\newtheorem{theorem}{Theorem}[section]
\newtheorem{theorem*}{Theorem}[]
\newtheorem{cor}[theorem]{Corollary}
\newtheorem{lemma}[theorem]{Lemma}
\newtheorem{prop}[theorem]{Proposition}
\theoremstyle{definition}
\newtheorem{define}[theorem]{Definition}
\newtheorem{define*}[theorem*]{Definition}
\newtheorem{ex}[theorem]{Example}
\newtheorem{remark}[theorem]{Remark}
\DeclareFontFamily{OT1}{pzc}{}
\DeclareFontShape{OT1}{pzc}{m}{it}{<-> s * [1.20] pzcmi7t}{}
\DeclareMathAlphabet{\mathpzc}{OT1}{pzc}{m}{it}
\newcommand{\bor}{\textnormal{bor}}
\newcommand{\Hom}{\textnormal{Hom}}
\newcommand{\End}{\textnormal{End}}
\newcommand{\Dom}{\textnormal{Dom}\,}
\newcommand{\K}{\mathbb{K}}
\newcommand{\B}{\mathbb{B}}
\newcommand{\Lip}{\textnormal{Lip}}
\newcommand{\minten}{\overline{\otimes}}
\newcommand{\alg}{\textnormal{alg}}
\newcommand{\N}{\field{N}}
\newcommand{\id}{\mathrm{id}}
\newcommand{\ind}{\mathrm{ind}}
\newcommand{\field}[1]{\mathbb{#1}}
\newcommand{\C}{\field{C}}
\newcommand{\R}{\field{R}}
\newcommand{\Z}{\mathbb{Z}}
\newcommand{\dmin}{\partial^{\textnormal{min}}}
\newcommand{\dmax}{\partial^{\textnormal{max}}}
\newcommand{\Spin}{\textnormal{Spin}}
\newcommand{\cyl}{\textnormal{cyl}}
\newcommand{\amp}{\textnormal{amp}}
\newcommand{\A}{\mathcal{A}}
\newcommand{\der}{\textnormal{d}}
\newcommand{\E}{\mathcal{E}}
\newcommand{\Her}{\textnormal{Her}}
\newcommand{\dc}{\textnormal{dc}}
\newcommand{\un}{\textnormal{un}}
\newcommand{\an}{\textnormal{an}}
\newcommand{\geo}{\textnormal{geo}}
\newcommand{\mini}{\textnormal{min}}
\newcommand{\hc}{\textnormal{hc}}
\newcommand{\s}{\textnormal{s}}
\begin{document}
\title[$KK$-bordisms]{The bordism group of unbounded KK-cycles}

\date{\today}

\author[Deeley]{Robin J. Deeley}
\author[Goffeng]{Magnus Goffeng}
\author[Mesland]{Bram Mesland}

\begin{abstract}
We consider Hilsum's notion of bordism as an equivalence relation on unbounded $KK$-cycles and study the equivalence classes. Upon fixing two $C^*$-algebras, and a $*$-subalgebra dense in the first $C^*$-algebra, a $\Z/2\Z$-graded abelian group is obtained; it maps to the Kasparov $KK$-group of the two $C^*$-algebras via the bounded transform. We study properties of this map both in general and in specific examples. In particular, it is an isomorphism if the first $C^*$-algebra is the complex numbers (i.e., for $K$-theory) and is a split surjection if the first $C^*$-algebra is the continuous functions on a compact manifold with boundary when one uses the Lipschitz functions as the dense $*$-subalgebra.
\end{abstract}

\maketitle

\section*{Introduction}
Kasparov's bivariant $K$-theory (i.e., $KK$-theory) \cite{Kas} is by now a fundamental tool in operator algebras and its applications. At the level of the groups themselves, $KK$-theory provides a joint generalization of $K$-theory and $K$-homology; while at the cycle level, it provides a vast generalization of $*$-homomorphisms, see any of \cite{Bla,cuntzquasi,cumero, Kas}. A prototypical example of a $KK$-cycle is the one associated to an elliptic differential operator acting on the sections of a vector bundle over a smooth manifold. Geometric examples of such classes are obtained from the Dirac operator on a ${\rm spin^c}$-manifold, the signature operator on an oriented manifold, the de Rham operator, among many others.

As these examples show, many $KK$-cycles appear naturally from unbounded operators. This observation led to the framework provided in \cite{baajjulg}, where Baaj and Julg gave the definition of an unbounded $KK$-cycle and defined a map (called the bounded transform) from the  unbounded cycles to the bounded cycles. It was shown in \cite{baajjulg} that every $KK$-class can be represented by an unbounded cycle (i.e., the bounded transform is surjective). After the introduction of the unbounded model in \cite{baajjulg}, Kucerovsky \cite{Kucerovsky1} expanded the theory by placing the Kasparov product in the unbounded setting. Subsequently, Hilsum developed a notion of bordism in the unbounded picture of $KK$-theory in the series of papers \cite{hilsumcmodbun,hilsumfol, hilsumbordism}, motivated by geometric examples from index theory. Further technical advances that enabled a more constructive approach to the unbounded model were made by Kaad and Lesch \cite{leschkaad2}, who independently proved and employed a local-global principle first proved by Pierrot in \cite{Pierrot}, to deal with sums of self-adjoint regular operators in Hilbert $C^{*}$-modules.

In view of these developments, we will in this paper address the following question posed in \cite[Section 17.11]{Bla}:

 \vspace{0.20cm} 
\noindent
``We leave to the reader the task of appropriately formulating the equivalence relations on unbounded cycles corresponding to the standard relations on bounded cycles."
 \vspace{0.20cm}
 
We will interpret the task of Blackadar as the question: is there a relation on unbounded $KK$-cycles, {\bf not} making reference to the bounded model,  which is equivalent to the relation of homotopy of their bounded transforms? In other words, given $C^*$-algebras, $A$ and $B$, can $KK_*(A,B)$ be realized using unbounded cycles and a relation defined at the level of these cycles? The work of Baaj-Julg, as well as the dictionary by Kucerovsky \cite[Appendix]{valettebook} indicate that our way of interpreting the question of Blackadar could have a positive answer. In the present paper we propose a relation at the level of unbounded cycles, with properties similar to Kasparov's notion of homotopy, by combining the technical results of Kaad-Lesch with the notion of Hilsum bordism.

Let us rephrase the question of Blackadar as a precise mathematical question. It requires a bit of notation to state. Let $A$ and $B$ be $C^*$-algebras and $\mathcal{A} \subseteq A$ be a dense $*$-subalgebra; fixing such a choice is similar to fixing a smooth structure. One of the reasons to fix a dense subalgebra of $A$ is due to a technical issue in the unbounded model: the direct sum is in general not well-defined, see more in Appendix \ref{counterdensesub}. An unbounded cycle with respect to $KK_*(A,B)$ is the following data, see Section \ref{unboundedSection} for further details:

\begin{define*} 
(also see Definition \ref{cycleschaindeef}) \\
Let $A$ and $B$ be $C^*$-algebras. An unbounded $KK$-cycle (with respect to $(A,B)$) is a pair $(\mathpzc{E}, D)$ where
\begin{enumerate}
\item $\mathpzc{E}$ is a (graded) Hilbert $(A,B)$-bimodule;
\item $D$ is an (odd) self-adjoint regular operator acting as an unbounded operator on $\mathpzc{E}$;
\item For each $a \in A$, $a(D\pm i)^{-1} \in \K_B(\mathpzc{E})$; 
\item The Lipschitz algebra $\mathrm{Lip}(A,\mathpzc{E},D)$ is dense in $A$, where:
\begin{align*}
\mathrm{Lip}&(A,\mathpzc{E},D)\\
& := \{ a\in A \: : \: a\cdot \Dom(D) \subseteq \Dom(D) \text{ and } [D,a] \in \End_B^{*}(\mathpzc{E}) \}.
\end{align*}
\end{enumerate}
\end{define*}

Here we abuse notation by writing $[D,a] \in \End^{*}_B(\mathpzc{E})$ if $[D,a]$ has a bounded adjointable extension to $\mathpzc{E}$. We write $\Lip(D)$ for the case $A=\End^{*}_{B}(\mathpzc{E})$, cf. Definition \ref{Lip}. For two unbounded cycles $(\mathpzc{E},D)$ and $(\mathpzc{E}',D')$, it holds that $\mathrm{Lip}(D\oplus D')=\mathrm{Lip}(D)\cap \mathrm{Lip}(D')$. Therefore, if both Lipschitz algebras contain $\mathcal{A}$, then the direct sum $(\mathpzc{E}\oplus \mathpzc{E}',D\oplus D')$ is also an unbounded cycle whose Lipschitz algebra contains $\mathcal{A}$.

{ \bf Question:} Does there exist an additive equivalence relation $\sim_{\textnormal{un}}$ on unbounded cycles such that given $C^*$-algebras, $A$ and $B$, there exists a dense subalgebra $\mathcal{A}\subseteq A$ such that the bounded transform: 
\[
\left\{ (\mathpzc{E}, D):\; \mathcal{A}\subseteq \mathrm{Lip}(D) \right\}/\!\! \sim_{\un}\;   \longrightarrow  KK_*(A,B)
\]
 is an isomorphism of abelian groups?\newline

The reader familiar with the Baum-Douglas model for $K$-homology (see \cite{BD}) might find the following analogy useful. The Baum-Douglas model uses geometric cycles, $(M,E,f)$ (see Definition \ref{BDcycle}), to give a realization of $KK_*(C(X), \C)$ where $X$ is a finite CW-complex. In particular, given a cycle $(M,E,f)$ there is an associated cycle in $KK$-theory; it is denoted by $f_*([D_E])$. We denote the mapping induced from $(M,E,f)\mapsto f_*([D_E])$ by $\mu$. Thus, every geometric cycle gives a class in $KK_*(C(X),\C)$. This is analogous to the bounded transform taking an unbounded cycle to a bounded one.

Moreover, a discussion similar to the one above (regarding unbounded $KK$-theory in the context of this geometric model) takes the following form: is there an equivalence relation on the geometric cycles which turns $\mu$ into an isomorphism? Of course, there is such a relation, see \cite{BD}. Indeed, the fact that the relation on Baum-Douglas cycles is geometrically defined is just as important as the cycles themselves being geometric.

The construction of an equivalence relation on unbounded $KK$-cycles is the theoretical starting point for this paper. The technical starting point is Hilsum's notion of bordism in the context of unbounded $KK$-theory--in particular, \cite[Theorem 6.2]{hilsumbordism}. The notion of bordism due to Hilsum is not to be confused with that of cobordism for {\it bounded} $KK$-cycles \cite{CS} (also see \cite[Section 17.10]{Bla}). {\bf Cobordism for bounded $KK$-cycles is  not the relation considered here.} Hilsum uses his notion of bordism to prove results concerning the cobordism invariance of various indices, see for examples \cite[Theorem 8.4 and Corollary 9.3]{hilsumbordism}. In particular, \cite[Theorem 6.2]{hilsumbordism} implies that if there is a Hilsum bordism between two unbounded $KK$-cycles, then the classes of the associated bounded transforms are equal in the relevant $KK$-group.

This result naturally led us to consider the possibility of defining an equivalence relation $\sim_{\bor}$ using Hilsum bordisms--that one can define such a relation is one of the fundamental results of the present paper, see Proposition \ref{bordismequi} and Definition \ref{defOfBorGrp}. The equivalence classes under $\sim_{\bor}$ form an abelian group. We denote the group associated to $\mathcal{A} \subseteq A$ and $B$ (see Definition \ref{defOfBorGrp}) by  $\Omega_*(\mathcal{A},B)$. Furthermore, using \cite[Theorem 6.2]{hilsumbordism}, one obtains the following result, which appears as Theorem \ref{bddtrasnsurj} below:

\begin{theorem*}
The abelian semigroup $\Omega_*(\mathcal{A},B)$ forms a $\Z/2\Z$-graded abelian group and the bounded transform 
$$b:\Omega_*(\mathcal{A},B)\to KK_*(A,B), \quad b[\mathpzc{E},D]:=[\mathpzc{E},b(D)],$$
is a well defined group homomorphism. Moreover, given two separable $C^*$-algebras $A$ and $B$, there exists a dense $*$-subalgebra $\mathcal{A}\subseteq A$ such that $b:\Omega_*(\mathcal{A},B)\to KK_*(A,B)$ is surjective. If $KK_*(A,B)$ is countably generated, $\mathcal{A}$ can be taken to be a Fr\'{e}chet algebra and if $KK_*(A,B)$ is finitely generated, $\mathcal{A}$ can be taken to be a Banach algebra.
\end{theorem*}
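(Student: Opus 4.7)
The plan is to treat the three assertions in turn: the group structure, the well-definedness and additivity of $b$, and the surjectivity statements together with the choice of $\mathcal{A}$.

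\emph{From semigroup to group.} Proposition \ref{bordismequi} already supplies an abelian semigroup structure on $\Omega_*(\mathcal{A},B)$ via direct sum, so only additive inverses remain to be exhibited. For any unbounded cycle $(\mathpzc{E},D)$, I would verify that the direct sum of $(\mathpzc{E},D)$ with its ``orientation reversal'' (same module with opposite grading and operator $-D$) is Hilsum-bordant to a degenerate cycle. The bordism is a standard cylinder: one forms the $(A,C([0,1],B))$-bimodule $C([0,1],\mathpzc{E})$ equipped with the fibrewise operator $D$, and checks that its boundary restrictions reproduce the two summands. Well-definedness of $b$ on bordism classes is precisely \cite[Theorem 6.2]{hilsumbordism}, while additivity follows at once from $b(D\oplus D')=b(D)\oplus b(D')$.

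\emph{Surjectivity and the choice of $\mathcal{A}$.} The Baaj--Julg theorem supplies, for each $x\in KK_*(A,B)$, an unbounded representative $(\mathpzc{E}_x,D_x)$. The key step is to find a single dense $*$-subalgebra $\mathcal{A}\subseteq A$ contained in $\Lip(A,\mathpzc{E}_x,D_x)$ for every $x$ in a chosen generating family. In the finitely generated case I would set $\mathcal{A}:=\bigcap_{k=1}^{n}\Lip(A,\mathpzc{E}_k,D_k)$, which is naturally a Banach $*$-algebra under the norm $a\mapsto\|a\|_A+\max_k\|[D_k,a]\|$. In the countably generated case the analogous countable intersection, topologised by the seminorm family $\|\cdot\|_A+\|[D_k,\cdot]\|$, is a Fréchet $*$-algebra, to which the local-global techniques of Kaad--Lesch \cite{leschkaad2} apply. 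In the general separable case $\mathcal{A}$ is simply an untopologised dense $*$-subalgebra sitting inside every relevant Lipschitz algebra.

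\emph{Main obstacle.} The essential difficulty is density: the Lipschitz algebra of a generic Baaj--Julg representative may contain too few elements of $A$ for a finite or countable intersection of such algebras to remain dense. My plan to overcome this is to replace each initial representative by a Hilsum-bordant one with strictly larger Lipschitz algebra, for instance by smoothing $D_x$ via bounded Borel functional calculus or by absorbing a degenerate cycle that kills stubborn commutator defects. Since such modifications do not alter the bordism class, $b$ continues to surject after replacement, while a common dense $\mathcal{A}$ can now be carved out simultaneously. The Banach and Fréchet refinements then follow directly from the corresponding cardinality hypothesis on the generating family, and the general separable statement drops the topology on $\mathcal{A}$ altogether.
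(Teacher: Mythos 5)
Your treatment of the bounded transform's well-definedness and additivity is correct and matches the paper's route (citing Hilsum's Theorem 6.2, plus $b(D\oplus D')=b(D)\oplus b(D')$). The surjectivity sketch is roughly on the right track, and in the paper this step is delegated entirely to Theorem \ref{surjoncycles}; you identify the genuine difficulty yourself, namely that a countable intersection of Lipschitz algebras of Baaj--Julg representatives may fail to be dense, and your proposed fix (smoothing representatives to enlarge the Lipschitz algebras) is only gestured at, not actually carried through.

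The real problem is in the inverse-existence step. You propose to exhibit the bordism from $(\mathpzc{E},D)\oplus(-(\mathpzc{E},D))$ to a degenerate cycle by forming the $(A,C([0,1],B))$-bimodule $C([0,1],\mathpzc{E})$ with the fibrewise operator $D$ and reading off the endpoint restrictions. That is the \emph{Kasparov homotopy} cylinder, not a Hilsum bordism. In the framework of this paper (Definition \ref{boundarydef}, Proposition \ref{cyclessss}), a bordism $(\mathpzc{F},Q,\theta,p)$ is a symmetric chain whose \emph{coefficient algebra remains} $B$: the module is $\mathpzc{E}\hat{\boxtimes}L^2[0,1]$, not $C([0,1],\mathpzc{E})$, and $Q$ must contain a first-order differential operator $\frac{\partial}{\partial t}$ in the cylinder direction, as in the operator \eqref{internalprdq} of Proposition \ref{technicalpartofhomtop}. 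Replacing $B$ by $C([0,1],B)$ and using the fibrewise (zeroth-order in $t$) operator $D$ yields a Kasparov homotopy between the bounded transforms, but it is not a Hilsum bordism, and it cannot be used to conclude that inverses exist in $\Omega_*(\mathcal{A},B)$. The paper's actual route is Corollary \ref{symmetryofbordism}, which applies Lemma \ref{difftopyofoperator} to the constant path $D(t)\equiv D$ on $\mathpzc{E}\hat{\boxtimes}L^2[0,1]$ with the operator $\frac{\partial}{\partial t}+D$ (in the graded formulation) and a specific choice of boundary data $(\theta,p)$ supported near $t=0$ and $t=1$; no intermediate ``degenerate cycle'' is needed because the construction directly exhibits $(\mathpzc{E},D)\oplus(-(\mathpzc{E},D))$ as a boundary. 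You should replace your cylinder with this one.
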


As mentioned above, the Baum-Douglas model also provides a realization of certain $KK$-groups. In fact, if $X$ is a compact manifold with boundary and $B$ is a unital $C^*$-algebra, one can model $KK_*(C(X), B)$ using Baum-Douglas type cycles, see for example \cite{Wal} or the discussion in Section \ref{BDBorKKSection}. The relationship between the geometric group, the $KK$-bordism group, and the standard Kasparov group is summarized in the next theorem, which appears as Theorem \ref{betamugamma} and Corollary \ref{splittingbeta} below--further details are provided in Subsection \ref{mfdwbdryex} and Section \ref{BDBorKKSection}. The definition of $\gamma_0$ can be found in Lemma \ref{firstgammadef} (on page \pageref{firstgammadef}). 

\begin{theorem*}
If $X$ is a compact manifold with boundary, the mapping 
$$\gamma:K^{{\rm geo}}_{*}(X;B) \to \Omega_*(\Lip(X),B)),\quad \gamma(M,E_B,\nabla_E,f):=f_*\gamma_0(M,E_B,\nabla_E),$$
is a well defined split injection and fits into a commuting diagram:
\begin{equation}
\xymatrix@C=1.7em@R=1.2em{
 K^{{\rm geo}}_{*}(X;B) \ar[rdd]_{\mu}   \ar[rr]^{\gamma}  & & \Omega_*(\Lip(X),B))  \ar[ldd]^{b}  
 \\ \\
&KK_*(C(X),B)&
}
\end{equation}
Moreover, the bounded transform $b: \Omega_*(\Lip(X),B))\to KK_*(C(X),B)$ is a split surjection.
\end{theorem*}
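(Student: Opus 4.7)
The plan is to reduce the theorem to the known fact (see \cite{Wal}) that, for a compact manifold $X$ with boundary and unital $B$, the Baum-Douglas assembly map $\mu\colon K^{{\rm geo}}_{*}(X;B)\to KK_{*}(C(X),B)$ is an isomorphism. Once $\gamma$ is shown to be well defined and the triangle $\mu=b\circ\gamma$ is shown to commute, the two splittings are purely formal: $s_{b}:=\gamma\circ\mu^{-1}$ is a right inverse to $b$ (making $b$ a split surjection), and $s_{\gamma}:=\mu^{-1}\circ b$ is a left inverse to $\gamma$ (making $\gamma$ a split injection). Thus the substantive content is (a) the well-definedness of $\gamma$ on Baum-Douglas equivalence classes and (b) the commutativity of the triangle.

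For (b), commutativity is essentially immediate from the construction in Lemma \ref{firstgammadef}. There $\gamma_{0}(M,E_{B},\nabla_{E})$ is the unbounded Kasparov cycle $(L^{2}(M;E_{B}),D_{E})$, where $D_{E}$ is the twisted spin$^{c}$ Dirac operator on $M$. Pushing forward by $f$ simply replaces the $C(M)$-action by the $\Lip(X)$-action induced through $f$. Taking the bounded transform $b$ then replaces $D_{E}$ by $D_{E}(1+D_{E}^{2})^{-1/2}$, and the resulting bounded Kasparov cycle is, by the definition of $\mu$ in the Baum-Douglas picture, precisely $f_{*}[D_{E}]=\mu(M,E_{B},\nabla_{E},f)$.

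For (a), I would check that $\gamma$ respects each of the three Baum-Douglas relations on geometric cycles. Disjoint union becomes direct sum on the unbounded side, and this is compatible with $\sim_{\bor}$ by the construction of $\Omega_{*}$, since $\mathcal{A}=\Lip(X)$ is contained in both Lipschitz algebras. For a geometric bordism $(W,F_{B},\nabla_{F},g)$ with $\partial W=M_{0}\sqcup M_{1}$, the unbounded cycle $(L^{2}(W;F_{B}),D_{F})$, together with the collar-neighbourhood decomposition and the identification of $D_{F}$ with the boundary Dirac operator on the collar, supplies precisely the data Hilsum's definition requires for a bordism between $\gamma(M_{0},\ldots)$ and $\gamma(M_{1},\ldots)$. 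The main obstacle will be vector bundle modification: given a modification $(\hat M,\hat E_{B},\hat\nabla,\hat f)$ of $(M,E_{B},\nabla_{E},f)$ along the unit sphere bundle of an even-rank spin$^{c}$ bundle $V\to M$, one must exhibit a Hilsum bordism between $\gamma_{0}(M,E_{B},\nabla_{E})$ and $\gamma_{0}(\hat M,\hat E_{B},\hat\nabla)$ after push-forward by $f$. The natural strategy is to work on the unit disk bundle $D(V\oplus\underline{\R})\to M$, whose boundary is the sphere bundle used in modification and which carries the fibrewise Bott element; the interior then furnishes the bordism while fibrewise Bott periodicity identifies the two boundary cycles. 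This essentially imports the Baum-Douglas invariance proof into the unbounded/Hilsum setting using the unbounded Kasparov product along the sphere fibration.

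Once (a) and (b) are established, the split-surjectivity of $b$ and the split-injectivity of $\gamma$ follow by the formal argument in the first paragraph, which also makes the splittings canonical up to $\mu^{-1}$.
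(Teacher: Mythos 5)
Your overall architecture matches the paper exactly: reduce to the known isomorphism $\mu_{\an}$ of Theorem \ref{muaniso}, check commutativity of the triangle at the cycle level, and verify that $\gamma$ respects the three Baum-Douglas relations; the two splittings $\gamma\circ\mu_{\an}^{-1}$ and $\mu_{\an}^{-1}\circ b$ then fall out formally, just as in Corollary \ref{splittingbeta}. Your treatment of disjoint union and of geometric bordism (via the collar decomposition feeding directly into Hilsum's boundary data, i.e.\ Proposition \ref{honestbordismprop}) is also the same as Proposition \ref{bordismgeocy}.

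The one place you diverge is vector bundle modification, and that is exactly where your sketch has a gap. You propose to exhibit a Hilsum bordism from the unit disk bundle $D(V\oplus\underline{\R})\to M$. But the boundary of that disk bundle is $S(V\oplus\underline{\R})=M^V$ alone, not $M\dot{\cup}(-M^V)$, so the disk bundle cannot directly furnish a bordism between $\gamma_0(M,E_B,\nabla_E)$ and $\gamma_0(M^V,E_B^V,\nabla_E^V)$: the original cycle $M$ never appears as a boundary component. To recover $M$ you would have to identify the ``extra'' part of $M^V$ as trivial, and that identification requires the key analytic input that the fiberwise Dirac operator $D^Q_{2k}$ on $(S^{2k},S_{2k}\otimes Q_k)$ has a one-dimensional kernel spanned by an even section (\cite[Lemma 6.7]{HReta}). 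The paper uses precisely this in Lemma \ref{vectorbmodgeo} to split
\[
\gamma_0\bigl((M,E_B,\nabla_E,f)^V\bigr)=\gamma_0(M,E_B,\nabla_E,f)+(\mathpzc{E},D),
\]
where $(\mathpzc{E},D)$ is the restriction of $D^{M^V}_{E^V}$ to the orthogonal complement of $\ker D^Q_{2k}$, and shows $(\mathpzc{E},D)$ is \emph{weakly degenerate} in the sense of Definition \ref{weakdegdeg} (with $S=D^Q_{2k}|_{\mathpzc{E}}$ boundedly invertible), so it is nullbordant by the swindle of Theorem \ref{weakdegnullbor}. You should either adopt this decomposition-plus-swindle route, or make precise how ``fibrewise Bott periodicity identifies the two boundary cycles'' produces an actual Hilsum bordism; as written the disk-bundle argument does not close.
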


The main technical tool in the proof of the above theorem is the notion of weakly degenerate cycles, see Definition \ref{weakdegdeg}, and the fact that these cycles are nullbordant, see Theorem \ref{weakdegnullbor}. In one special case, we do show that the bordism group is isomorphic to the Kasparov group. We have the following theorem, which appears as Theorem \ref{bddtrandk} and Corollary \ref{symnormediso}:

\begin{theorem*}
The bounded transform $b:\Omega_*(\C,B)\to KK_*(\C,B)$ is an isomorphism. Furthermore, if $\mathcal{I}$ is a regular symmetrically normed ideal of compact operators $b:\Omega_*^e(\mathcal{I},B)\to KK_*(\K,B)$ is an isomorphism, here $\Omega_*^e(\mathcal{I},B)$ denotes the subgroup generated by essential cycles (see Definition \ref{esssubgroup}).
\end{theorem*}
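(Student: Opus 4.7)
The proof splits into two parts. We treat the first, $b:\Omega_*(\C,B)\to KK_*(\C,B)$, in detail and derive the second from it by a Morita-type argument. Surjectivity of the first map is immediate: when $\mathcal{A}=\C$ the Lipschitz density condition in an unbounded cycle is automatic, so the Baaj--Julg representation theorem (equivalently, the preceding surjectivity theorem of the introduction) produces an unbounded representative of any $KK$-class.

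For injectivity, let $(\mathpzc{E}_i,D_i)$, $i=0,1$, be unbounded cycles over $(\C,B)$ with $[b(\mathpzc{E}_0,D_0)]=[b(\mathpzc{E}_1,D_1)]$. The plan is to produce a Hilsum bordism between them. Since degenerate cycles are nullbordant (by the paper's earlier theorem on weakly degenerate cycles, Theorem~\ref{weakdegnullbor}), and since adding degenerates to both sides yields bounded transforms on a common graded Hilbert $B$-module $\mathpzc{E}$ joined by an operator homotopy $\{F_t\}_{t\in[0,1]}$ of self-adjoint Fredholm contractions (Kasparov's characterization of $KK$-equivalence), it suffices to realize such an operator homotopy as an unbounded bordism. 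For this we use the cylinder $\tilde{\mathpzc{E}}:=\mathpzc{E}\otimes L^2([0,1])$ and construct an operator $\tilde D$ combining the inverse-bounded-transform family $D_t:=F_t(1-F_t^2)^{-1/2}$ with a Dirac-type derivative in the $t$-direction, checking that the result is self-adjoint regular with locally compact resolvent and that its boundary in Hilsum's sense recovers $(\mathpzc{E},D_0)\sqcup-(\mathpzc{E},D_1)$.

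The main obstacle is the singularity of the inverse bounded transform $F\mapsto F(1-F^2)^{-1/2}$ at $F=\pm 1$: whenever $F_t$ has spectrum touching $\pm 1$ the associated unbounded $D_t$ fails to be bounded and extra care is needed to define $\tilde D$. We expect to handle this by a preliminary perturbation ensuring $\|F_t\|<1$ strictly for $t\in(0,1)$, using the Kaad--Lesch local-global principle and the stability theory of sums of self-adjoint regular operators cited in the introduction. Verification of the remaining cycle axioms for $(\tilde{\mathpzc{E}},\tilde D)$ then reduces to standard functional analysis on Hilbert $C^*$-modules once $D_t$ has been controlled.

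Finally, the second statement follows from the first via the Morita equivalence $\mathcal{K}\sim_M\C$. Fixing a rank-one projection $p\in\mathcal{I}$, compression $(\mathpzc{E},D)\mapsto (p\mathpzc{E},pDp)$ sends essential cycles over $(\mathcal{I},B)$ to cycles over $(\C,B)$, while the reverse direction tensors with $\ell^2(\N)$ (on which $\mathcal{I}\subseteq\mathcal{K}$ acts by restriction); essentiality is exactly the hypothesis needed so that compression loses no information up to bordism. Both operations are compatible with the bounded transform and descend to mutually inverse maps of bordism groups $\Omega^e_*(\mathcal{I},B)\cong\Omega_*(\C,B)$, which, combined with the standard identification $KK_*(\mathcal{K},B)\cong KK_*(\C,B)$, yield the isomorphism $b:\Omega^e_*(\mathcal{I},B)\to KK_*(\mathcal{K},B)$.
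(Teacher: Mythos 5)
Your argument for injectivity of $b:\Omega_*(\C,B)\to KK_*(\C,B)$ diverges from the paper's and, in its current form, leaves the central technical difficulty unresolved. The paper does not lift an operator homotopy of bounded cycles. Instead it exploits the isomorphism $\ind_B:KK_*(\C,B)\to K_*(B)$ to observe that a cycle in $\ker b$ has vanishing index, then invokes Lemma~\ref{indzerolem}: for a Fredholm regular operator $D$ with $\ind_B(D)=0$ one can find $N$ and a bounded adjointable $T$ so that $D\oplus 0_{2N}+T$ is \emph{invertible}. Since $T$ is relatively bounded, Corollary~\ref{bddpert} provides the bordism $(\mathpzc{E},D)\sim_{\bor}(\mathpzc{E}\oplus B^{2N},D\oplus 0_{2N}+T)$, and the latter, being invertible, is a degenerate $(\C,B)$-cycle, hence nullbordant by Theorem~\ref{weakdegnullbor}. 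No homotopy lifting is required.

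Your proposed route—realizing a Kasparov operator homotopy $\{F_t\}$ as a Hilsum bordism on $\mathpzc{E}\otimes L^2[0,1]$ via the family $D_t=F_t(1-F_t^2)^{-1/2}$—identifies the correct obstacle (the inverse bounded transform is singular when $\mathrm{spec}(F_t)$ touches $\pm1$), but the fix is asserted rather than proved. Several issues remain open: (i) one must show that the perturbed family $D_t$ has a common domain so that the hypotheses of Proposition~\ref{technicalpartofhomtop} or Theorem~\ref{sumregul} apply; the Kaad--Lesch machinery requires $D(\cdot)\in C^1([0,1],\Hom^*_B(\mathpzc{W},\mathpzc{E}))$ for a fixed $\mathpzc{W}$, which is not automatic when the domain degenerates as $\|F_t\|\to 1$ near the endpoints; (ii) the claim that after stabilization the endpoints are the bounded transforms of the \emph{given} unbounded cycles on the nose (not merely homotopic to them) requires an argument; (iii) regularity and local compactness of the resulting $\tilde D$ are declared ``standard'' but in fact constitute the bulk of any such proof (compare the labor in the cone construction of the second proof of weak degeneracy). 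Until these are supplied, the argument is a plan, not a proof. The paper's route avoids all of this by working entirely with degenerate cycles and relatively bounded perturbations, which is why it is so short.

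For the $\mathcal{I}$-part your outline matches the paper's strategy (rank-one compression one way, tensoring with $\ell^2(\N)$ the other), but the statement ``essentiality is exactly the hypothesis needed so that compression loses no information up to bordism'' hides the actual content. The paper first produces a bordism from $(\tilde{\mathpzc{E}},\tilde D)$ to a block-diagonal operator $D_0=\sum_j e_{jj}\tilde D e_{jj}$ via an explicit cylinder (using W\"ust's theorem and Theorem~\ref{sumregul}), and then another bordism from $(\tilde{\mathpzc{E}},D_0)$ to $(\mathpzc{E}_0\otimes\ell^2(\N),D\otimes\id)$ via a bounded perturbation controlled by the symmetric norm estimate $\|[\tilde D,a]\|\leq C\|a\|_{\mathcal{I}}$. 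Neither step is immediate from essentiality alone; the regularity of the ideal is used crucially. You should fill in this construction.
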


There are at least two natural questions that we do not address, but are of great interest for further development of the theory:
\begin{enumerate}
\item A detailed study of the dependence on the bordism group on the dense subalgebra used to define it;
\item The definition of products in the bordism groups.
\end{enumerate}
In fact, we expect these questions to be related. It is likely that one must fix particular properties on the dense subalgebra, as well as restrict to a class of well behaved cycles, to ensure a well-defined product.

The content of the paper is organized as follows. The first section contains the fundamental properties of unbounded $KK$-cycles, including a review of regularity for unbounded operators on Hilbert modules and products in various contexts (e.g., the exterior product for symmetric chains and products with $K$-theory classes). In Section \ref{bordismsection}, we move to the fundamental object of study: the $KK$-bordism group. The bounded transform and the relationship between the $KK$-bordism group and $KK$-theory is discussed in Sections \ref{boundSection} and \ref{BDBorKKSection}. In the case of the latter section, the Baum-Douglas model for $K$-homology plays a key r\^ole. The final section of the paper contains examples, both of Hilsum bordisms and bordism groups. The reader unfamiliar with Hilsum's notion of bordism might find Subsection \ref{exHilsumSubSec} useful as it provides a list of geometrically defined examples due to Hilsum; this subsection is written to be independent of the rest of the paper.

\section{Unbounded $KK$-cycles}
\label{unboundedSection}

The following notation is used throughout the paper. We use $A$ and $B$ to denote separable $C^*$-algebras. We fix a dense $*$-subalgebra $\mathcal{A}\subseteq A$ throughout. We also assume that the dense subalgebra $\A$ has a locally convex topology stronger than the $C^*$-topology. This is no severe restriction since we can always equip it with the fine topology. If $\A$ has a countable basis, \cite{cuntzweyl} shows that the fine topology makes $\A$ into a locally convex algebra. In examples, $\A$ is a Fr\'echet- or Banach algebra. The topology is in the bulk of the paper \emph{only} used to define smooth functions into $\mathcal{A}$, but as the example in Subsection \ref{compsubse} shows the topology plays a r\^{o}le in computing the bordism groups. A $*$-homomorphism $\mathcal{A}\to \mathcal{A}'$ between two such subalgebras is tacitly assumed to be continuous, also in the $C^*$-topologies, thus being uniquely determined by a $*$-homomorphism $A\to A'$. The $C^*$-continuity is automatic for any $*$-homomorphism if $\mathcal{A}$ is closed under holomorphic functional calculus.

Hilbert $C^*$-modules will be denoted by $\mathpzc{E}$ and $\mathpzc{F}$. The $C^*$-algebra of $B$-linear adjointable operators on the $B$-Hilbert $C^*$-module $\mathpzc{E}$ is denoted by $\End^*_B(\mathpzc{E})$ and $\K_B(\mathpzc{E})\subseteq \End^*_B(\mathpzc{E})$ denotes the $C^*$-algebra of $B$-compact operators. An $(A,B)$-Hilbert bimodule is a $B$-Hilbert module $\mathpzc{E}$ equipped with a $*$-homomorphism $A\to \End_B^*(\mathpzc{E})$. Gradings will be denoted by $\gamma_\mathpzc{E}$ or simply $\gamma$. An $(A,B)$-Hilbert $C^*$-module $\mathpzc{E}$ is said to be graded if $\mathpzc{E}$ is graded as a $B$-Hilbert $C^*$-module and $A$ acts by even operators. Modules, and bimodules, are (possibly trivially) graded unless otherwise stated. The internal tensor product of a Hilbert $B$-module $\mathpzc{E}$ with a $(B,C)$-bimodule $\mathpzc{F}$ is denoted $\mathpzc{E}\otimes_{B}\mathpzc{F}$. If $\mathpzc{F}$ is an $(A,B)$-Hilbert $C^*$-module and $\mathpzc{E}$ is a $B$-Hilbert $C^*$-module we say that $T\in \Hom_B^*(\mathpzc{E},\mathpzc{F})$ is \emph{locally compact} if for any $a\in A$ it holds that $aT\in \K_B^*(\mathpzc{E},\mathpzc{F})$.

\subsection{Unbounded operators and regularity}

Let us first recall some notions for unbounded operators. The main reference for this subsection is \cite[Chapter 9 and 10]{lancesbook}. An unbounded operator $D:\mathpzc{E}\dashrightarrow \mathpzc{F}$ is a $B$-linear operator $D:\Dom(D)\to \mathpzc{F}$ for a $B$-submodule $\Dom(D)\subseteq \mathpzc{E}$. If $\Dom(D)$ is dense, we say that $D$ is densely defined. If $D:\mathpzc{E}\dashrightarrow\mathpzc{E}$ for a graded $C^*$-module $\mathpzc{E}$, we say $D$ is odd if $\gamma_\mathpzc{E}$ preserves $\mathrm{Dom}(D)$ and $D\gamma_\mathpzc{E}=- \gamma_\mathpzc{E}D$. If $D\gamma_\mathpzc{E}= \gamma_\mathpzc{E}D$ , we say that $D$ is even. The operator $D$ is called closed if its graph
$$G(D):=\{(x,Dx):\; x\in \Dom(D)\}\subseteq \mathpzc{E}\oplus \mathpzc{F},$$
is a closed subspace.
With a closed operator $D$, we associate the $B$-Hilbert $C^*$-module $W(D):=\Dom(D)$ equipped with the $B$-valued inner product
\begin{equation}
\label{dominprodef}
\langle x,y\rangle_{W(D)}:=\langle x,y\rangle_\mathpzc{E}+\langle Dx,Dy\rangle_\mathpzc{F}.
\end{equation}

The adjoint of a densely defined unbounded operator is an unbounded operator $D^*:\mathpzc{F}\dashrightarrow\mathpzc{E}$ equipped with the domain 
$$\Dom(D^*):=\{y\in \mathpzc{F}: \; \exists z\in \mathpzc{E}, \; \langle Dx,y\rangle_\mathpzc{F}=\langle x,z\rangle_\mathpzc{E} \;\forall x\in \Dom(D)\}.$$
Then $D^*$ is defined by $D^*y:=z$. The adjoint is always closed because 
$$G(D^*)=vG(D)^\perp, \quad\mbox{where}\quad v=\begin{pmatrix} 0& 1\\ -1& 0\end{pmatrix}.$$
If $D^*=D$ we say that $D$ is self-adjoint and if $D\subseteq D^*$ we say that $D$ is symmetric. Symmetric operators are closable, and we will thus assume all symmetric operators to be closed. By \cite[Lemma 9.7]{lancesbook}, if $D$ is closed and symmetric then $D\pm i$ are injective with closed range.

Following \cite{leschkaad2,posepumpen} we say that a closed operator $D$ is semi-regular if both $D$ and $D^*$ are densely defined. {\bf We henceforth assume that all our operators are semi-regular.} The next theorem is related to the highly relevant notion of \emph{regularity} for unbounded operators. For its proof, we refer to \cite{leschkaad2,lancesbook}.

\begin{theorem}
\label{regproperties}
Let $D$ be a semi-regular operator. The following are equivalent:
\begin{enumerate}
\item $1+D^*D$ has dense range.
\item $1+D^*D$ is surjective.
\item $G(D)$ is a complemented $B$-submodule in $\mathpzc{E}\oplus \mathpzc{F}$.
\item The canonical mapping $\mathpzc{E}\oplus \mathpzc{F}\cong G(D)\oplus vG(D^*)$ is an isomorphism.
\item $D$ defines a bounded adjointable mapping $D:W(D)\to \mathcal{F}$.
\item When localizing $D$ and $D^*$ in the GNS-space $\mathpzc{H}_\omega$ of a state $\omega$ on $B$, it holds that $(D_\omega)^*=(D^*)_\omega$ as operators on the Hilbert space $\mathpzc{E}_\omega:=\mathpzc{E}\otimes_B\mathpzc{H}_\omega$.
\end{enumerate}
\end{theorem}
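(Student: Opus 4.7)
My plan is to hub the argument at condition (3), the complementation of the graph. I would establish (1)$\iff$(2), (3)$\iff$(4), (2)$\iff$(3), (3)$\iff$(5), and (3)$\iff$(6) in that order; condition (3) is the most convenient geometric pivot because it is what connects the algebraic surjectivity statements to both the graph-inner-product picture and the local picture. Every implication except (6)$\implies$(3) is essentially a formal consequence of the closed graph structure; the deep content sits in the local--global principle of Pierrot (reproved by Kaad and Lesch), which I would quote rather than rederive.

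\textbf{The formal equivalences.} For (1)$\iff$(2), the operator $1+D^{*}D$ is symmetric on its natural domain and satisfies $\langle(1+D^{*}D)x,x\rangle\geq \langle x,x\rangle$, so it is injective with closed range; thus dense range forces surjectivity. For (3)$\iff$(4) I would first check the universal identity $vG(D^{*})=G(D)^{\perp}$, valid for any closed densely defined operator between Hilbert $C^{*}$-modules: the inclusion $vG(D^{*})\subseteq G(D)^{\perp}$ is immediate, and the reverse follows by reading off the definition of the adjoint. With this identity, $G(D)$ is complemented in $\mathpzc{E}\oplus\mathpzc{F}$ precisely when the orthogonal direct sum with $vG(D^{*})$ fills the whole module.

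\textbf{Bridging to (3) and to (5).} For (2)$\implies$(3), given $(y,0)\in\mathpzc{E}\oplus\mathpzc{F}$, I would use surjectivity to solve $(1+D^{*}D)x=y$ and decompose $(y,0)=(x,Dx)+(y-x,-Dx)$; a short computation gives $-Dx\in\Dom(D^{*})$ with $D^{*}(-Dx)=-(y-x)$, so the second summand lies in $vG(D^{*})$, and the analogous argument for $(0,z)$ finishes the projection. The converse reverses this: project $(y,0)$ onto $G(D)$ and unwind the $vG(D^{*})$ component to recover $(1+D^{*}D)x=y$. For (3)$\iff$(5), the map $\iota\colon W(D)\to\mathpzc{E}\oplus\mathpzc{F}$, $x\mapsto(x,Dx)$, is by definition of the inner product (\ref{dominprodef}) a unitary isomorphism of Hilbert $C^{*}$-modules onto $G(D)$. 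Writing $D=\pi_{\mathpzc{F}}\circ\iota$ where $\pi_{\mathpzc{F}}$ is the adjointable coordinate projection, adjointability of $D$ is equivalent to adjointability of $\iota$, which (since $\iota$ is an isometric bijection onto its image) is equivalent to $\iota(W(D))=G(D)$ being a complemented submodule.

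\textbf{Local--global and the main obstacle.} The direction (3)$\implies$(6) is straightforward: once $D$ is regular, the bounded operators $(1+D^{*}D)^{-1}$ and $D(1+D^{*}D)^{-1/2}$ live in $\End_{B}^{*}(\mathpzc{E})$ and localize to the analogous bounded operators on $\mathpzc{E}_{\omega}$, from which one reads off $(D_{\omega})^{*}=(D^{*})_{\omega}$. The converse (6)$\implies$(3) is the hard step: here one must recover global surjectivity of $1+D^{*}D$ from the fibrewise matching of adjoints over every state, and the only known proofs proceed via the local--global principle of Pierrot, whose module-theoretic formulation is supplied by Kaad--Lesch. My plan is to invoke this principle as a black box, and I expect this to be the one step that cannot be reduced to bookkeeping.
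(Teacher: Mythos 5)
The paper does not supply its own proof of Theorem~\ref{regproperties}: immediately after the statement it says ``For its proof, we refer to \cite{leschkaad2,lancesbook}.'' So there is no internal argument to compare against. Your sketch does follow the standard route in those references (hub at the graph-decomposition picture, then invoke Pierrot/Kaad--Lesch for the local--global direction), and most of the formal steps are sound, including the less-obvious claim that adjointability of $D\colon W(D)\to\mathpzc{F}$ forces adjointability of the full inclusion $\iota\colon W(D)\hookrightarrow\mathpzc{E}\oplus\mathpzc{F}$ (one checks that $1-D^{\sharp}D$ is a positive contraction on $W(D)$ that extends to a bounded map $\mathpzc{E}\to W(D)$ serving as $\iota^{*}$ on the first factor).

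There is one genuine gap, in (2)$\implies$(3). You decompose $(y,0)$ correctly using surjectivity of $1+D^{*}D$, and then assert ``the analogous argument for $(0,z)$ finishes the projection.'' Taken literally, the analogous argument would require solving $(1+DD^{*})w=z$, i.e.\ it would require surjectivity of $1+DD^{*}$, which at this stage you do not have; it is an a~priori asymmetric hypothesis. The step has to be finished differently: having established $\mathpzc{E}\oplus 0\subseteq G(D)\oplus vG(D^{*})=:W$, observe that for $w\in\Dom(D^{*})$ both $(D^{*}w,-w)\in vG(D^{*})\subseteq W$ and $(D^{*}w,0)\in\mathpzc{E}\oplus 0\subseteq W$, whence $(0,w)\in W$; since $\Dom(D^{*})$ is dense in $\mathpzc{F}$ (this is exactly where semi-regularity is used) and $W$, being an orthogonal direct sum of closed submodules, is closed, one concludes $0\oplus\mathpzc{F}\subseteq W$ and hence $W=\mathpzc{E}\oplus\mathpzc{F}$. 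This is a small repair, but as written your sentence describes a step that would fail, so it is worth flagging. The remaining implications, including (1)$\iff$(2) (where the closed-range claim rests on closedness of $D$ and $D^{*}$, which you could have stated explicitly), (3)$\iff$(4) via $vG(D^{*})=G(D)^{\perp}$, and the appeal to the local--global principle for (6), match what one finds in Lance and Kaad--Lesch.
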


The last condition of Theorem \ref{regproperties} is known as the local-global principle. This principle first appeard in \cite[Theorem 1.18]{Pierrot}. The formulation in Theorem \ref{regproperties} is from \cite[Theorem 4.2]{leschkaad2}. Since it reduces to a property on Hilbert spaces, it plays an important technical r\^ole throughout the paper. If a semi-regular operator $D$ satisfies any of the conditions of Theorem \ref{regproperties}, we say that $D$ is regular. It is sometimes possible to reduce regularity to a property of a self-adjoint operator. For a semi-regular operator $D:\mathpzc{E}\dashrightarrow\mathpzc{E}$ we use the notation 
\begin{equation}
\label{twiddlingoperators}
\tilde{D}:=\begin{pmatrix} 0& D\\ D^*& 0\end{pmatrix}.
\end{equation}
We refer to $\tilde{D}$ as the double of $D$. This operator is semi-regular and symmetric with $\Dom(\tilde{D})=\Dom(D^*)\oplus \Dom(D)$. It follows by \cite[Lemma 2.3]{leschkaad2} that $D$ is regular if and only if $\tilde{D}$ is regular and self-adjoint.

\begin{prop}
\label{regulproperla}
Let $D$ be a semi-regular operator on a Hilbert $C^{*}$-module $\mathpzc{E}$. Then
\begin{enumerate}
\item If $D$ is regular, then so is $D^*$.
\item If $D$ is regular, then $D^{**}=D$.
\item For a regular operator $D$ the operator $D^*D$ is regular, self-adjoint and $\Dom(D^*D)$ is a core for $D$.
\item If $D$ is symmetric, $D$ is regular if and only if $D\pm i$ have complemented ranges.
\item If $D$ is symmetric, then $D$ is self-adjoint and regular if and only if $D\pm i:\Dom D\to \mathpzc{E} $ have dense range if and only if $D\pm i$ are surjective.
\end{enumerate}
\end{prop}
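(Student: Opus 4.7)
My plan is to treat the five parts in order, using Theorem~\ref{regproperties} as a black box. The recurring tools are the complementation characterisation (condition (3) of Theorem~\ref{regproperties}) together with the local--global principle (condition (6)), which reduces most statements to classical facts for unbounded operators on a Hilbert space. All five assertions are Hilbert module analogues of standard Hilbert space results, and the local--global principle is tailor-made for transferring such statements once a surjectivity or complementation hypothesis is in hand.

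\textbf{Parts (1) and (2).} Starting from the decomposition $\mathpzc{E}\oplus\mathpzc{F}=G(D)\oplus vG(D^*)$ supplied by regularity, I apply $v^*$ and use $v^2=-\mathrm{id}$ to obtain the analogous complemented decomposition of $\mathpzc{F}\oplus\mathpzc{E}$ with the roles of $D$ and $D^*$ swapped; this is precisely the complementation criterion for $D^*$, giving (1). For (2), the general graph-adjoint formula applied twice yields $G(D^{**})=v^{*}G(D^{*})^\perp$, and the complementation from (1) identifies this with $G(D)$, so $D^{**}=D$.

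\textbf{Part (3).} Regularity yields surjectivity of $1+D^*D$ and positivity yields injectivity, so $(1+D^*D)^{-1}$ exists as a positive adjointable contraction on $\mathpzc{E}$. Inverting it shows that $D^*D$ is self-adjoint and regular. For the core statement, I would approximate $x\in\Dom(D)$ by $x_n:=n(n+1+D^*D)^{-1}x\in \Dom(D^*D)$ and verify convergence to $x$ in the graph norm of $D$. Convergence in $\mathpzc{E}$ follows from continuous functional calculus on the bounded positive operator $(1+D^*D)^{-1}$; convergence $Dx_n\to Dx$ is obtained by writing $Dx_n = D(1+D^*D)^{-1/2}\cdot n(1+D^*D)^{1/2}(n+1+D^*D)^{-1}x$ and using that $D(1+D^*D)^{-1/2}$ is bounded adjointable.

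\textbf{Parts (4) and (5); main obstacle.} For symmetric $D$, the operators $D\pm i$ are injective with closed range by \cite[Lemma 9.7]{lancesbook}. Via the isomorphism $(D\pm i):W(D)\to\mathrm{Ran}(D\pm i)$, complementation of $G(D)$ in $\mathpzc{E}\oplus\mathpzc{E}$ corresponds to complementation of $\mathrm{Ran}(D\pm i)$ in $\mathpzc{E}$, proving (4). For (5), closed range identifies dense range with surjectivity. Once $D\pm i$ is surjective, self-adjointness follows from the module version of the classical argument: for $y\in\Dom(D^*)$, surjectivity of $D-i$ gives $x\in\Dom(D)$ with $(D-i)x=(D^*-i)y$; symmetry of $D$ and injectivity of $D^*-i$ then force $y=x\in\Dom(D)$ and $D^*y=Dy$. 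I expect the core statement in (3) to be the main analytic obstacle, since the other parts reduce essentially to algebraic manipulations with graphs and complementation, while the core assertion requires a genuine functional-calculus approximation with no cheap Hilbert space reduction.
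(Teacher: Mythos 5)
Your outline tracks Lance's Chapters~9--10, which is exactly what the paper cites in lieu of a proof, and parts (1), (2), and (5) are fine. There is, however, a genuine gap precisely at the step you single out as the main analytic obstacle, the core statement in (3). The factorisation
\[
Dx_n \;=\; D(1+D^*D)^{-1/2}\cdot n(1+D^*D)^{1/2}(n+1+D^*D)^{-1}x
\]
is legitimate term by term, since $(n+1+D^*D)^{-1}$ maps $\mathpzc{E}$ into $\Dom(D^*D)\subseteq\Dom((1+D^*D)^{1/2})$. But to conclude $Dx_n\to Dx$ from the boundedness of the left factor you must show that the right factor converges to $(1+D^*D)^{1/2}x$, and for that you need $x\in\Dom((1+D^*D)^{1/2})$, i.e.\ $\Dom(D)\subseteq\Dom(|D|)$. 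That inclusion is normally \emph{deduced from} the core statement, not available before it, so as written the argument is circular. The standard repair is the intertwining relation $D(n+1+D^*D)^{-1}=(n+1+DD^*)^{-1}D$ on $\Dom(D)$, which is available once part (1) has been established (regularity of $D^*$ gives surjectivity of $n+1+DD^*$); then $Dx_n=n(n+1+DD^*)^{-1}Dx\to Dx$ by the same uniformly bounded functional-calculus estimate you already used for $x_n\to x$, and no statement about $\Dom((1+D^*D)^{1/2})$ is needed.

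There is also a smaller leap in (4). For symmetric $D$, the map $D+i:W(D)\to\mathrm{Ran}(D+i)$ is indeed a unitary of Hilbert modules, but a unitary equivalence between $G(D)$ (a submodule of $\mathpzc{E}\oplus\mathpzc{E}$) and $\mathrm{Ran}(D+i)$ (a submodule of $\mathpzc{E}$) does not by itself transport complementation: complementation is a property of the inclusion into the ambient module, not of the abstract module, and here the ambient modules differ. The equivalence is correct but genuinely needs the Cayley transform. In one direction, complementation of $G(D)$ makes $W(D)\hookrightarrow\mathpzc{E}$ and $D:W(D)\to\mathpzc{E}$ adjointable, so $D\pm i:W(D)\to\mathpzc{E}$ is adjointable with closed range (Lance, Lemma~9.7) and hence has complemented range; in the other direction, complementation of both $\mathrm{Ran}(D\pm i)$ lets one extend the Cayley isometry $(D+i)x\mapsto(D-i)x$ to an adjointable partial isometry on $\mathpzc{E}$ and recover a complement of $G(D)$ from it. Spelling this out (or citing Lance, Chapter~10) would close the gap.
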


The proofs of the properties in Proposition \ref{regulproperla} can be found in \cite[Chapter 9, 10]{lancesbook}. A result of technical importance to this paper is the following generalization of \cite[Theorem 7.10]{leschkaad2}. It concerns regularity of the sum of a symmetric and a self-adjoint operator.

\begin{theorem}
\label{sumregul}
Suppose that $S$ is a self-adjoint regular operator and $T$ is a symmetric regular operator on a Hilbert $C^*$-module $\mathpzc{E}$. We assume for any $\mu\in \R\setminus \{0\}$ that
\begin{enumerate}
\item $(S+i\mu)^{-1}\Dom(T)\subseteq \Dom(S)\cap \Dom(T)\cap \Dom(ST)$ and the operator $(ST+TS)(S+i\mu)^{-1}$ has a bounded adjointable extension to $\mathpzc{E}$;
\item $(S+i\mu)^{-1}\Dom(T^*)\subseteq \Dom(S)\cap \Dom(T^*)\cap \Dom(ST^*)$ and the operator $(ST^*+T^{*}S)(S+i\mu)^{-1}$ has a bounded adjointable extension to $\mathpzc{E}$.
\end{enumerate}
Then $S+T$ is a closed regular symmetric operator with domain $\Dom(S)\cap \Dom(T)$ and $(S+T)^*$ is the closure of $S+T^*$ equipped with the domain $\Dom(S)\cap \Dom(T^*)$.
\end{theorem}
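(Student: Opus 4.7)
The plan is to reduce to the self-adjoint specialization \cite[Theorem 7.10]{leschkaad2} via the doubling construction \eqref{twiddlingoperators}. Set
$$\hat{S}:=S\oplus S\quad\text{and}\quad\tilde{T}:=\begin{pmatrix}0 & T\\ T^* & 0\end{pmatrix}$$
on $\mathpzc{E}\oplus \mathpzc{E}$. Both operators are self-adjoint and regular: $\hat S$ trivially, $\tilde T$ by \cite[Lemma 2.3]{leschkaad2} applied to the symmetric regular $T$. A block calculation yields
$$(\hat{S}\tilde{T}+\tilde{T}\hat{S})(\hat{S}+i\mu)^{-1}=\begin{pmatrix}0 & (ST+TS)(S+i\mu)^{-1}\\ (ST^*+T^*S)(S+i\mu)^{-1} & 0\end{pmatrix},$$
together with the analogous block-diagonal identification of $(\hat S+i\mu)^{-1}\Dom(\tilde T)$, so hypotheses (1)--(2) translate verbatim into the hypotheses of \cite[Theorem 7.10]{leschkaad2} for the pair $(\hat S,\tilde T)$. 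That theorem then delivers the self-adjoint regular operator
$$\hat S+\tilde T=\begin{pmatrix}S & T\\ T^* & S\end{pmatrix}$$
on $(\Dom(S)\cap \Dom(T^*))\oplus(\Dom(S)\cap \Dom(T))$.

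I would then descend from the block operator to $S+T$ using the diagonal embedding $\iota\colon x\mapsto (x,x)$. For $x\in \Dom(S)\cap \Dom(T)$, symmetry of $T$ gives $x\in \Dom(T^*)$ with $T^*x=Tx$, hence $\iota(x)\in \Dom(\hat S+\tilde T)$ and $(\hat S+\tilde T)\iota(x)=\iota((S+T)x)$. Since $\iota$ is isometric for the graph norm of $S+T$, closedness of $S+T$ on $\Dom(S)\cap \Dom(T)$ is inherited from closedness of $\hat S+\tilde T$; symmetry is a direct pairing computation. For regularity I would invoke the local-global principle (Theorem \ref{regproperties}(6)): in each GNS-localization $\mathpzc{E}_\omega$ the commutator bounds (1)--(2) localize to give the Hilbert space identity $(S_\omega+T_\omega)^*=\overline{S_\omega+T_\omega^*}$ by a classical Kato--Rellich style perturbation estimate, and this localized identification of the adjoint together with closedness upgrades to regularity of $S+T$ in the module.

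The adjoint identification $(S+T)^*=\overline{S+T^*}$ splits into two inclusions. The easy one, $\overline{S+T^*}\subseteq (S+T)^*$, is a pairing argument combined with closedness of $(S+T)^*$. For the reverse, given $y\in \Dom((S+T)^*)$ I would set $y_n:=in(S+in)^{-1}y$, which lies in $\Dom(S)$; the inclusion $y_n\in \Dom(T^*)$ and the control $(S+T^*)y_n\to(S+T)^*y$ come from commuting $T^*$ past $(S+in)^{-1}$ using the bounded adjointable extension of $(ST^*+T^*S)(S+i\mu)^{-1}$ from condition (2). I expect the principal obstacle to be exactly this asymmetry between $T$ and $T^*$: the Kaad--Lesch sum theorem requires both summands self-adjoint, so one cannot apply it to $(S,T)$ directly, and after circumventing this by doubling one must still pay for the block-to-scalar descent by a genuine commutator-estimate argument, either via the local-global principle or the resolvent approximation sketched above, to recover regularity and the precise form of $(S+T)^*$.
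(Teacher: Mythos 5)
The doubling reduction is a valid and genuinely different route to the closedness of $S+T$. Your block computation is correct: hypotheses (1)--(2) for $(S,T)$ do translate into the hypotheses of \cite[Theorem 7.10]{leschkaad2} for $(\hat S,\tilde T)$, and the diagonal descent (using that symmetry of $T$ gives $T^{*}x=Tx$ on $\Dom T$, so $\iota(\Dom S\cap\Dom T)\subseteq\Dom(\hat S+\tilde T)$ with $(\hat S+\tilde T)\iota=\iota(S+T)$) correctly yields closedness and symmetry of $S+T$ on $\Dom S\cap\Dom T$. The paper instead re-runs the proof of \cite[Lemma 7.6]{leschkaad2} directly for symmetric $T$; your reduction avoids re-proving that lemma at the cost of the block-to-scalar descent. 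You are right that this is as far as the doubling goes: $\begin{pmatrix}S & T\\ T^{*}& S\end{pmatrix}$ is not the double of $S+T$, its resolvent does not preserve the diagonal, and so self-adjointness and regularity of the block operator do not descend. From here the two proofs necessarily converge: one needs the resolvent-approximation argument in the spirit of \cite[Proposition 7.7]{leschkaad2} to identify $(S+T)^{*}$ with $\overline{S+T^{*}}$, and then the local-global principle. One detail to fix in your sketch of that step: to show $y_{n}=(1-\tfrac{i}{n}S)^{-1}y\in\Dom(T^{*})$ for $y\in\Dom((S+T)^{*})$, you cannot literally commute $T^{*}$ past the resolvent, since that presupposes $y\in\Dom(T^{*})$; the correct move is to show the functional $\xi\mapsto\langle T\xi,y_{n}\rangle$ is bounded on $\Dom(T)$, and the anticommutator identity this uses is the one for $(ST+TS)(S+i\mu)^{-1}$ from \emph{condition (1)}, not condition (2). (Dually, the paper works with $\xi\in\Dom((S+T^{*})^{*})$, pairs against $\Dom(T^{*})$, and uses condition (2).) Finally, for the local-global step one must first have the \emph{global} identity $(S+T)^{*}=\overline{S+T^{*}}$, then verify $(\overline{S+T^{*}})_{\omega}=\overline{S_{\omega}+T_{\omega}^{*}}$ and $(S+T)_{\omega}=S_{\omega}+T_{\omega}$ (via the common core and localized version of the conditions), which is what produces $((S+T)_{\omega})^{*}=((S+T)^{*})_{\omega}$; your sketch compresses this, but the structure is right.
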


\begin{proof} 
The proof will follow by a careful inspection of the proofs of \cite[Section 7]{leschkaad2}. The difference is that in the statements in \cite{leschkaad2}, for instance \cite[Proposition 7.7]{leschkaad2}, it is required that $T$ is self-adjoint; the proofs can be generalized to symmetric operators $T$. We will indicate where $T$ and $T^*$ are used and where the conditions (1) and (2) appear. 

Condition (1) \emph{and} the fact that $T$ is symmetric imply that $S+T$ is a closed operator on $\Dom(S)\cap\Dom(T)$ using the argument in \cite[Lemma 7.6]{leschkaad2}. A quick computation shows that $$\Dom S\cap\Dom T^{*}\subset \Dom (S+T)^{*},$$ which implies that $(S+T)^{*}$ is densely defined, so $S+T$ is semi-regular. Furthermore, the graph of 
\[S+T^{*}:\Dom S\cap \Dom T^{*}\to \mathpzc{E},\]
is contained in the graph of the closed operator $(S+T)^{*}$, so $S+T^{*}$ is closable. We denote its closure by $\overline{S+T^*}$. Likewise, $(S+T^*)^*$ is an extension of the operator $S+T$. 

Let us prove the identity $S+T=(S+T^*)^*$. Condition (2) implies that for any $\mu\in \R\setminus \{0\}$ on $\Dom(T^*)$:
\begin{equation}
\label{resolventcomputanticomm}
(i\mu S+1)^{-1}T^*+T^*(i\mu S-1)^{-1}=i\mu(i\mu S+1)^{-1}(ST^*+T^*S)(i\mu S-1)^{-1}.
\end{equation}
The right hand side of this equation has a bounded adjointable extension $R_\mu$ by Condition (2). An argument as in \cite[Lemma 7.4]{leschkaad2} shows that the $R_\mu$ converge strictly to $0$ as $\mu\to0$. Following the proof of \cite[Proposition 7.7]{leschkaad2}, it follows from Equation \eqref{resolventcomputanticomm} that for any $\eta\in \Dom(T^*)$ and $\xi\in \Dom((S+T^*)^*)$ 
\begin{align}
\label{resolvcomp}
\left\langle \left(-\frac{i}{n}S+1\right)^{-1}\xi, T^*\eta\right\rangle=&\left\langle \left(\frac{i}{n}S+1\right)^{-1}(S+T^*)^*\xi,\eta\right\rangle\\
\nonumber
&-\left\langle Sc\left(\frac{S}{n}\right)\left(-\frac{i}{n}S+1\right)^{-1}\xi, \eta\right\rangle+\left\langle R_{\frac{1}{n}}^*\xi,\eta\right\rangle,
\end{align}
where $c(s):=(1-is)(1+is)^{-1}$. It follows from \cite[Lemma 7.2]{leschkaad2} that $c(S/n)\to 1$ and $(\frac{i}{n}S+1)^{-1}\to 1$ strictly. Consider the sequence $\xi_n:=(-\frac{i}{n}S+1)^{-1}\xi$. From \eqref{resolvcomp}, we conclude that $\xi_n\in\Dom(S)\cap\Dom(T)$ for all $n$ and that $\xi_n\to \xi$ in the graph norm of $\Dom((S+T^*)^*)$. Therefore $\Dom(S)\cap\Dom(T)\subseteq  \Dom((S+T^*)^*)$ is dense in the graph norm and $S+T=(S+T^*)^*$.

The result will now follow from the localization techniques of \cite{leschkaad2,Pierrot}. In more detail, since $S+T$ is semi-regular, and in particular closed, the proof of the result is completed using the local-global principle upon proving that $(S_{\omega}+T_{\omega})^*=((S+T)^{*})_{\omega}$ in every localization $\mathpzc{E}_{\omega}$ of $\mathpzc{E}$ in a state $\omega$ on $B$  (see \cite[Theorem 1.18]{Pierrot},  \cite[Theorem 4.2]{leschkaad2}). We remark that the operators $S_\omega$ and $T_\omega$ also satisfy Condition (1) and (2), this is shown in \cite[proof of Lemma 7.9]{leschkaad2}. The equality $(S+T)_{\omega}=S_{\omega}+T_{\omega}$ follows from the fact that the operators are defined on a common core and that $S_{\omega}+T_{\omega}$ is closed by Condition (1). By a similar argument, the identity $(\overline{S+T^{*}})_{\omega}=\overline{S_{\omega}+T^{*}_{\omega}}$ follows. As the operators $S_\omega$ and $T_\omega$ also satisfy Condition (1) and (2), $(S_\omega+T_\omega)^*=\overline{S_\omega+T^*_\omega}$, and hence $(S_{\omega}+T_{\omega})^*=((S+T)^{*})_{\omega}$.
\end{proof}

\begin{define}
\label{Lip}
Let $D:\mathpzc{E}\dashrightarrow \mathpzc{E}$ be a semi-regular operator. The algebra of $D$-\emph{Lipschitz operators} $\Lip(D)\subseteq \End^*_B(\mathpzc{E})$ is defined as the algebra of bounded adjointable operators $T$ such that 
\begin{enumerate}
\item $T$ preserves $\mathrm{Dom}(D)$.
\item $[D,T]:\mathrm{Dom}(D)\to \mathpzc{E}$ extends to a bounded adjointable operator $\mathpzc{E}\to \mathpzc{E}$. 
\end{enumerate}
We always consider $\Lip(D)$ as a Banach algebra in the norm 
$$\|T\|_{\Lip(D)}:=\|T\|_{\End^*_B(\mathpzc{E})}+\|[D,T]\|_{\End^*_B(\mathpzc{E})}.$$

If $D$ is a symmetric operator, we define the algebra of $(D^*,D)$-Lipschitz operators $\Lip(D^*,D)$ as the left ideal in $\Lip(D)$ consisting of operators $T\in \Lip(D)$ such that 
$$T\mathrm{Dom}(D^*)\subseteq \Dom(D).$$  
\end{define}
\begin{remark}\label{LipDstar} If $a,a^{*}\in\Lip(D)$, then it follows from \cite[Proposition 2.1]{FMR} that $a,a^{*}\in\Lip(D^{*})$. In particular, there is a continuous inclusion $\Lip(D)\cap \Lip(D)^*\subseteq \Lip(D^{*})$.
\end{remark}

\subsection{Chains and cycles in unbounded $KK$-theory}

Throughout the paper, we will consider pairs $(\mathpzc{E},D)$ of a Hilbert $C^*$-module and a regular operator. The following definition provides us with the terminology for the various conditions that will be placed upon such pairs.

\begin{define}[cf. \cite{hilsumbordism}]
\label{cycleschaindeef}
An $(\mathcal{A},B)$-\emph{chain} is a pair $(\mathpzc{E}, D)$ consisting of an $(A,B)$-Hilbert $C^*$-module $\mathpzc{E}$ and $D$ a regular operator (odd if $\mathpzc{E}$ is non-trivially graded) such that the $*$-representation $\pi:A\to \End^*_B(\mathpzc{E})$ restricts to a continuous homomorphism:
\[\pi:\mathcal{A}\to \Lip(D). \]
The $(\mathcal{A},B)$-chain $(\mathpzc{E}, D)$ is said to be \emph{symmetric} if $D$ is symmetric. A symmetric $(\mathcal{A},B)$-chain $(\mathpzc{E}, D)$ is said to be \emph{half-closed} if 
$$\pi(\mathcal{A})\subseteq \Lip(D^*,D)$$ 
and, for any $a\in A$,
\begin{equation}
\label{locacomp}
\pi(a)(1+D^*D)^{-1}\in \K_B(\mathpzc{E}).
\end{equation}
The $(\mathcal{A},B)$-chain $(\mathpzc{E}, D)$ is said to be \emph{closed} if $D= D^*$ and, for any $a\in A$,
$$\pi(a)(i\pm D)^{-1}\in \K_B(\mathpzc{E}).$$
\end{define}

An \emph{isomorphism} of two $(\mathcal{A},B)$-chains $(\mathpzc{E}, D)$ and $(\mathpzc{E}', D')$ is a unitary isomorphism $u:\mathpzc{E}\to \mathpzc{E}'$ of $(A,B)$-Hilbert $C^*$-modules such that $u\Dom(D)=\Dom(D')$ and $D=uD'u^*$. The conditions closed and half-closed are invariant under isomorphism. 

We refer to a closed chain as a cycle. We will often, but not exclusively, denote cycles by $(\mathpzc{E},D)$ and symmetric chains by $(\mathpzc{F},Q)$. An operator $D$ satisfying the condition in Equation \eqref{locacomp} is said to have \emph{$A$-locally compact resolvent}. If $A$ is understood from context, we simply say that $D$ has locally compact resolvent.

\begin{define}
\label{cycleschaindeefnew}
If $\mathpzc{E}$ is trivially graded, the chain is odd, otherwise we call it even. The dimension modulo $2$ of $(\mathpzc{E}, D)$ is denoted by $\dim_{\Z/2}(\mathpzc{E}, D)$ and defined as follows: $\dim_{\Z/2}(\mathpzc{E}, D):=0\mod 2$ if $(\mathpzc{E}, D)$ is even and $\dim_{\Z/2}(\mathpzc{E}, D):=1\mod 2$ if $(\mathpzc{E}, D)$ is odd.
\end{define}

\begin{define}
The \emph{inverse} of an $(\mathcal{A},B)$-chain $(\mathpzc{E}, D)$ is given by
$$-(\mathpzc{E}, D)=
\begin{cases}
(-\mathpzc{E}, D), \quad\mbox{for even $(\mathpzc{E}, D)$},\\
(\mathpzc{E}, -D), \quad\mbox{for odd $(\mathpzc{E}, D)$.}
\end{cases}$$
Here $-\mathpzc{E}$ is the module $\mathpzc{E}$ with the opposite grading. 

The \emph{sum} of two $(\mathcal{A},B)$-chains $(\mathpzc{E}_1, D_1)$ and $(\mathpzc{E}_2, D_2)$ is defined by
\begin{equation}
\label{dirsum}
(\mathpzc{E}_1, D_1)+(\mathpzc{E}_2, D_2):=(\mathpzc{E}_1\oplus \mathpzc{E}_2, D_1\oplus D_2).
\end{equation}
\end{define}

It is clear from the properties of regular operators that the inverse of a chain and the sum of two chains are again chains. We introduce the notation $C_*^{\s}(\mathcal{A},B)$ for the set of isomorphism classes of symmetric chains, this is a set $\Z/2$-graded by the dimension of the cycle. Similarly, we let $C^{\hc}_*(\mathcal{A},B)$ and $Z_*(\mathcal{A},B)$ denote the set of isomorphism classes of half-closed respectively closed chains, again these sets are $\Z/2$-graded by dimension. 

\begin{prop}
\label{semiisoclass}
The sets $C_*^{\s}(\mathcal{A},B)$, $C^{\hc}_*(\mathcal{A},B)$ and $Z_*(\mathcal{A},B)$ form $\Z/2\Z$-graded abelian semigroups under the direct sum operation in Equation \eqref{dirsum} and the inclusions $C_*^{\s}(\mathcal{A},B)\supseteq C^{\hc}_*(\mathcal{A},B)\supseteq Z_*(\mathcal{A},B)$ are inclusions of semigroups. The $\Z/2\Z$-graded abelian semigroups $C_*^{\s}(\mathcal{A},B)$, $C^{\hc}_*(\mathcal{A},B)$ and $Z_*(\mathcal{A},B)$ depend contravariantly on $\mathcal{A}$ and covariantly on $B$ with respect to $*$-homomorphisms under the constructions \eqref{concon} and respectively \eqref{covcon} below.

\end{prop}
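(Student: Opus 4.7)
The plan is to verify the semigroup axioms for each of the three sets in turn, then check that the inclusions respect the direct sum, and finally check functoriality. The commutativity and associativity of $\oplus$ are automatic from the underlying module structure, and the $\Z/2\Z$-grading by dimension is compatible with $\oplus$ since the sum of two even (resp.\ two odd) chains is even (resp.\ odd). So the nontrivial content is closure under direct sums for each type of chain.

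For symmetric chains, given $(\mathpzc{E}_1,D_1),(\mathpzc{E}_2,D_2)\in C_*^{\s}(\mathcal{A},B)$, the operator $D_1\oplus D_2$ with domain $\Dom(D_1)\oplus\Dom(D_2)$ is closed and symmetric, with adjoint $D_1^*\oplus D_2^*$, hence semi-regular. Regularity follows from Theorem \ref{regproperties}(2) applied to
\[
1+(D_1\oplus D_2)^*(D_1\oplus D_2)=(1+D_1^*D_1)\oplus(1+D_2^*D_2),
\]
each summand being surjective by regularity of $D_i$. The identity $\Lip(D_1\oplus D_2)=\Lip(D_1)\cap\Lip(D_2)$ (with diagonal embedding into $\End^*_B(\mathpzc{E}_1\oplus\mathpzc{E}_2)$) shows that the diagonal representation of $\mathcal{A}$ lands in $\Lip(D_1\oplus D_2)$ and is continuous. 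For half-closed chains one additionally observes that $\Lip((D_1\oplus D_2)^*,D_1\oplus D_2)=\Lip(D_1^*,D_1)\cap\Lip(D_2^*,D_2)$, and that local compactness of the resolvent passes to direct sums since $\pi(a)(1+(D_1\oplus D_2)^*(D_1\oplus D_2))^{-1}$ is the direct sum of the two locally compact resolvents and $\K_B$ forms an ideal in $\End^*_B$ closed under direct sums. For cycles the argument is identical, with self-adjointness of $D_1\oplus D_2$ following from self-adjointness of the summands.

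The chain of inclusions $C^{\s}_*\supseteq C^{\hc}_*\supseteq Z_*$ is immediate from the definitions, and since all three direct sum operations are computed by the same formula on the underlying modules and operators, the inclusions respect $\oplus$.

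For functoriality, the constructions in \eqref{concon} and \eqref{covcon} act as follows: a continuous $*$-homomorphism $\phi:\mathcal{A}'\to\mathcal{A}$ of dense subalgebras pulls back a chain $(\mathpzc{E},D)$ to $(\mathpzc{E}_\phi,D)$, where the $A'$-action is precomposed with the extension $A'\to A$ of $\phi$. The module and operator are unchanged, so regularity, self-adjointness, Lipschitz and local compactness conditions are preserved; only the continuity of $\mathcal{A}'\to\Lip(D)$ must be noted, which follows from continuity of $\phi$ composed with the given map $\mathcal{A}\to\Lip(D)$. A $*$-homomorphism $\psi:B\to B'$ induces the interior tensor product $\mathpzc{E}\mapsto\mathpzc{E}\otimes_\psi B'$, and the operator $D$ pushes forward to $D\otimes 1$ on this new module. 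Regularity of $D\otimes 1$, preservation of (self-)adjointness, and preservation of $A$-local compactness of the resolvent under interior tensoring are standard consequences of the theory of regular operators on Hilbert modules; continuity of the induced map $\mathcal{A}\to\Lip(D\otimes 1)$ follows since $[D\otimes 1,a\otimes 1]=[D,a]\otimes 1$ and $\|T\otimes 1\|\le\|T\|$. The expected main obstacle is ensuring that the local-global principle in Theorem \ref{regproperties}(6) is invoked correctly to verify regularity under both $\oplus$ and $\otimes_\psi$; in both cases this reduces to the corresponding Hilbert space statement, which is elementary.
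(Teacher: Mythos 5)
Your proposal is correct and follows the same approach as the paper's proof, which is similarly structured around (i) closure of the three classes under direct sum, (ii) commutativity via the flip isomorphism, and (iii) contravariant/covariant functoriality via the pullback construction and the interior tensor product $D\mapsto D\otimes_{\alpha'}1_{B_2}$. You simply flesh out the routine verifications (semi-regularity and regularity of $D_1\oplus D_2$ via $1+(D_1\oplus D_2)^*(D_1\oplus D_2)=(1+D_1^*D_1)\oplus(1+D_2^*D_2)$, intersection formulas for the Lipschitz algebras, and preservation of the various conditions under $\otimes_\psi$) that the paper delegates to ``standard techniques'' and the reference to Lance's book.
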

\begin{proof}
The proof that the semigroups are abelian follows from applying standard techniques with the flip map. For functoriality, let $\alpha:\mathcal{A}_2\to \mathcal{A}_1$ be a $*$-homomorphism. For an $(\mathcal{A}_1,B)$-chain $(\mathpzc{E},D)$ we define the $(\mathcal{A}_2,B)$-chain 
\begin{equation}
\label{concon}
\alpha^*(\mathpzc{E},D):=(\alpha^*\mathpzc{E},D),
\end{equation}
where $\alpha^*\mathpzc{E}:=\mathpzc{E}$ equipped with the left $A_2$-action defined from $\alpha$. If $\alpha':B_1\to B_2$ is a $*$-homomorphism and $(\mathpzc{E},D)$ is an $(\mathcal{A},B_1)$-chain, we define the $(\mathcal{A},B_2)$-chain
\begin{equation}
\label{covcon}
\alpha'_*(\mathpzc{E},D):=(\mathpzc{E}\otimes_{\alpha'}B_2,D\otimes_{\alpha'}1_{B_2}).
\end{equation}
Here $D\otimes_{\alpha'}1_{B_2}$ is the regular operator on $\mathpzc{E}\otimes_{\alpha'}B_2$ given as the internal tensor product with the identity operator on $B_2$, see \cite[Chapter 9]{lancesbook}.
\end{proof}

We now turn to the bounded transform. This transform relates the cycles and even the half-closed chains considered above with $KK$-theory. For a regular operator $D:\mathpzc{E}\dashrightarrow\mathpzc{E}$ we define
$$b(D):=D(1+D^*D)^{-1/2}.$$
This operator is a bounded adjointable operator on $\mathpzc{E}$ by \cite[Chapter 9]{lancesbook}. By \cite[Theorem 3.2]{hilsumbordism}, for any half-closed $(\mathcal{A},B)$-chain $(\mathpzc{E},D)$ the pair $(\mathpzc{E},b(D))$ is an $(A,B)$-Kasparov cycle. Moreover, by \cite[Lemma 3.1]{hilsumbordism}, the class $[\mathpzc{E},b(\hat{D})]\in KK_*(A,B)$ is well defined and independent of closed regular extension $D\subseteq \hat{D}\subseteq D^*$. Here we tacitly assume $\hat{D}$ to be odd if $\dim_{\Z/2}(\mathpzc{E},D)=0$.

\begin{theorem}[cf. \cite{baajjulg,hilsumbordism}]
\label{surjoncycles}
The map
$$b:C^{\hc}_*(\mathcal{A},B)\to KK_*(A,B),\quad (\mathpzc{E},D)\mapsto [\mathpzc{E},b(D)],$$ 
is a well defined and additive. Moreover, given two $C^*$-algebras $A$ and $B$, there exists a complete locally convex $*$-subalgebra $\mathcal{A}\subseteq A$ such that 
$$b|_{Z_*}:Z_*(\mathcal{A},B)\to KK_*(A,B),$$ 
is surjective. If $KK_*(A,B)$ is countably generated, $\mathcal{A}$ can be taken to be a Fr\'echet algebra and if $KK_*(A,B)$ is finitely generated, $\mathcal{A}$ can be taken to be a Banach algebra.
\end{theorem}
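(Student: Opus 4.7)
The well-definedness of $b$ on $C^{\hc}_*(\mathcal{A},B)$ is immediate from Hilsum's results already cited in the excerpt preceding the theorem: \cite[Theorem 3.2]{hilsumbordism} guarantees that $(\mathpzc{E},b(\hat D))$ is a Kasparov cycle for any closed regular extension $\hat D$ of $D$, while \cite[Lemma 3.1]{hilsumbordism} ensures the resulting $KK$-class is independent of the extension chosen. Invariance under isomorphism of chains is automatic since $b$ is defined by continuous functional calculus. Additivity on the semigroup follows from the fact that $D_1\oplus D_2$ is closed and regular with $(D_1\oplus D_2)^*(D_1\oplus D_2)=D_1^*D_1\oplus D_2^*D_2$, whence functional calculus gives $b(D_1\oplus D_2)=b(D_1)\oplus b(D_2)$, matching the direct sum of Kasparov cycles.

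For the existence statement and surjectivity, the starting point is the Baaj--Julg theorem \cite{baajjulg}: every class $[x]\in KK_*(A,B)$ admits an unbounded representative. The plan is to fix a closed self-adjoint regular representative $(\mathpzc{E}_x,D_x)$ and to associate to it the smooth-vector algebra
\[
\mathcal{A}_{D_x}^\infty:=\{a\in A : t\mapsto e^{itD_x}ae^{-itD_x}\text{ is smooth }\R\to A\},
\]
which is a Fr\'echet $*$-subalgebra of $A$ under the seminorms $a\mapsto\|\delta_x^k(a)\|$ with $\delta_x=[D_x,\,\cdot\,]$. Density is obtained via mollification $a\mapsto\int_\R \phi_\epsilon(t)\,e^{itD_x}ae^{-itD_x}\,\mathrm{d}t$ with $\phi_\epsilon$ an approximate identity on $\R$, and the inclusion $\mathcal{A}_{D_x}^\infty\hookrightarrow\Lip(D_x)$ is continuous. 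Thus each single class $[x]$ is realized by a cycle in $Z_*(\mathcal{A}_{D_x}^\infty,B)$.

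To obtain a single $\mathcal{A}$ simultaneously realizing all generators when $KK_*(A,B)$ is countably generated, choose Baaj--Julg representatives $(\mathpzc{E}_n,D_n)$ for a generating family $\{[x_n]\}$ and form the weighted direct sum $\tilde D:=\bigoplus_n \lambda_n D_n$ on $\mathpzc{E}:=\bigoplus_n \mathpzc{E}_n$, with weights $\lambda_n>0$ chosen so that $\tilde D$ is self-adjoint and regular by the local--global principle (Theorem \ref{regproperties}(6)). Since $e^{it\tilde D}=\bigoplus_n e^{it\lambda_n D_n}$ is a uniformly bounded one-parameter group on $\mathpzc{E}$, the algebra $\mathcal{A}:=\mathcal{A}_{\tilde D}^\infty$ is again a dense Fr\'echet $*$-subalgebra of $A$; because smoothness for $\tilde D$ implies smoothness for each $D_n$, one has $\mathcal{A}\subseteq\mathcal{A}_{D_n}^\infty\subseteq\Lip(D_n)$ for all $n$. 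Hence each $[x_n]$ is in the image of $b|_{Z_*(\mathcal{A},B)}$, and surjectivity follows by additivity. In the finitely-generated case only finitely many summands are needed, and one may replace the Fr\'echet algebra by the Banach algebra $\Lip(\tilde D)\cap\Lip(\tilde D)^*$ normed by $a\mapsto\|a\|+\|[\tilde D,a]\|+\|[\tilde D,a^*]\|$.

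The main technical obstacle is verifying that the (possibly infinite) weighted direct sum $\tilde D$ is self-adjoint and regular with enough functional calculus to sustain the mollification argument. The plan is to apply the local--global principle of Theorem \ref{regproperties}(6), reducing the question to the Hilbert-space setting where direct sums of self-adjoint operators are classical; the weights $\lambda_n$ play a role in controlling the common core and keeping the resolvents well-behaved. Density of the common smooth-vector algebra is then the standard mollification argument carried out with $\tilde D$, which converges in $C^*$-norm as $\epsilon\to 0$ and lands in every $\mathcal{A}_{D_n}^\infty$ simultaneously.
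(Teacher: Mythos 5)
The first paragraph (well-definedness and additivity) is fine and matches the citations the paper itself supplies.

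The surjectivity argument, however, has a genuine gap at the density step. You define $\mathcal{A}_{D}^{\infty}$ via smooth vectors for the conjugation $t\mapsto e^{itD}\pi(a)e^{-itD}$ and claim density ``via mollification'' by $a_{\phi}:=\int_{\R}\phi(t)\,e^{itD}\pi(a)e^{-itD}\,\mathrm{d}t$. But this mollified element lives in $\End^{*}_{B}(\mathpzc{E})$, not in $\pi(A)$: the one-parameter group $e^{itD}(\,\cdot\,)e^{-itD}$ has no reason to normalize $\pi(A)$, so $a_{\phi}$ need not correspond to any element of $A$. Density of a Lipschitz-type subalgebra inside $A$ is therefore \emph{not} a formal consequence of smoothing; indeed it is precisely axiom (4) in the paper's definition of an unbounded cycle, and the Appendix exhibits a Dirac operator $D$ and two representations $\pi,\pi_{h}$ with $\Lip(\pi,D)\cap\Lip(\pi_{h},D)$ not dense in $C(S^{1}\times S^{1})$. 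Your weighted direct sum $\tilde D$ yields $\mathcal{A}_{\tilde D}^{\infty}\subseteq\bigcap_{n}\Lip(D_{n})$, which is exactly the kind of intersection that the Appendix shows can degenerate; nothing about the block-diagonal structure or the weights $\lambda_{n}$ forces this intersection to be dense.

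The missing ingredient is that Baaj--Julg's construction has more flexibility than you use: given a bounded Kasparov cycle $(\mathpzc{E},F)$ \emph{and} a prescribed countable subset $S\subseteq A$, one can choose the quasi-central approximate unit and the divergent weights so that the resulting unbounded representative $D$ satisfies $S\subseteq\Lip(D)$. Fixing one countable dense $*$-subalgebra $\mathcal{A}_{0}\subseteq A$ and running this enhanced construction once per generator $[x_{m}]$ produces operators $D_{m}$ with $\mathcal{A}_{0}\subseteq\Lip(D_{m})$ for every $m$; density of the common subalgebra is then automatic because it contains $\mathcal{A}_{0}$. Completing $\mathcal{A}_{0}$ in the seminorms $p_{m}(a):=\|a\|_{A}+\|[D_{m},a]\|$ gives the required Fr\'echet algebra (Banach when there are finitely many $m$, complete locally convex in general). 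In short: it is not enough to invoke Baaj--Julg as an existence black box for each $[x_{m}]$ separately; the freedom to \emph{prescribe} the Lipschitz elements in advance is what makes a single dense $\mathcal{A}$ possible, and your smoothing construction does not supply a substitute for it.
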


\begin{ex}
An important example of a chain is obtained from the Dirac operator on a smooth manifold $W$ with coefficients in a smooth bundle of finitely generated projective modules over a $C^*$-algebra $B$. If $W$ is a closed manifold, \cite[Theorem 2.3]{hankpapsch} shows that we obtain a  cycle for the Lipschitz functions on $W$. If $W$ has a boundary, the Dirac operator with its minimal boundary condition provides a half-closed chain over Lipschitz functions vanishing on the boundary and a symmetric chain for the Lipschitz functions on the whole manifold with boundary. See more below in Subsection \ref{mfdwbdryex}.
\end{ex}

\subsection{Exterior products of symmetric chains}
\label{extprodofsymsect}

One of the original motivations for Baaj-Julg to introduce unbounded $KK$-cycles in \cite{baajjulg} was to explicitly describe the exterior product of cycles. The exterior product can often be extended to symmetric chains, a technicality we will need later. This product was considered by Hilsum in \cite[Section 3.1]{hilsumbordism}. 

Given a $B_1$-Hilbert $C^*$-module $\mathpzc{E}_1$ and a $B_2$-Hilbert $C^*$-module $\mathpzc{E}_2$, we denote their exterior tensor product by $\mathpzc{E}_1\boxtimes \mathpzc{E}_2$ which is a $B_1\minten B_2$-Hilbert $C^*$-module, where $B_1\minten B_2$ is the minimal tensor product of $C^{*}$-algebras. The Hilbert $C^*$-modules can be graded, and we define their graded exterior tensor product by
$$\mathpzc{E}_1\hat{\boxtimes} \mathpzc{E}_2:=
\begin{cases}
\mathpzc{E}_1\boxtimes \mathpzc{E}_2,\quad \mbox{if at least one of the modules is nontrivially graded},\\
\mathpzc{E}_1\boxtimes \mathpzc{E}_2\oplus \mathpzc{E}_1\boxtimes \mathpzc{E}_2,\quad \mbox{if both modules are trivially graded}.
\end{cases}$$
If both $\mathpzc{E}_1$ and $\mathpzc{E}_2$ are graded, we grade $\mathpzc{E}_1\hat{\boxtimes} \mathpzc{E}_2$ using the product grading. If exactly one of $\mathpzc{E}_1$ or $\mathpzc{E}_2$ is graded, we view $\mathpzc{E}_1\hat{\boxtimes} \mathpzc{E}_2$ as an ungraded Hilbert $C^*$-module. If both $\mathpzc{E}_1$ and $\mathpzc{E}_2$ are ungraded, $\mathpzc{E}_1\hat{\boxtimes} \mathpzc{E}_2$ is graded as a direct sum. 

Let $D_1:\mathpzc{E}_1\dashrightarrow \mathpzc{E}_1$ and $D_2:\mathpzc{E}_2\dashrightarrow \mathpzc{E}_2$ be densely defined regular operators. For any $T_1\in \End^*_{B_1}(\mathpzc{E}_1)$ and $T_2\in \End^*_{B_2}(\mathpzc{E}_2)$, the operators $D_1\boxtimes T_2:\mathpzc{E}_1\boxtimes\mathpzc{E}_2\dashrightarrow \mathpzc{E}_1\boxtimes\mathpzc{E}_2$ and $T_1\boxtimes D_2:\mathpzc{E}_1\boxtimes\mathpzc{E}_2\dashrightarrow \mathpzc{E}_1\boxtimes\mathpzc{E}_2$ are densely defined regular operators by \cite[Chapter 10]{lancesbook} such that 
\begin{equation}
\label{adjoints}
(D_1\boxtimes T_2)^*=D_1^*\boxtimes T_2^*\quad\mbox{and}\quad (T_1\boxtimes D_2)^*=T_1^*\boxtimes D_2^*.
\end{equation}
We define the graded exterior product of $D_1$ and $D_2$ by
$$D_1\hat{\boxtimes}D_2:=
\begin{cases}
D_1\boxtimes\id_{\mathpzc{E}_2}+\gamma_{\mathpzc{E}_1}\boxtimes D_2,\quad& \mbox{if both $\mathpzc{E}_1$ and $\mathpzc{E}_2$ are graded};\\
i\gamma_{\mathpzc{E}_1}D_1\boxtimes\id_{\mathpzc{E}_2}+\gamma_{\mathpzc{E}_1}\boxtimes D_2, &\mbox{if $\mathpzc{E}_1$ is graded but $\mathpzc{E}_2$ is not};\\
D_1\boxtimes\gamma_{\mathpzc{E}_2}+\id_{\mathpzc{E}_1}\boxtimes i\gamma_{\mathpzc{E}_2}D_2, &\mbox{if $\mathpzc{E}_2$ is graded but $\mathpzc{E}_1$ is not},
\end{cases}$$
and finally, if both $\mathpzc{E}_1$ and $\mathpzc{E}_2$ are ungraded, 
$$D_1\hat{\boxtimes}D_2:=\begin{pmatrix}0& iD_1\boxtimes\id_{\mathpzc{E}_2}+\id_{\mathpzc{E}_1}\boxtimes D_2\\
-iD_1\boxtimes\id_{\mathpzc{E}_2}+\id_{\mathpzc{E}_1}\boxtimes D_2&0 \end{pmatrix}.$$
The form of these product operators are discussed in for instance \cite[Section 1]{BMS}. We follow the conventions of \cite{hilsumcmodbun,hilsumfol,hilsumbordism}.

\begin{define}
We say that two symmetric chains $(\mathpzc{E}_1,D_1)$ and $(\mathpzc{E}_2,D_2)$ are \emph{compatible for the exterior product} if the operator $D_1\hat{\boxtimes}D_2$ is regular with $(D_1^*\hat{\boxtimes}D_2^*)^*=D_1\hat{\boxtimes}D_2$.
\end{define}

\begin{remark}
It is unclear if there is an example of a pair of symmetric chains which is not compatible for products. It follows from Lemma \ref{sumregul} that a cycle is always compatible for the exterior product with a symmetric chain.
\end{remark}

\begin{prop}
\label{extprodprop}
The exterior product of chains
$$(\mathpzc{E}_1,D_1)\hat{\boxtimes}(\mathpzc{E}_2,D_2):=(\mathpzc{E}_1\hat{\boxtimes}\mathpzc{E}_2, D_1\hat{\boxtimes}D_2),$$
extends to a partially defined biadditive mapping on pairs of compatible symmetric chains
$$C^{\s}_*(\mathcal{A}_1,B_1)\times C^{\s}_*(\mathcal{A}_2,B_2)\dashrightarrow C^{\s}_*(\mathcal{A}_1\otimes^{\alg} \mathcal{A}_2, B_1\minten B_2).$$
\end{prop}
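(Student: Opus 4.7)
The plan is to verify, for a compatible pair of symmetric chains $(\mathpzc{E}_1,D_1)$ and $(\mathpzc{E}_2,D_2)$, that $(\mathpzc{E}_1\hat{\boxtimes}\mathpzc{E}_2, D_1\hat{\boxtimes}D_2)$ satisfies the three defining clauses of a symmetric $(\mathcal{A}_1\otimes^{\alg}\mathcal{A}_2, B_1\minten B_2)$-chain, and then to observe that the construction is biadditive in the obvious sense on compatible pairs.

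The module-theoretic part is routine: standard constructions with exterior tensor products show that $\mathpzc{E}_1\hat{\boxtimes}\mathpzc{E}_2$ is a graded $(A_1\minten A_2, B_1\minten B_2)$-Hilbert bimodule, with left action coming from the spatial tensor product of the two representations, and restricting yields a $*$-homomorphism $\pi\colon \mathcal{A}_1\otimes^{\alg}\mathcal{A}_2\to \End^*_{B_1\minten B_2}(\mathpzc{E}_1\hat{\boxtimes}\mathpzc{E}_2)$. Regularity of $D_1\hat{\boxtimes}D_2$ is precisely the compatibility hypothesis, and symmetry follows from a short adjoint computation: since $D_i\subseteq D_i^*$, inspection of the formulas defining $\hat{\boxtimes}$ in each grading case together with Equation \eqref{adjoints} gives the inclusion $D_1\hat{\boxtimes}D_2\subseteq D_1^*\hat{\boxtimes}D_2^*$; taking adjoints and using compatibility yields
\[
D_1\hat{\boxtimes}D_2=(D_1^*\hat{\boxtimes}D_2^*)^*\subseteq (D_1\hat{\boxtimes}D_2)^*.
\]

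The main step is showing that $\pi$ lands in $\Lip(D_1\hat{\boxtimes}D_2)$ continuously. I would work on the algebraic core $\Dom(D_1)\otimes^{\alg}\Dom(D_2)\subseteq \Dom(D_1\hat{\boxtimes}D_2)$ and compute, on an elementary tensor $a_1\otimes a_2$ with $a_i\in\Lip(D_i)$ (necessarily even), the commutator
\[
[D_1\hat{\boxtimes}D_2, a_1\otimes a_2]=[D_1,a_1]\otimes a_2+ (\gamma_{\mathpzc{E}_1}a_1)\otimes [D_2,a_2],
\]
in the fully graded case, with analogous formulas in the mixed and ungraded cases. Since each $[D_i,a_i]$ is bounded adjointable on $\mathpzc{E}_i$ by assumption, the right-hand side extends to a bounded adjointable operator on $\mathpzc{E}_1\hat{\boxtimes}\mathpzc{E}_2$, and one obtains an estimate of the form
\[
\|a_1\otimes a_2\|_{\Lip(D_1\hat{\boxtimes}D_2)}\leq 2\,\|a_1\|_{\Lip(D_1)}\,\|a_2\|_{\Lip(D_2)}.
\]
Combined with the continuity of the two homomorphisms $\mathcal{A}_i\to\Lip(D_i)$, this delivers a continuous $*$-homomorphism $\mathcal{A}_1\otimes^{\alg}\mathcal{A}_2\to \Lip(D_1\hat{\boxtimes}D_2)$; a density argument in the graph norm then extends the domain invariance from the algebraic core to all of $\Dom(D_1\hat{\boxtimes}D_2)$. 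Biadditivity of the assignment is immediate from the distributivity of $\hat{\boxtimes}$ over direct sums, once one checks that both summands in a direct sum remain compatible with the other factor (which is automatic, since the resulting product operator is the direct sum of the respective product operators).

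The main obstacle is the bookkeeping in the mixed and ungraded cases, where the factors $i\gamma_{\mathpzc{E}_i}$ and the off-diagonal matrix form of $D_1\hat{\boxtimes}D_2$ force a case-by-case verification of both symmetry and the commutator identity. In each case, however, the final estimate splits cleanly into two pieces, one controlled by $\|[D_1,a_1]\|$ and the other by $\|[D_2,a_2]\|$, so the continuity bound retains the same shape throughout.
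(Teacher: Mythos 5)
The paper states this proposition without proof, so there is no argument of the authors' against which to compare; it is presumably considered a routine verification of the conditions in Definition~\ref{cycleschaindeef}. Your argument is correct and fills the gap sensibly: regularity of $D_1\hat{\boxtimes}D_2$ is bundled into the compatibility hypothesis by definition, the inclusion $D_1\hat{\boxtimes}D_2\subseteq D_1^*\hat{\boxtimes}D_2^*$ combined with $(D_1^*\hat{\boxtimes}D_2^*)^*=D_1\hat{\boxtimes}D_2$ indeed yields symmetry, and the elementary-tensor commutator identity together with the $\Lip$-norm estimate gives the required continuity of $\pi$ on the algebraic tensor product (the factor $2$ in your bound can even be dropped). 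Two small remarks: (i) in order to extend domain invariance from $\Dom(D_1)\otimes^{\alg}\Dom(D_2)$ to $\Dom(D_1\hat{\boxtimes}D_2)$ by the closedness argument you sketch, one uses that this algebraic tensor product is a core for $D_1\hat{\boxtimes}D_2$, which is implicit in the construction of $\hat{\boxtimes}$ as a graph closure but deserves a sentence; (ii) your parenthetical that both summands of a direct sum remain compatible with the fixed factor follows because the conditions in Theorem~\ref{regproperties} (e.g.\ surjectivity of $1+D^*D$) decompose over a direct sum, so compatibility of $(D_1\oplus D_1')\hat{\boxtimes}D_2$ is equivalent to compatibility of each $D_1^{(\prime)}\hat{\boxtimes}D_2$, which is exactly what biadditivity of the partially defined map needs.
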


\begin{remark}
\label{bjhilremark}
In  \cite{baajjulg}, Baaj and Julg showed that the exterior product of cycles is again a cycle and that under bounded transform, the exterior Kasparov product in $KK$ defines the same class as the bounded transform of the exterior product in the sense of above. It was shown in \cite[Lemma 3.3]{hilsumbordism} that the same holds true also for half-closed chains; half-closed chains are closed under exterior products and on the level of classes this exterior product is compatible with Kasparov's exterior product.
\end{remark}

\begin{ex}{\bf Exterior products with the real line} \label{psiinf} \\
Consider the $(C^\infty_c(\R),\C)$-cycle given by $(L^2(\R),\partial_x)$, where the differential operator $\partial_x:=i\frac{\mathrm{d}}{\mathrm{d} x}$ is equipped with the domain given by the graph closure of $C^\infty_c(\R)$. Since $\R$ is a complete manifold, $\partial_x$ is a self-adjoint operator with locally compact resolvent. We define the additive mapping 
$$\Psi_\infty:C^\s_*(\mathcal{A},B)\to C^\s_{*+1}(C^\infty_c(\R,\mathcal{A}),B), \quad \Psi_\infty(\mathpzc{E},D):=(\mathpzc{E},D)\hat{\boxtimes}(L^2(\R),\partial_x).$$
In Proposition \ref{extprodprop}, the exterior product $(\mathpzc{E},D)\hat{\boxtimes}(L^2(\R),\partial_x)$ is a chain for the algebraic tensor product $C^\infty_c(\R)\otimes^{\alg} \mathcal{A}$ but it is easily verified to extend to $C^\infty_c(\R,\mathcal{A})$. By Remark \ref{bjhilremark}, this mapping restricts to additive mappings
$$\Psi_\infty:C^{\hc}_*(\mathcal{A},B)\to C^{\hc}_{*+1}(C^\infty_c(\R,\mathcal{A}),B),\quad \Psi_\infty:Z_*(\mathcal{A},B)\to Z_{*+1}(C^\infty_c(\R,\mathcal{A}),B).$$
We often use the notation 
\begin{align}
\nonumber
\Psi_\infty(D)&:=D\hat{\boxtimes}\partial_x\\
&=\begin{cases} i\left(\gamma_\mathpzc{E}\boxtimes\frac{\partial}{\partial t} + \gamma_\mathpzc{E}D\boxtimes\mathrm{id}_{L^2(\R)}\right)&*=0\\
\label{writeingoutpsiinfty}
\begin{pmatrix} 0& \mathrm{id}_\mathpzc{E}\boxtimes\frac{\partial}{\partial t}+ D\boxtimes\mathrm{id}_{L^2(\R)}, \\
- \mathrm{id}_\mathpzc{E}\boxtimes\frac{\partial}{\partial t} + D\boxtimes\mathrm{id}_{L^2(\R)}&0 \end{pmatrix}, &*=1
\end{cases}
\end{align}
\end{ex}

\begin{ex}{\bf Exterior products with the interval} \label{psiint} \\
Consider the $(C^\infty_c(0,1),\C)$-chain given by $(L^2(0,1),\dmin_{x})$, where $\dmin_x$ is the differential expression $i\frac{\mathrm{d}}{\mathrm{d} x}$ equipped with the domain $H^1_0[0,1]$ which coincides with the graph closure of $C^\infty_c(0,1)$. It is easily verified that $\dmin_x$ is symmetric, with $(\dmin_x)^*=\dmax_{x}$ -- the maximal extension of the differential expression $i\frac{\mathrm{d}}{\mathrm{d} x}$ whose domain is $H^1[0,1]$. A short computation with Fourier series shows that $(L^2(0,1),\dmin_{x})$ is a half-closed $(C^\infty_c(0,1),\C)$-chain. In fact, $(L^2(0,1),\dmin_x)$ is compatible for exterior products with any symmetric chain, see below in Theorem \ref{extprodint}. As such, we define an additive mapping 
$$\Psi:C^\s_*(\mathcal{A},B)\to C^\s_{*+1}(C^\infty_c((0,1),\mathcal{A}),B), \quad \Psi(\mathpzc{E},D):=(\mathpzc{E},D)\hat{\boxtimes}(L^2(0,1),\dmin_x).$$
Again, the chains are easily verified to extend to $C^\infty_c((0,1),\mathcal{A})$ and Remark \ref{bjhilremark} shows that the mapping $\Psi$ restricts to an additive mapping on half-closed chains
$$\Psi:C^\hc_*(\mathcal{A},B)\to C^\hc_{*+1}(C^\infty_c((0,1),\mathcal{A}),B).$$
However, the map $\Psi$ does not map cycles to cycles. Similar to the case of $\Psi_\infty$, we let $\Psi(D):=D\hat{\boxtimes}\dmin_x$; it can be expressed as in \eqref{writeingoutpsiinfty}. We will now give the proof of that $(L^2(0,1),\dmin_x)$ is compatible for exterior products with any symmetric chain.
\end{ex}

\begin{theorem}
\label{extprodint}
If $D$ is a symmetric regular operator on a Hilbert $C^*$-module $\mathpzc{E}$, $D\hat{\boxtimes}\dmin_x$ is a symmetric and regular operator on $\mathpzc{E}\boxtimes L^2[0,1]$ whose adjoint is the closure of $D^*\hat{\boxtimes}\dmax_x$.
\end{theorem}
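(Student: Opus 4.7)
The plan is to unfold the definition of $\hat{\boxtimes}$ in each grading case, reducing the claim to the regularity and adjoint identity for an operator of the form $P := D \boxtimes \mathrm{id}_{L^2[0,1]} + \mathrm{id}_{\mathpzc{E}} \boxtimes \dmin_x$ (up to grading signs, or a matrix form when both tensor factors are ungraded) on $\mathpzc{E} \boxtimes L^2[0,1]$. Symmetry and semi-regularity will be established first by algebraic manipulation; regularity together with the adjoint identity will then be obtained by invoking the local-global principle (Theorem~\ref{regproperties}(6)) to reduce to a Hilbert space statement, which in turn is handled by mollification in the $x$-variable. Theorem~\ref{sumregul} does not apply directly here, because neither summand is self-adjoint, so this more refined route seems necessary.

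For the algebraic step, $D \boxtimes \mathrm{id}$ and $\mathrm{id} \boxtimes \dmin_x$ are regular operators on $\mathpzc{E} \boxtimes L^2[0,1]$ by \cite[Chapter~10]{lancesbook}, with adjoints $D^{*} \boxtimes \mathrm{id}$ and $\mathrm{id} \boxtimes \dmax_x$ respectively. On the intersection of the two domains, $P$ is closed, and a one-dimensional integration by parts in $x$, using that $H^{1}_0[0,1]$-functions vanish at the endpoints, shows $P$ to be symmetric and gives the formal inclusion $D^{*} \hat{\boxtimes} \dmax_x \subseteq (D \hat{\boxtimes} \dmin_x)^{*}$; passing to closures yields $\overline{D^{*} \hat{\boxtimes} \dmax_x} \subseteq (D \hat{\boxtimes} \dmin_x)^{*}$ and in particular semi-regularity. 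For the localization step, I would apply Theorem~\ref{regproperties}(6): both regularity and the remaining adjoint inclusion $(D \hat{\boxtimes} \dmin_x)^{*} \subseteq \overline{D^{*} \hat{\boxtimes} \dmax_x}$ are equivalent to the identity $((D\hat{\boxtimes}\dmin_x)_\omega)^{*} = ((D\hat{\boxtimes}\dmin_x)^{*})_\omega$ holding for every state $\omega$ on $B$. Since $D$ is regular, $(D_\omega)^{*} = (D^{*})_\omega$, and since $\dmin_x, \dmax_x$ act only on the $L^2[0,1]$-tensor factor, the localizations commute with the exterior product construction, giving $(D \hat{\boxtimes} \dmin_x)_\omega = D_\omega \hat{\boxtimes} \dmin_x$ and $(D^{*} \hat{\boxtimes} \dmax_x)_\omega = (D_\omega)^{*} \hat{\boxtimes} \dmax_x$ on $\mathpzc{E}_\omega \boxtimes L^2[0,1]$. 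Thus everything reduces to the Hilbert space case in which $D_\omega$ is a closed symmetric operator on $H := \mathpzc{E}_\omega$.

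For the Hilbert space case, I would identify $H \boxtimes L^2[0,1] \cong L^{2}([0,1]; H)$ and establish the remaining inclusion $(P_\omega)^{*} \subseteq \overline{(D_\omega)^{*} \hat{\boxtimes} \dmax_x}$ by mollification in the $x$-variable. Given $\eta \in \Dom((P_\omega)^{*})$ with $(P_\omega)^{*}\eta = \zeta$, extend $\eta$ and $\zeta$ by zero outside $[0,1]$ and convolve in $x$ with a standard mollifier $\rho_\varepsilon \in C_c^\infty(\R)$ to form $\eta_\varepsilon := \rho_\varepsilon \ast_x \eta$. Because convolution in $x$ commutes with the fibrewise action of $D_\omega$, a direct computation shows that $\eta_\varepsilon$ lies in the domain of $(D_\omega)^{*} \hat{\boxtimes} \dmax_x$, that $\eta_\varepsilon \to \eta$ in $L^{2}([0,1]; H)$, and that $((D_\omega)^{*} \hat{\boxtimes} \dmax_x)\eta_\varepsilon \to \zeta$ in $L^{2}([0,1]; H)$. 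The hard part will be the bookkeeping at the endpoints $x = 0, 1$: one must verify that the approximants can be arranged in the $H^{1}$-type domain of $\dmax_x$ without destroying convergence in the graph norm, analogous to—but not an instance of—the sum arguments in \cite[Section~7]{leschkaad2}, since $D_\omega$ is only symmetric rather than self-adjoint. Combining the Hilbert space conclusion with the localization step then yields the full theorem on the Hilbert $C^*$-module $\mathpzc{E} \boxtimes L^2[0,1]$.
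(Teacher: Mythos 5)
Your overall scaffolding tracks the paper's: reduce the regularity/adjoint questions to Hilbert spaces via the local--global principle, and you correctly observe that Theorem~\ref{sumregul} cannot be invoked directly since $\dmin_x$ is only symmetric. The easy inclusion $\overline{D^{*}\hat{\boxtimes}\dmax_x}\subseteq (D\hat{\boxtimes}\dmin_x)^{*}$ and semi-regularity are fine. (One organizational difference: the paper proves the full adjoint identity globally on the $C^*$-module and uses localization only for regularity, whereas you propose to route the hard inclusion itself through localization; that is a legitimate reshuffle and not where the problem lies.)

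The genuine gap is in the mollification step. Extending $\eta$ by zero and convolving does \emph{not} produce graph-norm approximants. If $\eta\in\Dom(P_\omega^{*})$ has $\eta(0)\neq 0$ (which is the generic case, since no boundary condition is imposed on $\Dom(P_\omega^{*})$), the zero-extension $\bar\eta$ has a jump at $0$, so as a distribution $\partial_x\bar\eta$ carries a term $\eta(0)\delta_0$. Consequently $\partial_x\eta_\varepsilon = \rho_\varepsilon\ast\partial_x\bar\eta$ contains the summand $\eta(0)\rho_\varepsilon$, whose $L^2[0,1]$-norm grows like $\varepsilon^{-1/2}$. Thus $((D_\omega)^{*}\hat{\boxtimes}\dmax_x)\eta_\varepsilon$ does \emph{not} converge in $L^2([0,1];H)$; the claimed graph-norm convergence fails. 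You flag the endpoints as ``the hard part," but this is not merely bookkeeping: the zero-extension construction cannot be salvaged, and a fundamentally different approximation is needed. (Interior mollification is fine and, as you note, does show pointwise membership of $\eta_\varepsilon(x_0)$ in $\Dom(D_\omega^{*})$ away from the boundary; the obstruction is purely at $x=0,1$.)

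The paper avoids the approximation problem with a different device. It introduces the periodic operator $\partial_x^{\mathrm{per}}$, which is \emph{self-adjoint}, sets $S_{\mathrm{per}}:=\gamma_\mathpzc{E}\boxtimes\partial_x^{\mathrm{per}}$ and $T:=i\gamma_\mathpzc{E}D\boxtimes\mathrm{id}$, and applies Theorem~\ref{sumregul} (now legitimately, as $S_{\mathrm{per}}$ is self-adjoint and $T$ is symmetric) to conclude $(S_{\mathrm{per}}+T^{*})^{*}=S_{\mathrm{per}}+T$. Since $S^{*}+T^{*}$ extends $S_{\mathrm{per}}+T^{*}$, one gets $\Dom((S^{*}+T^{*})^{*})\subseteq\Dom(S_{\mathrm{per}}+T)$, i.e.\ a \emph{pointwise} domain with the weak boundary constraint $f(0)=f(1)$. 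Then a short integration-by-parts identity, tested against $g(x)=xg_0$ and $g(x)=(1-x)g_0$ with $g_0$ ranging over $\Dom(T^{*})$, forces $f(0)=f(1)=0$, landing in $\Dom(S+T)$. This pins down the domain directly rather than by density, and is the key idea your proposal is missing.
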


\begin{proof}
The proof is similar to the proof of Theorem \ref{sumregul} apart from one key step; we must prove that $(D^*\hat{\boxtimes}\dmax_x)^*=D\hat{\boxtimes}\dmin_x$ (cf. the statement of \cite[Proposition 7.7]{leschkaad2}). We consider only the even case, the odd follows in a similar way. To follow the notations of \cite{leschkaad2}, we set
 \begin{align*}
 \Dom(T)&=W(D)\boxtimes L^2[0,1],\qquad T:=i\gamma_\mathpzc{E}D\boxtimes \mathrm{id},\\
\Dom(T^*)&=W(D^*)\boxtimes L^2[0,1],\quad T^*:=i\gamma_\mathpzc{E}D^*\boxtimes \mathrm{id}\\
\Dom(S)&= \mathpzc{E}\boxtimes H^1_0[0,1],\qquad\!\!\qquad S:=\gamma_\mathpzc{E}\boxtimes \dmin_x\\
\Dom(S^*)&= \mathpzc{E}\boxtimes H^1[0,1],\quad\,\qquad S^*:=\gamma_\mathpzc{E}\boxtimes \dmax_x .
\end{align*} 
The operators $S$ and $T$ anticommute on $\Dom(ST)=\Dom(TS)$. A direct computation using that $T$ and $S$ are anticommuting symmetric operators with the common core $\Dom(ST)=\Dom(TS)$ shows that for $\xi\in \Dom(S)\cap\Dom(T)$
\begin{equation}
\label{anitcommst}
\langle (S+T)\xi,(S+T)\xi\rangle = \langle S\xi,S\xi\rangle+\langle T\xi,T\xi\rangle.
\end{equation}
Equation \eqref{anitcommst} shows that the operator $S+T$ is closed on $\Dom(S)\cap\Dom(T)$. Moreover, $(S^*+T^*)^*\supseteq S+T$ holds trivially so it remains to prove $(S^*+T^*)^*= S+T$.

Let $\partial_x^{\textnormal{per}}$ be the differential expression $\partial_x$ equipped with periodic boundary conditions on $[0,1]$. This being the Dirac operator on the closed manifold $\R/\Z$, it is a self-adjoint operator on the Hilbert space $L^2[0,1]$. We use the notation $S_{\textnormal{per}}:=\gamma_\mathpzc{E}\boxtimes \partial_x^{\textnormal{per}}$. This is a self-adjoint regular operator on $\mathpzc{E}\hat{\boxtimes} L^2[0,1]$ by \cite[Chapter 10]{lancesbook}. We have that $S^*+T^*$ is an extension of $S_{\textnormal{per}}+T^*$, hence $(S_{\textnormal{per}}+T^*)^*$ is an extension of $(S^*+T^*)^*$. The operators $S_{\textnormal{per}}$ and $T$ trivially satisfy the assumptions of Theorem \ref{sumregul}, hence $(S_{\textnormal{per}}+T^*)^*=S_{\textnormal{per}}+T$. It follows that 
\begin{align*}
\Dom(S^*+T^*)^*&\subseteq\Dom(S_{per}+T)\\
&= \{f\in \mathpzc{E}\hat{\boxtimes}H^1[0,1]\cap \Dom(T)\hat{\boxtimes}L^2[0,1]: \; f(0)=f(1)\}.
\end{align*}
Here we are identifying elements of $\mathpzc{E}\hat{\boxtimes}L^2[0,1]$ with $\mathpzc{E}$-valued functions on $[0,1]$. Take an $f\in \Dom(S^*+T^*)^*$ and a 
$$g\in \Dom(S^*+T^*)=\mathpzc{E}\hat{\boxtimes}H^1[0,1]\cap \Dom(T^*)\hat{\boxtimes}L^2[0,1].$$ 
By partial integration, 
$$\langle f, (S^*+T^*)g\rangle_{\mathpzc{E}\hat{\boxtimes}L^2[0,1]}-\langle (S_{\textnormal{per}}+T)f,g\rangle_{\mathpzc{E}\hat{\boxtimes}L^2[0,1]}=i\langle\gamma f(0),g(1)-g(0)\rangle_\mathpzc{E}.$$
Since this holds for all $g\in \mathpzc{E}\hat{\boxtimes}H^1[0,1]\cap \Dom(T^*)\hat{\boxtimes}L^2[0,1]$, e.g. for $g(x)=xg_0$ and $g(x)=(1-x)g_0$ for all $g_0$ in the dense submodule $\Dom(T^*)\subseteq \mathpzc{E}$ we conclude that any $f\in\Dom(S^*+T^*)^*$ satisfies $f(0)=f(1)=0$. In particular, 
$$\Dom(S^*+T^*)^*\subseteq\mathpzc{E}\hat{\boxtimes}H^1_0[0,1]\cap \Dom(T)\hat{\boxtimes}L^2[0,1]=\Dom(S+T).$$
Hence $(S^*+T^*)^*=S+T$.
\end{proof}

\subsection{Twisting by a projective module}
\label{subseprodkth}

In this section the product between chains and projective modules is considered. This is one of the simplest non-trivial examples available for an unbounded Kasparov product, and is of importance when constructing examples on manifolds. The case of spectral triples was first considered by Connes in \cite{connesgravity} and is described in detail in \cite[Section 2]{chakramathai}. See also \cite{LRV}.

Let $\mathcal{A}$ be a unital dense $*$-subalgebra of a unital $C^*$-algebra $A$. We denote by $\Omega^1(\mathcal{A})$ the space of abstract $1$-forms on $\mathcal{A}$; that is, 
\[\Omega^{1}(\A):=\ker (m: \A\otimes \A\to \A), \quad\mbox{where}\quad m(a_{1}\otimes a_{2}):=a_{1}a_{2}.\]
The map $\der:\A\to \Omega^{1}(\A)$ defined by $\der a:= 1\otimes a- a\otimes 1$ is an $\A$-bimodule derivation. We call a pair $(\mathcal{E}_{\mathcal{A}},\nabla)$ an $\mathcal{A}$-module with connection if $\E_{\A}$ is a finitely generated projective $\mathcal{A}$-module and $\nabla$ is a connection on $\mathcal{E}_{\mathcal{A}}$, i.e. $\nabla:\mathcal{E}_{\mathcal{A}}\to \,\mathcal{E}\otimes_{\mathcal{A}} \Omega^1(\mathcal{A})$ is a linear mapping satisfying the Leibniz rule
$$ \nabla(ea)=\nabla(e)a+e\otimes \der a.$$
Suppose that we are given an $(\mathcal{A},B)$-chain  $(\mathpzc{F},Q)$ and an $\mathcal{A}$-module with connection $(\mathcal{E}_{\mathcal{A}},\nabla)$. Then there is a linear mapping $\nabla_Q:\mathcal{E}_{\mathcal{A}}\to \mathcal{E}\otimes_{\mathcal{A}} \End^*_B(\mathpzc{F})$ obtained by composing $\nabla$ with $\delta_Q:\Omega^1(\mathcal{A})\to \End^*_B(\mathpzc{F})$, $a\mathrm{d} b\mapsto a[Q,b]$. 

We tacitly assume that $\mathcal{E}_{\mathcal{A}}$ has an $\mathcal{A}$-valued inner product $\langle\cdot ,\cdot\rangle_{\mathcal{E}}$. From the inner product $\langle\cdot ,\cdot\rangle_{\mathcal{E}}$, we construct pairings $\E\times \E\otimes_\A\Omega^{1}(\A)\to \Omega^{1}(\A)$ and $\E\otimes_{\A}\Omega^{1}(\A)\times \E\to \Omega^{1}(\A)$, also denoted by $\langle\cdot ,\cdot\rangle_{\mathcal{E}}$. A connection $\nabla$ is said to be \emph{Hermitian} if 
$$\der \langle \xi,\eta\rangle =\langle \xi,\nabla\eta\rangle-\langle\nabla \xi,\eta\rangle.$$
If $\nabla$ is Hermitian, then for any symmetric $(\mathcal{A},B)$-chain $(\mathpzc{F},Q)$ and $\xi,\eta\in \,\!\mathcal{E}_{\mathcal{A}}$,
$$\delta_{Q} \langle \xi,\eta\rangle =\langle \xi,\nabla_{Q}\eta\rangle+\langle\eta, \nabla_{Q} \xi\rangle^{*}.$$
When $\mathcal{E}=p\cdot\mathcal{A}^n$ for a projection $p\in M_n(\mathcal{A})$, the Gra\ss mannian connection $\nabla(p\xi):=p\mathrm{d}(p\xi)$ is always Hermitian.

\begin{define}
\label{ktheprofdefinition}
Let $(\mathpzc{F},Q)$ be an $(\mathcal{A},B)$-chain and $(\mathcal{E}_{\mathcal{A}},\nabla)$ an $\mathcal{A}$-module with connection. We define 
$$(\mathcal{E}_{\mathcal{A}},\nabla)\otimes_{\mathcal{A}} (\mathpzc{F},Q):=(\,\mathcal{E}\otimes_{\mathcal{A}}\mathpzc{F}, 1\otimes_\nabla Q),$$
where $\Dom(1\otimes_\nabla Q):=\,\mathcal{E}\otimes_{\mathcal{A}}\Dom(Q)$ and thereon $1\otimes_\nabla Q$ is defined by
$$1\otimes_\nabla Q(e\otimes f):=e\otimes Qf+\nabla_Q(e)f.$$
\end{define}

\begin{prop}
If $(\mathpzc{F},Q)$ is an $(\mathcal{A},B)$-chain and $(\mathcal{E}_{\mathcal{A}},\nabla)$ and $(\mathcal{E}_{\mathcal{A}},\nabla')$ are $\mathcal{A}$-modules with connection then $\Dom(1\otimes_\nabla Q)=\Dom(1\otimes_{\nabla'} Q)$ and $1\otimes_\nabla Q-1\otimes_{\nabla'} Q$ extends to a bounded adjointable mapping on $\,\mathcal{E}\otimes_{\mathcal{A}}\mathpzc{F}$.
\end{prop}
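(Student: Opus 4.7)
\medskip

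\noindent\textbf{Proof plan.} The equality of domains is immediate from Definition \ref{ktheprofdefinition}: both $\Dom(1\otimes_\nabla Q)$ and $\Dom(1\otimes_{\nabla'} Q)$ are, by construction, equal to the algebraic tensor product $\mathcal{E}\otimes_{\mathcal{A}}\Dom(Q)$. So the content of the proposition is that the difference extends to a bounded adjointable operator on all of $\mathcal{E}\otimes_{\mathcal{A}}\mathpzc{F}$.

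The first key observation is that the difference $\nabla-\nabla':\mathcal{E}\to \mathcal{E}\otimes_{\mathcal{A}}\Omega^{1}(\mathcal{A})$ is right $\mathcal{A}$-linear, because the Leibniz rule forces the non-$\mathcal{A}$-linear terms $e\otimes \der a$ to cancel:
\[
(\nabla-\nabla')(ea)=\nabla(e)a+e\otimes\der a-\nabla'(e)a-e\otimes\der a=(\nabla-\nabla')(e)\cdot a.
\]
Composing with the $\mathcal{A}$-bimodule map $\delta_Q:\Omega^{1}(\mathcal{A})\to \End^*_B(\mathpzc{F})$, $a\der b\mapsto a[Q,b]$, which lands in $\Lip(Q)\subseteq \End^*_B(\mathpzc{F})$ by the chain condition on $(\mathpzc{F},Q)$, we obtain a right $\mathcal{A}$-linear map
\[
\omega:=(1\otimes\delta_Q)\circ(\nabla-\nabla'):\mathcal{E}\longrightarrow \mathcal{E}\otimes_{\mathcal{A}}\End^*_B(\mathpzc{F}).
\]
A direct calculation from the definition of $1\otimes_\nabla Q$ shows that on $\mathcal{E}\otimes_{\mathcal{A}}\Dom(Q)$,
\[
(1\otimes_\nabla Q-1\otimes_{\nabla'} Q)(e\otimes f)=\omega(e)\cdot f.
\]

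Now one uses that $\mathcal{E}_\mathcal{A}$ is finitely generated projective: choose a projection $p\in M_n(\mathcal{A})$ with $\mathcal{E}\cong p\mathcal{A}^n$. Then
\[
\mathcal{E}\otimes_{\mathcal{A}}\mathpzc{F}\;\cong\;p\,\mathpzc{F}^{\,n},
\]
and the right $\mathcal{A}$-linear map $\omega$ corresponds to an element of $pM_n(\End^*_B(\mathpzc{F}))p$, which acts on $p\mathpzc{F}^{\,n}$ by matrix multiplication as a bounded adjointable operator. Under the identification above, this action agrees with $e\otimes f\mapsto \omega(e)\cdot f$, so the difference of the two twisted operators extends uniquely to a bounded adjointable operator on $\mathcal{E}\otimes_{\mathcal{A}}\mathpzc{F}$, as required.

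The only real issue to keep track of is the $\mathcal{A}$-linearity step (so that the formula $e\otimes f\mapsto \omega(e)\cdot f$ is well defined on the balanced tensor product) and the compatibility of $\delta_Q$ with the $\mathcal{A}$-bimodule structure on $\Omega^{1}(\mathcal{A})$; both are routine once one writes them out. No deep analytical input is needed beyond the finite projectivity of $\mathcal{E}$ and the boundedness of commutators guaranteed by $\mathcal{A}\subseteq \Lip(Q)$.
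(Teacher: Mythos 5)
Your proof is correct and takes essentially the same approach as the paper; the paper's own proof is a one-line appeal to the fact that the space of connections is an affine space modeled on $\Hom_{\mathcal{A}}(\mathcal{E},\mathcal{E}\otimes_{\mathcal{A}}\Omega^1(\mathcal{A}))$, which is exactly the observation you make and then unwind in detail, using the finite projectivity of $\mathcal{E}_{\mathcal{A}}$ and $\mathcal{A}\subseteq\Lip(Q)$ to conclude boundedness and adjointability.
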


\begin{proof} 
This follows from the fact that the space of connections is an affine space modeled on $\Hom_{\mathcal{A}}(\mathcal{E},\mathcal{E}\otimes_{\mathcal{A}}  \Omega^1(\mathcal{A}))$.
\end{proof}

For a projective $\A$-module, we denote by $\End^{*}_{\A}(\E)$ the algebra of $\A$-linear endomorphisms $T:\E\to \E$. Such endomorphisms always admit an adjoint for the inner product, since $\A$ is a $*$-algebra.

\begin{lemma}
\label{prodwithkthe}
If $(\mathpzc{F},Q)$ is a symmetric $(\mathcal{A},B)$-chain and $(\mathcal{E}_{\mathcal{A}},\nabla)$ is an $\mathcal{A}$-module with a Hermitian connection then $(\,\mathcal{E}_{\mathcal{A}},\nabla)\otimes_{\mathcal{A}} (\mathpzc{F},Q)$ is a well defined symmetric $(\End^{*}_{\A}(\E), B)$-chain which is half-closed respectively closed if $(\mathpzc{F},Q)$ is half-closed respectively closed. 
\end{lemma}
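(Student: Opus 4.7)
The plan is to reduce the statement to a compression-of-sum argument, exploiting the freedom to replace an arbitrary Hermitian connection by the Gra\ss mannian one. By the preceding proposition, $\Dom(1\otimes_\nabla Q)$ and the bounded adjointable difference class of $1\otimes_\nabla Q$ depend only on $\nabla$ up to a bounded term, and symmetry of $1\otimes_\nabla Q$ with a Hermitian $\nabla$ follows from the Hermitian compatibility identity $\delta_Q \langle \xi,\eta\rangle=\langle \xi,\nabla_Q\eta\rangle+\langle\eta,\nabla_Q\xi\rangle^*$ combined with symmetry of $Q$ on $\Dom(Q)$. Thus without loss of generality I write $\mathcal{E}_{\mathcal{A}}=p\cdot \mathcal{A}^{n}$ for a projection $p\in M_n(\mathcal{A})$ and take $\nabla$ to be the Gra\ss mannian connection. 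Under the canonical isomorphism $\mathcal{E}\otimes_{\mathcal{A}}\mathpzc{F}\cong p\cdot \mathpzc{F}^n$, the operator $1\otimes_\nabla Q$ becomes $p\,Q^{(n)}\,p$ on the domain $p\cdot \Dom(Q)^{n}$, where $Q^{(n)}:=Q\oplus\cdots\oplus Q$.

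The key trick is then to pass to the auxiliary operator
\[
\tilde Q := p\,Q^{(n)}p + (1-p)Q^{(n)}(1-p),
\]
defined on $\Dom(Q)^n$. Since the entries of $p$ lie in $\mathcal{A}\subseteq \Lip(Q)$, the commutator $[Q^{(n)},p]$ extends to a bounded adjointable self-adjoint operator, and a short algebraic manipulation yields $\tilde Q = Q^{(n)} + B$ for a bounded self-adjoint $B\in \End^{*}_B(\mathpzc{F}^n)$. Therefore $\tilde Q$ is regular and symmetric (resp.\ self-adjoint) whenever $Q$ is, by the bounded perturbation case of Theorem \ref{sumregul}. By construction $\tilde Q$ commutes with $p$, so $p\mathpzc{F}^n$ is an invariant reducing subspace and $\tilde Q = \hat Q\oplus \tilde Q|_{(1-p)\mathpzc{F}^n}$ with $\hat Q := pQ^{(n)}p|_{p\cdot \Dom(Q)^n}$. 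Regularity, symmetry, the property $\hat Q^{*}= p Q^{(n)*}p$, and the half-closed/closed conditions on $\hat Q$ then follow directly from the corresponding properties for the direct summand decomposition of $\tilde Q$, using that for half-closedness $(1-p)\mathpzc{F}^n$ can be split off without difficulty.

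The representation aspect requires checking two points. First, for any $T\in\End^*_{\mathcal{A}}(\mathcal{E})\cong pM_n(\mathcal{A})p$, the commutator $[pQ^{(n)}p,T]$ is bounded; writing $T=pTp$, a direct computation reduces this to $p[Q^{(n)},T]-T[Q^{(n)},p]$, both of which are bounded since the matrix entries of $T$ and $p$ lie in $\mathcal{A}\subseteq \Lip(Q)$. The resulting estimate
\[
\|[1\otimes_\nabla Q,T]\|\leq \|[Q^{(n)},T]\|+\|T\|\cdot\|[Q^{(n)},p]\|,
\]
together with continuity of $\pi:\mathcal{A}\to\Lip(Q)$, yields continuity of the induced representation $\End^*_{\mathcal{A}}(\mathcal{E})\to \Lip(1\otimes_\nabla Q)$ in the natural Banach-algebra topology on $pM_n(\mathcal{A})p$ inherited from $M_n(\mathcal{A})$. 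Second, for the local compactness of the resolvent, observe that $(1+\hat Q^{*}\hat Q)^{-1}$ is the compression to $p\mathpzc{F}^n$ of $(1+\tilde Q^{*}\tilde Q)^{-1}$, and a standard resolvent identity together with boundedness of $B$ shows that $T(1+\tilde Q^{*}\tilde Q)^{-1}$ differs from $T(1+Q^{(n)*}Q^{(n)})^{-1}$ by a compact operator; since $T\in M_n(\mathcal{A})$ and $Q$ is half-closed (resp.\ closed), $T(1+Q^{(n)*}Q^{(n)})^{-1}\in M_n(\mathcal{K}_B(\mathpzc{F}))=\mathcal{K}_B(\mathpzc{F}^n)$, and locally compact resolvents pass to $\hat Q$.

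The main obstacle I expect is keeping the book-keeping clean when passing between the abstract description of $1\otimes_\nabla Q$ and its compression-form $pQ^{(n)}p$, and in particular verifying the identity $\hat Q^{*}=pQ^{(n)*}p$ (on the correctly-identified domain) so that the half-closed condition $\pi(a)\in \Lip(\hat Q^{*},\hat Q)$ inherits properly from $\pi(\mathcal{A})\subseteq \Lip(Q^{*},Q)$; all other steps are either bounded-perturbation arguments or direct matrix-level computations.
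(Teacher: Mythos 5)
Your proof is correct and substantially more self-contained than the paper's, which for both cases essentially outsources the work to \cite[Theorem 6.2.7]{thebeastofmesland}: the paper handles the closed case by direct citation, and the symmetric case by noting $p\in\Lip(Q)\cap\Lip(Q^*)$ (Remark \ref{LipDstar}) and then passing to the doubled self-adjoint chain $(\mathpzc{F}\oplus\mathpzc{F},\tilde Q)$ with $\tilde Q$ as in \eqref{twiddlingoperators}, to which the closed-case result applies. You instead work directly with the symmetric $Q$ and use a different decoupling: your auxiliary operator $pQ^{(n)}p+(1-p)Q^{(n)}(1-p)=Q^{(n)}+B$ with $B=p[Q^{(n)},p]-(1-p)[Q^{(n)},p]$ bounded self-adjoint and $pBp=0$, so that regularity, symmetry/self-adjointness, and locally compact resolvent pass from $Q^{(n)}$ to the bounded perturbation and then restrict to the complemented submodule $p\mathpzc{F}^n$. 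The key ingredient is the same in both approaches (Remark \ref{LipDstar} ensuring $p$ preserves both $\Dom Q$ and $\Dom Q^*$), but yours avoids the doubling trick entirely. Two small points worth tightening: (i) you should make the identity $pBp=0$ explicit, since it is what guarantees that the compressions of $\tilde Q$ and $\tilde Q^*$ to $p\mathpzc{F}^n$ are literally $pQ^{(n)}p$ and $pQ^{(n)*}p$ on the correct domains; (ii) for the resolvent compactness, the resolvent identity should be used in the form $T(i+\tilde Q)^{-1}=T(i+Q^{(n)})^{-1}\bigl(1-B(i+\tilde Q)^{-1}\bigr)$ (compact times bounded), rather than the version you state, which on a first reading appears circular; for the half-closed case the cleanest route is to observe that $\Dom(\tilde Q)=\Dom(Q^{(n)})$ with equivalent graph norms, so that $\pi(a)\colon W(\tilde Q)\to\mathpzc{F}^n$ inherits compactness from $\pi(a)\colon W(Q^{(n)})\to\mathpzc{F}^n$.
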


\begin{proof}
If $(\mathpzc{F},Q)$ is closed, the result follows from \cite[Theorem 6.2.7]{thebeastofmesland}. If $(\mathpzc{F},Q)$ is symmetric, choose a projection $p\in M_{n}(\A)$ defining $\E\cong p \A^{n}$, and consider the module $\A^{n}\otimes_{\A}\mathpzc{F}\cong \mathpzc{F}^{n}$ and the mutually adjoint regular diagonal operators $Q$ and $Q^{*}$. By Remark \ref{LipDstar} the projection $p$ is an element of both $\Lip(Q)$ and $\Lip(Q^{*})$. Thus the result follows from the proof of \cite[Theorem 6.2.7]{thebeastofmesland} after considering the symmetric chain $(\mathpzc{F}\oplus \mathpzc{F},\tilde{Q})$ with the self-adjoint regular operator $\tilde{Q}$, for notation see Equation \eqref{twiddlingoperators} on page \pageref{twiddlingoperators}.
\end{proof}

Let $A'$ be another choice of $C^{*}$-algebra with a choice of dense $*$-subalgebra $\A'$. By a \emph{Hermitian} $(\A',\A)$-\emph{module with connection} we mean a pair $(_{\A'}\E_{\A},\nabla)$ consisting of an $(\A',\A)$-bimodule $_{\A'}\E_{\A}$ with Hermitian connection $\nabla:\E_{\A}\to \E\otimes_{\A}\Omega^{1}(A)$ on the right module $\E_{\A}$. 
Two Hermitian $(\mathcal{A}',\mathcal{A})$-modules with connection are said to be isomorphic if there is a unitary isomorphism of $(\mathcal{A}',\mathcal{A})$-bimodules compatible with the choice of connection. We define $\Her(\mathcal{A}',\mathcal{A})$ as the semigroup of isomorphism classes of $(\mathcal{A}',\mathcal{A})$-modules with a Hermitian connection. 

We remark that the forgetful mapping from the semigroup  $\Her(\mathcal{A}',\mathcal{A})$ to that of finitely generated right projective $(\mathcal{A}',\mathcal{A})$-bimodules is surjective, i.e. any such module admits the Gra\ss man connection. The proof of the next proposition follows immediately from Lemma \ref{prodwithkthe}. 

\begin{prop}
The product of Definition \ref{ktheprofdefinition} defines a biadditive map
$$\Her(\mathcal{A}',\mathcal{A})\times C^\s_*(\mathcal{A},B)\to C^\s_*(\mathcal{A}',B).$$
Furthermore, it maps the sub-semigroups of half-closed, respectively closed $(\mathcal{A},B)$-chains to half-closed, respectively closed $(\mathcal{A}',B)$-chains. 
\end{prop}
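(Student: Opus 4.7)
The plan is to reduce the proposition to Lemma \ref{prodwithkthe} combined with the contravariant functoriality established in Proposition \ref{semiisoclass}. Given $(_{\A'}\E_{\A},\nabla)\in \Her(\A',\A)$ and $(\mathpzc{F},Q)\in C^{\s}_*(\A,B)$, Lemma \ref{prodwithkthe} already produces a symmetric $(\End^*_{\A}(\E),B)$-chain $(\E\otimes_{\A}\mathpzc{F},1\otimes_\nabla Q)$, which is half-closed respectively closed whenever $(\mathpzc{F},Q)$ is. So the only work left is to upgrade the $\End^*_{\A}(\E)$-structure to an $\A'$-structure.

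For this upgrade, I would observe that the left $\A'$-action on $_{\A'}\E_{\A}$ commutes with the right $\A$-action and preserves the $\A$-valued inner product (since $\E$ is a bimodule), and therefore defines a $*$-homomorphism $\pi:\A'\to \End^*_{\A}(\E)$. This map is continuous from the $C^*$-topology of $A'$ (hence from the finer topology on $\A'$) into the operator norm on $\End^*_{\A}(\E)$. Composing with the continuous inclusion $\End^*_{\A}(\E)\hookrightarrow \Lip(1\otimes_\nabla Q)$ provided by Lemma \ref{prodwithkthe} yields the required continuous $*$-homomorphism $\A'\to \Lip(1\otimes_\nabla Q)$, making $(\E\otimes_{\A}\mathpzc{F},1\otimes_\nabla Q)$ into an $(\A',B)$-chain. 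Functorially, this is exactly the pullback along $\pi$ described by \eqref{concon}.

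The closed and half-closed properties depend only on the operator $1\otimes_\nabla Q$ and on local compactness of resolvents with respect to the action; since every element of $\pi(A')$ is in $\End^*_{\A}(\E)$, the required compactness of $\pi(a')(1+(1\otimes_\nabla Q)^*(1\otimes_\nabla Q))^{-1}$ and of $\pi(a')(i\pm \widehat{1\otimes_\nabla Q})^{-1}$ follows directly from the corresponding statements in Lemma \ref{prodwithkthe} applied to the $\End^*_{\A}(\E)$-chain.

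Biadditivity then follows from the compatibility of the internal tensor product with direct sums in each slot: for $(\E_1\oplus \E_2, \nabla_1\oplus \nabla_2)\in \Her(\A',\A)$ one has $(\E_1\oplus \E_2)\otimes_{\A}\mathpzc{F}\cong (\E_1\otimes_{\A}\mathpzc{F})\oplus(\E_2\otimes_{\A}\mathpzc{F})$, and under this isomorphism $1\otimes_{\nabla_1\oplus\nabla_2}Q=(1\otimes_{\nabla_1}Q)\oplus(1\otimes_{\nabla_2}Q)$ together with the diagonal $\A'$-action; the analogous splitting on the chain side is immediate from $(\mathpzc{F}_1\oplus\mathpzc{F}_2, Q_1\oplus Q_2)$. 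The only mildly delicate point that I would want to check carefully is the continuity of $\pi:\A'\to \Lip(1\otimes_\nabla Q)$ in the Lipschitz norm; but since $\Lip(1\otimes_\nabla Q)$ inherits its Banach algebra structure from a seminorm dominated by the operator norm on $\End^*_{\A}(\E)$ plus a term controlled by the curvature of $\nabla$ (which is $\A$-linear on $\E$ and hence passes through $\pi$ in a controlled way), this reduces to continuity of $\A'\to \End^*_{\A}(\E)$ in the operator norm, which is automatic.
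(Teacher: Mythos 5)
Your proposal is correct and follows essentially the same route as the paper, whose entire stated proof is that the proposition ``follows immediately from Lemma~\ref{prodwithkthe}''; your argument simply fleshes out that one-liner by factoring through the $*$-homomorphism $\A'\to\End^*_\A(\E)$ and appealing to the contravariant functoriality of \eqref{concon}. One cosmetic remark: in your final paragraph the word ``curvature'' is not the right object — what controls the commutator term $[1\otimes_\nabla Q,\pi(a')]$ is the $\A$-linearity of $[\nabla_Q,\pi(a')]$ (i.e., the fact that the space of connections is affine over $\Hom_\A(\E,\E\otimes_\A\Omega^1(\A))$), not the curvature $\nabla^2$; but this continuity is already part of the conclusion of Lemma~\ref{prodwithkthe} and need not be re-derived.
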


\begin{remark}
A fact that will be of use later for manifolds is that if $\mathcal{A}'\subseteq \mathcal{A}$ is a central subalgebra, any right $\mathcal{A}$-module is in the obvious way an $(\mathcal{A}',\mathcal{A})$-bimodule giving rise to an additive mapping $\Her(\C,\mathcal{A})\to \Her(\mathcal{A}',\mathcal{A})$. 
\end{remark}

\subsection{Dirac operators on manifolds}
\label{mfdwbdryex}

In this subsection, we discuss how examples of chains as in Definition \ref{cycleschaindeef} naturally appear in manifold theory. We fix a unital $C^*$-algebra $B$. We will refer to a smooth locally trivial bundle of finitely generated $B$-Hilbert $C^*$-modules as a Hermitian $B$-bundle. We start with the following structures:
\begin{enumerate} 
\item $W$ is a Riemannian spin$^c$-manifold with boundary;
\item $E_B\to W$ is a Hermitian $B$-bundle;
\item $\nabla_E$ denotes a $B$-linear Hermitian connection on $E_B$.
\end{enumerate}

For now, we allow $W$ to be noncompact; we will mainly focus on the case of $W$ is complete or a compact manifold with boundary. Further details on this setup can be found in \cite{deelgoffIII,hankpapsch,Schickconn}. We tacitly assume that all structures (the metric on $W$, $E_B$ and its connection $\nabla_E$) are of product type near $\partial W$. Let $S_W\to W$ denote the spinor bundle on $W$. If $\dim(W)$ is even, $S_W$ can be considered a graded bundle. If $N$ is large enough, there is a projection $p_{\C\ell}\in M_N(C^\infty(W,B))$ such that 
$$C(W,S_W\otimes E_B)\cong p_{\C\ell}C(W,B^N).$$ 
Consider the $B$-Hilbert $C^*$-module,
$$\mathpzc{E}_W:=L^2(W,S_W\otimes E_B)\cong p_{\C\ell}L^2(W,B^N).$$
Here we define $L^2(W,B^N):=L^2(W)\otimes B^N$ -- the $B$-Hilbert $C^*$-module completion of the algebraic tensor product $L^2(W)\otimes^{\mathrm{alg}} B^N$ where $B^N$ is equipped with its standard $B$-Hilbert $C^*$-module structure. If $\dim(W)$ is even, $\mathpzc{E}_W$ forms a graded $B$-Hilbert $C^*$-module in the grading induced from $S_W$. We define the Banach $*$-algebras 
\begin{align*}
\Lip_0(W)&:=\{f\in C_0(W): \;\mathrm{d}f\in L^\infty(W,T^*W)\},\quad\mbox{and}\\
&\Lip_0(W^\circ):=\Lip_0(W)\cap C_0(W^\circ).
\end{align*}
We can identify 
$$H^1(W, B^N)=\{f\in L^2(W,B^N):\; \mathrm{d}f\in L^2(W,T^*W\otimes B^N)\}.$$
Here $H^1(W, B^N):=H^1(W)\otimes B^N$ denotes the $B$-Hilbert $C^*$-module completion of the algebraic tensor product $H^1(W)\otimes^{\mathrm{alg}} B^N$. We can extend the trace mapping $H^1(W)\otimes^{\mathrm{alg}} B^N\to  L^2(\partial W)\otimes^{\mathrm{alg}} B^N$ by continuity to an adjointable $B$-linear mapping $H^1(W, B^N)\to L^2(\partial W, B^N)$, $f\mapsto f|_{\partial W}$. Also, let
\begin{align*}
\mathcal{E}_W^{\mini}:&=H^1_0(W,S_W\otimes E_B)\\
&=\{f\in p_{\C\ell}L^2(W,B^N):\; \mathrm{d}f\in L^2(W,T^*W\otimes B^N),\; f|_{\partial W}=0\}.
\end{align*}
The module $\mathcal{E}_W^{\mini}$ is a dense submodule of $\mathpzc{E}_W$ with a left $\Lip_0(W)$-action. We construct the spin$^c$-Dirac operator $D^W$ on $W$ from the Riemannian metric: the operator $D^W_E$ is the differential expression acting on $C^\infty(W^\circ,S_W\otimes E_B)$ as $D^W$ twisted by the connection $\nabla_E$ equipped with the domain $\mathcal{E}_W^{\textnormal{min}}$. 

\begin{lemma}
\label{firstgammadef}
Let $W$ denote a complete manifold or a compact manifold with boundary. Then
$$\gamma_0(W,E_B,\nabla_E):=(\mathpzc{E}_W,D^W_E),$$
defines a symmetric $(\Lip_0(W),B)$-chain which restricts to a half-closed $(\Lip_0(W^\circ),B)$-chain. If $W$ is complete, $\gamma_0(W,E_B,\nabla_E)$ is a $(\Lip_0(W),B)$-cycle.
\end{lemma}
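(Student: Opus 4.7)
The plan is to verify, in order, the three defining axioms of a symmetric $(\Lip_0(W),B)$-chain, then to upgrade to the half-closed and closed cases. The main tools are Green's formula, the local-global principle (Theorem~\ref{regproperties}(6)), the Clifford formula $[D^W_E,f]=c(\mathrm{d}f)$, and standard elliptic regularity on $W$.

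First I would show that $D^W_E$ with the minimal domain $\mathcal{E}_W^{\mini}$ is a symmetric regular operator, odd when $\dim(W)$ is even. Symmetry follows from integration by parts: for $f,g\in H^1_0(W,S_W\otimes E_B)$, the pointwise identity $\langle D^W_E f,g\rangle-\langle f,D^W_E g\rangle=\mathrm{d}\langle c(\cdot)f,g\rangle$ integrates to a boundary term $\int_{\partial W}\langle c(\nu)f,g\rangle$ (with $\nu$ the outward conormal) which vanishes because $f|_{\partial W}=g|_{\partial W}=0$. For regularity, I would invoke Theorem~\ref{regproperties}(6): the localization $\mathpzc{E}_\omega$ at a state $\omega$ on $B$ turns $D^W_E$ into a classical Dirac-type operator with coefficients in the GNS Hilbert space, for which the identity $(D^W_E)^*_\omega=((D^W_E)^*)_\omega$ and regularity reduce to standard self-adjoint extension theory for first-order elliptic operators on $W$.

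Next I would prove that pointwise multiplication by $f\in\Lip_0(W)$ lies in $\Lip(D^W_E)$ continuously. For $g\in\mathcal{E}_W^{\mini}$, the product $fg$ satisfies $fg|_{\partial W}=0$ and $\mathrm{d}(fg)=g\,\mathrm{d}f+f\,\mathrm{d}g\in L^2$ because $\mathrm{d}f\in L^\infty$, hence $f\cdot\Dom(D^W_E)\subseteq\Dom(D^W_E)$; the commutator $[D^W_E,f]=c(\mathrm{d}f)$ is bounded adjointable of norm at most $\|\mathrm{d}f\|_\infty\leq\|f\|_{\Lip_0(W)}$. For the half-closed $(\Lip_0(W^\circ),B)$-chain assertion, the very same computation shows that if $f\in\Lip_0(W^\circ)$ then $f$ vanishes on $\partial W$, so it maps $\Dom((D^W_E)^*)=H^1(W,S_W\otimes E_B)$ into $\mathcal{E}_W^{\mini}$, giving $\Lip_0(W^\circ)\subseteq\Lip((D^W_E)^*,D^W_E)$. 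The locally compact resolvent condition $f(1+(D^W_E)^*D^W_E)^{-1}\in\K_B(\mathpzc{E}_W)$ for $f\in C_0(W)$ would be reduced, via the identification $\mathpzc{E}_W\cong p_{\C\ell}L^2(W,B^N)$, to $B$-compactness of $f(1+\Delta_W)^{-1}\otimes 1_B$ on $L^2(W,B^N)$; this in turn follows because $f(1+\Delta_W)^{-1}$ is compact on the Hilbert space $L^2(W)$ (by Rellich, after a compactly supported cutoff, valid whether $W$ is compact with boundary or complete), and tensoring a compact Hilbert space operator with $1_B$ yields a $B$-compact operator.

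Finally, for the closed-cycle claim when $W$ is complete, the minimal and maximal domains of $D^W_E$ coincide by the Chernoff-type essential self-adjointness theorem carried over to Hilbert modules via finite propagation speed (see \cite{hankpapsch}), so $D^W_E=(D^W_E)^*$, and the locally compact resolvent estimate already obtained promotes the symmetric chain to a cycle. The main obstacle I anticipate is the careful justification of regularity and self-adjointness in the Hilbert module setting, where my strategy is to reduce every operator-level statement to a classical one on Hilbert space through the local-global principle of Theorem~\ref{regproperties}, and to cite the Hilbert module treatment of essential self-adjointness in \cite{hankpapsch} for the complete case.
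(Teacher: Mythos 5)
Your proof is correct in broad strokes but takes a genuinely different route from the paper. The paper's proof is a short bootstrap: it asserts that the \emph{scalar} case $(L^2(W,S_W),D_W^{\min})$ is a symmetric $(\Lip(W),\C)$-chain restricting to a half-closed $(\Lip_0(W^\circ),\C)$-chain (without unwinding the verification), then obtains the general statement by first exterior-tensoring with the $(\C,B)$-cycle $(B,0)$ via Subsection~\ref{extprodofsymsect}, and then twisting by the Hermitian projective module with connection $(\Lip(W,E_B),\nabla_E)$ via Lemma~\ref{prodwithkthe}. Your approach instead verifies all the axioms directly for the twisted, $B$-linear operator $D^W_E$: symmetry via Green's formula, regularity via the local--global principle, the Lipschitz condition via the Clifford formula $[D^W_E,f]=c(\rmd f)$, half-closedness from the trace characterization of $H^1_0$, and locally compact resolvent via Rellich plus tensoring with $1_B$. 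The paper's modular route is shorter and reuses earlier infrastructure but delegates the analytic content to the scalar base case (which it takes as clear); your route is longer but self-contained and makes that analytic content explicit in the general $B$-linear case. Two small points you should tighten: (i) to apply Theorem~\ref{regproperties}(6) you need to establish semi-regularity first and then actually compute the localizations $(D^W_E)_\omega$, $((D^W_E)^*)_\omega$ and identify their domains with $H^1_0$ and $H^1$ of the localized Hilbert-space bundle $S_W\otimes E_\omega$ -- this uses elliptic regularity for operators with Hilbert-space coefficients and should not be waved through as ``classical''; (ii) for the locally compact resolvent you compare $(D^W_E)^*D^W_E$ to $\Delta_W\otimes 1_B$ under $p_{\C\ell}$, which is correct up to relatively bounded lower-order terms, but you should say so, since with minimal boundary conditions $(D^W_E)^*D^W_E$ is not literally of the form $\Delta\otimes 1_B$. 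Your treatment of the complete case coincides with the paper's: both cite \cite{hankpapsch}.
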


\begin{proof}
If the manifold $W$ is complete, it follows from \cite[Theorem 2.3]{hankpapsch} that the pair  $(\mathpzc{E}_W,D^W_E)$ is a $(\Lip_0(W),B)$-cycle. If $W$ is a compact manifold with boundary, we use the notation $(\Lip(W,E_B),\nabla_E)$ for the $(\Lip(W),\Lip(W,B))$-module with connection defined from $(E_B,\nabla_E)$. On $W$ the spin$^c$-Dirac operator with its minimal domain $H^1_0(W,S_W)$ will be denoted by $D_W^{\textnormal{min}}$, it is clear that $(L^2(W,S_W),D_W^{\textnormal{min}})$ is a symmetric $(\Lip(W),\C)$-chain restricting to a half-closed $(\Lip_0(W^\circ),\C)$-chain. We let $(B,0)$ denote the obvious $(\C,B)$-cycle. The result follows from noting that 
$$ \gamma_0(W,E_B,\nabla_E)=(\Lip(W,E_B),\nabla_E)\otimes_{\Lip(W,B)} \left((L^2(W,S_W),D_W^{\textnormal{min}})\hat{\boxtimes} (B,0)\right).$$
and applying the results of Subsections \ref{extprodofsymsect} and \ref{subseprodkth}.
\end{proof}

\begin{remark}
It follows from the proof of Lemma \ref{firstgammadef} that $(D^W_E)^*$ is the operator defined from equipping the differential expression given by $1_{E_B}\otimes_{\nabla_E} D^W$ with its maximal domain $\mathcal{E}_W^{\textnormal{max}}$ which is the graph closure of $H^1(W,S_W\otimes E_B)$.
\end{remark}

Let $X$ be a Riemannian manifold and $Y\subseteq X$ a smooth sub-manifold. The subalgebra $\Lip_0(X\setminus Y)\subseteq \Lip_0(X)$ is a closed ideal. A function $f:W\to X$ that is a globally Lipschitz map and satisfies $f(\partial W)\subseteq Y$, will be denoted by  
$$f:(W,\partial W)\to (X,Y).$$
The map $f$ induces a $*$-homomorphism $f^*:\Lip_0(X)\to \Lip_0(W)$ which restricts to a $*$-homomorphism $f^*:\Lip_0(X\setminus Y)\to \Lip_0(W^\circ)$. The next proposition follows from Proposition \ref{semiisoclass} and Lemma \ref{firstgammadef}.

\begin{prop}
With $(W,E_B,\nabla_E)$ as above and $f:(W,\partial W)\to (X,Y)$ being a globally Lipschitz mapping, $f_*(\mathpzc{E}_W,D^W_E):=(f^*)^*(\mathpzc{E}_W,D^W_E)$ is a symmetric $(\Lip_0(X),B)$-chain restricting to a half-closed $(\Lip_0(X\setminus Y),B)$-chain. If $W$ is complete, $(\mathpzc{E}_W,D^W_E)$ is a  $(\Lip_0(X),B)$-cycle.
\end{prop}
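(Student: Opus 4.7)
The plan is to combine Lemma \ref{firstgammadef} with the contravariant functoriality of the chain semigroups recorded in Proposition \ref{semiisoclass}. The preceding discussion already establishes that a globally Lipschitz map $f:(W,\partial W)\to (X,Y)$ induces a continuous $*$-homomorphism $f^*:\Lip_0(X)\to \Lip_0(W)$, and that since $f(\partial W)\subseteq Y$ the preimage $f^{-1}(X\setminus Y)$ lies in $W^\circ$, so $f^*$ further restricts to a $*$-homomorphism $\Lip_0(X\setminus Y)\to \Lip_0(W^\circ)$. These are the only algebraic ingredients needed from the hypotheses on $f$.

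By Lemma \ref{firstgammadef}, the pair $(\mathpzc{E}_W,D^W_E)$ is a symmetric $(\Lip_0(W),B)$-chain which restricts to a half-closed $(\Lip_0(W^\circ),B)$-chain, and is a closed $(\Lip_0(W),B)$-cycle when $W$ is complete. Applying the pullback construction $\alpha^*(\mathpzc{E},D)=(\alpha^*\mathpzc{E},D)$ of Equation \eqref{concon} with $\alpha=f^*$ produces precisely $f_*(\mathpzc{E}_W,D^W_E):=(f^*)^*(\mathpzc{E}_W,D^W_E)$. Proposition \ref{semiisoclass} then guarantees that this pullback preserves each of the three classes: the underlying Hilbert $C^*$-module and regular operator are unchanged, with only the left action composed with $f^*$, so symmetry, half-closedness, and self-adjointness of the operator are inherited automatically. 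For the locally compact resolvent condition at a typical $g\in \Lip_0(X\setminus Y)$ (respectively $g\in \Lip_0(X)$ in the complete case), one simply observes that $(g\circ f)(1+(D^W_E)^{*}D^W_E)^{-1}\in \K_B(\mathpzc{E}_W)$ because $g\circ f\in \Lip_0(W^\circ)$ (respectively $\Lip_0(W)$) and the original chain from Lemma \ref{firstgammadef} is half-closed (respectively closed).

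No additional analytic input is required, so there is no real obstacle; the entire proof reduces to the $*$-algebraic inputs on Lipschitz algebras plus direct invocations of Proposition \ref{semiisoclass} and Lemma \ref{firstgammadef}. The only bookkeeping point worth emphasizing is that the restriction of $f^*$ sends the ideal $\Lip_0(X\setminus Y)$ into $\Lip_0(W^\circ)$, which is a direct consequence of the boundary condition $f(\partial W)\subseteq Y$; everything else follows formally from contravariance.
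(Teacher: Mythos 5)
Your proof is correct and follows exactly the route the paper intends: the paper itself offers no written proof beyond the single remark that the proposition ``follows from Proposition \ref{semiisoclass} and Lemma \ref{firstgammadef},'' together with the observation preceding the statement that $f^*$ restricts to $\Lip_0(X\setminus Y)\to\Lip_0(W^\circ)$. You have simply unpacked that remark carefully, including the key bookkeeping point that $f(\partial W)\subseteq Y$ forces $f^*$ to respect the ideals, so nothing is missing.
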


\section{Bordisms of unbounded $KK$-cycles}
\label{bordismsection}

The purpose of this section is to present a notion of bordism in unbounded $KK$-theory stemming from \cite{hilsumcmodbun, hilsumfol, hilsumbordism}. The goal is to show that bordism defines an equivalence relation on the semigroup of unbounded $KK$-cycles. Beyond establishing a toolbox for working with bordisms in unbounded $KK$-theory, the main result of this section states that the bordism group of closed cycles indeed is an abelian group, which maps to the standard $KK$-group via the bounded transform.

\subsection{Symmetric chains with boundary}

Before defining what a bordism is, we will need a notion of boundary of symmetric chains. The definition is due to Hilsum; it is found in \cite[Section 3]{hilsumcmodbun} in the odd case and in \cite[Section 5]{hilsumbordism} in the even case. In fact, a number of the results in this subsection are due to Hilsum; we collect them here for convience.

The notion of the boundary of a symmetric chain used here is similar to the condition on a manifold of requiring the existence of a collar neighborhood where all the geometric structures are of product type. We remark that for the topological applications we have in mind, such a ``classical" notion suffices albeit a more ``noncommutative" notion of a boundary remains an interesting and tractable goal for future research. Recall the notion of dimension modulo $2$ from Definition \ref{cycleschaindeefnew}.

\begin{define}
Let $(\mathpzc{F}, Q)$ be a symmetric $(\mathcal{A},B)$-chain of dimension $k\mod 2$ and $(\mathpzc{E}, D)$ be an $(\mathcal{A},B)$-cycle of dimension $k-1\mod 2$. We say that $(\theta,p)$ is \emph{boundary data for} $(\mathpzc{E}, D)$ \emph{relative to} $(\mathpzc{F}, Q)$ if 
\begin{enumerate}
\item $p\in \End^*_B(\mathpzc{F})$ is a projection commuting with the $A$-action, required to be even if $k=0\mod 2$.
\item $\theta:p\mathpzc{F}\xrightarrow{\sim} L^2[0,1]\hat{\boxtimes} \mathpzc{E}$ is an isomorphism of $(A,B)$-Hilbert $C^*$-modules, required to be graded if $k=0\mod 2$.
\end{enumerate}
\end{define}

We equip $L^2[0,1]$ with the point wise action of $C[0,1]$. The following construction is clear from the definition and should be compared with \cite[Examples 3.3 and 3.7]{hilsumcmodbun}.

\begin{prop}
Assume that $(\theta,p)$ is boundary data for the symmetric chain $(\mathpzc{F}, Q)$ relative to the closed cycle $(\mathpzc{E}, D)$. For $\phi\in C[0,1]$ and $a\in A$, we set 
\begin{equation}\label{b}b(\phi\otimes a):=\left(\theta(\phi\otimes \mathrm{id}_{\mathpzc{E}})\theta^{-1}\cdot p +\phi(1)(1-p)\right)\pi(a).\end{equation}
Under $b$, $\mathpzc{F}$ forms a $(C[0,1]\otimes A,B)$-Hilbert $C^*$-module.
\end{prop}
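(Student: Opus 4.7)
The plan is to verify three facts: (i) for each $\phi\in C[0,1]$ and $a\in A$, the expression in \eqref{b} defines a bounded adjointable operator on $\mathpzc{F}$; (ii) the assignment $b$ is a $*$-homomorphism on the algebraic tensor product $C[0,1]\otimes^{\alg}A$; and (iii) it extends by continuity to a $*$-homomorphism from the $C^{*}$-tensor product $C[0,1]\otimes A$ into $\End^{*}_B(\mathpzc{F})$. The resulting $C[0,1]\otimes A$-action then gives $\mathpzc{F}$ the desired Hilbert $C^{*}$-bimodule structure, and taking $\phi\equiv 1$ recovers the original $A$-action since $b(1\otimes a)=(p+(1-p))\pi(a)=\pi(a)$.

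For (i), decompose $\mathpzc{F}=p\mathpzc{F}\oplus(1-p)\mathpzc{F}$. Pointwise multiplication by $\phi$ is a bounded adjointable operator on $L^{2}[0,1]$, hence $\phi\otimes\mathrm{id}_{\mathpzc{E}}$ is bounded adjointable on $L^{2}[0,1]\hat{\boxtimes}\mathpzc{E}$; transporting it through the unitary isomorphism $\theta$ yields a bounded adjointable operator on $p\mathpzc{F}$, which precomposed with the projection $p$ becomes a bounded adjointable operator on all of $\mathpzc{F}$. The second summand $\phi(1)(1-p)$ is manifestly bounded adjointable, and right-multiplication by $\pi(a)$ preserves both properties.

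For (ii), linearity is clear, so I focus on multiplicativity and the involution. The crucial observation is that $\theta$, being an isomorphism of $(A,B)$-Hilbert modules, intertwines the $A$-action on $p\mathpzc{F}$ with the action $\mathrm{id}_{L^{2}[0,1]}\otimes\pi$ on $L^{2}[0,1]\hat{\boxtimes}\mathpzc{E}$; combined with the hypothesis that $p$ commutes with $\pi(A)$, this implies that both summands of \eqref{b} commute with $\pi(a)$. Multiplicativity $b(\phi_{1}\otimes a_{1})b(\phi_{2}\otimes a_{2})=b(\phi_{1}\phi_{2}\otimes a_{1}a_{2})$ then reduces to expanding
\[
\bigl(\theta(\phi_{1}\otimes\mathrm{id})\theta^{-1}p+\phi_{1}(1)(1-p)\bigr)\bigl(\theta(\phi_{2}\otimes\mathrm{id})\theta^{-1}p+\phi_{2}(1)(1-p)\bigr),
\]
observing that the two cross terms vanish by $p(1-p)=0$, that the first diagonal term collapses to $\theta(\phi_{1}\phi_{2}\otimes\mathrm{id})\theta^{-1}p$ since $\theta^{-1}$ takes values in $p\mathpzc{F}$, and that the second diagonal term equals $\phi_{1}(1)\phi_{2}(1)(1-p)$. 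The adjoint identity $b(\phi\otimes a)^{*}=b(\bar\phi\otimes a^{*})$ is immediate from unitarity of $\theta$, self-adjointness of $p$, $(\phi\otimes\mathrm{id})^{*}=\bar\phi\otimes\mathrm{id}$, and $\pi(a)^{*}=\pi(a^{*})$.

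For (iii), part (ii) in particular produces two commuting $*$-representations $C[0,1]\to \End^{*}_B(\mathpzc{F})$ via $\phi\mapsto b(\phi\otimes 1)$, and $A\to \End^{*}_B(\mathpzc{F})$ via $a\mapsto \pi(a)$. Since $C[0,1]$ is nuclear, the minimal and maximal $C^{*}$-tensor norms on $C[0,1]\otimes A$ coincide, and any pair of commuting $*$-representations into a $C^{*}$-algebra extends uniquely to a $*$-homomorphism from the $C^{*}$-completion. The main obstacle is the bookkeeping in the multiplicativity expansion of (ii); once the intertwining of the $A$-actions by $\theta$ is in hand and $p(1-p)=0$ is used to annihilate the cross terms, everything else reduces to unitarity of $\theta$ and the multiplicative structure of $C[0,1]$.
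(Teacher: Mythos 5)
The paper gives no proof of this proposition --- it is declared ``clear from the definition'' with a pointer to Hilsum's Examples 3.3 and 3.7 --- so there is nothing in the source to compare against; your argument supplies exactly the verification that was left implicit, and it is correct. The decomposition $\mathpzc{F}=p\mathpzc{F}\oplus(1-p)\mathpzc{F}$, the use of $\theta$ as an $(A,B)$-bimodule unitary to intertwine the $A$-action with $\mathrm{id}\otimes\pi$, the observation that cross terms die because $p(1-p)=0$ and $\theta^{-1}$ lands in $p\mathpzc{F}$, and the nuclearity of $C[0,1]$ to pass from the algebraic to the $C^{*}$-tensor product are precisely the right ingredients. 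One small point worth flagging: as printed, the formula $\theta(\phi\otimes\mathrm{id}_{\mathpzc{E}})\theta^{-1}\cdot p$ has its composition order reversed relative to the declared direction $\theta\colon p\mathpzc{F}\xrightarrow{\sim}L^{2}[0,1]\hat{\boxtimes}\mathpzc{E}$; the intended operator is $\theta^{-1}(\phi\otimes\mathrm{id}_{\mathpzc{E}})\theta\,p$, and you read it that way, which is the only reading that makes the domains match. Your treatment is complete and the approach is the natural one.
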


We sometimes shorten notation and write $b(\phi)$ for $b(\phi\otimes 1)$. Recall the construction of $\Psi(D)$ from Example \ref{psiint} (see page \pageref{psiint}).

\begin{define}
\label{boundarydef}
Let $(\mathpzc{F}, Q)$ be a symmetric chain and $(\mathpzc{E}, D)$ a closed $(\mathcal{A},B)$-cycle. We say that $(\mathpzc{E}, D)$ is a \emph{boundary of} $(\mathpzc{F}, Q)$ \emph{with boundary data} $(\theta,p)$ if 
\begin{enumerate}
\item For $\phi\in C^\infty_c(0,1]$,  
$$b(\phi)\Dom Q^*\subseteq \Dom Q\quad\mbox{and}\quad Q^*b(\phi)=Qb(\phi)\quad\mbox{on}\quad \Dom Q^*.$$
\item For $\phi\in C^\infty_c(0,1)$,  
$$\phi \Dom \Psi(D)=\theta b(\phi)\Dom Q \quad \mbox{and} \quad Q=\theta^{-1}\Psi(D)\theta\quad \mbox{on}\quad b(\phi)\Dom Q.$$
\item For $\phi_1,\phi_2\in C^\infty[0,1]$ satisfying $\phi_1\phi_2=0$,  
$$b(\phi_1) Q b(\phi_2)=0.$$
\end{enumerate}
\end{define}

\begin{remark}
We remark here that the specific choice of the interval $[0,1]$ is not important. We will in fact start to work with general compact intervals $[a,b]$ when constructing boundaries. Below, in Subsection \ref{homotopylemmasection}, we will see that up to the soon-to-be defined notion of bordism the choice of interval is irrelevant. 

\end{remark}

\begin{lemma}
\label{cylinderconstruction}\cite[Example 5.3]{hilsumbordism}. Any closed $(\mathcal{A},B)$-cycle is the boundary of a symmetric $(\mathcal{A},B)$-chain.
\end{lemma}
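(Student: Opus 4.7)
The plan is to realize $(\mathpzc{E},D)$ as the boundary of a ``half-line cylinder'' chain. Set $\mathpzc{F}:=L^2([0,\infty))\hat{\boxtimes}\mathpzc{E}$ and $Q:=D\hat{\boxtimes}\partial_x$, where $\partial_x=i\frac{d}{dx}$ is taken on $L^2([0,\infty))$ with minimal domain $H^1_0([0,\infty))$ (Dirichlet at $t=0$); its adjoint has domain $H^1([0,\infty))$. The graded $\hat{\boxtimes}$-formulas from Subsection~\ref{extprodofsymsect} give $Q$ the correct $\Z/2$-parity opposite to that of $(\mathpzc{E},D)$. Adapting the proof of Theorem~\ref{extprodint} with the auxiliary self-adjoint operator now being $i\partial_x$ on $\R$ in place of $\partial_x^{\textnormal{per}}$, one verifies that $Q$ is a symmetric regular operator on $\mathpzc{F}$ with adjoint the closure of $D^*\hat{\boxtimes}\partial_x^{\max}$. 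Continuity of $\A\to\Lip(Q)$ is automatic since $a\in\A$ acts only on the $\mathpzc{E}$-factor and $[Q,a]=[D,a]\hat{\boxtimes}1$ is bounded because $(\mathpzc{E},D)$ is a cycle; thus $(\mathpzc{F},Q)$ is a symmetric $(\A,B)$-chain.

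To realize $(\mathpzc{E},D)$ as the boundary, I would take $p$ to be multiplication by $\chi_{[0,1]}$ on the $L^2([0,\infty))$-factor and let $\theta:p\mathpzc{F}\to L^2[0,1]\hat{\boxtimes}\mathpzc{E}$ be the identity. The crucial observation is that $b(\phi)$ is then multiplication by the continuous extension $\tilde\phi:[0,\infty)\to\C$ with $\tilde\phi|_{[0,1]}=\phi$ and $\tilde\phi|_{[1,\infty)}=\phi(1)$, which is globally Lipschitz. For condition~(1), any $\phi\in C^\infty_c(0,1]$ has $\tilde\phi(0)=0$, so multiplication by $\tilde\phi$ carries $H^1([0,\infty))\otimes\Dom D$ into $H^1_0([0,\infty))\otimes\Dom D=\Dom Q$. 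For condition~(2), $\phi\in C^\infty_c(0,1)$ makes $\tilde\phi$ vanish outside $[0,1]$; an extension-by-zero argument then identifies $\theta b(\phi)\Dom Q$ with $\phi \cdot H^1_0[0,1]\otimes\Dom D=\phi\Dom\Psi(D)$. For condition~(3), the pointwise identity $\phi_1\phi_2=0$ implies $\phi_1\phi_2'=0$, since on the open set $\{\phi_1\neq 0\}$ the function $\phi_2$ vanishes identically so $\phi_2'$ does too; the same identities pass to the $\tilde\phi_i$ on $[0,\infty)$, and a Leibniz computation $Q(\tilde\phi_2 x)=\tilde\phi_2 Qx + i\gamma_{\mathpzc{E}}\tilde\phi_2'x$ followed by left multiplication by $\tilde\phi_1$ yields $b(\phi_1)Qb(\phi_2)=0$.

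The main obstacle is choosing the ambient module correctly. The naive attempt $\mathpzc{F}=L^2[0,1]\hat{\boxtimes}\mathpzc{E}$ with $Q=\Psi(D)$ fails condition~(1), since for $\phi\in C^\infty_c(0,1]$ with $\phi(1)\neq 0$ the function $\phi x$ does not lie in the bilateral Dirichlet domain $H^1_0[0,1]$ at the right endpoint; substitutes such as $L^2(\R)$ with the self-adjoint $\Psi_\infty(D)$, or $L^2[0,2]$ with two-sided Dirichlet, instead force $b(\phi)$ to have jump discontinuities at $t=0$ or $t=2$ that destroy the chain structure. The half-line is essentially forced: it provides just enough ``room'' beyond $t=1$ for the multiplier to extend as the continuous constant $\phi(1)$, while keeping a single genuine boundary at $t=0$ to be identified with the cycle $(\mathpzc{E},D)$.
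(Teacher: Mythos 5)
Your construction---the half-line cylinder $L^2[0,\infty)\hat{\boxtimes}\mathpzc{E}$ with $Q=D\hat{\boxtimes}\partial_x^{\textnormal{min}}$, $p$ the characteristic function of $[0,1]$, and $\theta=\mathrm{id}$---is exactly the paper's cylinder construction from Lemma \ref{cylinderconstruction}, and your explicit verification of conditions (1)--(3) of Definition \ref{boundarydef} is correct. The only minor point is that since $D$ is a closed cycle, hence self-adjoint, one can invoke Theorem \ref{sumregul} directly (with the $D$-piece as the self-adjoint summand and the $\partial_x^{\textnormal{min}}$-piece as the symmetric one) rather than adapting Theorem \ref{extprodint}, which is designed for the harder case of two merely symmetric factors; both routes do work.
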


\begin{proof}
Let $(\mathpzc{E}, D)$ be a closed $(\mathcal{A},B)$-cycle. Using Subsection \ref{extprodofsymsect}, and Theorem \ref{sumregul} in particular, we can define the symmetric chain
$$(\mathpzc{E}_{\cyl},D_{\cyl}):=(\mathpzc{E},D)\hat{\boxtimes} (L^2[0,\infty),\partial_x^{\textnormal{min}}),$$
where $\partial_x^{\textnormal{min}}:L^2[0,\infty)\dashrightarrow L^2[0,\infty)$ is the minimal extension of the differential expression $i\frac{\mathrm{d}}{\mathrm{d}x}$. It is clear that the projection $p_{\cyl}$, defined by multiplication by the characteristic function of $[0,1]$, and $\theta_{\cyl}=\mathrm{id}_{L^2[0,1]\otimes \mathpzc{E}}$ satisfies the assumptions of Definition \ref{boundarydef}. Hence $(\mathpzc{E}, D)$ is the boundary of $(\mathpzc{E}_{\cyl},D_{\cyl})$ with boundary data $(\theta_{\cyl},p_{\cyl})$.
\end{proof}

\subsection{Bordisms of closed cycles}

For a locally convex topological vector space $\A$, we use the notation
$$C^1_0((0,1],\mathcal{A}):=\left\{\varphi\in C_0((0,1],\mathcal{A})\cap C^1([0,1],\mathcal{A}): \;\varphi'(0)=0\right\}.$$
Equivalently, if $\mathcal{A}$ is a Banach space, $C^1_0((0,1],\mathcal{A})$ is the closure of $C^\infty_c((0,1],\mathcal{A})$ inside $\Lip([0,1],\mathcal{A})$. Similarly, we define $C^1_0((0,1),\mathcal{A})$ which coincides with the closure of $C^\infty_c((0,1),\mathcal{A})$ inside $\Lip([0,1],\mathcal{A})$ if $\mathcal{A}$ is a Banach space.

\begin{prop}[Lemma 5.4 of \cite{hilsumbordism}]
\label{cyclessss}
Assume that $(\mathpzc{E}, D)$ is a boundary of $(\mathpzc{F}, Q)$ with boundary data $(\theta,p)$. The $(C_0((0,1],A),B)$-Hilbert $C^*$-module structure on $\mathpzc{F}$ induced by $b$ in Equation \eqref{b} makes $(\mathpzc{F}, Q)$ into a symmetric $(C^1_0((0,1],\mathcal{A}),B)$-chain such that $b(C^1_0((0,1],\mathcal{A}))\subseteq \End_B^*(Q^*,Q)$. This symmetric chain restricts to a half-closed $(C^1_0((0,1),\mathcal{A}),B)$-chain. 
\end{prop}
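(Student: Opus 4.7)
The plan is to establish three things in sequence: (i) that $b$ defines a continuous $*$-homomorphism $C^1_0((0,1], \mathcal{A}) \to \Lip(Q)$, (ii) that its image lies in $\Lip(Q^*, Q)$, and (iii) that restriction to functions also vanishing at $x=1$ yields a half-closed chain. The overall strategy is to verify the operator-theoretic estimates first on the dense subspace of smooth elementary tensors, and then pass to the closure in $\Lip([0,1], \mathcal{A})$, which by the preceding discussion equals $C^1_0((0,1], \mathcal{A})$.

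The central ingredient is Condition (2) of Definition \ref{boundarydef}, which identifies $Q$ with $\theta^{-1}\Psi(D)\theta$ on $b(C^\infty_c(0,1))\Dom(Q)$. For $\phi \in C^\infty_c(0,1)$ and $a\in\mathcal{A}$, this identification together with the Leibniz rule for $\Psi(D)$ from Example \ref{psiint} yields an expression for $[Q, b(\phi \otimes a)]$ as $\theta^{-1}$ applied to a sum of bounded operators built from $[D, \pi(a)]\otimes\phi$ and $\pi(a)\otimes\phi'$ (up to signs and grading). This commutator extends boundedly with norm controlled by $\|\phi\|_{\Lip([0,1])} \cdot \|a\|_{\Lip(D)}$, via the continuity of $\pi:\mathcal{A}\to \Lip(D)$. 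Linearity and density cover all of $C^\infty_c((0,1),\mathcal{A})$ with a uniform Lipschitz estimate.

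To accommodate $\phi \in C^\infty_c((0,1],\mathcal{A})$ not vanishing at $x=1$, I would split $\phi = \phi_0 + \phi_1$ with $\phi_0$ supported in $(0,1)$, handled above, and $\phi_1$ supported near $x=1$. For $\phi_1$, Condition (1) provides $b(\phi_1)\Dom(Q^*) \subseteq \Dom(Q)$, and Condition (3) supplies the spatial locality of $Q$ needed for a Leibniz-type estimate at the endpoint. Together these give a uniform commutator bound in the $\Lip([0,1],\mathcal{A})$-norm which, combined with the density of $C^\infty_c((0,1],\mathcal{A})$ in $C^1_0((0,1],\mathcal{A})$, extends $b$ continuously to all of $C^1_0((0,1],\mathcal{A})$ inside $\Lip(Q)$. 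The inclusion $b(C^1_0((0,1], \mathcal{A})) \subseteq \Lip(Q^*, Q)$ then follows from the same approximation argument applied to Condition (1), using the closedness of $Q$ and $Q^*$ to preserve the domain relation under passage to the limit.

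For half-closedness, I would observe that $\phi \in C^1_0((0,1),\mathcal{A})$ vanishes at both endpoints so $b(\phi) = \theta^{-1}(\phi\otimes\mathrm{id})\theta\cdot p$ is supported in the collar, where $Q = \theta^{-1}\Psi(D)\theta$. Using the anticommutation identity from Equation (\ref{anitcommst}), $(1+Q^*Q)^{-1}$ on the collar factors through the resolvent of $-\partial_x^2$ on $L^2[0,1]$ with Dirichlet-type endpoint conditions, which is compact by Rellich, together with the resolvent of $D^*D$ on $\mathpzc{E}$, which is $A$-locally compact because $(\mathpzc{E},D)$ is closed. Consequently $\pi(a)b(\phi)(1+Q^*Q)^{-1}$ is $B$-compact for every $a\in A$. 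The hardest part is the boundary analysis at $x=1$: Condition (2) fails there, so one must combine the domain improvement from Condition (1) with the spatial locality from Condition (3) to produce a Lipschitz-uniform commutator estimate that survives the density argument; the regularity class $C^1_0$ is precisely calibrated for this limiting procedure to close.
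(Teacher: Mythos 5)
The paper cites this result to \cite[Lemma 5.4]{hilsumbordism} and provides no proof of its own, so there is no in-text argument to compare against. Assessed on its own terms, your sketch identifies the right structural ingredients: the Leibniz computation on $b(C^\infty_c(0,1))\Dom Q$ coming from Condition (2) and the product formula for $\Psi(D)$; the relevance of Conditions (1) and (3) at the endpoint $t=1$; and Rellich-type compactness on the collar for half-closedness. But two key steps are asserted rather than argued.

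First, the endpoint commutator estimate. For $\phi$ with support reaching $t=1$, Condition (2) gives no information, since $Q=\theta^{-1}\Psi(D)\theta$ holds only on $b(\psi)\Dom Q$ for $\psi\in C^\infty_c(0,1)$. You invoke Conditions (1) and (3) for ``a Leibniz-type estimate at the endpoint,'' but neither condition is of Leibniz type: (1) is a domain-mapping statement and (3) a vanishing statement for disjoint supports, and you never explain how to extract the uniform bound $\|[Q,b(\phi\otimes a)]\|\lesssim\|\phi\|_{\Lip}\|a\|_{\Lip(D)}$ from them. This is the genuinely hard point; without it the density argument passing from $C^\infty_c((0,1],\mathcal{A})$ to $C^1_0((0,1],\mathcal{A})$ (which itself needs a closedness argument for $Q$, since one only has uniform boundedness along an approximating sequence) cannot close. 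Second, the half-closedness argument: the claim that ``$(1+Q^*Q)^{-1}$ on the collar factors through the resolvent of $-\partial_x^2$ with Dirichlet conditions together with $D^*D$'' is not meaningful as written, because $(1+Q^*Q)^{-1}$ is the resolvent of the global operator on $\mathpzc{F}$, and for $\phi\in C^1_0((0,1),\mathcal{A})$ --- which vanishes at the endpoints but is not compactly supported --- the collar identification does not hold directly on $b(\phi)\Dom Q$. What is actually required is a parametrix/resolvent-commutation argument of the kind carried out explicitly in the proof of Theorem \ref{gluingthm} of this paper (the operators $r_\lambda$ built from localized resolvents and cutoffs), which transfers the local compactness of $a(1+\Psi(D)^*\Psi(D))^{-1}$ on $L^2[0,1]\hat{\boxtimes}\mathpzc{E}$ to $b(\phi\otimes a)(1+Q^*Q)^{-1}$ up to compact error terms. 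Your sketch gestures at the conclusion but omits this bridge.
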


\begin{define}
If $(\mathpzc{E}, D)$ is a boundary of $(\mathpzc{F}, Q)$ with boundary data $(\theta,p)$ such that the associated symmetric $(C^\infty_c((0,1],\mathcal{A}),B)$-chain $(\mathpzc{F}, Q)$ is half-closed, then we say that $(\mathpzc{F}, Q,\theta,p)$ is a \emph{bordism with boundary} $(\mathpzc{E}, D)$. We write 
$$\partial(\mathpzc{F}, Q,\theta,p)=(\mathpzc{E}, D).$$
If $(\mathpzc{E}, D)$ is the boundary of a bordism, we say that $(\mathpzc{E}, D)$ is \emph{nullbordant}.

Two closed cycles $(\mathpzc{E}_1, D_1)$ and $(\mathpzc{E}_2, D_2)$ are said to be \emph{bordant}, written 
$$(\mathpzc{E}_1, D_1)\sim_{\bor}(\mathpzc{E}_2, D_2),$$ 
if $(\mathpzc{E}_1, D_1)+(-(\mathpzc{E}_2, D_2))$ is nullbordant. 
\end{define}

\begin{remark}
\label{bordismandbddtrans}
By \cite[Theorem 6.2]{hilsumbordism},  if $(\mathpzc{E},D)\sim_{\bor} 0$, then $[\mathpzc{E},b(D)]=0$ in $KK_*(A,B)$. This fact is the main motivation for studying bordism as a relation on unbounded $KK$-cycles.
\end{remark}

\begin{define}
We say that a bordism $(\mathpzc{F},Q,\theta,p)$ has \emph{empty boundary} if $p=0$.
\end{define}

\begin{prop}
A bordism with empty boundary defines a closed cycle.
\end{prop}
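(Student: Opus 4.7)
The plan is to unpack Definition \ref{boundarydef} under the hypothesis $p=0$ and observe that the bordism axioms then collapse onto the axioms for a closed $(\mathcal{A},B)$-cycle. When $p=0$, the prescription \eqref{b} simplifies to $b(\phi\otimes a)=\phi(1)\pi(a)$, so the $C([0,1])\otimes A$-action on $\mathpzc{F}$ is the pullback of the given $A$-action along evaluation at $1$. Conditions (2) and (3) of Definition \ref{boundarydef} hold for free: every $\phi\in C_{c}^{\infty}(0,1)$ satisfies $\phi(1)=0$, hence $b(\phi)=0$, and $(\phi_{1}\phi_{2})(1)=0$ whenever $\phi_{1}\phi_{2}=0$.

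To promote symmetry of $Q$ to self-adjointness, I would pick $\phi\in C_{c}^{\infty}((0,1],\mathcal{A})$ with $\phi(1)=1$, giving $b(\phi)=\mathrm{id}_{\mathpzc{F}}$ (interpreted via an approximate unit for $A$ if $A$ is non-unital). Condition (1) of Definition \ref{boundarydef} then reads $\Dom Q^{*}\subseteq\Dom Q$ and $Q^{*}=Q$ on $\Dom Q^{*}$. Combined with the symmetry $Q\subseteq Q^{*}$ built into the chain structure, this forces $\Dom Q=\Dom Q^{*}$ and $Q=Q^{*}$.

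It remains to obtain the locally compact resolvent. The bordism hypothesis asserts that $(\mathpzc{F},Q)$ is half-closed over $C_{c}^{\infty}((0,1],\mathcal{A})$, so $b(c)(1+Q^{*}Q)^{-1}\in\K_{B}(\mathpzc{F})$ for every $c$ in the ambient $C^{*}$-algebra. Taking $c=\phi\otimes a$ with $\phi(1)=1$ and using $Q=Q^{*}$ yields $\pi(a)(1+Q^{2})^{-1}\in\K_{B}(\mathpzc{F})$ for every $a\in A$. The main step is to transfer this compactness to $(i\pm Q)^{-1}$. Since $Q$ is self-adjoint and regular there is a continuous functional calculus $C_{0}(\mathbb{R})\to\End_{B}^{*}(\mathpzc{F})$, and a standard truncation argument---decomposing $g\in C_{0}(\mathbb{R})$ as $\chi_{n}g+(1-\chi_{n})g$ with $\chi_{n}\in C_{c}(\mathbb{R})$ increasing to $1$---gives
\[
\pi(a)\chi_{n}(Q)g(Q)=\pi(a)(1+Q^{2})^{-1}\cdot\bigl((1+t^{2})\chi_{n}(t)g(t)\bigr)(Q),
\]
which lies in $\K_{B}(\mathpzc{F})$ because $\K_{B}(\mathpzc{F})$ is a two-sided ideal, while $\|\pi(a)(1-\chi_{n}(Q))g(Q)\|\leq\|\pi(a)\|\,\|(1-\chi_{n})g\|_{\infty}\to 0$. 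Hence $\pi(a)g(Q)\in\K_{B}(\mathpzc{F})$; applied to $g(t)=(i\pm t)^{-1}$ this gives $\pi(a)(i\pm Q)^{-1}\in\K_{B}(\mathpzc{F})$.

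Combined with regularity of $Q$ and continuity of $\pi:\mathcal{A}\to\Lip(Q)$, both inherited from the chain structure, this verifies every clause of Definition \ref{cycleschaindeef} for a closed $(\mathcal{A},B)$-cycle. The only non-formal point in the argument is the functional-calculus transfer between $(1+Q^{2})^{-1}$ and $(i\pm Q)^{-1}$; the degeneration of conditions (1)--(3) of Definition \ref{boundarydef} when $p=0$ is straightforward bookkeeping.
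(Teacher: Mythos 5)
Your proof is correct and follows essentially the same route as the paper: use $p=0$ to reduce $b(\phi\otimes a)$ to $\phi(1)\pi(a)$, pick $\phi_0$ with $\phi_0(1)=1$ so that condition (1) of the boundary definition forces $\Dom(Q^*)\subseteq\Dom(Q)$ and hence $Q=Q^*$, and then read off the locally compact resolvent from the half-closedness. The only difference is that you spell out the transfer from $\pi(a)(1+Q^2)^{-1}\in\K_B(\mathpzc{F})$ to $\pi(a)(i\pm Q)^{-1}\in\K_B(\mathpzc{F})$ via truncation in the continuous functional calculus, whereas the paper's proof treats this equivalence (standard for a self-adjoint regular operator with a $*$-algebra action) as implicit; your added detail is harmless and the observation that conditions (2)--(3) degenerate when $p=0$ is redundant, since those conditions are part of the hypothesis.
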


\begin{proof}
If $(\mathpzc{F},Q,0,0)$ is a bordism, $b(\phi\otimes a)=\phi(1)\pi(a)$. Take $\phi_0\in C^\infty_c(0,1]$ with $\phi_0(1)=1$, hence $b(\phi_0)=1$. Since $(\mathpzc{F},Q,0,0)$ is a bordism the associated $(C^\infty_c((0,1],\mathcal{A}),B)$-chain is half-closed and
$$\Dom(Q^*)=b(\phi_0)\Dom(Q^*)\subseteq \Dom Q$$ 
so $Q=Q^*$. Furthermore $a(i\pm Q)^{-1}$ are compact for any $a\in A$, because \[a(i\pm Q)^{-1}=b(\phi_0\otimes a)(i\pm Q)^{-1}\] is compact.
\end{proof}

The motivation for the term ``bordism" in this context comes from the fact that often a bordism in the geometric sense induces a bordism in unbounded $KK$-theory; a list of geometric examples which lead to bordisms in the sense of Definition \ref{boundarydef} are given in Subsection \ref{exHilsumSubSec}. The content of the next proposition deals with the classical case of a manifold with boundary; its proof follows directly from Lemma \ref{firstgammadef}.

\begin{prop}
\label{honestbordismprop}
Suppose that $(W,E_B,\nabla_E)$ is data as in Subsection \ref{mfdwbdryex} and $W$ is a compact manifold with boundary. We let $p$ denote the characteristic function of a collar neighborhood $U$ of $\partial W$ and 
$$\theta:L^2(U,(S_W\otimes E_B)|_U)\to L^2[0,1]\boxtimes L^2(\partial W,S_{\partial W}\otimes E_B|_{\partial W}),$$
the geometric isomorphism constructed from the isometry $U\cong [0,1]\times \partial W$. Then, $(\gamma_0(W,E_B,\nabla_E),\theta,p)$ is a bordism with boundary
$$\partial (\gamma_0(W,E_B,\nabla_E),\theta,p)=\gamma_0(\partial W,E_B|_{\partial W},\nabla_E|_{\partial W}).$$
\end{prop}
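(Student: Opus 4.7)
The plan is to exploit the product structure of all geometric data in a collar neighborhood $U\cong[0,1]\times\partial W$ in order to identify $\gamma_0(W,E_B,\nabla_E)$ restricted to $U$ with the exterior product chain $\gamma_0(\partial W,E_B|_{\partial W},\nabla_E|_{\partial W})\hat{\boxtimes}(L^2[0,1],\dmin_x)$ appearing in Example \ref{psiint}. Concretely, since $g_W$, $E_B$ and $\nabla_E$ are of product type near $\partial W$, the spinor bundle $S_W|_U$ is (after the canonical doubling when $\dim W$ is even so that grading can be accounted for) isometric to the pullback of $S_{\partial W}$ from $\partial W$, and under this isometry the twisted spin$^c$-Dirac operator $D^W_E$ acts on sections supported in $U$ precisely as $\Psi(D^{\partial W}_E)=D^{\partial W}_E\hat{\boxtimes}\dmin_x$, with the appropriate Clifford conventions. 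This identification is exactly the content of the data $(\theta,p)$, and it will drive the verification of all items of Definition \ref{boundarydef}.

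With this identification in hand, I would verify the three defining conditions as follows. For condition (1), any $\phi\in C^\infty_c(0,1]$ vanishes in a neighborhood of $\{0\}$, so multiplication by $\phi$ (via $b$) sends any element of $\Dom(Q^*)=\mathcal{E}_W^{\max}$ to a section vanishing near $\partial W$, which lies in $\mathcal{E}_W^{\min}=\Dom(Q)$; the identity $Q^*b(\phi)=Qb(\phi)$ then holds since both sides are computed by the same local differential expression and the minimal/maximal extensions agree away from $\partial W$. For condition (2), one takes $\phi\in C^\infty_c(0,1)$: such $\phi$ is supported in the interior of the collar, and on sections whose support lies in this interior the operator $Q$ is genuinely local and equals $\theta^{-1}\Psi(D^{\partial W}_E)\theta$ by the product identification, while the domain match $\phi\Dom\Psi(D^{\partial W}_E)=\theta b(\phi)\Dom Q$ follows because both domains are characterized by the $H^1$ condition away from $\partial W$. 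For condition (3), if $\phi_1,\phi_2\in C^\infty[0,1]$ satisfy $\phi_1\phi_2=0$, then by locality of the differential operator $Q$ the section $Qb(\phi_2)\xi$ is supported in $\mathrm{supp}(\phi_2)$, which is disjoint from $\mathrm{supp}(\phi_1)$, so $b(\phi_1)Qb(\phi_2)=0$.

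It remains to check that $(\mathpzc{F},Q,\theta,p)$ is actually a bordism, i.e.\ that the associated $(C^\infty_c((0,1],\Lip(W)),B)$-chain is half-closed, and to identify its boundary. Proposition \ref{cyclessss} already provides the half-closed structure on $C^1_0((0,1),\Lip(W))$; to upgrade this to $C^\infty_c((0,1],\Lip(W))$ one uses that any $\phi\in C^\infty_c((0,1],\Lip(W))$ still vanishes near $0$, so the argument of condition (1) gives $b(\phi)\Dom(Q^*)\subseteq\Dom(Q)$, while local compactness of $b(\phi)(1+Q^*Q)^{-1}$ follows from Lemma \ref{firstgammadef} applied to $W$ as a manifold with boundary together with the standard fact that compactly supported Lipschitz multipliers compose with the resolvent of an elliptic operator to a compact operator. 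Finally, the boundary identification $\partial(\mathpzc{F},Q,\theta,p)=\gamma_0(\partial W,E_B|_{\partial W},\nabla_E|_{\partial W})$ follows directly from the definitions of $\theta$ and of $\gamma_0$ on the closed manifold $\partial W$, together with Lemma \ref{firstgammadef} which ensures that this datum is a closed cycle (as $\partial W$ is complete without boundary).

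The main obstacle is the bookkeeping required to match gradings and to realize $D^W_E|_U$ precisely as $\Psi(D^{\partial W}_E)$ under $\theta$: in the even-dimensional case one must carefully track the decomposition $S_W|_U\cong S_{\partial W}\otimes\C^2$ coming from Clifford multiplication by the inward normal, while in the odd case the doubling is implicit in the definition of $\hat{\boxtimes}$. Once this Clifford-algebraic identification is in place, conditions (1)--(3) and the half-closed property reduce to the standard facts about the minimal and maximal extensions of Dirac operators on manifolds with product collar, which are precisely the content of Lemma \ref{firstgammadef}.
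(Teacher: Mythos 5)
Your proposal is correct and is, in essence, a fully spelled-out version of the paper's one-line proof ("its proof follows directly from Lemma \ref{firstgammadef}"). The logical route is identical: you use the product structure in the collar to identify $D^W_E|_U$ with $\Psi(D^{\partial W}_E)$, verify conditions (1)--(3) of Definition \ref{boundarydef} by locality of the Dirac expression and the fact that the $H^1$/$H^1_0$ extensions agree away from $\partial W$, and obtain half-closedness for the $C^\infty_c((0,1],\Lip(W))$-chain by observing that for $\phi$ with $\phi(0)=0$ the operator $b(\phi)$ is multiplication by a function in $C_0(W^\circ)$, so the locally compact resolvent condition is precisely the half-closedness of the $(\Lip_0(W^\circ),B)$-chain established in Lemma \ref{firstgammadef}. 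The Clifford/grading bookkeeping you flag (the implicit $\C^2$-doubling via inward normal Clifford multiplication when $\dim W$ is even, and the tacit identification $L^2[0,1]\boxtimes L^2(\partial W,\cdot)$ with $L^2[0,1]\hat{\boxtimes}L^2(\partial W,\cdot)$) is genuine and worth keeping explicit, since the displayed $\theta$ in the proposition statement is written with $\boxtimes$ but the definition of boundary data uses $\hat{\boxtimes}$.
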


\subsection{The homotopy lemma}
\label{homotopylemmasection}

One of the main tools for constructing bordisms will be an analogue of operator homotopy for unbounded operators. Recall that the domain of $D$ forms a $B$-Hilbert $C^*$-module $W(D)$ in the $B$-valued inner product, see \eqref{dominprodef} on page \pageref{dominprodef}.

\begin{define}
Let $\mathpzc{W}$ and $\mathpzc{E}$ be $B$-Hilbert $C^*$-modules with left adjointable actions by an algebra $\mathcal{A}$. We say that an $(\mathcal{A},B)$-linear adjointable mapping $T:\mathpzc{W}\to \mathpzc{E}$ is $\mathcal{A}$-locally compact if $aT$ is $B$-compact for all $a\in \mathcal{A}$. If $\mathcal{A}$ is understood from context, we drop $\mathcal{A}$ from the notation.
\end{define}

\begin{prop}
\label{technicalpartofhomtop}
Let $\iota:\mathpzc{W}\to \mathpzc{E}$ be a dense locally compact inclusion of $B$-Hilbert $C^*$-modules. Assume that $D\in C^1([0,1],\Hom^*_B(\mathpzc{W},\mathpzc{E}))$ is a path such that $D(t)$ is a self-adjoint and regular operator partially defined on $\mathpzc{E}$ with $\Dom(D(t))=\iota(\mathpzc{W})$ for any $t$. Consider the operator:
\begin{equation}
\label{internalprdq}
Q:=\begin{cases}
i\gamma_{\mathpzc{E}}\left(\frac{\partial}{\partial t}+D(t)\right), &\mbox{in the even case};\\
\,\\
\begin{pmatrix}
0& \frac{\partial}{\partial t}+D(t)\\
-\frac{\partial}{\partial t}+D(t)& 0
\end{pmatrix}
&\mbox{in the odd case},
\end{cases}
\end{equation}
equipped with the domain $\Dom(Q)=\mathpzc{W}\hat{\boxtimes}L^2[0,1]\cap \mathpzc{E}\hat{\boxtimes}H^1_0[0,1]$. Then $Q$ is a regular symmetric operator whose adjoint $Q^*$ is the graph closure of differential expression \eqref{internalprdq} equipped with the domain $\mathpzc{W}\hat{\boxtimes}L^2[0,1]\cap \mathpzc{E}\hat{\boxtimes}H^1[0,1]$.
\end{prop}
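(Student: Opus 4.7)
The plan is to decompose $Q$ as a sum $S+T$ with $S$ being the ``$\partial_t$-part'' and $T$ the fibrewise multiplication by $D(\cdot)$, then apply Theorem~\ref{sumregul} to a self-adjoint regularization of $S$, following the strategy of Theorem~\ref{extprodint}. I treat only the odd case; the even case is analogous after tracking $\gamma_\mathpzc{E}$. Let $T$ denote fibrewise multiplication by the off-diagonal matrix with entries $D(t)$ on $(\mathpzc{E}\boxtimes L^2[0,1])^{\oplus 2}$, with maximal domain $L^2([0,1],\mathpzc{W})^{\oplus 2}$; let $S_{\textnormal{per}}$ denote the matrix operator with entries $\pm\partial_x$ under periodic boundary conditions on $[0,1]$, and $S_{\textnormal{min}},S_{\textnormal{max}}$ its Dirichlet resp.\ maximal extensions. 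The local-global principle (Theorem~\ref{regproperties}(vi)) applied fibrewise, using self-adjoint regularity of each $D(t)$ and continuity of $D(\cdot)$ into $\Hom_B^*(\mathpzc{W},\mathpzc{E})$, yields that $T$ is self-adjoint and regular; by \cite[Chapter 10]{lancesbook}, $S_{\textnormal{per}}$ is self-adjoint and regular.

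Next, I verify Conditions (1)--(2) of Theorem~\ref{sumregul} for $(S_{\textnormal{per}},T)$; since $T=T^*$, (2) reduces to (1). A direct computation gives the anticommutator $ST+TS=\diag(\dot D(t),-\dot D(t))$, and the $C^1$-hypothesis makes $\dot D(\cdot)\in C([0,1],\Hom_B^*(\mathpzc{W},\mathpzc{E}))$ uniformly bounded. The resolvent $(S_{\textnormal{per}}+i\mu)^{-1}$ is convolution by a scalar periodic Green's kernel, so it preserves the fibre structure and sends $\Dom(T)$ into $H^1_{\textnormal{per}}([0,1],\mathpzc{W})\subseteq\Dom(S_{\textnormal{per}})\cap\Dom(T)\cap\Dom(S_{\textnormal{per}}T)$. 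One then shows that $(ST+TS)(S_{\textnormal{per}}+i\mu)^{-1}$ extends to a bounded adjointable operator on $\mathpzc{E}\hat{\boxtimes}L^2[0,1]$ by carefully matching $\mathpzc{W}$- and $\mathpzc{E}$-norms after the smoothing of the resolvent, using the uniform bound on $\dot D$. Theorem~\ref{sumregul} then yields that $S_{\textnormal{per}}+T$ is self-adjoint and regular on $\Dom(S_{\textnormal{per}})\cap\Dom(T)$.

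Finally, I pin down $Q$ via the boundary-value analysis of Theorem~\ref{extprodint}. On $\Dom(Q)$ the operator $Q$ coincides with $S_{\textnormal{min}}+T$, and is easily verified to be closed and symmetric. I show $(S_{\textnormal{max}}+T)^*=Q$: the inclusion $\supseteq$ is automatic, and conversely $(S_{\textnormal{max}}+T)^*\subseteq(S_{\textnormal{per}}+T)^*=S_{\textnormal{per}}+T$ forces any $f\in\Dom((S_{\textnormal{max}}+T)^*)$ into $\mathpzc{W}\hat{\boxtimes}L^2[0,1]\cap\mathpzc{E}\hat{\boxtimes}H^1_{\textnormal{per}}[0,1]$; integration by parts against $g\in\Dom(S_{\textnormal{max}}+T)$ with arbitrary boundary values $g(0),g(1)\in\mathpzc{W}$ produces a boundary term linear in $f(0),f(1)$ which must vanish on a dense set of $g$, forcing $f(0)=f(1)=0$ and $f\in\Dom(Q)$. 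Taking adjoints yields $Q^*=\overline{S_{\textnormal{max}}+T}$ as claimed, and regularity of $Q$ follows from the local-global principle using the self-adjoint regular structure of $S_{\textnormal{per}}+T$.

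The main obstacle will be verifying that $(ST+TS)(S_{\textnormal{per}}+i\mu)^{-1}$ admits a bounded adjointable extension to $\mathpzc{E}\hat{\boxtimes}L^2[0,1]$: since $\dot D(t)$ is only bounded as $\mathpzc{W}\to\mathpzc{E}$ (not as an operator on $\mathpzc{E}$), the naive composition with the resolvent does not obviously extend boundedly, and one must exploit both the scalar nature of the periodic Green's kernel (which preserves $\mathpzc{W}$-valued functions) and the $C^1$-control on $\dot D$ to produce the required bound.
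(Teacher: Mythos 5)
The decomposition you chose cannot be made to work, and the ``main obstacle'' you flag at the end is not a technical hurdle to be overcome but a structural defect of the approach. To apply Theorem~\ref{sumregul} with $S=S_{\textnormal{per}}$ and $T$ the fibrewise multiplication by $D(\cdot)$, you need the anticommutator $\pm\dot D(t)$ post-composed with $(S_{\textnormal{per}}+i\mu)^{-1}$ to extend to a bounded adjointable operator on $\mathpzc{E}\hat{\boxtimes}L^2[0,1]$. But $(S_{\textnormal{per}}+i\mu)^{-1}$ is convolution by a scalar kernel: it smooths in $t$ while leaving the fibre $\mathpzc{E}$ untouched. So it sends $L^2([0,1],\mathpzc{E})$ to $L^2([0,1],\mathpzc{E})$, and the composition with $\dot D(t)\in\Hom^*_B(\mathpzc{W},\mathpzc{E})$ is not even densely defined, let alone bounded in the $\mathpzc{E}\hat{\boxtimes}L^2[0,1]$-norm. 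Your observation that the Green's kernel preserves $L^2([0,1],\mathpzc{W})$ gives a well-defined map on that submodule, but only bounded relative to the $L^2_{\mathpzc{W}}$-norm, not the $L^2_{\mathpzc{E}}$-norm needed for a bounded adjointable extension. No amount of exploiting the $C^1$-control on $\dot D$ remedies this, because the failure is in the fibre direction, where the periodic resolvent provides no gain.

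The fix is to swap the roles of $S$ and $T$, which is what the paper's (terse) proof means. Take $S$ to be the fibrewise self-adjoint operator built from $D(t)$ (you already verified via the local-global principle that this is self-adjoint and regular on $L^2([0,1],\mathpzc{W})$), and take $T=\dmin_t$, a symmetric regular operator with $T^*=\dmax_t$. Now $(S+i\mu)^{-1}=(D(t)+i\mu)^{-1}$ acts fibrewise and maps $L^2([0,1],\mathpzc{E})$ boundedly into $L^2([0,1],\mathpzc{W})$; it also preserves $H^1_0[0,1]$- and $H^1[0,1]$-regularity in $t$ (the boundary values are handled since the resolvent acts at each $t$ separately), so $(S+i\mu)^{-1}\Dom(T^{(*)})\subseteq\Dom(S)\cap\Dom(T^{(*)})\cap\Dom(ST^{(*)})$. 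The anticommutator is $\pm\dot D(t)$, and $\dot D(t)(D(t)+i\mu)^{-1}$ is a composition $\mathpzc{E}\to\mathpzc{W}\to\mathpzc{E}$ of uniformly (in $t$) bounded adjointable maps, hence bounded adjointable on $\mathpzc{E}\hat{\boxtimes}L^2[0,1]$. Theorem~\ref{sumregul} then directly yields that $S+T$ is closed, regular and symmetric with domain $\mathpzc{W}\hat{\boxtimes}L^2[0,1]\cap\mathpzc{E}\hat{\boxtimes}H^1_0[0,1]$, and that $(S+T)^*=\overline{S+T^*}$ with core $\mathpzc{W}\hat{\boxtimes}L^2[0,1]\cap\mathpzc{E}\hat{\boxtimes}H^1[0,1]$, which is exactly the statement. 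In particular the detour through $S_{\textnormal{per}}$ and the boundary-value argument modelled on Theorem~\ref{extprodint} is unnecessary here: that machinery was needed there because \emph{both} operators were merely symmetric; in the present proposition one of them is honestly self-adjoint, so Theorem~\ref{sumregul} applies in one step.
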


\begin{proof} 
The assumption $D\in C^1([0,1],\Hom^*_B(\mathpzc{W},\mathpzc{E}))$ guarantees relatively bounded anticommutators of $D$ with both $\dmin_t$ and  $\dmax_t$. Therefore the statement follows from Theorem \ref{sumregul}.
\end{proof}

The operator can be viewed as a symmetric analogue of the internal Kasparov product of $D$ with $\dmin_t$. We believe that this viewpoint will be fruitful in future work.

\begin{lemma}
\label{difftopyofoperator}
If $D\in C^1([0,1],\Hom^*_B(\mathpzc{W},\mathpzc{E}))$ satisfies the conditions of Proposition \ref{technicalpartofhomtop} and $(\mathpzc{E},D(t))$ is a cycle for all $t$, then there is a bordism
$$(\mathpzc{E}, D(0))\sim_{\bor} (\mathpzc{E}, D(1)).$$
\end{lemma}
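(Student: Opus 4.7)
The plan is to realize the required bordism as a cylinder over $[0,1]$ carrying the time-dependent operator $Q$ supplied by Proposition~\ref{technicalpartofhomtop}. First I would reparametrize: choose a smooth monotone $\rho\colon[0,1]\to[0,1]$ with $\rho\equiv 0$ on $[0,1/4]$ and $\rho\equiv 1$ on $[3/4,1]$ and replace $D$ by $\tilde D(t):=D(\rho(t))$. By the chain rule, $\tilde D\in C^{1}([0,1],\Hom^{*}_{B}(\mathpzc{W},\mathpzc{E}))$ still satisfies the hypotheses of Proposition~\ref{technicalpartofhomtop}, the endpoints $\tilde D(0)=D(0)$ and $\tilde D(1)=D(1)$ are unchanged, and $\tilde D$ is now constant on collar neighborhoods of both endpoints of $[0,1]$.

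Next I would set $\mathpzc{F}:=\mathpzc{E}\hat{\boxtimes} L^{2}[0,1]$ and let $Q$ be the regular symmetric operator produced by Proposition~\ref{technicalpartofhomtop} from $\tilde D$. Take $p:=p_{L}+p_{R}$, where $p_{L},p_{R}$ are multiplication by $\chi_{[0,1/4]}$ and $\chi_{[3/4,1]}$, and define
\[
\theta\colon p\mathpzc{F}\xrightarrow{\sim} L^{2}[0,1/4]\hat{\boxtimes}\bigl((\mathpzc{E},D(0))\oplus(-(\mathpzc{E},D(1)))\bigr)
\]
(using the remark following Definition~\ref{boundarydef} that the length of the collar interval is immaterial) as the canonical flip $\theta_{L}$ on the left collar together with a map $\theta_{R}$ on the right collar. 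The latter uses the orientation-reversing change of variables $x=1-t$, which introduces a sign in $\partial_{t}$; to absorb this sign and recover $\Psi$ of the boundary cycle one twists by a grading-preserving unitary, namely conjugation by $\mathrm{diag}(i,-i)$ in the even case (which realizes the isomorphism $(-\mathpzc{E},-D(1))\cong -(\mathpzc{E},D(1))$) and by $\mathrm{diag}(1,-1)$ in the odd case (which converts the right-collar operator into $\Psi(-D(1))$, i.e.\ $\Psi$ of the inverse cycle).

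With this data in hand, the three conditions of Definition~\ref{boundarydef} are to be verified as follows. Condition~(1) holds since for $\phi\in C^{\infty}_{c}(0,1/4]$ the operator $b(\phi)$ is a smooth multiplier vanishing near both endpoints of $[0,1]$, hence sends $\Dom Q^{*}\subseteq\mathpzc{E}\hat{\boxtimes}H^{1}[0,1]$ into $\Dom Q\subseteq\mathpzc{E}\hat{\boxtimes}H^{1}_{0}[0,1]$, on which $Q$ and $Q^{*}$ coincide. Condition~(2) is built into the construction of $\theta$ and the constancy of $\tilde D$ on each collar, so that $Q|_{p\mathpzc{F}}=\theta^{-1}\Psi(D_{\partial})\theta$ for $D_{\partial}$ the boundary operator. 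Condition~(3) follows from the pointwise observation that if $\phi_{1}\phi_{2}\equiv 0$ with both smooth, then $\phi_{1}$ vanishes together with all its derivatives on $\mathrm{supp}\,\phi_{2}$, so that $b(\phi_{1})[Q,b(\phi_{2})]=0$. Half-closedness of the induced $(C^{\infty}_{c}((0,1/4],\mathcal{A}),B)$-chain then follows from $\pi(\mathcal{A})\subseteq\Lip(\tilde D(t))$ (uniformly in $t$, using continuity of $t\mapsto[\tilde D(t),a]$) together with a standard local-compactness argument combining the compact embedding $H^{1}_{0}[0,1]\hookrightarrow L^{2}[0,1]$ with the locally compact resolvent of each $D(t)$. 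Hence $(\mathpzc{F},Q,\theta,p)$ is a bordism with boundary $(\mathpzc{E},D(0))+(-(\mathpzc{E},D(1)))$, so the two cycles are bordant.

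The main obstacle is fixing signs and gradings on the right collar so that $Q|_{p_{R}\mathpzc{F}}$, after the orientation-reversing reparametrization, becomes precisely $\Psi$ applied to the \emph{inverse} cycle rather than to $(\mathpzc{E},D(1))$ itself. This requires careful bookkeeping of the conventions for $\hat{\boxtimes}$ in each parity case (the formula for $D_{1}\hat{\boxtimes}D_{2}$ differs between the graded/graded, graded/ungraded, and ungraded/ungraded cases) and producing the correct grading-preserving unitary twist exhibiting $(-\mathpzc{E},-D(1))\cong-(\mathpzc{E},D(1))$ in the even case, or reducing the right-collar operator to $\Psi(-D(1))$ in the odd case.
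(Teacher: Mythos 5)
Your construction coincides with the paper's: form the cylinder $\mathpzc{E}\hat{\boxtimes}L^{2}[0,1]$ with the time-dependent operator $Q$ from Proposition~\ref{technicalpartofhomtop}, take two disjoint collar projections near the endpoints, and verify the bordism conditions. You have in fact filled in two details that the paper's proof elides, namely the reparametrization of $D$ to be constant on each collar (without which condition~(2) of Definition~\ref{boundarydef} cannot literally hold for the identity boundary data, since $Q$ carries the time-dependent $D(t)$) and the explicit grading-preserving unitary on the right collar exhibiting that the orientation-reversed endpoint is the \emph{inverse} cycle $-(\mathpzc{E},D(1))$ rather than $(\mathpzc{E},D(1))$; both checks are correct and make the argument more self-contained than the published version.
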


\begin{proof}
By Proposition \ref{technicalpartofhomtop} $(\mathpzc{E}\hat{\boxtimes} L^2[0,1],Q)$ is a symmetric chain. We let $p_1$ denote multiplication by the characteristic function of $[0,1/3]$, $p_2$ likewise for the characteristic function of $[2/3,1]$ and $p=p_1+p_2$. We take $\theta$ to be the identity on 
$$p(\mathpzc{E}\hat{\boxtimes} L^2[0,1])=\mathpzc{E}\hat{\boxtimes} L^2[0,1/3]\oplus \mathpzc{E}\hat{\boxtimes} L^2[2/3,1].$$
We note that the associated action of $C[0,1]\otimes A$ on $\mathpzc{E}\hat{\boxtimes} L^2[0,1]$ is defined by 
$$b(\phi\otimes a)f(t)=
\begin{cases}
\phi(3t)\pi(a)f(t), \quad&t\in [0,1/3],\\
\phi(1)\pi(a)f(t), \quad&t\in (1/3,2/3),\\
\phi(3-3t)\phi(a)f(t), \quad &t\in [2/3,1].
\end{cases}$$

By construction, $(\mathpzc{E},D(0))-(\mathpzc{E},D(1))$ is a boundary for $(\mathpzc{E}\hat{\boxtimes} L^2[0,1],Q)$ with boundary data $(\theta,p)$. By the same argument as that proving Proposition \ref{cyclessss}, see \cite[Lemma 5.4]{hilsumbordism}, it follows that $(\mathpzc{E}\hat{\boxtimes} L^2[0,1],Q)$ is a half-closed $(C^\infty_c((0,1],\mathcal{A}),B)$-chain. Hence $\partial(\mathpzc{E}\hat{\boxtimes} L^2[0,1],Q,\theta,p)=(\mathpzc{E},D(0))-(\mathpzc{E},D(1))$. 
\end{proof}

\begin{cor}
\label{symmetryofbordism}
For any cycle $(\mathpzc{E}, D)$, there is a bordism $(\mathpzc{E}, D)\sim_{\bor}(\mathpzc{E}, D)$. In particular, $(\mathpzc{E}, D)+(-(\mathpzc{E}, D))\sim_{\bor} 0$.
\end{cor}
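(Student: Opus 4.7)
The plan is to deduce this immediately from the homotopy Lemma \ref{difftopyofoperator} by feeding it the constant path $D(t):=D$ on $[0,1]$.

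First I would verify the hypotheses of Proposition \ref{technicalpartofhomtop} and Lemma \ref{difftopyofoperator} for this constant path. Take $\mathpzc{W}:=W(D)$ equipped with its graph inner product \eqref{dominprodef}. The inclusion $\iota:W(D)\hookrightarrow\mathpzc{E}$ is dense because $D$ is regular, and it is $A$-locally compact because the closed-cycle condition gives $a(i\pm D)^{-1}\in\K_B(\mathpzc{E})$ for every $a\in A$. The constant path $t\mapsto D$ trivially belongs to $C^1([0,1],\Hom^*_B(\mathpzc{W},\mathpzc{E}))$, each $D(t)=D$ is self-adjoint regular with $\Dom(D(t))=\iota(\mathpzc{W})$, and $(\mathpzc{E},D(t))=(\mathpzc{E},D)$ is a cycle for every $t$ by assumption.

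Applying Lemma \ref{difftopyofoperator} to this path yields a bordism $(\mathpzc{E},D(0))\sim_{\bor}(\mathpzc{E},D(1))$, i.e.\ $(\mathpzc{E},D)\sim_{\bor}(\mathpzc{E},D)$. Unwinding the definition of $\sim_{\bor}$, this is exactly the statement that $(\mathpzc{E},D)+(-(\mathpzc{E},D))$ is nullbordant, which is the second assertion. Explicitly, a witnessing bordism is $(\mathpzc{E}\hat{\boxtimes}L^2[0,1],Q,\theta,p)$ with $Q$ the symmetric regular operator from \eqref{internalprdq} (for the constant path, $Q$ is just the exterior-product operator $\Psi(D)$ on the interval at each end and the boundary contribution telescopes to $(\mathpzc{E},D)-(\mathpzc{E},D)$), and with the cut-off projection $p=p_1+p_2$ and identification $\theta$ as in the proof of Lemma \ref{difftopyofoperator}.

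There is essentially no obstacle here: all analytic content --- in particular the regularity and self-adjointness of $Q$, the half-closed structure of the resulting $(C^\infty_c((0,1],\mathcal{A}),B)$-chain, and the identification of its boundary --- is already packaged into Lemma \ref{difftopyofoperator}, and a constant path trivially satisfies all its hypotheses. The only thing worth flagging is that Lemma \ref{difftopyofoperator} subtracts $D(1)$ from $D(0)$ at the boundary, so it is the inverse cycle $-(\mathpzc{E},D)$ (rather than $(\mathpzc{E},D)$ itself) that appears with a sign, which is precisely what is needed to make $(\mathpzc{E},D)+(-(\mathpzc{E},D))$ the boundary.
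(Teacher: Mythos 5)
Your proof is correct and follows exactly the paper's approach: apply Lemma \ref{difftopyofoperator} to the constant path $D(t)=D$. The verification of the hypotheses of Proposition \ref{technicalpartofhomtop} and the sign bookkeeping you spell out are implicit in the paper's one-line proof, so you have simply filled in details the paper leaves to the reader.
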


\begin{proof}
Apply Lemma \ref{difftopyofoperator} to the constant path $D(t)=D$.
\end{proof}

\begin{cor}
If $(\mathpzc{E},D)$ is an odd closed cycle and $s>0$, $(\mathpzc{E},D)\sim_{\bor} (\mathpzc{E},sD)$. If $(\mathpzc{E},D)$ is an even closed cycle, the same holds true for any $s\neq 0$. 
\end{cor}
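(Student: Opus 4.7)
The plan is to apply the homotopy lemma (Lemma \ref{difftopyofoperator}) to an appropriate path in each case. Throughout, set $\mathpzc{W} := W(D)$; by Theorem \ref{regproperties}(5), $D:\mathpzc{W}\to \mathpzc{E}$ is bounded adjointable, so producing $C^1$ paths in $\Hom^*_B(\mathpzc{W},\mathpzc{E})$ amounts to producing smooth families of bounded operators applied to $D$.

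First I would handle the case $s>0$ (in either parity) using the linear scaling path $D(t):= c(t) D$, where $c(t):=(1-t)+ts$ is strictly positive on $[0,1]$. Then $D(t)$ is $C^1$ in $\Hom^*_B(\mathpzc{W},\mathpzc{E})$, each $D(t)$ shares the domain $\mathpzc{W}$ and self-adjointness of $D$, and the locally compact resolvent property transfers via
\[
a(D(t)+i)^{-1} = c(t)^{-1}\, a(D+i)^{-1}\,(D+i)(D+i/c(t))^{-1},
\]
where the last factor is a bounded adjointable operator. Lemma \ref{difftopyofoperator} then gives $(\mathpzc{E},D)\sim_{\bor}(\mathpzc{E},sD)$ and, in particular, disposes of the odd case entirely.

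For an even cycle with $s<0$ the linear path would pass through $0$, where the locally compact resolvent condition fails; the remedy is to exploit the grading. I would apply the homotopy lemma to the rotation path $D(t):= e^{i\pi t\gamma} D$, which satisfies $D(0)=D$ and $D(1)=-D$. Since $\gamma$ anticommutes with $D$, one has $D e^{-i\pi t\gamma} = e^{i\pi t\gamma} D$ on $\mathpzc{W}$, which quickly yields $D(t)^*=D(t)$, $\gamma D(t)=-D(t)\gamma$, $\Dom(D(t)) = \mathpzc{W}$, and $D(t)^2=D^2$. The last identity shows the locally compact resolvent property is preserved, and the smoothness of $e^{i\pi t\gamma}=\cos(\pi t)+i\sin(\pi t)\gamma$ as a family of bounded operators gives the required $C^1$ regularity. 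Lemma \ref{difftopyofoperator} thus provides $(\mathpzc{E},D)\sim_{\bor}(\mathpzc{E},-D)$, and composing with the positive-scaling bordism $(\mathpzc{E},-D)\sim_{\bor}(\mathpzc{E},sD)$ (available since $-s>0$) via transitivity of $\sim_{\bor}$ completes the argument.

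The main obstacle is exactly the even case with $s<0$: no path of the form $D(t)=f(t)D$ can avoid a zero of $f$, at which the locally compact resolvent condition breaks down. The rotation through the grading is the natural workaround because it keeps $D(t)^2=D^2$ throughout, preserving all cycle properties simultaneously. It is also clear from this why the odd case must be restricted to $s>0$: without a grading at one's disposal there is no analogous rotation that flips sign without passing through a non-regular operator.
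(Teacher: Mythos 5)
Your proof is correct and takes essentially the same route as the paper: apply the homotopy Lemma~\ref{difftopyofoperator}, scaling $D$ for $s>0$ and, for an even cycle with $s<0$, rotating through the grading so as to flip the sign of $D$ without ever losing the locally compact resolvent. The paper performs the two moves simultaneously, applying the lemma to the single path
$$D(t)=\chi_{|s|}(t)\begin{pmatrix} 0& \mathrm{e}^{\pi i tH(-s)}D_-\\ \mathrm{e}^{-\pi i tH(-s)}D_+& 0\end{pmatrix}=\chi_{|s|}(t)\,\mathrm{e}^{i\pi tH(-s)\gamma/2}\,D\,\mathrm{e}^{-i\pi tH(-s)\gamma/2},$$
whereas you compose the two bordisms via transitivity; the essential point in both cases is the identity $D(t)^2=\chi_{|s|}(t)^2D^2$ (in your notation $D(t)^2=D^2$ for the rotation piece, and scaling by a nonvanishing scalar for the other), which preserves the locally compact resolvent condition throughout. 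Your rotation $e^{i\pi t\gamma}D$ is a one-sided variant of the paper's conjugation $e^{i\pi t\gamma/2}De^{-i\pi t\gamma/2}$, and the verification that it is self-adjoint, odd, and domain-preserving goes through exactly as you outline since $a$ commutes with $\gamma$. One small cosmetic difference: the paper chooses a scaling function $\chi_s$ that is constant near the endpoints of $[0,1]$ (and monotone) rather than your affine $c(t)=(1-t)+ts$; a constant-near-endpoints interpolant makes the collar condition of Definition~\ref{boundarydef}(2) manifest, but since any $C^1$ path may be reparameterized to have this property without changing the endpoints, this is not a gap in your argument.
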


\begin{proof}
For any $s>0$ we choose $\chi_s\in C^\infty[0,1]$ such that $\chi_s(t)=1$ near $t=0$, $\chi_s(t)=s$ near $t=0$ and $\chi_s'$ having semidefinite sign. For an odd closed cycle $(\mathpzc{E},D)$ and $s>0$ the statement of the Corollary follows by applying Lemma \ref{difftopyofoperator} to the path
$$D(t)=\chi_s(t)D.$$

We let $H$ denote the Heaviside function, i.e. $H(s)=0$ of $s<0$ and $H(s)=1$ if $s>0$. For an even closed cycle $(\mathpzc{E},D)$, with 
$$D=\begin{pmatrix}
0& D_-\\
D_+& 0\end{pmatrix},$$
in the grading. The required result now follows: for any $s\neq 0$, one can apply Lemma \ref{difftopyofoperator} to the path
$$D(t)=\chi_{|s|}(t)\begin{pmatrix}
0& \mathrm{e}^{\pi i tH(-s)}D_-\\
\mathrm{e}^{-\pi i tH(-s)}D_+& 0\end{pmatrix}.$$
\end{proof}

We say that a symmetric operator $T$ is \emph{locally bounded} if $\mathcal{A}$ preserves $\Dom(T)$ and $aT$ as well as $Ta$ have bounded adjointable extensions for any $a\in \mathcal{A}$. If $\Dom(D)\subseteq\Dom(T)$, we say that $T$ has relative bound $s>0$ to $D$ if there is a $t\geq 0$ such that:
$$\|Tx\|\leq s\|Dx\|+t\|x\|,\quad\forall x\in \Dom(D).$$

\begin{cor}
\label{bddpert}
Let $(\mathpzc{E},D)$ be a closed cycle and $T:\mathpzc{E}\dashrightarrow\mathpzc{E}$ a locally bounded symmetric operator (odd if $(\mathpzc{E},D)$ is even) which is relatively bounded to $D$ with relative bound $<1$. There is a bordism
$$(\mathpzc{E},D)\sim_{\bor} (\mathpzc{E},D+T).$$
\end{cor}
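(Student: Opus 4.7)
The plan is to apply the homotopy lemma (Lemma \ref{difftopyofoperator}) to the linear path $D(\tau) := D + \tau T$, $\tau \in [0,1]$. We must check that $D(\cdot) \in C^1([0,1], \Hom^*_B(W(D), \mathpzc{E}))$ is a path of self-adjoint regular operators with $\Dom(D(\tau)) = \Dom(D)$ and that each $(\mathpzc{E}, D(\tau))$ is a closed cycle. Write the relative bound as $\|Tx\| \leq s\|Dx\| + c\|x\|$ with $s < 1$; this makes $T|_{\Dom(D)}\colon W(D) \to \mathpzc{E}$ bounded and, together with the bounded adjointable map $D\colon W(D) \to \mathpzc{E}$ from Theorem \ref{regproperties}(5), makes $\tau \mapsto D + \tau T$ a smooth affine path in $\Hom^*_B(W(D), \mathpzc{E})$. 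Adjointability of $T|_{\Dom(D)}$ as a bounded map $W(D) \to \mathpzc{E}$ is inherited from the complementation $\mathpzc{E} \oplus \mathpzc{E} \cong G(D) \oplus vG(D)$ of Theorem \ref{regproperties}(4) together with the symmetry of $T$.

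For self-adjointness and regularity we use a Kato--Rellich style argument. The operator $D(\tau)$ is symmetric on $\Dom(D)$, so by Proposition \ref{regulproperla}(5) it suffices to show that $D(\tau) \pm i\mu$ is surjective for some $\mu > 0$. Because $D$ is self-adjoint regular, the identity
\[\langle (D \pm i\mu)y, (D \pm i\mu)y\rangle = \langle Dy, Dy\rangle + \mu^2 \langle y, y\rangle\]
of positive elements of $B$ gives $\|(D \pm i\mu)^{-1}\| \leq \mu^{-1}$ and $\|D(D \pm i\mu)^{-1}\| \leq 1$. Hence $\|T(D \pm i\mu)^{-1}\| \leq s + c\mu^{-1}$, which is strictly less than $1$ for all $\tau \in [0,1]$ once $\mu > c/(1-s)$. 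The factorisation
\[D(\tau) \pm i\mu = \bigl(1 + \tau T(D \pm i\mu)^{-1}\bigr)(D \pm i\mu)\]
then exhibits $D(\tau) \pm i\mu$ as a product of surjective operators via a Neumann series.

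The cycle axioms for $(\mathpzc{E}, D(\tau))$ now follow readily: $a\Dom(D(\tau)) = a\Dom(D) \subseteq \Dom(D(\tau))$ for $a \in \mathcal{A}$ since $\mathcal{A} \subseteq \Lip(D)$, and $[D(\tau), a] = [D, a] + \tau[T, a]$ extends to a bounded adjointable operator because $\mathcal{A} \subseteq \Lip(D)$ and $T$ is locally bounded (so $[T,a] = Ta - aT$ has a bounded adjointable extension). For the locally compact resolvent, the factorisation above gives $a(i\mu \pm D(\tau))^{-1} = a(i\mu \pm D)^{-1}(1 + \tau T(i\mu \pm D)^{-1})^{-1}$, a locally compact operator composed with a bounded one; a standard resolvent identity transfers compactness from $i\mu$ to $i$. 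Lemma \ref{difftopyofoperator} then delivers the bordism $(\mathpzc{E}, D) \sim_{\bor} (\mathpzc{E}, D + T)$. The main obstacle throughout is the Kato--Rellich verification of self-adjointness and regularity after perturbation; it is essential here that the relative bound is strictly less than $1$ and that $c$ is independent of $\tau$, so that the Neumann series converges uniformly in the parameter.
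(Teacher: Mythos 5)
Your proof follows essentially the same route as the paper: apply Lemma~\ref{difftopyofoperator} to the linear path $D(t)=D+tT$, with the key verification being that each $D(t)$ is self-adjoint and regular on $\Dom(D)$ with $(\mathpzc{E},D(t))$ a cycle. The paper simply invokes the Kato--Rellich theorem for Hilbert modules (\cite[Theorem~4.5]{leschkaad2}) at this step, whereas you reprove it via the Neumann-series factorisation $D(t)\pm i\mu=(1+tT(D\pm i\mu)^{-1})(D\pm i\mu)$; that is a legitimate unpacking of the same theorem, and your observation that the bound $s+c\mu^{-1}<1$ is uniform in $t$ is exactly what makes the path-level statement go through. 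One point worth tightening: the sentence asserting adjointability of $T|_{\Dom(D)}:W(D)\to\mathpzc{E}$ ``from the complementation $\mathpzc{E}\oplus\mathpzc{E}\cong G(D)\oplus vG(D)$ together with the symmetry of $T$'' is not really an argument — complementation of the graph encodes regularity of $D$, not adjointability of a relatively bounded perturbation, and boundedness alone does not give adjointability on Hilbert modules. This is precisely the content one gets for free by citing the Kaad--Lesch Kato--Rellich theorem, which handles the Hilbert-module subtleties; if you want to keep the self-contained Neumann-series route, you should either argue directly that $T(1+D^2)^{-1/2}$ is adjointable (using that $D$ is self-adjoint regular and $T$ symmetric with $\Dom(T)\supseteq\Dom(D)$) or defer to the cited theorem as the paper does.
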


\begin{proof}
Consider the path $D(t):=D+tT$ for $t\in [0,1]$. If the relative bound of $T$ is $<1$, the Kato-Rellich theorem for Hilbert modules (see for instance \cite[Theorem 4.5]{leschkaad2}) implies that $D(t)$ is self-adjoint and regular on the domain $\Dom(D)$. Moreover, for any $t\in [0,1]$, since $\Dom(D)=\Dom(D+tT)$ it holds that $a(D\pm i)^{-1}$ is compact if and only if $a(D+tT\pm i)^{-1}$ is compact. Therefore $(\mathpzc{E},D(t))$ is a cycle for any $t$. The proof is complete once applying Lemma \ref{difftopyofoperator} to the path $D(t)$.
\end{proof}

\begin{lemma}
\label{reflexivityofbordism}
If $(\mathpzc{E}_1, D_1)\sim_{\bor}(\mathpzc{E}_2, D_2)$, then $(\mathpzc{E}_2, D_2)\sim_{\bor}(\mathpzc{E}_1, D_1)$. 
\end{lemma}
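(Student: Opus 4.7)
The plan is to reduce the symmetry of $\sim_{\bor}$ to the statement that the class of nullbordant cycles is closed under taking inverses. Since the direct sum is commutative up to isomorphism and the inverse of a cycle distributes over direct sum, one has
\[
(\mathpzc{E}_2, D_2) + (-(\mathpzc{E}_1, D_1)) \;\cong\; -\bigl((\mathpzc{E}_1, D_1) + (-(\mathpzc{E}_2, D_2))\bigr).
\]
So given a bordism $(\mathpzc{F}, Q, \theta, p)$ with boundary $(\mathpzc{E}, D) := (\mathpzc{E}_1, D_1) + (-(\mathpzc{E}_2, D_2))$, it suffices to build a bordism whose boundary is $-(\mathpzc{E}, D)$. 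My strategy will be to negate $Q$ and, where needed, compose $\theta$ with a suitable unitary on the collar $L^2[0,1] \hat{\boxtimes} \mathpzc{E}$, exploiting an identity of the form $\Psi(-(\mathpzc{E}, D)) \sim -\Psi(\mathpzc{E}, D)$ up to a parity-adjusting conjugation. The construction splits by the parity of the boundary cycle.

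In the even case, $\dim_{\Z/2}(\mathpzc{E}, D) = 0$ and $-(\mathpzc{E}, D) = (-\mathpzc{E}, D)$ differs from $(\mathpzc{E}, D)$ only by a reversal of the grading on $\mathpzc{E}$. The explicit formula for $\hat{\boxtimes}$ in Subsection \ref{extprodofsymsect} shows that this reversal multiplies both summands of $\Psi(\mathpzc{E}, D)$ by $-1$, yielding $\Psi(-\mathpzc{E}, D) = -\Psi(\mathpzc{E}, D)$. The bordism $(\mathpzc{F}, Q)$ has odd dimension so $\theta$ carries no grading condition, and I propose the tuple $(\mathpzc{F}, -Q, \theta, p)$. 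Conditions (1)--(3) of Definition \ref{boundarydef} together with the half-closedness of the associated $(C^1_0((0,1], \mathcal{A}), B)$-chain from Proposition \ref{cyclessss} then follow from the corresponding properties of $(\mathpzc{F}, Q, \theta, p)$ by multiplying each defining identity by $-1$; the algebras $\Lip(Q)$ and $\Lip(Q^*, Q)$ and the resolvent $(1 + Q^*Q)^{-1}$ are manifestly invariant under $Q \mapsto -Q$.

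The odd case is more delicate because $-(\mathpzc{E}, D) = (\mathpzc{E}, -D)$ has the same underlying ungraded module, while $L^2[0,1] \hat{\boxtimes} \mathpzc{E}$ is the graded double $(L^2[0,1] \boxtimes \mathpzc{E})^{\oplus 2}$ and the boundary isomorphism $\theta$ is required to be graded. Using the matrix form of $\Psi$ in Example \ref{psiint}, a direct computation shows that the unitary $F = \begin{pmatrix} 0 & 1 \\ -1 & 0 \end{pmatrix}$ on the two summands satisfies $F^{*} \Psi(\mathpzc{E}, -D) F = -\Psi(\mathpzc{E}, D)$, is odd with respect to the direct-sum grading, and commutes with both the diagonal $C[0,1]$-action and the $A$-action. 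I therefore propose $(-\mathpzc{F}, -Q, F \circ \theta, p)$: reversing the grading on $\mathpzc{F}$ is compensated by the odd parity of $F$, so that $F \circ \theta$ is once again graded; the intertwining $F^{*} \Psi(\mathpzc{E}, -D) F = -\Psi(\mathpzc{E}, D)$ reduces Condition (2) of Definition \ref{boundarydef} for $-Q$ with collar model $\Psi(\mathpzc{E}, -D)$ to the original Condition (2) for $Q$ with collar model $\Psi(\mathpzc{E}, D)$; and Conditions (1), (3) together with half-closedness transfer verbatim because $F$ commutes with the collar action so that the new algebra action $\tilde b$ coincides with $b$. The central obstacle is exhibiting a single unitary that simultaneously inverts the sign of $\Psi$, flips parity, and commutes with $C[0,1] \otimes A$; once $F$ is produced, the remaining verifications reduce to routine sign- and domain-tracking.
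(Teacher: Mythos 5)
Your argument follows the same one-line route as the paper (negate $Q$ in the even case; negate $Q$ and the grading on $\mathpzc{F}$ in the odd case), but you correctly notice that in the odd case the boundary isomorphism must be composed with an odd unitary — your $F$ — both to restore gradedness of $\theta$ after flipping the grading on $\mathpzc{F}$ and to conjugate $\Psi(\mathpzc{E},-D)$ to $-\Psi(\mathpzc{E},D)$, whereas the paper simply writes $(-\mathpzc{F},-Q,\theta,p)$ and leaves this adjustment implicit. Apart from this extra care in tracking the collar identification, the two proofs coincide.
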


\begin{proof}
Assume that $\partial(\mathpzc{F},Q,\theta,p)=(\mathpzc{E}_1, D_1)-(\mathpzc{E}_2, D_2)$. If the closed cycles under consideration are even, $\partial(\mathpzc{F},-Q,\theta,p)=(\mathpzc{E}_2, D_2)-(\mathpzc{E}_1, D_1)$. If the closed cycles under consideration are odd, $\partial(-\mathpzc{F},-Q,\theta,p)=(\mathpzc{E}_2, D_2)-(\mathpzc{E}_1, D_1)$. 
\end{proof}

\subsection{The gluing lemma}

Our aim is a proof that bordism forms an equivalence relation on the semigroup of isomorphism classes of closed $(\mathcal{A},B)$-cycles as defined in Proposition \ref{semiisoclass} and that the quotient forms an abelian group. This involves generalizing the gluing construction of Hilsum (see \cite[Section 7]{hilsumcmodbun}). The first step is the following proposition which follows from the definition of direct sum.

\begin{prop}
\label{additivityofcor}
Assume that we are given a direct sum decomposition
$$\partial(\mathpzc{F}, Q,\theta,p)=(\mathpzc{E}_1, D_1)+(\mathpzc{E}_2, D_2)+\cdots +(\mathpzc{E}_k, D_k),$$
for closed $(\mathcal{A},B)$-cycles $(\mathpzc{E}_1, D_1)$, $(\mathpzc{E}_2, D_2)$,...,$(\mathpzc{E}_k, D_k)$. There exists
\begin{enumerate}
\item mutually orthogonal projections $p_1,p_2,\ldots, p_k\in \End^*_B(\mathpzc{F})$ commuting with the $A$-action such that 
$$p=\sum_{j=1}^kp_j;$$
\item isomorphisms $\theta_j:p_j\mathpzc{F}\xrightarrow{\sim} \mathpzc{E}_j\hat{\boxtimes}L^2[0,1]$ of $(A,B)$-Hilbert $C^*$-modules such that $\theta=\oplus_{j=1}^k \theta_j$.
\end{enumerate}
\end{prop}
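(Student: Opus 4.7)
The proof is essentially an unpacking of the direct sum structure through the boundary data. Here is how I would organize it.

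My plan is to start from the boundary data $(\theta,p)$ of the bordism, which by definition gives an isomorphism of $(A,B)$-Hilbert $C^*$-modules $\theta:p\mathpzc{F}\xrightarrow{\sim} L^2[0,1]\hat{\boxtimes}\mathpzc{E}$, where $\mathpzc{E}=\bigoplus_{j=1}^k\mathpzc{E}_j$ is the boundary. The exterior tensor product commutes with direct sums in the second factor, so there is a canonical decomposition
\[
L^2[0,1]\hat{\boxtimes}\mathpzc{E} \;\cong\; \bigoplus_{j=1}^k L^2[0,1]\hat{\boxtimes}\mathpzc{E}_j,
\]
as $(A,B)$-Hilbert $C^*$-modules, which respects the grading if $k\equiv 0\bmod 2$ because the grading on $\mathpzc{E}$ is defined summand-wise.

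Next, I would let $q_j\in\End_B^*(L^2[0,1]\hat{\boxtimes}\mathpzc{E})$ denote the orthogonal projection onto the $j$-th summand. These projections are mutually orthogonal, satisfy $\sum_j q_j = \mathrm{id}$, commute with the $A$-action (since the $A$-action is diagonal in the direct sum decomposition) and, in the even case, are even since each $\mathpzc{E}_j$ is a graded subsummand of $\mathpzc{E}$. I then define
\[
p_j := \theta^{-1} q_j\, \theta \in \End_B^*(p\mathpzc{F})\subseteq \End_B^*(\mathpzc{F}),
\]
extended by zero on $(1-p)\mathpzc{F}$. Because $\theta$ is an isomorphism of $(A,B)$-Hilbert $C^*$-modules (and is graded in the even case), the $p_j$ are mutually orthogonal projections commuting with the $A$-action (and even in the even case), and $\sum_j p_j = \theta^{-1}(\sum_j q_j)\theta = \theta^{-1}\theta = p$.

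Finally, I would set $\theta_j := \theta|_{p_j \mathpzc{F}}$. Since $\theta(p_j\mathpzc{F}) = q_j\theta(p\mathpzc{F}) = q_j(L^2[0,1]\hat{\boxtimes}\mathpzc{E}) = L^2[0,1]\hat{\boxtimes}\mathpzc{E}_j$, the restriction $\theta_j:p_j\mathpzc{F}\to L^2[0,1]\hat{\boxtimes}\mathpzc{E}_j$ is an isomorphism of $(A,B)$-Hilbert $C^*$-modules, graded in the even case. The identity $\theta=\bigoplus_{j=1}^k\theta_j$ holds by construction under the identification $p\mathpzc{F}=\bigoplus_j p_j\mathpzc{F}$. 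There is essentially no obstacle here; the only point requiring minor care is verifying that in the even case the projections $q_j$ are even and the component isomorphisms $\theta_j$ are graded, both of which follow immediately from the fact that the grading on $\mathpzc{E}$ is the direct sum grading.
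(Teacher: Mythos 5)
Your proof is correct and is exactly the routine unpacking the paper has in mind when it says the proposition ``follows from the definition of direct sum''; the paper gives no further argument. One small slip: the condition for the decomposition to be graded is that $(\mathpzc{F},Q)$ has dimension $0\bmod 2$ (the ``even case'' you invoke later), not that the number $k$ of summands is even, but this does not affect the substance of the argument.
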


We use the suggestive notation $(\theta,p)=(\theta_1,p_1)\dot{\cup}(\theta_2,p_2)\dot{\cup}\cdots \dot{\cup}(\theta_k,p_k).$

\begin{theorem}
\label{gluingthm}
Assume that $(\mathpzc{F}, Q)$ and $(\mathpzc{F}', Q')$ are two bordisms with boundary data $(\theta,p)$ respectively $(\theta',p')$ and
\begin{align*}
\partial(\mathpzc{F}, Q,\theta,p)&=(\mathpzc{E}_1,D_1)+(\mathpzc{E}_2,D_2)\quad\mbox{and}\\
\partial(\mathpzc{F}', Q',\theta',p')&=-(\mathpzc{E}_2,D_2)+(\mathpzc{E}_3,D_3),
\end{align*}
for some closed cycles $(\mathpzc{E}_1,D_1)$, $(\mathpzc{E}_2,D_2)$ and $(\mathpzc{E}_3,D_3)$. Then there is a bordism $(\mathpzc{F}'', Q'',\theta'',p'')$ such that 
\begin{enumerate}
\item The $(A,B)$-Hilbert $C^*$-module $\mathpzc{F}''\subseteq \mathpzc{F} \oplus \mathpzc{F}' $ is determined by the pullback diagram
\[
\begin{CD}
\mathpzc{F}'' @>>>\mathpzc{F} \\
@VVV @ VV\theta_2p_2 V\\
\mathpzc{F}' @>\sigma\theta_2'p_2'>>\mathpzc{E}_2\hat{\boxtimes}L^2[0,1]  \\
\end{CD}, \]
where $\sigma: \mathpzc{E}\hat{\boxtimes}L^2[0,1]\to \mathpzc{E}\hat{\boxtimes}L^2[0,1]$ is defined by 
$$\sigma\xi(t)=\xi(1-t),$$
and where we have written 
$$(\theta,p)=(\theta_1,p_1)\dot{\cup}(\theta_2,p_2)\quad\mbox{and}\quad (\theta',p')=(\theta_2',p_2')\dot{\cup}(\theta_3',p_3').$$ 
\item The operator $Q''$ has domain 
$$\quad\quad\Dom Q''=\left\{z=(x,x'):\; 
\begin{cases}
x\in (1-p_2+\theta_2^{-1}(\chi_1\otimes 1)\theta_2p_2)\Dom Q,\\
x'\in (1-p_2'+(\theta_2')^{-1}(\chi_2\otimes 1)\theta_2'p_2')\Dom Q'.
\end{cases}\right\},$$
where $\chi_1,\chi_2\in C^\infty_c(0,1]$ satisfy $\chi_1(t)+\chi_2(1-t)=1,$ and $Q''$ acts by 
$$Q''z=(Qx,Q'x').$$
\item The boundary data $(\theta'',p'')$ is defined as $(\theta_1,p_1)\dot{\cup}(\theta_3',p_3')$.
\end{enumerate}
The bordism $(\mathpzc{F}'', Q'',\theta'',p'')$ does not depend on the choice of $\chi_1$ and $\chi_2$ and satisfies
$$\partial(\mathpzc{F}'', Q'',\theta'',p'')=(\mathpzc{E}_1,D_1)+(\mathpzc{E}_3,D_3).$$
\end{theorem}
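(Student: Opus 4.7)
The plan is to build the glued bordism in four stages: first the module $\mathpzc{F}''$, then the operator $Q''$ on its prescribed domain, then the chain and half-closedness structure over $C^1_0((0,1],\mathcal{A})$, and finally the identification of the boundary and the independence of the cutoffs. The underlying geometric intuition is that, near the seam, both original bordisms look like product cylinders $\Psi(D_2)$ (from Example \ref{psiint}) over $\mathpzc{E}_2$, and the flip $\sigma$ allows one to splice these cylinders into a single longer cylinder on which the operator is again of the same product form. Thus the whole construction is a ``noncommutative'' version of the classical gluing of manifolds along a common boundary collar.

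First I would verify that the pullback diagram defines a closed $(A,B)$-Hilbert $C^*$-submodule $\mathpzc{F}''\subseteq \mathpzc{F}\oplus \mathpzc{F}'$. Since $p_2,p_2'$ are projections commuting with the respective $A$-actions, the maps $\theta_2p_2$ and $\sigma\theta_2'p_2'$ are adjointable and have closed range, so the equalizer $\{(x,x'):\theta_2p_2x=\sigma\theta_2'p_2'x'\}$ is a closed $B$-submodule, compatible with the $A$-action inherited from $\mathpzc{F}\oplus\mathpzc{F}'$. The prospective boundary projection $p''=p_1+p_3'$ makes sense inside $\End_B^*(\mathpzc{F}'')$ because it is supported on the complement of the glued region.

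The main work is in Step two, where I would show that $Q''$ is a regular symmetric operator with the stated domain. The decomposition $\mathpzc{F}''=(1-p_2-p_2')(\mathpzc{F}\oplus\mathpzc{F}')\oplus \mathpzc{F}''_{\mathrm{seam}}$ reduces the problem to two parts: on the complement of the seam, $Q''$ acts as $Q\oplus Q'$, which is already regular symmetric by hypothesis; on the seam, by Definition \ref{boundarydef}(2), the operators $\theta_2 Q\theta_2^{-1}$ and $\sigma\theta_2'Q'(\theta_2')^{-1}\sigma^{-1}$ agree with $\Psi(D_2)$ on their overlapping cylindrical domains. Using the partition identity $\chi_1(t)+\chi_2(1-t)=1$, the restriction of $Q''$ to the seam is unitarily equivalent to the product operator $\Psi(D_2)$ on a single interval, whose regularity and symmetry follow from Theorem \ref{extprodint} and Theorem \ref{sumregul}. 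Condition (3) in Definition \ref{boundarydef} ensures that these two pieces are $Q$- and $Q'$-orthogonal, so there is no cross-term, and the sum of regular symmetric operators on orthogonal complementary submodules is again regular symmetric. The local--global principle (Theorem \ref{regproperties}) allows one to confirm these assertions fibrewise over states of $B$ if direct arguments on the module do not suffice.

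Having established regularity, the third step identifies the $C^1_0((0,1],\mathcal{A})$-chain structure on $(\mathpzc{F}'',Q'')$ with boundary data $(\theta'',p'')=(\theta_1,p_1)\dot\cup(\theta_3',p_3')$. Since $p_1,p_3'$ have support disjoint from the glued seam, the verification of Definition \ref{boundarydef}(1)--(3) and the half-closedness on $C^1_0((0,1),\mathcal{A})$ follow directly from Proposition \ref{cyclessss} applied separately to $(\mathpzc{F},Q)$ and $(\mathpzc{F}',Q')$; local compactness of $Q''$'s resolvent on $\pi(a)\mathpzc{F}''$ is inherited from the locally compact resolvents of $Q$ and $Q'$ together with the fact that the seam contributes a $\Psi(D_2)$-type resolvent, which is locally compact over $C^1_0((0,1),\mathcal{A})$. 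Finally, to show the bordism is independent of $\chi_1,\chi_2$ up to bordism, one connects any two admissible partitions by a smooth path and applies the homotopy argument of Lemma \ref{difftopyofoperator} on the resulting one-parameter family of glued chains; the resulting operator homotopy produces a bordism between the two glued bordisms over their common boundary.

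The main obstacle I expect is Step two: precisely verifying that the pullback domain, together with the cutoff-based description of $\Dom Q''$, is a core on which the glued operator is symmetric and regular rather than merely closed. The subtlety is that the gluing does not identify $\mathpzc{F}$ and $\mathpzc{F}'$ along a submodule but along the $L^2[0,1]$-cylindrical extension of $\mathpzc{E}_2$ via the flip $\sigma$, so one must carefully convert the ``$\chi_1,\chi_2$'' description of the domain into an honest cylinder picture where Theorem \ref{sumregul} and Theorem \ref{extprodint} apply. Once that identification is made cleanly, the rest of the verification reduces to bookkeeping with the already-available technical tools of the paper.
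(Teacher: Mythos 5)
Your overall picture (splice along a cylindrical seam, invoke the product-operator results near the seam, and use the homotopy lemma for the $\chi$-independence) is reasonable as intuition, but your central step---the regularity of $Q''$---rests on a decomposition that is not actually available, and this is precisely where the real work lies.

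Specifically, you propose to write $\mathpzc{F}'' = (1-p_2-p_2')(\mathpzc{F}\oplus\mathpzc{F}')\oplus \mathpzc{F}''_{\mathrm{seam}}$ and argue that $Q''$ splits as $Q\oplus Q'$ on the first summand and as (a unitary conjugate of) $\Psi(D_2)$ on the seam, then conclude regularity from the orthogonal-direct-sum principle. This fails on two counts. First, the projections $p_2$ and $p_2'$ commute with the $A$-action but not with $Q$ or $Q'$, so restricting $Q$ to $(1-p_2)\mathpzc{F}$ is not a well-defined regular symmetric operator; $Q''$ simply does not respect that orthogonal decomposition. Second, Definition \ref{boundarydef}(2) only forces $Q$ to agree with $\Psi(D_2)$ after localizing with $\phi\in C^\infty_c(0,1)$; near the interior end $t=1$ of each collar there is no cylinder description, and the seam together with a margin needed for regularity is not a clean product region. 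Thus ``the restriction of $Q''$ to the seam is unitarily equivalent to $\Psi(D_2)$'' cannot be extracted cleanly, and Theorem \ref{extprodint} cannot be applied as you suggest.

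The paper handles this differently, in two ways. It first glues $(\mathpzc{F},Q)\oplus -(\mathpzc{E}_{3,\cyl},D_{3,\cyl})$ to $(\mathpzc{F}',Q')\oplus -(\mathpzc{E}_{1,\cyl},D_{1,\cyl})$ along the \emph{entire} common boundary $(\mathpzc{E}_1,D_1)\oplus(\mathpzc{E}_2,D_2)\oplus(\mathpzc{E}_3,D_3)$, producing a boundaryless glued operator to which Hilsum's self-adjointness result (\cite[Proposition 7.3]{hilsumcmodbun}) applies, then extracts $Q''$ by an algebraic manipulation. Independently, it verifies half-closedness (and reconfirms regularity) by explicitly constructing a parametrix
\[
r_\lambda := b_c(\sqrt{\chi_1})(\lambda i+\tilde{Q})^{-1}b_c(\sqrt{\chi_1})+b_c'(\sqrt{\chi_2})(\lambda i+\tilde{Q}')^{-1}b_c'(\sqrt{\chi_2}),
\]
using the product property $b_c(C^\infty_c(0,1])b_c'(C^\infty_c(0,1])\subseteq b(C^\infty_c(0,1))b'(C^\infty_c(0,1))$ (Equation \eqref{prodrep}) to control cross-terms, and then estimating $\|(\lambda i+\tilde{Q}'')r_\lambda -1\|=\mathcal{O}(|\lambda|^{-1})$. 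This avoids any putative orthogonal operator splitting: the coupling across the seam is dealt with by the parametrix computation, not by decomposing $\mathpzc{F}''$. You should replace the decomposition argument in your Step two with one of these two strategies (or another argument of equivalent strength), since the direct-sum reduction does not go through.
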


\begin{proof}
Recall the cylinder construction of Lemma \ref{cylinderconstruction}. We construct $(\mathcal{F}''_\infty,Q''_\infty)$ as above but by gluing the symmetric chain $(\mathcal{F},Q)\oplus -(\mathpzc{E}_{3,\cyl}, D_{3,\cyl})$ together with $(\mathcal{F}',Q')\oplus -(\mathpzc{E}_{1,\cyl}, D_{1,\cyl})$ along their common boundary $(\mathpzc{E}_1,D_1)\oplus (\mathpzc{E}_2,D_2)\oplus(\mathpzc{E}_3,D_3)$. It follows from \cite[Proposition 7.3]{hilsumcmodbun} that $Q''_\infty$ is self-adjoint and regular. An algebraic manipulation shows that $(\mathpzc{F}'', Q'')$ is symmetric and regular. It is immediate from the construction that $(\mathpzc{F}'', Q'')$ is a symmetric chain with boundary $(\mathpzc{E}_1,D_1)+(\mathpzc{E}_3,D_3)$ relative to the boundary data $(\theta'',p'')$.

It remains to prove that $(\mathpzc{F}'', Q'')$ is a half-closed $(C^\infty_c((0,1],\mathcal{A}),B)$-chain. (In fact, we also provide an alternative proof for the regularity of $Q''$). We will construct a locally compact resolvent for $\tilde{Q}''$ up to compact error terms. For $\lambda\neq 0$,  we define the operator $r_\lambda$ on $\mathcal{F}''$ through
$$r_\lambda:=b_c(\sqrt{\chi_1})(\lambda i+\tilde{Q})^{-1}b_c(\sqrt{\chi_1})+b_c'(\sqrt{\chi_2})(\lambda i+\tilde{Q}')^{-1}b_c'(\sqrt{\chi_2}),$$ 
where $b_c(\phi)=b(\phi)p_2+\phi(1)(1-p_2)$ for $b$ constructed from $(\mathcal{F},Q,\theta,p)$ and $b_c'$ is defined analogously using $(\mathcal{F}',Q',\theta',p')$. A crucial property of these representations is that 
\begin{equation}
\label{prodrep}
b_c(C^\infty_c(0,1])b_c'(C^\infty_c(0,1])\subseteq b(C_c^\infty(0,1)) b'(C_c^\infty(0,1)).
\end{equation}
By construction, $r_\lambda$ is an adjointable operator on $\mathcal{F}''$ which is locally compact for the  $C^\infty_c((0,1],\mathcal{A})$-action on $\mathcal{F}''$ defined from the boundary data $(\theta'',p'')$. We will proceed to verify that $(\lambda i+\tilde{Q}'')r_\lambda-1$ is compact with $\|(\lambda i+\tilde{Q})r_\lambda-1\|_{\End^*_B(\mathcal{F}'')}<1$ for $\lambda$ large enough. This shows that $(\mathpzc{F}'', \lambda^{-1} Q'')$ is half-closed for $\lambda>0$ large enough, hence it holds for any $\lambda$. Note that $r_\lambda(\mathcal{F}'')\subseteq \Dom(\tilde{Q}'')$. Hence 
\begin{align*}
(\lambda i+\tilde{Q}'')r_\lambda=&(\lambda i+\tilde{Q})b_c(\chi_1^{3/2})(\lambda i+\tilde{Q})^{-1}b_c(\sqrt{\chi_1})\\
&+(\lambda i+\tilde{Q}')b_c'(\chi_2^{3/2})(\lambda i+\tilde{Q}')^{-1}b_c'(\sqrt{\chi_2})\\
&+(\lambda i+\tilde{Q}')\cdot\!\!\!\!\!\!\underbrace{b_c'(\chi_2)b_c(\chi_1^{1/2})}_{\in b(C_c^\infty(0,1)) b'(C_c^\infty(0,1))}\!\!\!\!\!\!\cdot\, (\lambda i+\tilde{Q})^{-1}b_c(\sqrt{\chi_1})\\
&+(\lambda i+\tilde{Q})\cdot\!\!\!\!\!\!\underbrace{b_c'(\chi_2^{1/2})b_c(\chi_1)}_{\in b(C_c^\infty(0,1)) b'(C_c^\infty(0,1))}\!\!\!\!\!\!\cdot\, (\lambda i+\tilde{Q}')^{-1}b_c(\sqrt{\chi_2})\\
&=1+b_c(\phi_1)(\lambda i+\tilde{Q})^{-1}b_c(\sqrt{\chi_1})+b_c'(\phi_2)(\lambda i+\tilde{Q}')^{-1}b_c'(\sqrt{\chi_2}),
\end{align*}
where $\phi_1,\phi_2\in C^\infty_c(0,1)$ are given by $\phi_1:=(\chi_1^{3/2})'+(\chi_1^{1/2}\sigma^*\chi_2)'$ and $\phi_2:=(\chi_2^{3/2})'+(\chi_2^{1/2}\sigma^*\chi_1)'$ and $\sigma(t):=1-t$. These computations imply that
$$\|(\lambda i+\tilde{Q})r_\lambda-1\|_{\End^*_B(\mathcal{F}'')}=\mathcal{O}(|\lambda|^{-1}), \quad\mbox{as $|\lambda|\to \infty$},$$
as required.

\end{proof}

\begin{cor}
\label{transitivitycoror}
If $(\mathpzc{E}_1,D_1)\sim_{\bor}(\mathpzc{E}_2,D_2)$ and $(\mathpzc{E}_2,D_2)\sim_{\bor}(\mathpzc{E}_3,D_3)$, then  $(\mathpzc{E}_1,D_1)\sim_{\bor}(\mathpzc{E}_3,D_3)$. 
\end{cor}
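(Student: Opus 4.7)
The plan is to deduce transitivity as an immediate application of the gluing theorem (Theorem \ref{gluingthm}), once the data are relabeled appropriately. By hypothesis and Lemma \ref{reflexivityofbordism}, there exist bordisms $(\mathpzc{F}, Q, \theta, p)$ and $(\mathpzc{F}', Q', \theta', p')$ with
\[
\partial(\mathpzc{F}, Q, \theta, p) = (\mathpzc{E}_1, D_1) + (-(\mathpzc{E}_2, D_2)), \qquad \partial(\mathpzc{F}', Q', \theta', p') = (\mathpzc{E}_2, D_2) + (-(\mathpzc{E}_3, D_3)).
\]
By Proposition \ref{additivityofcor}, the boundary data split accordingly as $(\theta,p)=(\theta_1,p_1)\dot{\cup}(\theta_2,p_2)$ and $(\theta',p')=(\theta_2',p_2')\dot{\cup}(\theta_3',p_3')$, where the subscripts $2$ correspond to the piece $-(\mathpzc{E}_2,D_2)$ respectively $(\mathpzc{E}_2,D_2)$.

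The key observation is that these two boundaries are set up exactly to match the hypotheses of Theorem \ref{gluingthm}, after renaming pieces via the involution $(\mathpzc{E},D)\mapsto -(\mathpzc{E},D)$. Setting $(\mathpzc{E}_1^\star, D_1^\star) := (\mathpzc{E}_1,D_1)$, $(\mathpzc{E}_2^\star, D_2^\star) := -(\mathpzc{E}_2, D_2)$, and $(\mathpzc{E}_3^\star, D_3^\star) := -(\mathpzc{E}_3, D_3)$, we have
\[
\partial(\mathpzc{F}, Q, \theta, p) = (\mathpzc{E}_1^\star, D_1^\star) + (\mathpzc{E}_2^\star, D_2^\star),
\]
\[
\partial(\mathpzc{F}', Q', \theta', p') = -(\mathpzc{E}_2^\star, D_2^\star) + (\mathpzc{E}_3^\star, D_3^\star),
\]
since the involution is an involution. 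Theorem \ref{gluingthm} then produces a bordism $(\mathpzc{F}'', Q'', \theta'', p'')$ with
\[
\partial(\mathpzc{F}'', Q'', \theta'', p'') = (\mathpzc{E}_1^\star, D_1^\star) + (\mathpzc{E}_3^\star, D_3^\star) = (\mathpzc{E}_1, D_1) + (-(\mathpzc{E}_3, D_3)),
\]
which by definition means $(\mathpzc{E}_1, D_1) \sim_{\bor} (\mathpzc{E}_3, D_3)$.

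The only point requiring mild care is the bookkeeping with the involution $-(\cdot)$: one must verify that the isomorphisms $\theta_2, \theta_2'$ supplied by Proposition \ref{additivityofcor} identify compatibly oriented copies of $\mathpzc{E}_2\hat{\boxtimes}L^2[0,1]$, so that the pullback module in part (1) of Theorem \ref{gluingthm} is well defined. This is automatic from the sign conventions of $-(\mathpzc{E}_2,D_2)$ together with the reflection $\sigma$ built into the gluing construction, so no additional analysis is required and the proof reduces to this single invocation of the gluing theorem.
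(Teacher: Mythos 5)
Your proof is correct and is exactly the application of Theorem \ref{gluingthm} that the paper intends when labelling this result a corollary: relabel $(\mathpzc{E}_1,D_1)$, $-(\mathpzc{E}_2,D_2)$, $-(\mathpzc{E}_3,D_3)$ so that the two nullbordisms supplied by the hypotheses match the hypotheses of the gluing theorem, and read off that $(\mathpzc{E}_1,D_1)+(-(\mathpzc{E}_3,D_3))$ is nullbordant. One tiny remark: the invocation of Lemma \ref{reflexivityofbordism} is superfluous, since the two bordisms you start from come directly from the definition of $\sim_{\bor}$ applied to the two hypotheses.
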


\subsection{The bordism group}

We now turn to the equivalence classes of closed cycles determined by the bordism relation. The results of the last two sections imply that it is indeed an equivalence relation:

\begin{prop}
\label{bordismequi}
The bordism relation is an additive equivalence relation on the semigroup of isomorphism classes of closed $(\mathcal{A},B)$-cycles.
\end{prop}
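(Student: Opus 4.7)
The proof is essentially an assembly of the preparatory lemmas from the preceding subsections, so my plan is to verify reflexivity, symmetry, transitivity, and compatibility with the semigroup operation in turn.

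First, reflexivity: for any closed cycle $(\mathpzc{E},D)$, Corollary \ref{symmetryofbordism} gives $(\mathpzc{E},D)+(-(\mathpzc{E},D))\sim_{\bor} 0$, which is exactly the statement $(\mathpzc{E},D)\sim_{\bor}(\mathpzc{E},D)$. Note that this also shows well-definedness of the relation on isomorphism classes, since isomorphic cycles can easily be connected by the identity bordism coming from Lemma \ref{cylinderconstruction} (or directly from Lemma \ref{difftopyofoperator} applied to the constant path under a unitary identification).

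Next, symmetry is precisely the content of Lemma \ref{reflexivityofbordism}: by negating $Q$ (in the even case) or reversing the grading (in the odd case), any bordism with boundary $(\mathpzc{E}_1,D_1)-(\mathpzc{E}_2,D_2)$ can be turned into one with boundary $(\mathpzc{E}_2,D_2)-(\mathpzc{E}_1,D_1)$. Transitivity is Corollary \ref{transitivitycoror}, which follows from the gluing theorem (Theorem \ref{gluingthm}) applied to a nullbordism of $(\mathpzc{E}_1,D_1)-(\mathpzc{E}_2,D_2)$ and one of $(\mathpzc{E}_2,D_2)-(\mathpzc{E}_3,D_3)$ glued along the common boundary cycle $(\mathpzc{E}_2,D_2)$ (after using Corollary \ref{symmetryofbordism} to cancel the $-(\mathpzc{E}_2,D_2)+(\mathpzc{E}_2,D_2)$ summand on the resulting boundary, which is itself nullbordant).

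Finally, additivity: suppose $(\mathpzc{E}_1,D_1)\sim_{\bor}(\mathpzc{E}_2,D_2)$ via a bordism $(\mathpzc{F},Q,\theta,p)$ and $(\mathpzc{E}_1',D_1')\sim_{\bor}(\mathpzc{E}_2',D_2')$ via a bordism $(\mathpzc{F}',Q',\theta',p')$. Form the direct sum
\[
(\mathpzc{F}\oplus\mathpzc{F}',\,Q\oplus Q',\,\theta\oplus\theta',\,p\oplus p').
\]
It is immediate from Definition \ref{boundarydef} that $(\mathpzc{E}_1,D_1)-(\mathpzc{E}_2,D_2)+(\mathpzc{E}_1',D_1')-(\mathpzc{E}_2',D_2')$ is a boundary of the direct sum chain with the direct sum boundary data, because the conditions in Definition \ref{boundarydef} are all imposed cutting-and-pasting-wise in each block, and $\Psi$ commutes with direct sums. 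The half-closed property of the associated $(C^1_0((0,1],\mathcal{A}),B)$-chain passes to direct sums since the local compactness condition \eqref{locacomp} and the Lipschitz condition on $\mathcal{A}$ are both closed under orthogonal direct sums. Hence the direct sum is a bordism witnessing $(\mathpzc{E}_1,D_1)+(\mathpzc{E}_1',D_1')\sim_{\bor}(\mathpzc{E}_2,D_2)+(\mathpzc{E}_2',D_2')$.

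No step is really an obstacle here since all the heavy lifting (the homotopy lemma and the gluing lemma) has already been carried out in the preceding subsections. The only mildly delicate point is to confirm that bordism is well-defined on \emph{isomorphism} classes: if $u:(\mathpzc{E},D)\to (\mathpzc{E}',D')$ is an isomorphism of cycles, one composes any nullbordism of $(\mathpzc{E},D)-(\mathpzc{F},F)$ with the cylinder bordism supplied by Lemma \ref{cylinderconstruction} (or the constant-path bordism of Lemma \ref{difftopyofoperator}) twisted by $u$, and glues via Theorem \ref{gluingthm}; this is immediate and I would mention it only briefly.
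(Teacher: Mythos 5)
Your proof is correct and follows the same route as the paper: reflexivity from Corollary~\ref{symmetryofbordism}, symmetry from Lemma~\ref{reflexivityofbordism}, transitivity from Corollary~\ref{transitivitycoror} (via the gluing theorem), and additivity from the direct-sum construction of bordisms. You have in fact untangled the paper's swapped labels correctly: the corollary named ``symmetryofbordism'' does prove reflexivity, and the lemma named ``reflexivityofbordism'' does prove symmetry. For additivity the paper simply cites Proposition~\ref{additivityofcor}; your explicit direct sum of the two bordisms $(\mathpzc{F}\oplus\mathpzc{F}',Q\oplus Q',\theta\oplus\theta',p\oplus p')$ is the honest justification (Proposition~\ref{additivityofcor} is really the converse decomposition statement), so if anything your account is slightly cleaner.
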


\begin{proof}
The bordism relation is symmetric by Corollary \ref{symmetryofbordism}, reflexive by Lemma \ref{reflexivityofbordism} and transitive by  Corollary \ref{transitivitycoror}. The relation is additive by Proposition \ref{additivityofcor}.
\end{proof}

\begin{define} 
\label{defOfBorGrp}
We define the \emph{bordism group} $\Omega_*(\mathcal{A},B)$ as the set of bordism classes of closed $(\mathcal{A},B)$-cycles.
\end{define}

Lemma \ref{semiisoclass} and Proposition \ref{bordismequi} imply that $\Omega_*(A,B)$ forms a well defined $\Z/2\Z$-graded abelian semigroup under direct sum graded by parity of cycles.

\begin{theorem}
\label{bddtrasnsurj}
The abelian semigroup $\Omega_*(\mathcal{A},B)$ forms a $\Z/2\Z$-graded abelian group and the bounded transform 
$$b:\Omega_*(\mathcal{A},B)\to KK_*(A,B), \quad b[\mathpzc{E},D]:=[\mathpzc{E},b(D)],$$
is a well defined group homomorphism. Moreover, given two separable $C^*$-algebras $A$ and $B$, there exists a dense $*$-subalgebra $\mathcal{A}\subseteq A$ with a complete locally convex topology such that $b:\Omega_*(\mathcal{A},B)\to KK_*(A,B)$ is surjective. If $KK_*(A,B)$ is countably generated, $\mathcal{A}$ can be taken to be a Fr\'echet algebra and if $KK_*(A,B)$ is finitely generated, $\mathcal{A}$ can be taken to be a Banach algebra.
\end{theorem}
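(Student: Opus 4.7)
The plan is to assemble the theorem from the machinery already in place in the excerpt; essentially every ingredient has been prepared, so the proof is a matter of organizing the pieces.

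First, I would verify that $\Omega_*(\mathcal{A},B)$ is a group rather than merely a semigroup. By Proposition~\ref{bordismequi}, bordism is an additive equivalence relation on $Z_*(\mathcal{A},B)$, so the quotient is at least a $\Z/2\Z$-graded abelian semigroup. The existence of inverses is exactly the content of Corollary~\ref{symmetryofbordism}: for any closed cycle $(\mathpzc{E},D)$ one has $(\mathpzc{E},D)+(-(\mathpzc{E},D))\sim_{\bor} 0$. Thus each class $[\mathpzc{E},D]$ has inverse $[-(\mathpzc{E},D)]$, promoting $\Omega_*(\mathcal{A},B)$ to a $\Z/2\Z$-graded abelian group.

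Next I would check that $b$ is a well-defined homomorphism. Suppose $(\mathpzc{E}_1,D_1)\sim_{\bor}(\mathpzc{E}_2,D_2)$. By definition this means $(\mathpzc{E}_1,D_1)+(-(\mathpzc{E}_2,D_2))$ is the boundary of a bordism $(\mathpzc{F},Q,\theta,p)$. Remark~\ref{bordismandbddtrans}, which invokes Hilsum's \cite[Theorem 6.2]{hilsumbordism}, then yields
\[
[\mathpzc{E}_1,b(D_1)] - [\mathpzc{E}_2,b(D_2)] = 0 \quad\text{in } KK_*(A,B),
\]
so $b[\mathpzc{E},D] := [\mathpzc{E},b(D)]$ descends to a well-defined map on $\Omega_*$. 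Additivity follows because the bounded transform respects direct sums: $b(D_1\oplus D_2) = b(D_1)\oplus b(D_2)$, and direct sums of Kasparov cycles add classes in $KK_*$. Compatibility with the grading is immediate from Definition~\ref{cycleschaindeefnew}.

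For the surjectivity claim together with the Fr\'echet and Banach refinements, essentially no new work is required: this is precisely the second half of Theorem~\ref{surjoncycles}, applied at the level of closed cycles $Z_*(\mathcal{A},B)$. That theorem produces a dense $*$-subalgebra $\mathcal{A}\subseteq A$ (Fr\'echet if $KK_*(A,B)$ is countably generated, Banach if finitely generated) such that every class in $KK_*(A,B)$ is represented by some $(\mathpzc{E},D)\in Z_*(\mathcal{A},B)$. Since the diagram
\[
Z_*(\mathcal{A},B)\twoheadrightarrow \Omega_*(\mathcal{A},B)\xrightarrow{\,b\,} KK_*(A,B)
\]
commutes with $b|_{Z_*}$ from Theorem~\ref{surjoncycles}, surjectivity of the composite forces surjectivity of $b$ on the bordism group.

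I do not expect a serious obstacle here, since the hard analytic content (existence of unbounded representatives of $KK$-classes, bordism invariance of the bounded transform, the gluing construction ensuring $\sim_{\bor}$ is transitive and additive) has been assembled in the preceding sections. The main thing to be careful about is that in assembling well-definedness of $b$ one really does get an \emph{equality} of classes (not just homotopy of bounded transforms), which is guaranteed by Hilsum's cobordism invariance cited in Remark~\ref{bordismandbddtrans}; and that Theorem~\ref{surjoncycles} already delivers representatives in the closed sub-semigroup $Z_*(\mathcal{A},B)$, so no further approximation is needed at the bordism level.
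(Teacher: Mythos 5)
Your proof is correct and follows exactly the paper's approach: inverses from Corollary~\ref{symmetryofbordism}, well-definedness of $b$ from Remark~\ref{bordismandbddtrans} (Hilsum's bordism invariance), and surjectivity with the Fr\'echet/Banach refinements from Theorem~\ref{surjoncycles}. You have merely spelled out the same three citations in more detail than the paper does.
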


\begin{proof}
Any element of $\Omega_*(A,B)$ possesses an inverse by Corollary \ref{symmetryofbordism}. The bounded transform respects bordism by Remark \ref{bordismandbddtrans} (see \cite[Theorem 6.2]{hilsumbordism}). Existence of the dense subalgebra $\mathcal{A}$ follows from Theorem \ref{surjoncycles}.
\end{proof}

\section{The bounded transform} 
\label{boundSection}

In this section we go deeper into the mapping properties of the bounded transform. Kasparov's $KK$-theory is built out of bounded $KK$-cycles \cite{Kas}, i.e., pairs $(\mathpzc{E},F)$ as above only with $F$ being an adjointable operator that modulo the space of locally compact operators is a self-adjoint symmetry. To understand mapping properties of the bounded transform, one needs not only to understand how cycles relate to each other, but foremost how the corresponding relations relate to each other. The relation on the set of bounded $KK$-cycles can for instance be constructed by \emph{generating} a relation from continuous homotopies of the operator $F$ and identifying the so called degenerate cycles with the trivial cycle, see more in for instance \cite{Bla, Kas,jeto}. 

\subsection{Degenerate cycles}

We will start by considering a property analogous to the notion of degenerate unbounded $KK$-cycles (see \cite[Appendix by Kucerovsky]{valettebook}). Recall that a bounded $(A,B)$-Kasparov cycle $(\mathpzc{E},F)$ is called degenerate if
$$\pi(a)(F^2-1)=\pi(a)(F^*-F)=[\pi(a),F]=0 \quad \forall a\in A.$$
Note that if $A=\C$ acts unitally on $\mathpzc{E}$, the degeneracy condition simplifies substantially to the condition $F^2-1=F^*-F=0$.

Although it might be difficult to see at first, the next definition is inspired by a construction in geometric $K$-homology, see for example \cite[proof of Proposition 6.6]{HReta}, \cite[Section 2.3]{deelgoffIII} or below in Lemma \ref{vectorbmodgeo} of Subsection \ref{secassem}.

\begin{define}
\label{weakdegdeg}
We say that a closed $(\mathcal{A},B)$-cycle $(\mathpzc{E},D)$ is \emph{weakly degenerate} if we have decomposition $D=D_0+S$ where
\begin{enumerate}
\item $D_0$ and $S$ are self-adjoint regular operators (both odd if $\mathpzc{E}$ is nontrivially graded) satisfying 
$$\Dom D=\Dom S\cap \Dom D_{0}.$$
\item $S$ admits a bounded adjointable inverse, its domain is preserved by $A$ and $S$ commutes with the $A$-action.
\item There is a common core $X_{0}\subset \Dom S\cap \Dom D_{0}$ for which $SX_{0}\subset \Dom D_{0}$, $D_{0}X_{0}\subset \Dom S$ and 
$$D_0 S +SD_0=0 \quad\mbox{on}\quad X_{0}.$$
\item $\pi$ restricts to a continuous homomorphism $\pi:\mathcal{A}\to \Lip(D_0)$.
\end{enumerate}
If $(\mathpzc{E},D)$ is weakly degenerate with $D_0=0$, then $(\mathpzc{E},D)$ is called \emph{degenerate} (see for example \cite[Appendix by Kucerovsky]{valettebook}). 
\end{define}

It is immediate from the definition that (if one picks the correct chopping function) the bounded transform of a degenerate unbounded $KK$-cycle is a degenerate bounded $KK$-cycle. We can prove the nullbordance of weakly degenerate cycles using a swindle.

\begin{theorem}
\label{weakdegnullbor}
If $(\mathpzc{E},D)$ is weakly degenerate, there is a closed cycle $(\mathpzc{E}_\infty,D_\infty)$ such that $(\mathpzc{E}_\infty,D_\infty)+(\mathpzc{E},D)\sim_{\bor} (\mathpzc{E}_\infty,D_\infty)$. In particular, weakly degenerate cycles are null bordant.
\end{theorem}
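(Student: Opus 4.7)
The strategy is an Eilenberg swindle at the cycle level, using invertibility of $S$ and the anticommutation $D_0 S + S D_0 = 0$ to make infinitely rescaled copies of $(\mathpzc{E},D)$ assemble into a genuine closed cycle. Specifically, I would set $\mathpzc{E}_\infty := \bigoplus_{n=1}^\infty \mathpzc{E}$ with diagonal $A$-action and $D_\infty := \bigoplus_{n=1}^\infty (D_0 + (n+1)S)$. After verifying that $(\mathpzc{E}_\infty, D_\infty)$ is a closed cycle, I would produce a $C^1$-path on $\mathpzc{E} \oplus \mathpzc{E}_\infty$ whose value at $t=0$ is $(\mathpzc{E},D) + (\mathpzc{E}_\infty, D_\infty)$ and whose value at $t=1$ agrees with $(\mathpzc{E}_\infty, D_\infty)$ under the shift isomorphism $\mathpzc{E} \oplus \mathpzc{E}_\infty \cong \mathpzc{E}_\infty$, then invoke Lemma \ref{difftopyofoperator}. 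The cycle $(\mathpzc{E},D)$ will then be nullbordant by adding $-(\mathpzc{E}_\infty, D_\infty)$ to both sides and using Corollary \ref{symmetryofbordism} together with transitivity (Corollary \ref{transitivitycoror}).

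The key analytic input is that $(D_0 + \lambda S)^2 = D_0^2 + \lambda^2 S^2$ on the common core $X_0$, which after appealing to Theorem \ref{sumregul} yields a self-adjoint regular closure with $\lambda$-independent domain $\Dom D_0 \cap \Dom S$ and, by invertibility of $S$, the uniform bounds
\[\|(D_0 + \lambda S \pm i)^{-1}\| = \mathcal{O}(\lambda^{-1}), \qquad \|S\xi\| \leq \|(D_0 + \lambda S)\xi\| \text{ for } \lambda \geq 1.\]
The resolvent identity
\[(D_0 + \lambda S \pm i)^{-1} = (D \pm i)^{-1} + (1-\lambda)(D_0 + \lambda S \pm i)^{-1}\, S\, (D \pm i)^{-1},\]
combined with $\pi(a) S = S \pi(a)$ and local compactness of $(D \pm i)^{-1}$, gives compactness of $\pi(a)(D_0 + \lambda S \pm i)^{-1}$ with norm $\mathcal{O}(\lambda^{-1})$. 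Summing over $n$ produces a compact operator $\pi(a)(D_\infty \pm i)^{-1}$ on $\mathpzc{E}_\infty$, while the identity $[D_0 + \lambda S, a] = [D_0, a]$ shows that $\pi: \mathcal{A} \to \Lip(D_\infty)$ is continuous.

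For the bordism, I would apply Lemma \ref{difftopyofoperator} to the path
\[D(t) := (D_0 + (1+t)S) \oplus \bigoplus_{n=1}^\infty (D_0 + (n+1+t)S)\]
on $\mathpzc{E} \oplus \mathpzc{E}_\infty$. The common domain $\mathpzc{W} := W(D(0))$ is stable in $t \in [0,1]$ by the $\lambda$-uniform estimates, each $D(t)$ is a closed cycle by the argument above, and the derivative $\bigoplus_n S$ is an adjointable map $\mathpzc{W} \to \mathpzc{E} \oplus \mathpzc{E}_\infty$ of norm $\leq 1$ (again by $\|S\xi\| \leq \|(D_0 + \lambda S)\xi\|$ for $\lambda \geq 1$). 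Lemma \ref{difftopyofoperator} thus provides a bordism $(\mathpzc{E},D) + (\mathpzc{E}_\infty, D_\infty) \sim_{\bor} (\mathpzc{E}_\infty, D_\infty)$, and the conclusion follows as indicated above.

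The main technical obstacle will be the rigorous execution of the uniform-in-$\lambda$ estimates and the self-adjoint regularity of $D_0 + \lambda S$ (and of $D(t)$) given only that the anticommutation is known on the core $X_0$. Once these are in place the swindle is mechanical, but matching the hypotheses of Theorem \ref{sumregul} and Lemma \ref{difftopyofoperator}, and in particular verifying that the domain assignments behave correctly under closure and direct sum, is the core analytic content of the argument.
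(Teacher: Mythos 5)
Your proposal is correct and follows essentially the same approach as the paper's proof: an Eilenberg swindle on $\mathpzc{E}_\infty = \bigoplus_{n\geq 1}\mathpzc{E}$ with $D_\infty = \bigoplus (D_0 + (n+1)S)$, using the anticommutation $(D_0+\lambda S)^2 = D_0^2 + \lambda^2 S^2$ together with invertibility of $S$ to control the resolvent, and a $C^1$ path in $t$ (the paper reindexes from $k=2$ and uses a bump function $\chi(t)$, but this is cosmetic) fed into Lemma \ref{difftopyofoperator}. The only stylistic difference is that you supply more explicit uniformity estimates where the paper is terse; the underlying argument is identical.
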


\begin{proof}
We define $\mathpzc{E}_\infty:=\ell^2(\mathbb{N}_{\geq 2})\otimes \mathpzc{E}$ and 
$$D_\infty:=\bigoplus_{k=2}^\infty (D_0+kS),$$
whose domain is defined from the graph closure of $C_c(\mathbb{N}_{\geq 2}, \Dom D)$. Let $\pi_\infty$ denote the left $A$-action on $\mathpzc{E}_\infty$. Since 
$$[D_\infty,\pi_\infty(a)]=\bigoplus_{k=2}^\infty [D_0,\pi(a)],$$
we have that $\pi_\infty(\mathcal{A})\subseteq \Lip(D_\infty)$. The operator $D_\infty$ has locally compact resolvent because 
$$(1+D_\infty^2)^{-1}=\bigoplus_{k=2}^\infty (1+D_0^2+k^2S^2)^{-1},$$
and we have $\|(1+D_0^2+k^2S^2)^{-1}\|_{\End^*_B(\mathpzc{E})}=O(k^{-1})$, because $S$ admits a bounded adjointable inverse.

Take an increasing function $\chi\in C^\infty[0,1]$ such that $\chi(t)=0$ near $t=0$ and $\chi(t)=1$ near $t=1$. Consider the path of operators given by $\Dom D(t)=\Dom D_\infty$ and 
$$D(t):=\bigoplus_{k=2}^\infty (D_0+(k-\chi(t))S).$$
It is clear that $D(0)=D_\infty$ and 
$$(\mathpzc{E}_\infty,D(1))=\left(\mathpzc{E}_\infty, \bigoplus_{k=2}^\infty(D_{0}+(k-1)S)\right)\cong (\mathpzc{E}_\infty\oplus \mathpzc{E},D_\infty\oplus D).$$
It follows from Lemma \ref{difftopyofoperator} that $(\mathpzc{E}_\infty,D_\infty)+(\mathpzc{E},D)\sim_{\bor} (\mathpzc{E}_\infty,D_\infty)$. Therefore, in the abelian group $\Omega_*(\mathcal{A},B)$, we have
$$[\mathpzc{E},D]=[\mathpzc{E}_\infty,D_\infty]-[\mathpzc{E}_\infty,D_\infty]=0.$$
\end{proof}

We note that the same proof applies to the following situation.

\begin{prop}
\label{degnulb}
Let $(\mathpzc{E},D)$ by an $(\mathcal{A},B)$-cycle such that $\mathcal{A}\mathpzc{E}=0$. Then $(\mathpzc{E},D)$ is null-bordant.
\end{prop}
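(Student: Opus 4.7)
The plan is to run an Eilenberg swindle following the same outline as the proof of Theorem \ref{weakdegnullbor}, but vastly simplified by the hypothesis $\mathcal{A}\mathpzc{E}=0$, which renders both the Lipschitz and local compactness conditions automatic and eliminates any need for the auxiliary operator $S$ used there.

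Set $\mathpzc{E}_\infty:=\ell^2(\mathbb{N})\otimes \mathpzc{E}$ and let $D_\infty:=\bigoplus_{n\in \mathbb{N}} D$, equipped with the graph closure of $C_c(\mathbb{N},\Dom D)$. Then $D_\infty$ is self-adjoint and regular (and odd if $\mathpzc{E}$ is non-trivially graded). Because $\mathcal{A}\mathpzc{E}_\infty=0$, the induced left $A$-action $\pi_\infty$ on $\mathpzc{E}_\infty$ vanishes, so $\pi_\infty(\mathcal{A})\subseteq \Lip(D_\infty)$ holds trivially and $\pi_\infty(a)(i\pm D_\infty)^{-1}=0\in \K_B(\mathpzc{E}_\infty)$ for every $a\in A$. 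Hence $(\mathpzc{E}_\infty,D_\infty)$ is a closed $(\mathcal{A},B)$-cycle. The shift isomorphism $\ell^2(\mathbb{N})\cong \mathbb{C}\oplus \ell^2(\mathbb{N})$ induces a $B$-linear unitary $U:\mathpzc{E}\oplus \mathpzc{E}_\infty \xrightarrow{\cong}\mathpzc{E}_\infty$. Because the $A$-actions on both sides vanish, $U$ is automatically an isomorphism of $(A,B)$-Hilbert $C^*$-bimodules, and it intertwines $D\oplus D_\infty$ with $D_\infty$.

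Therefore $(\mathpzc{E},D)+(\mathpzc{E}_\infty,D_\infty)\cong (\mathpzc{E}_\infty,D_\infty)$ as closed $(\mathcal{A},B)$-cycles, so they represent the same class in $\Omega_*(\mathcal{A},B)$, giving
\[
[\mathpzc{E},D]+[\mathpzc{E}_\infty,D_\infty]=[\mathpzc{E}_\infty,D_\infty].
\]
Since $\Omega_*(\mathcal{A},B)$ is an abelian group by Theorem \ref{bddtrasnsurj}, cancelling $[\mathpzc{E}_\infty,D_\infty]$ yields $[\mathpzc{E},D]=0$, so $(\mathpzc{E},D)$ is null-bordant. There is no genuine obstacle here: the essential observation is that the hypothesis $\mathcal{A}\mathpzc{E}=0$ removes every constraint placed on the summands of $D_\infty$ and on the shift unitary by the $A$-action, so that the swindle runs without the auxiliary decomposition $D=D_0+S$ required in Theorem \ref{weakdegnullbor}.
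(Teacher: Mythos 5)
Your proof is correct. The hypothesis $\mathcal{A}\mathpzc{E}=0$ (hence, by density and continuity, $A\mathpzc{E}=0$) does indeed make the Lipschitz and local-compactness conditions for $D_\infty=\bigoplus_{n}D$ trivial, and the shift unitary $U:\mathpzc{E}\oplus\mathpzc{E}_\infty\to\mathpzc{E}_\infty$ preserves domains, intertwines $D\oplus D_\infty$ with $D_\infty$, and (trivially) intertwines the $A$-actions and gradings, so it is an isomorphism of closed $(\mathcal{A},B)$-cycles. Cancellation in the group $\Omega_*(\mathcal{A},B)$ then gives $[\mathpzc{E},D]=0$.

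Your route is a genuine (small) simplification of the one taken in the paper. The paper's proof of this proposition copies the template of Theorem~\ref{weakdegnullbor}: it takes the $k$-scaled swindle $D_\infty=\bigoplus_{k\geq 2}kD$ and then produces an explicit bordism $(\mathpzc{E},D)+(\mathpzc{E}_\infty,D_\infty)\sim_{\bor}(\mathpzc{E}_\infty,D_\infty)$ by running the homotopy $D(t)=\bigoplus_{k\geq 2}(k-\chi(t))D$ through Lemma~\ref{difftopyofoperator}. In Theorem~\ref{weakdegnullbor} that scaling is essential --- there $D=D_0+S$ and the growing coefficients on $S$ are what give $\pi_\infty(\mathcal{A})\subseteq\Lip(D_\infty)$ and the $O(k^{-1})$ resolvent decay --- and the homotopy is needed because the scaled sum is not literally shift-isomorphic to $D_\infty\oplus D$. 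You observe that when $\mathcal{A}$ annihilates $\mathpzc{E}$ all of that machinery collapses: the unscaled $\bigoplus_n D$ already defines a cycle, and the classical Eilenberg swindle isomorphism $(\mathpzc{E},D)\oplus(\mathpzc{E}_\infty,D_\infty)\cong(\mathpzc{E}_\infty,D_\infty)$ holds on the nose in the semigroup $Z_*(\mathcal{A},B)$, with no appeal to the homotopy lemma at all. Both arguments are valid; yours trades the explicit bordism for a one-line isomorphism and is arguably the cleaner proof of this particular proposition, at the cost of no longer being a specialization of the weakly-degenerate argument.
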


\begin{proof}
Consider the module $\mathpzc{E}_\infty:=\ell^2(\mathbb{N}_{\geq 2})\otimes \mathpzc{E}$ and the operator
$$D_\infty:=\bigoplus_{k=2}^\infty kD,$$
which we equip with the domain given by the graph closure of $C_c(\mathbb{N}_{\geq 2}, \Dom D)$. Since $\A$ acts as $0$ on $\mathpzc{E}$, $D_\infty$ has vanishing commutators with $\A$ and locally compact resolvent. The proof now proceeds as in Theorem \ref{weakdegnullbor}.
\end{proof}

Let us give a second proof of the fact that weakly degenerate cycles are null bordant. This proof is more geometric, retaining the geometric flavor of classical bordisms.

\begin{theorem}
Let $(\mathpzc{E},D)$ be a weakly degenerate cycle. Choose a function $\chi\in C^\infty(\R,\R_{>0})$ such that $\chi(t)=1$ for $|t|\leq 2$ and $\chi(t)=\sqrt{1+t^2}$ for $|t|$ large and denote the associated self-adjoint regular operator on $\mathpzc{E}_{\cyl}$ by $X$. Define $D_{\textnormal{cone}}$ to be the graph closure of 
\begin{equation}
\label{diffexpdefdcone}
\begin{cases}
&i\gamma_{\mathpzc{E}}\left(\frac{\partial}{\partial t}+D_0+XS\right), \\
&\qquad\qquad\qquad\mbox{on}\quad C^\infty_c((0,\infty),\Dom D)\quad\mbox{in the even case};\\
\,\\
&\begin{pmatrix}
0& \frac{\partial}{\partial t}+D_0+XS\\
-\frac{\partial}{\partial t}+D_0+XS& 0
\end{pmatrix}
\\&\qquad\qquad\qquad\mbox{on}\quad C^\infty_c((0,\infty),\Dom D\oplus \Dom D)\quad\mbox{in the odd case}.
\end{cases}.
\end{equation}
Then, $(\mathpzc{E}_{\cyl},D_{\textnormal{cone}},\theta_{\cyl},p_{\cyl})$ is a bordism and
\[\partial(\mathpzc{E}_{\cyl},D_{\textnormal{cone}},\theta_{\cyl},p_{\cyl})=(\mathpzc{E},D).\]
In particular, weakly degenerate cycles are nullbordant.
\end{theorem}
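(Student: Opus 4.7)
The plan is to verify that $(\mathpzc{E}_{\mathrm{cyl}}, D_{\mathrm{cone}}, \theta_{\mathrm{cyl}}, p_{\mathrm{cyl}})$, with the collar data from Lemma \ref{cylinderconstruction}, defines a bordism with boundary $(\mathpzc{E}, D)$; nullbordance is then immediate. Three properties must be checked: (i) $D_{\mathrm{cone}}$ is closed, symmetric and regular on $\mathpzc{E}_{\mathrm{cyl}}$; (ii) the boundary-data axioms of Definition \ref{boundarydef} hold; and (iii) the resulting symmetric chain is half-closed as a $(C^1_0((0,1],\mathcal{A}), B)$-chain in the sense of Proposition \ref{cyclessss}. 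Because $\chi \equiv 1$ on $[0,2]$, $D_{\mathrm{cone}}$ agrees on the collar $p_{\mathrm{cyl}}\mathpzc{E}_{\mathrm{cyl}} = L^2[0,1]\hat{\boxtimes}\mathpzc{E}$ with the cylinder operator built from $D = D_0 + S$, so (ii) follows verbatim from the argument of Lemma \ref{cylinderconstruction}.

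For (i), I would write in the even case $D_{\mathrm{cone}} = i\gamma_\mathpzc{E}(\partial_t + D_0) + i\gamma_\mathpzc{E} X S$, with the obvious odd variant; regularity of the first summand follows from Proposition \ref{technicalpartofhomtop} applied to the constant path $t \mapsto D_0$. Using that $D_0$ and $S$ are odd, that $\chi(t)$ commutes with operators on $\mathpzc{E}$, and that $D_0 S + S D_0 = 0$ on the common core $X_0$, a direct algebraic computation on $C^\infty_c((0,\infty), X_0)$ yields
\[
\bigl\{i\gamma_\mathpzc{E}(\partial_t + D_0),\, i\gamma_\mathpzc{E} X S\bigr\} = -\chi'(t)\,S \quad\text{and}\quad (i\gamma_\mathpzc{E} X S)^2 = \chi(t)^2 S^2.
\]
Since $\chi'$ is bounded and $S$ is boundedly invertible, the hypotheses of Theorem \ref{sumregul} are satisfied, yielding closedness, symmetry and regularity of $D_{\mathrm{cone}}$ on the intersection domain, together with the identity
\[
D_{\mathrm{cone}}^2 = -\partial_t^2 + D_0^2 + \chi(t)^2 S^2 - \chi'(t)\,S.
\]

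For (iii), by the argument of Proposition \ref{cyclessss} it suffices to show that $\pi_\infty(a)(1 + D_{\mathrm{cone}}^2)^{-1}$ is $B$-compact for every $a \in A$, where $\pi_\infty$ denotes the induced $A$-action on $\mathpzc{E}_{\mathrm{cyl}}$. On the collar $[0,1]$ this follows from the cycle hypothesis that $\pi(a)(1 + D_0^2 + S^2)^{-1}$ is $B$-compact, combined with the decay of the test functions in $C^1_0((0,1],\mathcal{A})$ at $t = 0$. On the region $t \geq 1$, bounded invertibility of $S$ yields the inequality $\chi(t)^2 S^2 \geq \chi(t)^2 \|S^{-1}\|^{-2}$, which combined with the heat-kernel behaviour of $(1 - \partial_t^2)^{-1}$ delivers the desired compact decay of the resolvent along the noncompact $t$-direction.

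The technically most delicate step will be this last compactness estimate: the unbounded perturbation $-\chi'(t)\,S$ must be dominated by $\chi(t)^2 S^2$ in a way that preserves $B$-compactness of the resolvent. A workable strategy is a Kato--Rellich-type argument based on the bound $\|\chi'(t)\, S\,(1 + \chi(t)^2 S^2)^{-1/2}\| = O(\chi(t)^{-1})$, combined with the local-global principle (Theorem \ref{regproperties}(6)) to reduce compactness to a Hilbert-space estimate in each GNS localization of $B$. Once this is in place, the boundary identity $\partial(\mathpzc{E}_{\mathrm{cyl}}, D_{\mathrm{cone}}, \theta_{\mathrm{cyl}}, p_{\mathrm{cyl}}) = (\mathpzc{E}, D)$ follows directly from the collar formula.
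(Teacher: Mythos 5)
Your parts (i) and (ii) are correct and run parallel to the paper's own argument: Theorem \ref{sumregul} gives regularity and the identification of $D_{\textnormal{cone}}^*$, and since $\chi\equiv 1$ on $[0,2]$ the collar operator is literally $\Psi(D)$, so the boundary-data axioms reduce to Lemma \ref{cylinderconstruction}. Your square of $D_{\textnormal{cone}}$ is also the one the paper uses.

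The genuine gap is in your plan for (iii). You propose to establish $B$-compactness of the resolvent by invoking ``the local-global principle (Theorem \ref{regproperties}(6)) to reduce compactness to a Hilbert-space estimate in each GNS localization of $B$.'' That is not what the local-global principle does: it characterizes \emph{regularity and self-adjointness} of operators on Hilbert $C^*$-modules via localizations, but $B$-compactness is not a local property and cannot be verified localization by localization. An operator can be compact in every GNS localization without being a norm limit of finite-rank module maps. This is exactly the obstacle the paper's proof is designed to overcome: after extending to $\R$ via the even-reflection map $u$ to obtain a self-adjoint $D_{\dc}$ and comparing (by a relatively bounded perturbation, which is the easy step, not the delicate one you flagged) with the harmonic-oscillator-type operator $H=-\partial_t^2+D_0^2+(1+t^2)S^2$, the paper proves $B$-compactness of $(1+H)^{-1}$ \emph{directly}, by exhibiting an explicit integral kernel $k(t,s)=\sum_n (2(n+1)+D_0^2+S^2)^{-1}h_n(|S|t)h_n(|S|s)$ built from Hermite functions and checking $k\in L^2(\R\times\R,\K_B(\mathpzc{E}))$. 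You would need to replace your localization argument with a direct $\K_B(\mathpzc{E})$-valued kernel construction (or some other genuinely global compactness argument) for the growth of $\chi(t)^2 S^2$ to actually yield a $B$-compact resolvent; the ``heat-kernel behaviour of $(1-\partial_t^2)^{-1}$'' alone will not, since that operator is not even compact on $L^2(\R_+)$.

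A secondary structural difference: you split the analysis into a collar $[0,1]$ and a region $t\geq 1$, whereas the paper avoids the boundary at $t=0$ entirely by doubling to $\R$ before estimating the resolvent. Your splitting is not incorrect per se, but the doubling is what lets the paper compare cleanly against a self-adjoint confining model operator without boundary conditions.
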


\begin{proof}
By similar considerations as in Subsection \ref{extprodofsymsect}, with Theorem \ref{sumregul} playing the main r\^ole, the domain of the operator $D_{\textnormal{cone}}$ in the even case is
$$\Dom(D_{\textnormal{cone}})=H^1_0(\R_+,\mathpzc{E})\cap X^{-1}L^2(\R_+,\Dom(S))\cap L^2(\R_+,\Dom(D_0)).$$
A similar expression holds in the odd case. We also have that $D_{\textnormal{cone}}^*$ is the differential expression \eqref{diffexpdefdcone} with the core
$$H^1(\R_+,\mathpzc{E})\cap X^{-1}L^2(\R_+,\Dom(S))\cap L^2(\R_+,\Dom(D_0)),$$
in the even case and a similar expression in the odd case. This proves that $(\mathpzc{E}_{\cyl},D_{\textnormal{cone}})$ is a well defined symmetric $(\mathcal{A},B)$-cycle. It is clear from the construction that $(\mathpzc{E},D)$ is a boundary of $(\mathpzc{E}_{\cyl},D_{\textnormal{cone}})$ with boundary data $(\theta_{\cyl},p_{\cyl})$. 

It remains to prove that whenever $\phi\in C^\infty_c(0,1]$, $b(\phi)(1+D_{\textnormal{cone}}^*D_{\textnormal{cone}})^{-1}$ is compact. Equivalently, we prove that the composition 
\begin{equation}
\label{compactoinclos}
\Dom(D_{\textnormal{cone}})\xrightarrow{b(\phi)}\Dom(D_{\textnormal{cone}})\hookrightarrow L^2(\R_+,\mathpzc{E}),
\end{equation}
is compact. To do so, we use the extension to even functions $u:L^2(\R_+,\mathpzc{E})\to L^2(\R,\mathpzc{E})$. It is clear that $u$ induces an inner product preserving map 
$$H^1_0(\R_+,\mathpzc{E})\to H^1(\R,\mathpzc{E}).$$
Let $D_{\dc}$ denote the differential expression \eqref{diffexpdefdcone}, extended to $\R$ by making $X$ even and equipped with the domain
$$\Dom(D_{\dc}):=H^1(\R,\mathpzc{E})\cap X^{-1}L^2(\R,\Dom(S))\cap L^2(\R,\Dom(D_0)).$$
It follows from Theorem \ref{sumregul} that $D_{\dc}$ is self-adjoint. If $\Dom(D_{\textnormal{dc}})\hookrightarrow L^2(\R,\mathpzc{E})$ is compact, then the operator in Equation \eqref{compactoinclos} is compact for all $\phi\in C^\infty_c(0,1]$. 

To prove that $D_{\dc}$ has compact resolvent, we consider the differential expression 
$$H:=-\frac{\partial^2}{\partial t^2}+D_0^2+(1+t^2)S^2,$$
which we equip with the domain 
$$\Dom H=H^2(\R,\mathpzc{E})\cap X^{-2}L^2(\R,\Dom S^2)\cap L^2(\R,\Dom D_0^2).$$
Then $D_{\dc}^2-H=\chi'S+(\chi^2(t)-1-t^2)S^2$ on $\Dom(D_{\dc}^2)\cap \Dom(H)$. As such, $D_{\dc}^2$ is a relatively bounded perturbation of $H$. Hence it suffices to prove that $H$ has compact resolvent.

Let $(h_n)_{n\in \field{N}}$ denote the Hermite functions. These are real-analytic functions such that $( -\frac{\partial^2}{\partial t^2}+t^2)h_n=(2n+1)h_n$ and $h_n(t)=\mathcal{O}(e^{-t^2/2})$ as $|t|\to \infty$. Consider the operator-valued integral kernel given through the norm-convergent sum
$$k(t,s)=\sum_{n=0}^\infty \left(2(n+1)+D_0^2+S^2\right)^{-1}h_n(|S|t)h_n(|S|s).$$
It is easily verified that $k\in L^2(\R\times \R,\K_B(\mathpzc{E}))$ and that if $f(t)=\int_\R k(t,s)g(s)\mathrm{d} s$, for $g\in L^2(\R,\mathpzc{E})$, then $(1+H)f=g$. We conclude that $(1+H)^{-1}\in \K_B(L^2(\R,\mathpzc{E}))$.
\end{proof}

\begin{remark}
For $\mathcal{A}\neq \C$, it is a non-trivial question whether a degenerate bounded $KK$-cycles lifts, modulo bordism, to a weakly degenerate unbounded $KK$-cycle. It is unclear to the authors what the answer to this question is. 
\end{remark}

\subsection{$K$-theory and the bordism group}

Our aim is to prove that the bounded transform $b:\Omega_*(\C,B)\to KK_*(\C,B)$ is an isomorphism for any $C^*$-algebra $B$. The key technical lemma is the following result on index theory of $B$-linear operators.

\begin{lemma}
\label{indzerolem}
Suppose that $D:\mathpzc{E}\dashrightarrow \mathpzc{F}$ is a regular operator which is Fredholm and $\ind_B(D)=0$ in $K_0(B)$. Then there is an $N\in \field{N}$ and a bounded adjointable operator $T\in \Hom_B^*(\mathpzc{E}\oplus B^N,\mathpzc{F}\oplus B^N)$ such that $D\oplus 0_N+T$ is invertible. If $\mathpzc{E}=\mathpzc{F}$ and $D$ is self-adjoint with $\ind_B(D)=0$ in $K_1(B)$ we can take $T$ self-adjoint.
\end{lemma}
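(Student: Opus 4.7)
The plan is to use the structure theorem for regular Fredholm operators on Hilbert $C^{*}$-modules: such a $D$ admits orthogonal direct sum decompositions
\[
\mathpzc{E}=\ker D\oplus\mathpzc{E}_{0},\qquad \mathpzc{F}=\mathrm{coker}\,D\oplus\mathpzc{F}_{0},
\]
in which $\ker D$ and $\mathrm{coker}\,D$ are complemented finitely generated projective $B$-submodules and $D$ restricts to a closed regular isomorphism $\mathpzc{E}_{0}\cap\Dom(D)\to\mathpzc{F}_{0}$. This is obtained by applying the Mingo--Phillips theory of Fredholm operators to the bounded transform $b(D)$, together with the local--global principle of Theorem~\ref{regproperties}. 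Under this decomposition $\ind_{B}(D)=[\ker D]-[\mathrm{coker}\,D]\in K_{0}(B)$.

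First I would establish the main assertion. The hypothesis that this class vanishes in $K_{0}(B)$ implies, by the standard characterization of $K_{0}(B)$ as stable equivalence classes of finitely generated projective Hilbert $B$-modules, the existence of $N\in\N$ and a unitary Hilbert-module isomorphism $\phi:\ker D\oplus B^{N}\xrightarrow{\sim}\mathrm{coker}\,D\oplus B^{N}$. Let $p:\mathpzc{E}\oplus B^{N}\to\ker D\oplus B^{N}$ denote the orthogonal projection and $\iota:\mathrm{coker}\,D\oplus B^{N}\hookrightarrow\mathpzc{F}\oplus B^{N}$ the inclusion, and set $T:=\iota\circ\phi\circ p$. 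This is a bounded adjointable operator. With respect to the refined decompositions of source and target, the operator $D\oplus 0_{N}+T$ is block diagonal: it acts as the regular isomorphism $D|_{\mathpzc{E}_{0}}$ on $\mathpzc{E}_{0}$, and as the bounded isomorphism $\phi$ on $\ker D\oplus B^{N}$. Its inverse is then assembled from $(D|_{\mathpzc{E}_{0}})^{-1}$ and $\phi^{-1}$.

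For the self-adjoint refinement, when $\mathpzc{E}=\mathpzc{F}$ and $D=D^{*}$ one has $\ker D=\mathrm{coker}\,D$ as submodules of $\mathpzc{E}$ and $\mathpzc{E}_{0}=\mathpzc{F}_{0}$, so $\iota=p^{*}$ and the formula $T=p\phi p$ gives a self-adjoint $T$ as soon as $\phi$ is self-adjoint. The role of the hypothesis $\ind_{B}(D)=0\in K_{1}(B)$ is to produce such a self-adjoint $\phi$ after suitable stabilization: interpreting the $K_{1}(B)$-index of the self-adjoint regular Fredholm operator $D$ via the boundary map
\[
K_{0}\bigl(\End_{B}^{*}(\mathpzc{E})/\K_{B}(\mathpzc{E})\bigr)\longrightarrow K_{1}\bigl(\K_{B}(\mathpzc{E})\bigr)\cong K_{1}(B)
\]
applied to the class of the projection $(\overline{b(D)}+1)/2$ in the Calkin algebra, its vanishing produces a self-adjoint unitary lift of $\overline{b(D)\oplus 0_{N}}$ in $\End_{B}^{*}(\mathpzc{E}\oplus B^{N})$, from which a self-adjoint intertwiner on the kernel summand is extracted.

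The principal obstacle is making this second paragraph fully rigorous, namely translating vanishing of the $K_{1}(B)$-index into the existence of a self-adjoint stable isomorphism of $\ker D\oplus B^{N}$; once this is achieved, the block-diagonal argument of the general case goes through verbatim and delivers the required self-adjoint $T$.
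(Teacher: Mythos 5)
The opening ``structure theorem'' on which your argument rests is false as stated, and this is precisely the technical obstacle the paper's proof is designed to overcome. For a regular Fredholm operator $D:\mathpzc{E}\dashrightarrow\mathpzc{F}$ on Hilbert $C^{*}$-modules, the range need \emph{not} be closed, so $\mathrm{coker}\,D$ is not well defined as a complemented submodule of $\mathpzc{F}$; and even when the range is closed, $\ker D$ need not be finitely generated projective, nor need it be orthogonally complemented. (A standard illustration: over $B=C[0,1]$, multiplication by $t$ on $B$ is $B$-Fredholm since $\K_{B}(B)=B$, but its range is not closed.) You attribute the decomposition to ``Mingo--Phillips theory plus the local--global principle,'' but what that theory actually provides is the decomposition \emph{after amplification}: one must first replace $D$ by
\[
D_{\amp}=\begin{pmatrix} D & R \\ 0 & 0 \end{pmatrix}:\mathpzc{E}\oplus B^{N_{0}}\dashrightarrow \mathpzc{F}\oplus B^{N_{0}}
\]
for a suitable finite rank $R$ so that the range becomes closed and the kernel and cokernel become finitely generated projective --- this is Exel's \cite[Lemma 3.8 and Proposition 3.11]{Exel}, and it is exactly what the paper's proof uses. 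Once you amplify, the block-diagonal argument you sketch in the second paragraph (choose a unitary $\ker D_{\amp}\oplus B^{N_{1}}\cong \ker D_{\amp}^{*}\oplus B^{N_{1}}$, extend by zero on the complement, and add) is correct and essentially matches the paper. But by asserting the decomposition for the raw $D$ you have assumed the hard part.

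For the self-adjoint refinement you acknowledge yourself that the key step --- extracting a self-adjoint unitary intertwiner from the vanishing of the $K_{1}(B)$-index --- is not carried out. The paper sidesteps this entirely by invoking ``standard Clifford algebra techniques'' to reduce the odd case to the even one (i.e., replace the self-adjoint $D$ on $\mathpzc{E}$ by the odd operator $\bigl(\begin{smallmatrix} 0 & D \\ D & 0\end{smallmatrix}\bigr)$ on $\mathpzc{E}\oplus\mathpzc{E}$ with a $\mathbb{C}\ell_{1}$-action and run the even argument equivariantly); your boundary-map approach is not obviously wrong, but as you note it needs a genuine argument that the self-adjoint lift can be made to respect the kernel/orthogonal-complement decomposition. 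In summary: both parts of your proposal currently have real gaps, and the first one (missing amplification) is not a detail but the central point of the lemma.
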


\begin{proof}
We restrict to the even case, the odd case follows by standard Clifford algebra techniques. Recall that the index of $D$, when viewing $D$ as a Fredholm operator $D:W(D)\to \mathpzc{F}$, can be constructed using the amplification techniques of \cite{Exel}. Let $F:=D(1+D^{*}D)^{-\frac{1}{2}}$ be the bounded transform of $D$. By \cite[Lemma 3.8]{Exel} there exists $N_0\in \N$ and a finite rank operator $R:B^{N_0}\to \mathpzc{F}$ such that
\[F_{\amp}:=\begin{pmatrix} F & R \\ 0 & 0 \end{pmatrix}:\mathpzc{E}\oplus B^{N_0}\to  \mathpzc{F}\oplus B^{N_0},\]
has closed range. Since $\textnormal{im}\,F_{\amp}=\textnormal{im}\,F+\textnormal{im} \,R$ and $\textnormal{im}\, F=\textnormal{im}\,D$, it follows that 
\begin{align*}
D_{\amp}:=\begin{pmatrix} D & R \\ 0 & 0 \end{pmatrix}:&\,\mathpzc{E}\oplus B^{N_0}\dashrightarrow \mathpzc{F}\oplus B^{N_0},\\
&\mbox{with}\quad \Dom(D_{\amp}):=\Dom(D)\oplus B^{N_0},
\end{align*}
has closed range. The index is given by $\ind_B(D):=[\ker D_{\amp}]-[\ker D_{\amp}^*]$, and is independent of the choice of $D_{\amp}$ by \cite[Proposition 3.11]{Exel}. If $\ind_B(D)=0$, there is a unitary isomorphism of closed complemented submodules  \[T_0:\ker D_{\amp}\oplus B^{N_1}\to \ker D_{\amp}^*\oplus B^{N_1}\] for some $N_1$. Set $N:=N_0+N_1$ and consider $D_{\amp}\oplus 0:\mathpzc{E}\oplus B^{N}\dashrightarrow \mathpzc{F}\oplus B^{N}$ which we denote again by $D_{\amp}$ and $T_{0}:\ker D_{\amp}\xrightarrow{\sim} \ker D_{\amp}^{*}$. Since $D_{\amp}$ has closed range  we can write
\[\mathpzc{E}\oplus B^{N}\cong \ker D_{\amp}\oplus \textnormal{im} \,D_{\amp}^{*}, \quad \mathpzc{F}\oplus B^{N}\cong \ker D_{\amp}^{*}\oplus \textnormal{im}\, D_{\amp}.\]
Therefore the two mappings 
\begin{align*}
D_{\amp}:\textnormal{im}\, D_{\amp}^{*}&\cap \,\Dom D_{\amp}\to\textnormal{im} \,D_{\amp}\\
&\mbox{and}\quad D_{\amp}^{*}: \Dom D_{\amp}^{*}\cap\, \textnormal{im}\, D_{\amp}\to \textnormal{im}\,D_{\amp}^{*},
\end{align*} 
are  bijections. We can extend $T_0$ to an adjointable operator $T_1:\mathpzc{E}\oplus B^N\to \mathpzc{F}\oplus B^N$ via
\[T_1: \mathpzc{E}\oplus B^{N}\xrightarrow{\sim} \ker D_{\amp}\oplus \textnormal{im}\,D_{\amp}^{*}\xrightarrow{T_{0}\oplus 0} \ker D_{\amp}^{*}\oplus \textnormal{im}\, D_{\amp}\xrightarrow{\sim}\mathpzc{F}\oplus B^{N}.\]
The operators $D_{\amp}+T_1$ and $D_{\amp}^*+T^*_1$ are by construction invertible finite rank perturbations of $D\oplus 0$ proving the Lemma.
\end{proof}

\begin{theorem}
\label{bddtrandk} 
The bounded transform $b:\Omega_*(\C,B)\to KK_*(\C,B)$ is an isomorphism.
\end{theorem}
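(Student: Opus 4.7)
The plan is to prove the two implications separately. Surjectivity is essentially already in hand: since $A=\C$ is its only dense $*$-subalgebra, Theorem \ref{bddtrasnsurj} specializes to surjectivity of $b:\Omega_*(\C,B)\to KK_*(\C,B)$. Alternatively, one represents every class in $K_0(B)$ by $(pB^n,0)$ with $pB^n$ graded even/odd by the decomposition of a virtual module, and every class in $K_1(B)$ by a suitable finitely generated cycle; the locally compact condition reduces to compactness and is automatic on finitely generated projective modules.

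For injectivity, suppose $(\mathpzc{E},D)$ is a closed $(\C,B)$-cycle with $[\mathpzc{E},b(D)]=0$ in $KK_*(\C,B)$. Under the identification $KK_*(\C,B)\cong K_*(B)$, this class equals the $B$-linear index $\ind_B(D)$ ($\in K_0(B)$ for even, $\in K_1(B)$ for odd cycles), so $\ind_B(D)=0$. I would now invoke Lemma \ref{indzerolem} to produce an integer $N$ and a bounded adjointable operator $T$ on $\mathpzc{E}\oplus B^M$ (with $M=2N$ in the even case and $M=N$ in the odd case; $T$ self-adjoint and odd or self-adjoint ungraded as appropriate) such that $D\oplus 0_{B^M}+T$ is invertible. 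Since $B^M$ is finitely generated projective, $(D\oplus 0\pm i)^{-1}$ is $B$-compact, and the resolvent identity then shows $(D\oplus 0+T\pm i)^{-1}$ is also compact.

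The argument now has three moves. First, the invertible cycle $(\mathpzc{E}\oplus B^M, D\oplus 0+T)$ is weakly degenerate in the sense of Definition \ref{weakdegdeg} (take $D_0=0$ and $S=D\oplus 0+T$, so that all hypotheses hold trivially since $\mathcal{A}=\C$ commutes with everything), hence nullbordant by Theorem \ref{weakdegnullbor}. Second, because $T$ is bounded, Corollary \ref{bddpert} provides a bordism $(\mathpzc{E}\oplus B^M, D\oplus 0)\sim_{\bor}(\mathpzc{E}\oplus B^M, D\oplus 0+T)$, so the additive decomposition $(\mathpzc{E},D)+(B^M,0)\sim_{\bor}0$ holds in $\Omega_*(\C,B)$. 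Third, the ``padding'' cycle $(B^M,0)$ is itself nullbordant: apply Lemma \ref{difftopyofoperator} to the linear path $D(t)=tS_0$, where $S_0$ is the identity on $B^M$ in the odd case and $\left(\begin{smallmatrix} 0 & I \\ I & 0\end{smallmatrix}\right)$ in the even case; each $D(t)$ is a cycle since $B^M$ is finitely generated projective, and the endpoint is invertible hence weakly degenerate. Combining, $(\mathpzc{E},D)\sim_{\bor}0$, which finishes the proof.

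The main technical obstacle I anticipate is bookkeeping of the grading and the self-adjointness of the perturbation $T$ in the even case: Lemma \ref{indzerolem} produces $T:\mathpzc{E}^+\oplus B^N\to \mathpzc{E}^-\oplus B^N$, and one must symmetrize to $\left(\begin{smallmatrix} 0 & T^* \\ T & 0\end{smallmatrix}\right)$ acting on $\mathpzc{E}\oplus B^N\oplus B^N$ with the correct parity assignment so that the summand $(B^N\oplus B^N,0)$ is evenly graded and the perturbation is odd. Apart from this, the ingredients are all already in place in Sections \ref{bordismsection} and \ref{boundSection}; no genuinely new analytic input beyond Theorem \ref{weakdegnullbor}, Corollary \ref{bddpert}, Lemma \ref{difftopyofoperator} and Lemma \ref{indzerolem} is required.
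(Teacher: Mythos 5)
Your proof is correct and follows essentially the same route as the paper's: surjectivity from Theorem \ref{bddtrasnsurj}, injectivity by amplifying a kernel element to an invertible (hence degenerate) cycle via Lemma \ref{indzerolem} together with Corollary \ref{bddpert}, and then killing it with Theorem \ref{weakdegnullbor}. The only cosmetic difference is how you dispose of the padding cycle $(B^{M},0)$: the paper implicitly writes it as $(B^{N},0)+(-(B^{N},0))$ and invokes Corollary \ref{symmetryofbordism}, whereas you deform $0$ linearly to an invertible operator via Lemma \ref{difftopyofoperator}; both are valid.
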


\begin{proof}
By Theorem \ref{bddtrasnsurj}, $b:\Omega_*(\C,B)\to KK_*(\C,B)$ is surjective. Again we restrict to the even case. Suppose that $[\mathpzc{E},D]\in \ker b$. After applying the index mapping $\ind_B:KK_0(\C,B)\to K_0(B)$ it follows that $\ind_B(D)=0$ in $K_0(B)$. By Corollary \ref{bddpert} and Lemma \ref{indzerolem}, there is a bordism $(\mathpzc{E},D)\sim_{\bor}(\mathpzc{E}\oplus B^N\oplus -B^N,D\oplus 0_{2N}+T)$ where $D\oplus 0_{2N}+T$ is invertible. If $D\oplus 0_{2N}+T$ is invertible, $(\mathpzc{E},D\oplus 0_{2N}+T)$ is a degenerate $(\C,B)$-cycle and Theorem \ref{weakdegnullbor} implies that $(\mathpzc{E},D)$ is nullbordant. Hence $\ker b=0$.
\end{proof}

\section{Analytic assembly on manifolds}\label{BDBorKKSection}

The bounded transform admits a splitting when $\mathcal{A}$ is the Lipschitz algebra of a Riemannian manifold $X$. This splitting involves solving a Baum-Douglas type index problem, see \cite{BD,BD2}. Let us first recall the cycles in the Baum-Douglas model for $K$-homology. Throughout the section, $B$ will denote a fixed unital $C^*$-algebra.

\subsection{Baum-Douglas models for $K$-homology}

The topic of this section is closely related to the examples of Subsection \ref{mfdwbdryex}. 

\begin{define} \label{BDcycle}
Let $X$ be a compact topological space. A \emph{geometric cycle} over $X$ with coefficients in $B$ is a triple $(M,E_B,f)$ where
\begin{enumerate}
\item $M$ is a closed spin$^c$-manifold;
\item $E_B\to M$ is a Hermitian $B$-bundle;
\item $f:M\to X$ is a continuous mapping.
\end{enumerate}
If $M$ is a Riemannian manifold and $\nabla_E$ is a $B$-linear Hermitian connection on $E_B$, we say that $(M,E_B,\nabla_E,f)$ is a \emph{Riemannian cycle with connection}.

An isomorphism between two geometric cycles $(M,E_B,f)$ and $(M',E_B',f')$ is a diffeomorphism $u:M'\to M$ lifting to a unitary isomorphism $u^*E_B\cong E_B'$ of Hermitian $B$-bundles satisfying that $f\circ u=f'$. An isomorphism between two Riemannian cycles with connection $(M,E_B,\nabla_E,f)$ and $(M',E_B',\nabla_{E'},f')$ is an isometric isomorphism $u$ satisfying the conditions above and in addition the isomorphism $u^*E_B\cong E_B'$ preserves the connections.
\end{define}

We note that the manifold $M$ need not be connected nor have the same dimensionality of the components. If the dimensionality of the components in $(M,E_B,f)$ is even/odd we say that this cycle has even/odd parity. For $*=$even/odd, we let $BD_*(X;B)$ denote the set of isomorphism classes of geometric cycles of parity $*$. This set forms an abelian semigroup under disjoint union:
$$(M,E_B,f)\dot{\cup}(M',E_B',f'):=(M\dot{\cup}M',E_B\dot{\cup}E_B',f\dot{\cup}f').$$
Similarly, we let $\widetilde{BD}_*(X;B)$ denote the abelian semigroup of isomorphism classes of Riemannian cycles with connection of parity $*$.

If $X$ is a Riemannian manifold, we let $BD_{*,\Lip}(X;B)\subseteq BD_*(X;B)$ denote the subsemigroup of isomorphism classes of cycles $(M,E_B,f)$ with $f:M\to X$ being Lipschitz in \emph{some} Riemannian structure on $M$ (compactness of $M$ guarantees that in this case it is Lipschitz in all Riemannian structures on $M$). We let $\widetilde{BD}_{*,\Lip}(X;B)$ denote the set of isomorphism classes of Riemannian cycles with connection $(M,E_B,\nabla_E,f)$ with $f:M\to X$ being Lipschitz.

\begin{define}
A \emph{geometric bordism} is a triple $(W,F_B,g)$ satisfying all the conditions of a geometric cycle, but $W$ is allowed to have a boundary. We write
$$\partial(W,F_B,g)=(\partial W,F_B|_{\partial W},g|_{\partial W}).$$ 
A \emph{Riemannian bordism} is a quadruple $(W,F_B,\nabla_F,g)$ satisfying all the conditions of a Riemannian cycle, but $W$ is allowed to have a boundary. We write
$$\partial(W,F_B,\nabla_F,g)=(\partial W,F_B|_{\partial W},\nabla_F|_{\partial W},g|_{\partial W}).$$ 
If a geometric cycle (Riemannian cycle) is the boundary of a bordism, we say it is \emph{nullbordant}.
\end{define}

\begin{define}
Suppose that $(M,E_B,\nabla_E,f)$ is a Riemannian cycle with connection and that $V\to M$ is a Hermitian spin$^c$-vector bundle with even-dimensional fibers. The \emph{vector bundle modification} of $(M,E_B,\nabla_E,f)$ along $V$ is defined as 
$$(M,E_B,\nabla_E,f)^V:=(M^V,E_B^V,\nabla_E^V,f^V),$$
where 
\begin{enumerate}
\item $M^V:=S(V\oplus 1_\R)$ is the sphere bundle in $V\oplus 1_\R$, where $1_\R\to M$ is the trivial real line bundle, with its induced spin$^c$-structure and Riemannian metric;
\item $f^V:M^V\to X$ is the composition of the projection $\pi_V:M^V\to M$ with $f$;
\item $E_B^V:=\pi_V^*E_B\otimes Q_V$ where $Q_V\to M^V$ is the Bott bundle (see \cite[Section 2.5]{Rav});
\item $\nabla_E^V=\pi_V^*(\nabla_E)\otimes \nabla_{Q_V}$ for the standard connection $\nabla_{Q_V}$ on the Bott bundle.
\end{enumerate}
If $(M,E_B,f)$ is a cycle and $V\to M$ is a Hermitian spin$^c$-vector bundle with even-dimensional fibers, the vector bundle modification of $(M,E_B,f)$ along $V$ is defined as 
$$(M,E_B,f)^V:=(M^V,E_B^V,f^V),$$
\end{define}

\begin{define}
\label{bdrelations}
We define the equivalence relation $\sim$ on $BD_*(X;B)$ respectively $\widetilde{BD}_*(X;B)$ to be the one generated by bordism, vector bundle modification, and disjoint union/direct sum; the last of these relations is defined as follows:
\begin{align*}
\qquad\qquad (M,E_B,f)\dot{\cup}(M,&E_B',f)\sim (M,E_B\oplus E_B',f)\quad\mbox{respectively}\\ 
(M,E_B,&\nabla_E,f)\dot{\cup}(M,E_B',\nabla_E',f)\sim (M,E_B\oplus E_B',\nabla_E\oplus \nabla_E',f).
\end{align*}
\end{define}

It is clear that for a compact Riemannian manifold $X$, the equivalence relation restricts to a well defined relation also on $BD_{*,\Lip}(X;B)$ respectively $\widetilde{BD}_{*,\Lip}(X;B)$. By a perturbation argument, cf. \cite[Proof of Theorem 5.1]{Kescont}, the relation that $\sim$ induces on $BD_{*,\Lip}(X;B)$ respectively $\widetilde{BD}_{*,\Lip}(X;B)$ coincides with that generated by bordisms $(W,F_B,g)$ such that the mapping $g$ is Lipschitz.

\begin{prop}
\label{kstarprop}
The semigroups 
\begin{enumerate}
\item $BD_*(X;B)/\sim$;
\item $\widetilde{BD}_*(X;B)/\sim$;
\end{enumerate}
are isomorphic abelian groups. 
\end{prop}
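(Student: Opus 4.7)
The plan is to establish the group structure on each semigroup independently, and then prove that the obvious forgetful map is an isomorphism by constructing a well-defined inverse using partition-of-unity arguments.

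First, I would verify that both $BD_*(X;B)/\!\sim$ and $\widetilde{BD}_*(X;B)/\!\sim$ are abelian groups. Commutativity and associativity of disjoint union are immediate, and the empty cycle serves as the neutral element. The key point is existence of inverses: given a (Riemannian) cycle $(M,E_B,f)$, let $-M$ denote $M$ with the reversed spin${}^c$-structure (equivalently, the opposite orientation), and consider the bordism $(W,F_B,g)=([0,1]\times M, \pi^*E_B, f\circ \pi)$ where $\pi:[0,1]\times M\to M$ is the projection. Then $\partial W = M\sqcup -M$ (with the induced spin${}^c$-structures), which shows $(M,E_B,f)\dot\cup(-M,E_B,f)\sim 0$. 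In the Riemannian setting, equip $W$ with a product metric and the pullback connection; the same computation applies.

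Next I would define the natural forgetful map
\[
\Phi:\widetilde{BD}_*(X;B)/\!\sim\ \longrightarrow\ BD_*(X;B)/\!\sim,\qquad \Phi[M,E_B,\nabla_E,f]:=[M,E_B,f],
\]
which is well-defined because every Riemannian bordism (respectively vector bundle modification, respectively direct sum) forgets to a geometric bordism (resp.\ VBM, resp.\ direct sum). Additivity is immediate. To invert $\Phi$, I would construct a map $\Psi:BD_*(X;B)\to \widetilde{BD}_*(X;B)/\!\sim$ by choosing, for each cycle $(M,E_B,f)$, a Riemannian metric $g_M$ on $M$ and a $B$-linear Hermitian connection $\nabla_E$ on $E_B$, both of which exist by standard partition-of-unity arguments. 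The main task is to show that $\Psi$ descends to equivalence classes and is independent of these auxiliary choices.

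Independence of choices is the crux. If $g_0,g_1$ are two Riemannian metrics on $M$ and $\nabla_0,\nabla_1$ are two $B$-linear Hermitian connections on $E_B$, set $g_t:=(1-t)g_0+tg_1$ and $\nabla_t:=(1-t)\nabla_0+t\nabla_1$; these are again Riemannian metrics and Hermitian connections by convexity. On $W:=[0,1]\times M$, equip $W$ with the metric $\mathrm{d}t^2+g_t$ modified near the boundary so that it becomes a product, pull back $E_B$ along the projection and equip it with a connection interpolating $\nabla_0$ and $\nabla_1$ analogously. This produces a Riemannian bordism with connection between $(M,E_B,\nabla_0,g_0,f)$ and $(M,E_B,\nabla_1,g_1,f)$. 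By the same principle, given a topological bordism $(W,F_B,g)$ between two Riemannian cycles $(M_i,E_{B,i},\nabla_{E_i},f_i)$, one can extend the boundary metrics and connections to $W$ (the space of metrics being an open convex cone in symmetric 2-tensors and the space of connections being an affine space), producing a Riemannian bordism. The analogous statement for vector bundle modifications is easier: given a topological spin${}^c$-bundle $V\to M$, choose any Hermitian metric and spin${}^c$-connection on $V$; different choices yield Riemannian-bordant results by the same convexity argument applied to $[0,1]\times M$.

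Combining these facts, $\Psi$ is a well-defined homomorphism $BD_*(X;B)/\!\sim\to \widetilde{BD}_*(X;B)/\!\sim$, and $\Phi\circ\Psi$ and $\Psi\circ\Phi$ are each the identity by construction. The main obstacle will be the careful verification that every relation in $BD_*(X;B)/\!\sim$ (bordism, vector bundle modification, and direct sum relation) lifts to a Riemannian relation; this reduces in each case to the convexity argument above plus the standard trick of modifying metrics and connections near a collar neighborhood to be of product form, which does not change the underlying equivalence class.
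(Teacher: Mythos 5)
Your proof is correct and follows essentially the same route as the paper: cylinder bordism for inverses, the forgetful map $\Phi$, and the inverse $\Psi$ defined by arbitrary choices of metric and connection, with well-definedness coming from the contractibility (convexity) of the spaces of such choices. The paper compresses the independence-of-choices argument into a single sentence ("the choices are made in a path connected space"); your version spells out the interpolation $g_t$, $\nabla_t$ and the lifting of each generating relation to the Riemannian setting, which is exactly the content the paper leaves implicit.
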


We will denote the abelian group of Proposition \ref{kstarprop} by $K_{*}^{\geo}(X;B)$. 

\begin{proof}
The proof that these are all abelian groups is similar in the different cases. For example, in the case of $BD_*(X;B)/\sim$, one notes that 
$$\partial(M\times [0,1],E_B,f\circ \pi_M)=(M,E_B,f)\dot{\cup}(-M,E_B,f),$$
where $-M$ is $M$ equipped with the opposite spin$^c$-structure.

To prove that the groups are isomorphic, we show that
$$\widetilde{BD}_*(X;B)/\!\sim\;\longrightarrow BD_*(X;B)/\!\sim, \quad (M,E_B,\nabla_E,f)\mapsto (M,E_B,f), $$
is an isomorphism. Indeed, its inverse is given by $(M,E_B,f)\mapsto  (M,E_B,\nabla_E,f)$ for some choice of Riemannian metric on $M$ and connection on $E_B$, since the choices are made in a path connected space it is clearly well defined up to bordism.
\end{proof}

\begin{theorem}
\label{muaniso}
In the notation of Lemma \ref{firstgammadef} on page \pageref{firstgammadef}, The analytic assembly map is defined via
\begin{align*}
\mu_{an}:K_{*}^{\geo}(X;B)\to &KK_*(C_0(X),B),\\
& \mu_{\an}(M,E_B,\nabla_E,f):=f_*\left(\mathpzc{E}_M,D^M_E\left(1+(D^M_E)^2\right)^{-1/2}\right).
\end{align*}
It is a well defined group homomorphism. If $X$ is a finite $CW$-complex, $\mu_{\an}$ is an isomorphism.
\end{theorem}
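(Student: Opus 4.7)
The proof splits into three parts: well-definedness of $\mu_{\an}$, additivity, and bijectivity on finite CW-complexes. For \textbf{well-definedness}, I would verify that $\mu_{\an}$ respects each of the three generating relations on $\widetilde{BD}_*(X;B)$. Bordism invariance is the main payoff of the machinery of Sections 2--3: given a Riemannian bordism $(W,F_B,\nabla_F,g)$ with boundary $(M,E_B,\nabla_E,f)$, Proposition \ref{honestbordismprop} produces a $KK$-bordism $(\gamma_0(W,F_B,\nabla_F),\theta,p)$ whose boundary is $\gamma_0(M,E_B,\nabla_E)$. Functoriality (Proposition \ref{semiisoclass}) applied to $g^{*}:\Lip_0(X)\to \Lip_0(W)$ transports this to a bordism in $\Omega_*(\Lip_0(X),B)$ with boundary $f_*\gamma_0(M,E_B,\nabla_E)$, and Theorem \ref{bddtrasnsurj} annihilates its bounded transform in $KK_*(C_0(X),B)$. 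The disjoint union / direct sum relation is immediate, since the Dirac operator on $M\dot{\cup} M'$ with coefficients in $E_B\oplus E_B'$ is literally a direct sum of $KK$-cycles. Finally, vector bundle modification invariance reduces to the identity $(\pi_V)_*\bigl[\mathpzc{E}_{M^V}, b(D^{M^V}_{Q_V\otimes \pi_V^{*}E_B})\bigr]=\bigl[\mathpzc{E}_M, b(D^M_{E_B})\bigr]$ in $KK_*(C(M),B)$, which is the classical Connes--Skandalis computation: the fiberwise Dirac twisted by the Bott bundle represents the Bott class $[\pi_V^{!}]$, and its internal Kasparov product with the base reduces, via Thom isomorphism, to the identity.

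\textbf{Additivity} of $\mu_{\an}$ is immediate from the above, since disjoint union is mapped to direct sum and both the pushforward $f_*$ and the bounded transform $b$ are additive.

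For the \textbf{isomorphism on finite CW-complexes}, I would argue along classical Baum--Douglas lines. Both $X\mapsto K^{\geo}_*(X;B)$ and $X\mapsto KK_*(C_0(X),B)$ define $\Z/2\Z$-graded half-exact homotopy functors on finite CW-pairs, and $\mu_{\an}$ is a natural transformation between them. Mayer--Vietoris six-term exact sequences exist on both sides: on the analytic side from the pullback description of $C(X)$ as a fiber product of $C^{*}$-algebras for $X=X_1\cup X_2$, and on the geometric side from a bordism-theoretic gluing of cycles along overlaps. One checks that $\mu_{\an}$ intertwines these sequences; by the five lemma and induction on cells the problem reduces to the base case $X=\mathrm{pt}$, where both groups are canonically identified with $K_*(B)$ --- geometrically via $(M,E_B,\ast)\mapsto \ind_B(D^M_{E_B})$, and analytically via $KK_*(\C,B)\cong K_*(B)$ --- and the two identifications agree by the very definition of the $B$-valued index.

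The \textbf{main obstacle} is the geometric Mayer--Vietoris statement with $B$-coefficients and, more precisely, the geometric description of the connecting map in terms of bordism and vector bundle modification. For $\C$-coefficients this is by now standard, and the extension to $B$-bundles is essentially formal but notationally heavy; alternatively, one can bypass this by constructing an explicit inverse $\mu_{\an}^{-1}$ using geometric representatives for $KK$-cycles on manifolds (as in the $\gamma$-construction of Theorem \ref{betamugamma}) and then verifying $\mu_{\an}\circ \mu_{\an}^{-1}=\mathrm{id}$ by a Kasparov product computation.
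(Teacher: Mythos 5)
The paper does not supply a proof of Theorem \ref{muaniso}; it cites Walter's thesis \cite[Chapter 2.3]{Wal} (and the result is, for $B=\C$, due to Baum--Higson--Schick). So the comparison here is with the standard literature rather than with an in-text argument. Your sketch follows that standard Baum--Douglas strategy in outline: check compatibility with the three generating relations, then establish bijectivity by inducting over cells via Mayer--Vietoris and the five lemma, with base case $X=\mathrm{pt}$ handled by the index pairing $KK_*(\C,B)\cong K_*(B)$. You have also correctly isolated the genuine technical heart of the argument, namely the geometric Mayer--Vietoris sequence with $B$-bundle coefficients and the explicit description of its boundary map at the level of geometric cycles; that is indeed where all the work lies and where the published references spend most of their effort.

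Two caveats are worth flagging. First, your well-definedness argument runs through the Hilsum $KK$-bordism machinery of this paper: you show the cycle is nullbordant in $\Omega_*(\Lip(X),B)$ and then apply the bounded transform (via Remark \ref{bordismandbddtrans}, which is the operative fact, rather than Theorem \ref{bddtrasnsurj}). This is legitimate and does not create circularity (Theorem \ref{betamugamma} is proved independently of Theorem \ref{muaniso}), but it is a different route from the classical one in \cite{Wal}, which verifies well-definedness of $\mu_{\an}$ directly on bounded $KK$-cycles without introducing a bordism group. In effect you are proving Theorem \ref{betamugamma} first and deriving $\mu_{\an}=b\circ\gamma$ as a corollary. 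What this buys is conceptual uniformity; what it costs is dependence on the whole apparatus of Sections 2--3, which the classical proof avoids. Second, your proposed shortcut for the isomorphism, ``construct $\mu_{\an}^{-1}$ using geometric representatives as in the $\gamma$-construction,'' is not a shortcut: $\gamma$ has target $\Omega_*(\Lip(X),B)$, not $K^{\geo}_*(X;B)$, and the paper in fact uses $\mu_{\an}^{-1}$ (from Theorem \ref{muaniso}) to build the splitting of $b$ in Corollary \ref{splittingbeta}, so appealing to $\gamma$ here would be circular. Lifting arbitrary $KK$-classes to geometric cycles on a manifold is precisely the content of the isomorphism statement, not an independent construction one can appeal to.
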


For a proof of this theorem see for instance \cite[Chapter 2.3]{Wal}. It combined with Poincar\'e duality on smooth manifolds implies the following:

\begin{cor}
For a smooth Riemannian manifold $X$, the following semigroups are abelian groups isomorphic to the groups of Proposition \ref{kstarprop}:
\begin{enumerate}
\item[(3)] $BD_{*,\Lip}(X;B)/\sim$;
\item[(4)] $\widetilde{BD}_{*,\Lip}(X;B)/\sim$;
\end{enumerate}

\end{cor}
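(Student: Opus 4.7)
The plan is to establish that the forgetful inclusion
$$\iota:\widetilde{BD}_{*,\Lip}(X;B)/\!\sim\;\longrightarrow \widetilde{BD}_*(X;B)/\!\sim$$
is an isomorphism of semigroups; the isomorphism between (3) and (4) themselves, and between them and the groups of Proposition~\ref{kstarprop}, then follows by the same ``convexity of metrics and connections'' argument as in the proof of Proposition~\ref{kstarprop} applied to Lipschitz projections. Note that the source of $\iota$ is itself a group, via the cylinder bordism $(M\times[0,1],\pi_M^{*}E_B,\pi_M^{*}\nabla_E,f\circ \pi_M)$, which is Lipschitz because $\pi_M$ is smooth.

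For surjectivity of $\iota$, given a Riemannian cycle with connection $(M,E_B,\nabla_E,f)$ and $f$ only continuous, I invoke Whitney smooth approximation (applicable since the target $X$ is a smooth Riemannian manifold) to produce a smooth map $\tilde f:M\to X$ together with a smooth homotopy $H:M\times[0,1]\to X$ from $f$ to $\tilde f$. Compactness of $M$ and smoothness of $\tilde f$ make $\tilde f$ globally Lipschitz. The Riemannian bordism $(M\times[0,1],\pi_M^{*}E_B,\pi_M^{*}\nabla_E,H)$, endowed with a product metric of collar type near $M\times\{0,1\}$, yields
$$(M,E_B,\nabla_E,f)\;\sim\;(M,E_B,\nabla_E,\tilde f)$$
in $\widetilde{BD}_*(X;B)/\!\sim$, and the right-hand side is a Lipschitz cycle.

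For injectivity, suppose two Lipschitz cycles $(M_i,E_i,\nabla_i,f_i)$, $i=1,2$, become equivalent in $\widetilde{BD}_*(X;B)/\!\sim$ through a sequence of elementary moves. Disjoint union, direct sum, and vector bundle modification obviously preserve Lipschitzness, so the only case requiring attention is a bordism $(W,F_B,\nabla_F,g)$ with Lipschitz boundary data but only continuous interior map $g:W\to X$. Using a collar $\partial W\times[0,\varepsilon)$ to extend the Lipschitz boundary data by pull-back, then patching with a smooth (hence Lipschitz) mollification of $g$ on the compact complement, one obtains a Lipschitz $\tilde g:W\to X$ that agrees with $g$ on $\partial W$; this is precisely the perturbation argument referenced just before Proposition~\ref{kstarprop}, cf.\ \cite[Proof of Theorem 5.1]{Kescont}. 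The resulting Lipschitz bordism exhibits the same equivalence in $\widetilde{BD}_{*,\Lip}(X;B)/\!\sim$.

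The main obstacle is this relative approximation step: replacing the continuous interior of a bordism by a Lipschitz map \emph{without} altering the prescribed Lipschitz boundary data. The argument works because $X$ is a smooth Riemannian manifold, so the exponential map on $X$ is available to interpolate between the fixed boundary values and the smooth interior mollification across the collar. Once this is in place, combining with Proposition~\ref{kstarprop} identifies the semigroups (3) and (4) with the abelian group $K_*^{\geo}(X;B)$.
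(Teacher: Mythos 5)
Your proposal is correct, but it follows a genuinely different route from the paper. The paper derives the corollary in one line from Theorem~\ref{muaniso} (``the analytic assembly $\mu_{\an}$ is an isomorphism for finite CW complexes'') combined with $K$-theoretic Poincar\'{e} duality on the smooth manifold $X$: duality produces, for each class in $KK_*(C(X),B)$, a Dirac-type representative on $X$ itself with a smooth coefficient bundle and the identity map as reference map, so surjectivity of the inclusion $\widetilde{BD}_{*,\Lip}(X;B)/\!\sim\;\hookrightarrow \widetilde{BD}_*(X;B)/\!\sim$ is read off from the already-established isomorphism $\mu_{\an}$. You instead prove surjectivity directly with Whitney smooth approximation, replacing the reference map $f$ by a nearby smooth $\tilde f$ and using the homotopy as a cylinder bordism. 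This is more elementary and self-contained, does not invoke Theorem~\ref{muaniso}, and in fact does not even use compactness of $X$ (only compactness of $M$), so it is slightly more robust than the paper's argument; the paper's version has the virtue of being a one-line consequence of machinery already in place. Both arguments rely on the Keswani-type perturbation the paper cites just before Proposition~\ref{kstarprop} for the injectivity/well-definedness of the restricted relation. Two small imprecisions in your write-up worth flagging: the homotopy $H$ cannot literally be smooth at $t=0$ when $f$ is only continuous, though this is harmless since a geometric bordism only requires a continuous map; and in the injectivity step, the intermediate cycles appearing in a chain of elementary moves need not be Lipschitz, so one must first Lipschitz-approximate them (as in your surjectivity step) and then glue the resulting cylinders before the ``bordism with Lipschitz boundary data but continuous interior'' reduction applies --- this is precisely what the cited Keswani argument supplies, but it is more than the single case you isolate.
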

The reader should note the following assumption: {\bf for a smooth manifold N, we tacitly represent classes by cycles $\bf{(M,E_{B},\nabla_{E}, f)}$ for which the map $\bf{f:M\to N}$ is Lipschitz.}

\subsection{Analytic assembly and its consequences}
\label{secassem}

We now describe the geometric assembly map. This map allows us to compare the relations of geometric $K$-homology from Definition \ref{bdrelations} with the $KK$-bordism relation. We first describe the situation on the level of cycles, in order to pass to the bordism groups. Finally, we show that the bounded transform is split-surjective in the case of manifolds.

\begin{prop}
The constructions of Subsection \ref{mfdwbdryex}, see Lemma \ref{firstgammadef} on page \pageref{firstgammadef}, defines an additive map: 
$$\gamma_0:\widetilde{BD}_{*,\Lip}(X;B)\to Z_*(\Lip(X);B), \quad (M,E_B,\nabla_E,f)\mapsto f_*\gamma_0(M,E_B,\nabla_E).$$
\end{prop}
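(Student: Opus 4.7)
The plan is to reduce the statement to a direct application of the building blocks already established, with the Lipschitz assumption on $f$ entering only through the verification that $f^*$ acts continuously on the relevant Lipschitz algebras. The bulk of the analytic work is encapsulated in Lemma \ref{firstgammadef}, and the covariance/contravariance machinery of Proposition \ref{semiisoclass}, so the remaining task is essentially bookkeeping.

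First I would unpack the assignment. Since $M$ is a closed Riemannian spin$^c$-manifold, it is complete, so Lemma \ref{firstgammadef} furnishes a $(\Lip_0(M),B)$-cycle $\gamma_0(M,E_B,\nabla_E)=(\mathpzc{E}_M,D^M_E)$; compactness of $M$ gives $\Lip_0(M)=\Lip(M)$ so we in fact have a closed $(\Lip(M),B)$-cycle. A Lipschitz map $f\colon M\to X$ induces, by composition, a continuous unital $*$-homomorphism $f^*\colon \Lip(X)\to \Lip(M)$ (bounding the Lipschitz constant of $\phi\circ f$ by $\Lip(\phi)\Lip(f)$ plus the supremum norm estimate), and then the contravariant functoriality in Proposition \ref{semiisoclass}, with $f_*:=(f^*)^*$, yields a closed $(\Lip(X),B)$-cycle. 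Thus the assignment at the level of representatives lands in $Z_*(\Lip(X);B)$.

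Second I would address well-definedness on isomorphism classes. An isomorphism of Riemannian cycles with connection $(M',E_B',\nabla_{E'},f')\to (M,E_B,\nabla_E,f)$ is by definition an isometric diffeomorphism $u\colon M'\to M$ with $f\circ u=f'$ together with a connection-preserving unitary identification $u^*E_B\cong E_B'$. Transport of structure produces a unitary $U\colon \mathpzc{E}_{M'}\to \mathpzc{E}_M$ of $B$-Hilbert $C^*$-modules which preserves the gradings (when present), intertwines the minimal domains $H^1_0(M',S_{M'}\otimes E_B')\to H^1_0(M,S_M\otimes E_B)$, conjugates $D^{M'}_{E'}$ to $D^M_E$ (because the Dirac operator is determined by the Riemannian and spin$^c$-data and the choice of Hermitian connection), and is compatible with both the right $B$-action and the left $\Lip(X)$-actions obtained from $f$ and $f'=f\circ u$. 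So the isomorphism class of $f_*\gamma_0(M,E_B,\nabla_E)$ in $Z_*(\Lip(X);B)$ is independent of the chosen representative.

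Additivity is immediate: disjoint union of Riemannian cycles produces the direct sum of $B$-Hilbert $C^*$-modules $\mathpzc{E}_M\oplus\mathpzc{E}_{M'}$, and the Dirac operator on $M\sqcup M'$ is $D^M_E\oplus D^{M'}_{E'}$ in the obvious sense, matching the direct sum of chains in Equation \eqref{dirsum}; the $\Lip(X)$-action factors through $\Lip(M)\oplus\Lip(M')$ via $(f\sqcup f')^*$, so $f_*\gamma_0$ converts $\dot{\cup}$ into $+$. I do not expect a serious obstacle here; the only mildly delicate point is ensuring that $f^*$ really takes values in $\Lip(M)$ with the correct topology (so that the required continuous homomorphism $\Lip(X)\to \Lip(D^M_E)$ of Definition \ref{cycleschaindeef} is obtained), and this follows from the estimate $\|[D^M_E,\phi\circ f]\|\le \|\mathrm{d}(\phi\circ f)\|_\infty\le \Lip(\phi)\Lip(f)$ combined with Rademacher's theorem identifying $\Lip(M)$ with the bounded functions with essentially bounded weak derivative.
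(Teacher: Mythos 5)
Your proposal is correct and follows exactly the route the paper intends: the paper gives no explicit proof and treats the statement as immediate from Lemma \ref{firstgammadef} (closed $M$ is complete, so $\gamma_0(M,E_B,\nabla_E)$ is a cycle), the final proposition of Subsection \ref{mfdwbdryex} (pushforward along a Lipschitz map $f$ via $(f^*)^*$), and the functoriality/direct-sum structure in Proposition \ref{semiisoclass}. Your verification of well-definedness on isomorphism classes, additivity under disjoint union, and continuity of $\Lip(X)\to\Lip(D^M_E)$ simply spells out what the paper leaves implicit.
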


The definition of $\gamma_0$ can be found in Lemma \ref{firstgammadef} (see page \pageref{firstgammadef}). The proof of the next proposition follows immediately from Proposition \ref{honestbordismprop}.

\begin{prop}
\label{bordismgeocy}
Nullbordant Riemannian cycles are mapped to nullbordant cycles under $\gamma_0:\widetilde{BD}_{*,\Lip}(X;B)\to Z_*(\Lip(X);B)$.
\end{prop}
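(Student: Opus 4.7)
The plan is to reduce the statement directly to Proposition \ref{honestbordismprop} via the covariant functoriality for maps (contravariant on the left algebra) established in Proposition \ref{semiisoclass}. Suppose $(M,E_B,\nabla_E,f)$ is a nullbordant Riemannian cycle, witnessed by a Riemannian bordism $(W,F_B,\nabla_F,g)$ with $\partial(W,F_B,\nabla_F,g)=(M,E_B,\nabla_E,f)$. Thus $W$ is a compact Riemannian spin$^c$-manifold with boundary $M$, the bundle $F_B$ restricts to $E_B$ along $\partial W=M$, the connection restricts to $\nabla_E$, and $g$ is a Lipschitz map with $g|_{\partial W}=f$ (we may arrange the product structure near $\partial W$ required in Subsection \ref{mfdwbdryex} without changing the bordism class).

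First I would apply Proposition \ref{honestbordismprop} directly to $(W,F_B,\nabla_F)$ to produce boundary data $(\theta,p)$, where $p$ is multiplication by the characteristic function of a collar $U\cong[0,1]\times\partial W$ and $\theta$ is the geometric isomorphism of that collar, so that $(\gamma_0(W,F_B,\nabla_F),\theta,p)$ is a bordism with
\[\partial(\gamma_0(W,F_B,\nabla_F),\theta,p)=\gamma_0(M,E_B,\nabla_E).\]
At this stage the bordism lives over the algebra associated to $W$; our task is to transport everything to $\Lip(X)$ via $g$.

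Next I would push forward along the Lipschitz map $g:W\to X$ using the $*$-homomorphism $g^*:\Lip(X)\to\Lip(W)$ and the contravariant functoriality construction $\alpha^*$ in Equation \eqref{concon}. Applied to the symmetric chain underlying the bordism, this produces $g_*\gamma_0(W,F_B,\nabla_F)$ as a symmetric $(\Lip(X),B)$-chain with the same underlying Hilbert module, operator, and domain. The projection $p$ (scalar multiplication by a characteristic function) is central, hence automatically commutes with the pulled-back $\Lip(X)$-action; the isomorphism $\theta$ likewise respects the pulled-back action because on the collar the $\Lip(X)$-action factors through $f^*=g^*|_{\partial W}$, matching the $\Lip(X)$-action on $L^2[0,1]\hat\boxtimes\mathpzc{E}_M$ coming from $f_*\gamma_0(M,E_B,\nabla_E)$. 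All three conditions of Definition \ref{boundarydef} are preserved by this pushforward since they are statements about the operator $Q$, the module $\mathpzc{F}$, and the module map $b$, none of which change. The half-closedness of the associated $(C^1_0((0,1],\Lip(X)),B)$-chain is inherited from the half-closedness of the original $(C^1_0((0,1],\Lip(\partial W)),B)$-chain via the $*$-homomorphism $f^*$ on the bulk part and the closed cycle structure on the boundary.

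Concluding, the pushed-forward quadruple is a bordism whose boundary equals $f_*\gamma_0(M,E_B,\nabla_E)=\gamma_0(M,E_B,\nabla_E,f)$, so the latter is nullbordant in $Z_*(\Lip(X),B)$. The only nonroutine point in this plan is the verification that pushforward by $g$ preserves the boundary data and the half-closed property; since $g$ is globally Lipschitz, $g^*$ is continuous into $\Lip(W)$, and since $p$ and $\theta$ are built from the centrally acting collar coordinate, everything commutes as required, so no real obstacle arises.
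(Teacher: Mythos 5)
Your proposal is correct and matches the paper's (one-line) proof: apply Proposition \ref{honestbordismprop} to the Riemannian bordism $(W,F_B,\nabla_F)$ and then transport the resulting Hilsum bordism along the Lipschitz map $g:W\to X$ via the contravariant functoriality of Proposition \ref{semiisoclass}, the key point being exactly the one you flag, namely that arranging $g$ to be of product type near the collar ensures $\theta$ intertwines the pulled-back $\Lip(X)$-actions and the half-closedness over $C^\infty_c((0,1],\Lip(X))$ is inherited. (A small slip: the chain you push forward from is the $(C^\infty_c((0,1],\Lip(W)),B)$-chain, not over $\Lip(\partial W)$, but this does not affect the argument.)
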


The next lemma provided the inspiration for the notion of weakly degenerate cycles of Definition \ref{weakdegdeg}, see page \pageref{weakdegdeg}.

\begin{lemma}
\label{vectorbmodgeo}
Whenever $(M,E_B,\nabla_E,f)$ is a Riemannian cycle with connection and $V\to M$ is a Hermitian spin$^c$-vector bundle with even-dimensional fibers, there is a weakly degenerate $(\Lip(X),B)$-cycle $(\mathpzc{E},D)$ such that 
$$\gamma_0((M,E_B,\nabla_E,f)^V)=\gamma_0(M,E_B,\nabla_E,f)+(\mathpzc{E},D).$$
\end{lemma}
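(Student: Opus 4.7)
The plan is to realize $\gamma_0((M,E_B,\nabla_E,f)^V)$ as an orthogonal direct sum of chains along the fiberwise kernel of the vertical Dirac operator on the sphere bundle $M^V=S(V\oplus 1_\R)$, where the Bott bundle $Q_V$ is specifically built so that this fiberwise kernel is a canonically trivial line bundle. Concretely, choose a Riemannian submersion metric on $\pi_V:M^V\to M$ with totally geodesic fibers and product-type collar, and equip $Q_V$ with its standard connection. The spinor bundle of $M^V$ factors as $S_{M^V}\cong \pi_V^*S_M\hat{\otimes}S_{\mathrm{vert}}$ and the twisting bundle decomposes as $E_B^V=\pi_V^*E_B\otimes Q_V$. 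Pushing forward along $\pi_V$ and decomposing pointwise into the one-dimensional fiberwise kernel of $D^{\mathrm{vert}}$ on $S_{\mathrm{vert}}\otimes Q_V|_{\mathrm{fiber}}$ and its orthogonal complement gives an orthogonal decomposition of $B$-Hilbert $C^*$-modules
\[\mathpzc{E}_{M^V}\cong \mathpzc{E}_M \oplus \mathpzc{E},\]
where $\mathpzc{E}_M=L^2(M,S_M\otimes E_B)$ via the canonical trivialization of the kernel line bundle, and $\mathpzc{E}$ is the orthogonal complement. Because the $\Lip(X)$-action on $\mathpzc{E}_{M^V}$ factors through $\pi_V$, this decomposition is $\Lip(X)$-invariant.

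Under these choices, the twisted Dirac operator on $M^V$ decomposes into its horizontal and vertical parts $D^{M^V}_{E^V}=D_0+S$, where $S$ is the fiberwise Dirac operator twisted by $Q_V$ (acting only in the vertical direction) and $D_0$ is the horizontal Clifford contraction with the connection on $\pi_V^*(S_M\otimes E_B)\otimes Q_V$. The totally geodesic fiber condition together with the standard fact that horizontal and vertical Clifford multiplications anticommute yields $D_0S+SD_0=0$ on the common core $C^\infty_c(M^V,S_{M^V}\otimes E_B^V)$. Restricted to the summand $\mathpzc{E}_M$ (the fiberwise kernel), $S$ vanishes and $D_0$ is unitarily equivalent to $D^M_E$; this is precisely the infinitesimal content of Bott periodicity that makes vector bundle modification a $K$-homology equivalence. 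Thus the chain $\gamma_0((M,E_B,\nabla_E,f)^V)$ splits as $\gamma_0(M,E_B,\nabla_E,f)\oplus(\mathpzc{E},D)$ where $D:=D_0|_\mathpzc{E}+S|_\mathpzc{E}$.

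It remains to verify the four conditions of Definition \ref{weakdegdeg} for $(\mathpzc{E},D)$. The operator $S|_\mathpzc{E}$ is self-adjoint and regular with spectrum bounded away from zero (the fiberwise Dirac on $S^{\dim V}$ twisted by the Bott class has discrete spectrum with smallest nonzero eigenvalue controlled uniformly in $m\in M$), hence admits a bounded adjointable inverse. The $\Lip(X)$-action commutes with $S|_\mathpzc{E}$ because $S$ differentiates only in the fiber direction while functions pulled back via $f\circ\pi_V$ are constant on fibers. The horizontal part $D_0|_\mathpzc{E}$ is self-adjoint, regular, and satisfies the Leibniz rule with $\Lip(X)$ via the chain rule for $f\circ\pi_V$, giving the continuous homomorphism $\Lip(X)\to\Lip(D_0)$. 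Finally, the common core $X_0$ required by Definition \ref{weakdegdeg} is provided by the orthogonal projection of $C^\infty_c(M^V,S_{M^V}\otimes E_B^V)$ onto $\mathpzc{E}$, which is preserved by both $D_0|_\mathpzc{E}$ and $S|_\mathpzc{E}$ and on which the anticommutation $D_0S+SD_0=0$ holds.

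The main obstacle is ensuring that $D^{M^V}_{E^V}=D_0+S$ genuinely on the nose, so that the required identity of chains is an isomorphism rather than holding merely modulo a bounded perturbation. With a generic submersion metric one obtains additional terms involving the second fundamental form of the fibers and the mean curvature vector. Choosing the metric on $V$ to have totally geodesic unit spheres as fibers of $\pi_V$ and using the natural connection on $Q_V$ eliminates these, so that the Bochner-type cross-terms vanish identically. If any residual zeroth-order curvature correction persists, it is a bounded self-adjoint operator commuting with both the decomposition and the $\Lip(X)$-action, hence can be absorbed into $D_0|_\mathpzc{E}$ without disturbing regularity, the anticommutation with $S|_\mathpzc{E}$, or the Lipschitz condition.
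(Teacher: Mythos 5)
Your high-level strategy coincides with the paper's: split $L^2(M^V,S_{M^V}\otimes E_B^V)$ along the fiberwise kernel line bundle of the vertical Dirac operator twisted by the Bott bundle, identify the kernel part with $\gamma_0(M,E_B,\nabla_E,f)$, and show the complement carries a weakly degenerate cycle with $D=D_0+S$ where $S$ is the vertical piece. However, the technical realization differs, and your version has a genuine gap.

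The paper does not work with submersion geometry on $M^V$ directly. Instead it passes to the total space of the principal $\Spin^c(2k)$-bundle $P\to M$, identifies $L^2(M^V,S_{M^V}\otimes E^V_B)$ with the $\Spin^c(2k)$-invariant subspace of $L^2(P\times S^{2k},(E_B\otimes\pi_P^*S_M)\boxtimes(Q_k\otimes S_{2k}))$, and works there with the genuine graded exterior product $\tilde{D}^V:=\pi_P^*D^M\hat{\boxtimes}D^Q_{2k}$. On a product, $D_0$ and $S$ anticommute by the algebraic form of the graded box product, so the decomposition $D^{M^V}_{E^V}=D_0+S$ and the relation $D_0S+SD_0=0$ come for free. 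Restricting to the invariant part and then to the orthogonal complement $\mathpzc{E}$ of the fiberwise kernel (using that $\ker D^Q_{2k}$ is one-dimensional and spanned by an even section, citing \cite[Lemma 6.7]{HReta}) gives exactly the weakly degenerate cycle, with the grading-corrected $D_0$ in the odd-dimensional case.

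The gap in your argument is precisely the point you flag at the end and then dismiss. Working directly on $M^V$ with a submersion metric, the Dirac operator is not $D_0+S$ even with totally geodesic fibers: there is a zeroth-order A-tensor (O'Neill) correction arising from the curvature of the horizontal distribution. You propose to absorb this correction $C$ into $D_0$, but for the resulting pair $(D_0+C,\,S)$ to satisfy condition (3) of Definition \ref{weakdegdeg} you need $(D_0+C)S+S(D_0+C)=0$, i.e.\ $CS+SC=0$, and you never verify that $C$ anticommutes with $S$. It is not automatic (the correction involves horizontal Clifford factors together with one vertical factor, and its parity with respect to $S$ needs a dedicated computation), and if it fails the construction no longer produces a weakly degenerate cycle. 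The paper sidesteps the issue entirely by lifting to $P\times S^{2k}$ where the box-product structure forces the exact splitting and anticommutation, then descending to the invariant subspace. If you prefer to stay on $M^V$, you would need to either show explicitly that $C$ anticommutes with $S$, or show $C=0$ for the chosen metric and connection; asserting ``can be absorbed without disturbing the anticommutation'' is not a proof.
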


The proof of this lemma is inspired by \cite{deelgoffIII,HReta}. 

\begin{proof}
By considering each of the connected components of $M$, we can assume that $V$ has constant rank. Let $2k$ denote the rank of $V$. We let $\pi_P:P\to M$ denote the principal $\Spin^c(2k)$-bundle of $\Spin^c(2k)$-frames of $V$. In particular
$$M^V=P\times_{\Spin^c(2k)}S^{2k}.$$
There is a bundle $Q_k\to S^{2k}$ with connection $\nabla_k$ such that $Q_V=P\times_{\Spin^c(2k)}Q_k$ and $\nabla_{Q_V}$ is induced from $\nabla_k$. We use $S_M$ to denote the spin$^c$-structure on $M$ and $S_{2k}$ to denote that on $S^{2k}$. Hence
\begin{equation}
\label{spinc2kinv}
L^2(M^V,E_B^V\otimes S_{M^V})=[L^2(P\times S^{2k},(E_B\otimes \pi_P^*S_M)\boxtimes (Q_k\otimes S_{2k}))]^{\Spin^c(2k)}.
\end{equation}
Here $[\mathcal{V}]^G$ denotes the $G$-invariant part of a $G$-representation $\mathcal{V}$. Since $\Spin^c(2k)$ is a compact group, the identification of $L^2(M^V,E_B^V\otimes S_{M^V})$ with the $\Spin^c(2k)$-invariant part of $L^2(P\times S^{2k},(E_B\otimes \pi_P^*S_M)\boxtimes (Q_k\otimes S_{2k}))$ gives a complemented submodule. 

Let $D_{2k}^Q$ denote the Dirac operator on $Q_k\otimes S_{2k}\to S^{2k}$, defined as the spin$^c$-Dirac operator twisted by $\nabla_k$. We also let $\epsilon_k$ denote the grading on $L^2(S^{2k},S_{2k}\otimes Q_k)$. By \cite[Lemma 6.7]{HReta}, $\ker D_{2k}^Q$ is a one-dimensional space spanned by an \emph{even} section; we let $\mathrm{e}_k\in \Psi^{-\infty}(S^{2k};S_{2k}\otimes Q_k)$ denote the projection onto the span of this section (i.e., the projection onto $\ker D_{2k}^Q$). The auxiliary operator
$$\tilde{D}^V:=\pi_P^*D^M\hat{\boxtimes}D^Q_{2k},$$
is densely defined on $C^\infty(P\times S^{2k},(E_B\otimes \pi_P^*S_M)\hat{\boxtimes} (Q_k\otimes S_{2k}))$. We remind the reader that the tensor products are graded. In the identification \eqref{spinc2kinv}, the Dirac operator on the vector bundle modification $(M,E_B,\nabla_E,f)^V$ is given by
$$D^{M^V}_{E^V}=\tilde{D}^V|_{L^2(M^V,E_B^V\otimes S_{M^V})}.$$
Since this is a Dirac operator on a closed manifold, it defines a selfadjoint regular operator by Lemma \ref{firstgammadef}. 

On the complemented $B$-submodule 
\begin{align*}
L^2(M,E_B\otimes S_M)&\cong (1_{L^2(M,E_B\otimes S_M)}\otimes \mathrm{e}_k)\cdot L^2(M^V,E_B^V\otimes S_{M^V})\\
&\subseteq L^2(P\times S^{2k},(E_B\otimes \pi_P^*S_M)\boxtimes (Q_k\otimes S_{2k})),
\end{align*}
We have that
$$D^{M^V}_{E^V}|_{L^2(M,E_B\otimes S)}=D^M_E.$$
Hence the restriction of the regular operator  $D^{M^V}_{E^V}$ to the complemented $B$-submodule $L^2(M,E_B\otimes S_M)$ coincides with the non-modified cycle. We use the following notation for the complement of this $B$-submodule and related operator:
\begin{align*}
\mathpzc{E}:=L^2(M,&\,E_B\otimes S_M)^\perp\\
&=(\id-1_{L^2(M,E_B\otimes S_M)}\otimes \mathrm{e}_k)\cdot L^2(M^V,E_B^V\otimes S_{M^V}),
\end{align*}
and 
$$D:=D^{M^V}_{E^V}|_{\mathpzc{E}\cap \Dom D^{M^V}_{E^V}}:\mathpzc{E}\dashrightarrow \mathpzc{E}.$$ 
Since $D$ is a direct summand in the self-adjoint regular operator $D^{M^V}_{E^V}$, $D$ is self-adjoint and regular. Moreover,
\begin{align*}
D^2=\left(\pi_P^*(D^M_E)^2\hat{\otimes} (\id-1_{L^2(M,E_B\otimes S_M)}\otimes \mathrm{e}_k) +1_{L^2(M,E_B\otimes S_M)}\hat{\otimes} (D_k^Q)^2\right)|_{\mathpzc{E}}.
\end{align*}
Since $(D_k^Q)^2\geq 1$ is strictly positive on the image of $\id-1_{L^2(M,E_B\otimes S_M)}\otimes \mathrm{e}_k$, it is invertible. Furthermore, $D_k^Q$ differentiates only in the fiber direction. An element $a\in C_b(X)$ acts as multiplication operator by the Lipschitz function $a\circ f\circ \pi_{M^V}\in \Lip(M^V)$ - a function constant on the fibers of $M^V\to M$. Hence $D_k^Q$ commutes with the action of $C_b(X)$ on $L^2(M^V,E_B^V\otimes S_{M^V})$. By construction $S:=D_k^Q|_{\mathpzc{E}}$ anticommutes with 
$$D_0:=\begin{cases}
(\pi_P^*(D^M_{E})\boxtimes \epsilon_{k}, \quad&\mbox{if $M$ is odd-dimensional};\\
(\pi_P^*(D^M_{E})\boxtimes  \mathrm{id}_{S^{2k}}, \quad&\mbox{if $M$ is even-dimensional}.
\end{cases}$$
It follows that $D=D_0+S$ is weakly degenerate and hence trivial in the bordism group. Hence, the proof is complete by observing that 
$$(L^2(M^V,E_B^V\otimes S_{M^V}),D^{M^V}_{E^V})=(L^2(M,E_B\otimes S_{M}),D^M_E)+(\mathpzc{E},D).$$
\end{proof}

We now come to the main result of this subsection, showing that the analytic assembly map $\mu$ factors through the bounded transform via the $KK$-bordism group. We will use this result to prove that the bounded transform is a split surjection in this particular case.

\begin{theorem}
\label{betamugamma}
If $X$ is a compact manifold with boundary, the map
$$\gamma:K^{{\rm geo}}_{*}(X;B) \to \Omega_*(\Lip(X),B)),\quad \gamma[M,E_B,\nabla_E,f]:=[\gamma_0(M,E_B,\nabla_E,f)],$$
is well defined and fits into a commuting diagram:
\begin{equation}
\label{commdiagramwithbdd}
\xymatrix{
 K^{{\rm geo}}_{*}(X;B) \ar[rdd]_{\mu}   \ar[rr]^{\gamma}  & & \Omega_*(\Lip(X),B))  \ar[ldd]^{b}  
 \\ \\
&KK_*(C(X),B)&
}
\end{equation}
\end{theorem}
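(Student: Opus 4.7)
The theorem has two assertions: well-definedness of $\gamma$ on equivalence classes, and commutativity of the triangle. Commutativity is essentially tautological: by construction, $b(\gamma_0(M,E_B,\nabla_E,f)) = [f^*\mathpzc{E}_M, b(D^M_E)]$, and this equals $\mu_{\an}(M,E_B,\nabla_E,f)$ by the very definition of the analytic assembly map recorded in Theorem \ref{muaniso}. The substantive content is therefore to verify that $\gamma_0$ descends through each of the three generating equivalence relations on $\widetilde{BD}_{*,\Lip}(X;B)$ listed in Definition \ref{bdrelations}.

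First I would dispose of the direct sum / disjoint union relation. Additivity of $\gamma_0$, recorded in the proposition immediately preceding the theorem, together with the canonical isomorphism
\[
\mathpzc{E}_M^{E_B \oplus E_B'} \;\cong\; \mathpzc{E}_M^{E_B} \oplus \mathpzc{E}_M^{E_B'}
\]
intertwining the twisted Dirac operators, shows that both sides of the relation produce the same sum in $Z_*(\Lip(X);B)$, hence the same class in $\Omega_*(\Lip(X),B)$. Next, for bordism invariance, I would invoke Proposition \ref{honestbordismprop}: given a Riemannian bordism $(W,F_B,\nabla_F,g)$ — which, within its geometric bordism class, may be arranged to have all structures of product type in a collar of $\partial W$ — the quadruple $(\gamma_0(W,F_B,\nabla_F),\theta,p)$ with $(\theta,p)$ coming from the collar identification is a $KK$-bordism of unbounded cycles whose boundary equals $\gamma_0(\partial W, F_B|_{\partial W}, \nabla_F|_{\partial W}, g|_{\partial W})$. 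Thus nullbordant Riemannian cycles produce nullbordant closed chains, which is the content of Proposition \ref{bordismgeocy}.

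Third, for vector bundle modification, Lemma \ref{vectorbmodgeo} provides the key decomposition
\[
\gamma_0\bigl((M,E_B,\nabla_E,f)^V\bigr) \;=\; \gamma_0(M,E_B,\nabla_E,f) + (\mathpzc{E},D),
\]
where $(\mathpzc{E},D)$ is a weakly degenerate $(\Lip(X),B)$-cycle, and Theorem \ref{weakdegnullbor} asserts that every weakly degenerate cycle is nullbordant. The two sides therefore coincide in $\Omega_*(\Lip(X),B)$. Assembling these three verifications proves that $\gamma$ is a well-defined homomorphism, and combined with the tautological commutativity computation above gives the theorem. The main substantive obstacle is the vector bundle modification relation, since it is the most geometrically intricate to track through the analytic construction; however this work is already absorbed into Lemma \ref{vectorbmodgeo} and Theorem \ref{weakdegnullbor}, so no further analysis is required at the level of the present theorem beyond quoting them.
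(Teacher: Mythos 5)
Your proposal matches the paper's proof exactly: the commutativity of the triangle is observed to hold at the level of cycles, and well-definedness of $\gamma$ is established by checking the three generating relations, citing Proposition \ref{bordismgeocy} for bordism invariance and Lemma \ref{vectorbmodgeo} (together with Theorem \ref{weakdegnullbor}) for vector bundle modification. You have merely spelled out the disjoint-union/direct-sum and vector-bundle-modification steps in slightly more detail than the paper, which simply asserts them.
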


\begin{proof}
It is a consequence of the construction that, on the level of cycles, the diagram \eqref{commdiagramwithbdd} commutes. Hence it will commute if the maps are well defined. It remains to prove that $\gamma$ is well defined. This follows since $\gamma$ clearly respects disjoint union/direct sum, respects the bordism relation by Proposition \ref{bordismgeocy} and respects vector bundle modification by Lemma \ref{vectorbmodgeo}.

\end{proof}

\begin{cor}
\label{splittingbeta}
If $X$ is a compact manifold with boundary, then the bounded transform $b:\Omega_*(\Lip(X),B)\to KK_*(C(X),B)$ is a split surjection.
\end{cor}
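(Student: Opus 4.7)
The plan is to extract the splitting essentially for free from the commutative diagram in Theorem \ref{betamugamma} combined with the isomorphism theorem \ref{muaniso} for the analytic assembly map. Since a compact manifold with boundary is homotopy equivalent to a finite $CW$-complex, Theorem \ref{muaniso} guarantees that $\mu_{\an}:K^{\geo}_*(X;B)\to KK_*(C(X),B)$ is an isomorphism of abelian groups. So we already have a distinguished inverse $\mu_{\an}^{-1}$ available on the target of the bounded transform.

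Given this, the proposal is to simply define
\[
s:=\gamma\circ \mu_{\an}^{-1}:KK_*(C(X),B)\longrightarrow \Omega_*(\Lip(X),B),
\]
using $\gamma$ from Theorem \ref{betamugamma}, and check that $s$ splits $b$. The commutativity of the triangle in \eqref{commdiagramwithbdd} reads $b\circ \gamma=\mu_{\an}$, so
\[
b\circ s=b\circ \gamma\circ \mu_{\an}^{-1}=\mu_{\an}\circ\mu_{\an}^{-1}=\mathrm{id}_{KK_*(C(X),B)}.
\]
This proves that $b$ admits a right inverse, i.e.\ is a split surjection, and as a byproduct that $\gamma$ itself is a split injection (confirming the last assertion of Theorem \ref{betamugamma}).

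There is no real obstacle at this stage, since all the substantive work has been isolated in prior results: Theorem \ref{betamugamma} ensures that $\gamma$ is a well-defined group homomorphism and that the diagram commutes (which in turn relies on Proposition \ref{bordismgeocy} for bordism invariance and Lemma \ref{vectorbmodgeo} together with Theorem \ref{weakdegnullbor} for invariance under vector bundle modification), while Theorem \ref{muaniso} supplies the invertibility of $\mu_{\an}$ on the category of finite $CW$-complexes. The only remark worth including for completeness is that, although cycles in $K^{\geo}_*(X;B)$ are a priori defined for continuous maps $f:M\to X$, classes can be represented by Lipschitz $f$ by the perturbation argument cited after Definition \ref{bdrelations}, so that $\gamma$ genuinely lands in $\Omega_*(\Lip(X),B)$ and not merely in $\Omega_*(C(X),B)$.
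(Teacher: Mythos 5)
Your proposal is correct and coincides with the paper's own proof, which also defines the splitting as $\gamma\circ\mu_{\an}^{-1}$ and invokes Theorems \ref{muaniso} and \ref{betamugamma} in exactly the same way. You have merely spelled out the verification $b\circ s=\mathrm{id}$ and the standing Lipschitz-representative convention, both of which the paper leaves implicit.
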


\begin{proof}
By Theorem \ref{muaniso}, $\mu_{\an}$ is an isomorphism and Theorem \ref{betamugamma} implies that $\gamma\circ \mu_{\an}^{-1}$ splits $b$.
\end{proof}

\begin{remark}
\label{nhremark}
A consequence of Corollary \ref{splittingbeta} is that for any compact manifold $X$ there is a direct sum decomposition 
$$\Omega_*(\Lip(X),B)\cong KK_*(C(X),B)\oplus NH_*(\Lip(X),B),$$
where any element $x\in NH_*(\Lip(X),B)$ has a nullhomotopic bounded transform but $x$ itself need not be nullbordant. 
\end{remark}

\section{Examples} \label{examplesSection}
\subsection{Examples of Hilsum bordisms} \label{exHilsumSubSec}
In this subsection we list various geometric situations which lead to Hilsum bordisms. All of these have appeared in Hilsum's previous work \cite{hilsumcmodbun, hilsumfol, hilsumbordism}, but it seems useful to summarize them here for the reader unfamiliar with his work. Furthermore, we note that there is overlap between the various examples. The first appears as a special case of each of the rest! Rather vaguely, the overall theme is that if a geometric situation gives an unbounded $KK$-cycle and the context provides a natural notion of bordism, then one often can obtain a Hilsum bordism.

\begin{ex} {\bf Compact manifolds} (see any of \cite{hilsumcmodbun, hilsumfol, hilsumbordism} and/or Subsection \ref{mfdwbdryex} and Proposition \ref{honestbordismprop}) \\
Let $W$ be a compact Riemannian manifold with boundary, $E$ a Hermitian vector bundle over $W$ and $f:W\to X$ a Lipschitz mapping. The operator $P$ is a formally self-adjoint elliptic pseudo-differential operator on $W$ of order $1$ acting on sections of $E$. We assume existence of a collaring $U\cong \partial W\times [0,1]$ near the boundary and that the data $E$ and $P$ respect this collar in the sense that there exists a boundary operator $P_\partial$ -- an elliptic self-adjoint operator of order $1$ on $\partial W$, such that for any $\chi\in C^\infty_c(U^\circ)$ it holds that $\chi P\chi=\chi\Psi(P_\partial)\chi$. Under these assumptions, one can form a Hilsum bordism with respect to $C^{\infty}(M) \subseteq C(M)$ from any closed symmetric extension of $P$. The associated boundary cycle is $(L^2(\partial W; E|_{\partial W}), P_{\partial})$. 

Special cases of this situation include the construction considered in Subsection \ref{mfdwbdryex} (that is, a Dirac operator associated to a ${\rm spin^c}$-structure twisted by a Hermitian vector bundle). In fact, if $B$ is a unital $C^*$-algebra, we saw (again in Subsection \ref{mfdwbdryex}) that by twisting by a Hermitian $B$-bundle one obtains a Hilsum bordism with respect to $C^{\infty}(M) \subseteq C(M)$ and $B$.
\end{ex} 

\begin{ex} {\bf Coverings} (see \cite[Section 8]{hilsumbordism}) \\
Let $\Gamma$ be a discrete group, $M$ an oriented Lipschitz manifold with a fixed (measurable) Riemannian metric, $E$ a Hermitian Lipschitz vector bundle over $M$, and $f: M \rightarrow B\Gamma$ a continuous map. Also, let $\tilde{M}$ denote the Galois covering obtained from $f$ and $\tilde{g}$ is the lift of a Riemannian metric on $M$ to $\tilde{M}$. Then, there is an associated unbounded $KK$-cycle, $(\mathcal{E}, A^E)$ coming from the signature operator, (leading to a class in $KK^*(C(M), C^*_r(\Gamma))$) where $C^*_r(\Gamma)$ is the reduced $C^*$-algebra of $\Gamma$.

Furthermore, following the setup of \cite[Theorem 8.4]{hilsumbordism}, suppose that $Z$ is a Lipschitz manifold with boundary isomorphic to $M$ with $F$ a Hermitian Lipschitz vector bundle over $Z$, and $g: Z \rightarrow B\Gamma$ a continuous map which extend respectively $E$ and $f$. Then, there is an associated Hilsum bordism, with respect to $\Lip(M) \subseteq C(M)$ and $C^*_r(\Gamma)$, which has boundary $(\mathcal{E}, A^E)$. 
\end{ex}

\begin{ex} {\bf Transversely elliptic operators} (see \cite[Section 9]{hilsumbordism}) \\
Let $G$ be a compact Lie group, $M$ be a compact $G$-manifold, $E$ be a Hermitian $G$-vector bundle over $M$, and $P$ be a pseudo-differential operator of order $1$ acting on sections of $E$. Assuming that $P$ is transversely elliptic, one obtains a class in $KK^*(C(M) \rtimes G, \C)$, which is realized by an unbounded $KK$-cycle, $x$.

Furthermore, assume that $W$ is a compact $G$-manifold with boundary, $F$ is a Hermitian $G$-vector bundle over $W$, and $Q$ a pseudo-differential operator order $1$ collared near the boundary as above such that $\partial W = M$, $F|_{\partial W}=E$, and $Q_\partial=P$. Then, there is an associated Hilsum bordism which has boundary $x$.
\end{ex}

\begin{ex} {\bf Orbifolds} \\
A special case of the previous example are orbifolds which are locally free quotients of a compact Lie group action. To see this, one works with the $G$-frame bundle (see \cite[Introduction]{Far}). The reader is directed to \cite[Remark 9.4]{hilsumbordism} and \cite{Far} for further details. 
\end{ex}
\begin{ex} {\bf Foliations} (see \cite[Section 5]{hilsumfol}) \\
Let $(M, \mathcal{F})$ be a closed foliated manifold. Then one can construct the holonomy groupoid, $\mathcal{G}$, with its associated reduced $C^*$-algebra; we denote this algebra by $C^*(M,\mathcal{F})$. If we assume that the restriction of the tangent bundle of $M$ to the foliation (i.e., $\mathcal{F}$) is spin$^c$, so we can form the longitudinal spin$^c$-Dirac operator and its associated unbounded $KK$-cycle and class in $KK^*(C^*(M,\mathcal{F}), \C)$.

Furthermore, assume that there is $(W, \mathcal{E})$ a compact foliated manifold with boundary such that $\partial W = M$ and $\mathcal{E}$ intersects $M$ transversely along $\mathcal{F}$. Also, assume that the restriction of the trangent bundle of $W$ to the foliation is ${\rm spin^c}$. Then, by fixing Riemannian metrics and a collaring, one can obtain a Hilsum bordism (with boundary the cycle produced in the previous paragraph); the precise construction is in \cite[Section 5]{hilsumfol}, see in particular \cite[Lemma 5.1]{hilsumfol}.
\end{ex}

\subsection{The bordism group for ideals of compact operators}
\label{compsubse}

Let $\mathcal{I}\subseteq \mathbb{B}(\mathpzc{H})$ be a symmetrically normed operator ideal. We assume that $\mathcal{I}$ is regular, i.e. the set of finite rank operators is dense in $\mathcal{I}$. We will compute a certain subgroup of $\Omega_*(\mathcal{I},B)$ and show that it is isomorphic to $K_*(B)$. We assume $\mathpzc{H}=\ell^2(\N)$ for simplicity.

\begin{define}
\label{esssubgroup}
We say that an $(\mathcal{A},B)$-cycle $(\mathpzc{E},D)$ is \emph{essential} if the $A$-action on $\mathpzc{E}$ is essential. We let $\Omega_*^e(\mathcal{A},B)$ denote the subgroup of $\Omega_*(\mathcal{A},B)$ generated by essential $(\mathcal{A},B)$-cycles.
\end{define}

\begin{prop}
Suppose that $\mathcal{A}$ is unital. Then $\Omega_*(\mathcal{A},B)=\Omega_*^e(\mathcal{A},B)$.
\end{prop}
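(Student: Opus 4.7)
The plan is to show that every closed $(\mathcal{A},B)$-cycle is bordant to an essential cycle, which will immediately give the equality $\Omega_*(\mathcal{A},B)=\Omega_*^e(\mathcal{A},B)$. The key input will be the projection $p := \pi(1_\mathcal{A}) \in \End^*_B(\mathpzc{E})$ furnished by unitality of $\mathcal{A}$. Since $\pi:\mathcal{A}\to \Lip(D)$ is continuous, $p \in \Lip(D)$; hence $p$ preserves $\Dom(D)$, $[D,p] \in \End^*_B(\mathpzc{E})$, and in the graded case $p$ is even while $[D,p]$ is odd. Note also that $p$ commutes with $\pi(A)$.

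First I would remove the off-diagonal part of $D$ with respect to the splitting $\mathpzc{E} = p\mathpzc{E} \oplus (1-p)\mathpzc{E}$ by setting $D' := pDp + (1-p)D(1-p)$, so that $T := D - D' = pD(1-p) + (1-p)Dp$. A short computation using $pD = Dp - [D,p]$ and $p^2 = p$ gives
$$pD(1-p) = -[D,p](1-p) \quad\text{and}\quad (1-p)Dp = [D,p]p,$$
so $T = [D,p](2p-1)$ is a bounded adjointable operator. Verifying the anticommutation $\{2p-1, [D,p]\} = 0$ (a direct consequence of $p^2 = p$ and $D=D^*$) will show that $T$ is self-adjoint, and odd in the graded case. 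By Corollary \ref{bddpert} this furnishes a bordism $(\mathpzc{E}, D) \sim_{\bor} (\mathpzc{E}, D')$; local compactness of the resolvent of $D'$ is inherited from $D$ because $T$ is bounded.

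Finally, $D'$ respects the decomposition $\mathpzc{E} = p\mathpzc{E} \oplus (1-p)\mathpzc{E}$ and thus splits as $D_1 \oplus D_2$, giving $(\mathpzc{E},D') = (p\mathpzc{E},D_1) + ((1-p)\mathpzc{E},D_2)$. The summand $(p\mathpzc{E},D_1)$ is essential since $p$ acts as the identity on $p\mathpzc{E}$, whence $\pi(A)\cdot p\mathpzc{E} = p\mathpzc{E}$. Conversely, $\pi(a) = \pi(a)\pi(1_\mathcal{A}) = \pi(a)p$ annihilates $(1-p)\mathpzc{E}$ for every $a \in A$, so $((1-p)\mathpzc{E}, D_2)$ is nullbordant by Proposition \ref{degnulb}. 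Combining, $[\mathpzc{E}, D] = [(p\mathpzc{E}, D_1)] \in \Omega_*^e(\mathcal{A}, B)$, as required. The principal technical check is the self-adjointness and parity of the bounded perturbation $T$, both of which reduce to elementary algebraic manipulations using $p^2 = p$ and $D = D^*$; everything else is formal given Corollary \ref{bddpert} and Proposition \ref{degnulb}.
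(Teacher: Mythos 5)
Your proof is correct and follows essentially the same route as the paper: cut out $p=\pi(1_{\mathcal{A}})\in\Lip(D)$, use Corollary \ref{bddpert} to pass to the block-diagonal operator $pDp\oplus(1-p)D(1-p)$, and dispose of the $(1-p)$-summand via Proposition \ref{degnulb}. The only difference is that you spell out the algebraic check that $T=[D,p](2p-1)$ is bounded, self-adjoint and odd, details the paper leaves implicit.
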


\begin{proof}
We need to prove that any cycle is bordant to an essential cycle. Suppose that $(\mathpzc{E},D)$ is an $(\mathcal{A},B)$-cycle and let $p$ be the image in $\End_B^*(\mathpzc{E})$ of $1\in \mathcal{A}$. By construction, the projection $p$ commutes with the $A$-action and preserves the domain of $D$. Since $\mathcal{A}$ has bounded adjointable commutators with $D$, Corollary \ref{bddpert} provides us with a bordism
$$(\mathpzc{E},D)\sim_{\bor} (p\mathpzc{E},pDp)\oplus ((1-p)\mathpzc{E},(1-p)D(1-p)).$$
It holds that $p\mathpzc{E}=\overline{\mathcal{A}\mathpzc{E}}$ and $\mathcal{A}$ acts trivially on $(1-p)\mathpzc{E}$. Therefore the cycle $((1-p)\mathpzc{E},(1-p)D(1-p))$ is nullbordant by Proposition \ref{degnulb}. From the transitivity of the bordism relation, it follows that $(\mathpzc{E},D)\sim_{\bor} (p\mathpzc{E},pDp)$ and the latter cycle is essential.
\end{proof}

It remains an open question if $\Omega_*(\mathcal{A},B)=\Omega_*^e(\mathcal{A},B)$ in general. In the bounded setting, it was proved in \cite[Lemma 2.8]{Kas} that the class of any $KK$-cycle in the relevant $KK$-group always can be represented by an essential cycle. The statement in bounded $KK$-theory will have an immediate consequence in the $KK$-bordism groups {\bf only} for algebras $\mathcal{A}$ such that the bounded transform is injective.

\begin{prop}
Assume that $\mathcal{I}$ is a regular symmetrically normed ideal. The mapping 
$$\alpha:\Omega_*(\C,B)\to \Omega_*^e(\mathcal{I},B),\quad(\mathpzc{E},D)\mapsto (\mathpzc{E}\otimes\ell^2(\N),D\otimes\id_{\ell^2(\N)}),$$ 
is an isomorphism with inverse mapping being the restriction of the mapping
$$\beta_0:\Omega_*(\mathcal{I},B)\to \Omega_*(\C,B), \quad \beta_0(\tilde{\mathpzc{E}},\tilde{D}):=(e\tilde{\mathpzc{E}},e\tilde{D}e),$$
where $e\in \mathcal{I}$ is a rank one projection.
\end{prop}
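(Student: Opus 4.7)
The plan consists of verifying well-definedness of $\alpha$ and of $\beta_0$ at the level of bordism classes, checking $\beta_0\circ\alpha=\id$ at the cycle level, and constructing an explicit bordism for the harder direction $\alpha\circ\beta_0=\id$ on $\Omega_*^e(\mathcal{I},B)$.

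For $\alpha$: given a $(\C,B)$-cycle $(\mathpzc{E},D)$, the $\mathcal{I}$-action on $\mathpzc{E}\otimes\ell^2(\N)$ via the second factor is essential because the regular ideal $\mathcal{I}$ contains an approximate unit for $\K(\ell^2(\N))$; since $D\otimes\id$ commutes with $\id\otimes T$ one has $\mathcal{I}\subseteq\Lip(D\otimes\id)$; and $(1+(D\otimes\id)^2)^{-1}(\id\otimes T)=(1+D^2)^{-1}\otimes T$ lies in $\K_B(\mathpzc{E}\otimes\ell^2(\N))$. Tensoring bordism data $(\mathpzc{F},Q,\theta,p)$ with $\ell^2(\N)$ and the identity shows $\alpha$ respects bordism. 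For $\beta_0$: since $e\in\mathcal{I}\subseteq\Lip(\tilde{D})$, $e$ preserves $\Dom(\tilde{D})$ and $[\tilde{D},e]$ is bounded adjointable, so the off-diagonal part $e\tilde{D}(1-e)+(1-e)\tilde{D}e=-[\tilde{D},e](2e-\id)$ is bounded. Kato--Rellich \cite[Theorem 4.5]{leschkaad2} then implies $e\tilde{D}e+(1-e)\tilde{D}(1-e)$ is self-adjoint and regular, hence so is the direct summand $e\tilde{D}e$ on $e\tilde{\mathpzc{E}}$; local compactness of its resolvent is inherited from the $\mathcal{I}$-local compactness of $(1+\tilde{D}^2)^{-1}$ applied to the projection $e\in\mathcal{I}$. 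Applying the same compression to bordism data gives a $(\C,B)$-bordism, so $\beta_0$ descends to bordism groups. The identity $\beta_0\circ\alpha=\id$ is immediate at the cycle level (identifying $e\ell^2(\N)\cong\C$), whence $\alpha$ is injective.

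For the remaining direction $\alpha\circ\beta_0=\id$ on $\Omega_*^e(\mathcal{I},B)$: given an essential $(\mathcal{I},B)$-cycle $(\tilde{\mathpzc{E}},\tilde{D})$, the essential $\mathcal{I}$-action extends by $C^*$-norm continuity to a nondegenerate $\K(\ell^2(\N))$-action, and Morita stabilization furnishes a unitary $B$-linear isomorphism $\tilde{\mathpzc{E}}\cong e\tilde{\mathpzc{E}}\otimes\ell^2(\N)$ intertwining the $\mathcal{I}$-actions. Transporting $\tilde{D}$ across this isomorphism and decomposing into matrix-unit blocks, the conditions $[\tilde{D},\id\otimes \mathrm{e}_{ij}]\in\End_B^*$ for $\mathrm{e}_{ij}\in\mathcal{I}$ force every off-diagonal block $\tilde{D}_{ij}$ $(i\neq j)$ to be bounded adjointable and every diagonal block $\tilde{D}_{kk}$ to differ from $e\tilde{D}e$ by a bounded adjointable operator. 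The principal difficulty is that these bounds are not uniform in the block indices, so the global difference $\tilde{D}-e\tilde{D}e\otimes\id$ need not be bounded and Corollary \ref{bddpert} cannot be applied directly.

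To overcome this, I would combine an Eilenberg-style swindle in the spirit of Theorem \ref{weakdegnullbor} with the operator-homotopy argument of Lemma \ref{difftopyofoperator}: direct-summing $(\tilde{\mathpzc{E}},\tilde{D})$ with a weakly degenerate companion $(\mathpzc{E}_\infty,D_\infty)$ built from infinitely many scaled copies of $\alpha\beta_0(\tilde{\mathpzc{E}},\tilde{D})$ allows the block-wise corrections to be distributed along the infinite direct sum and absorbed into a globally norm-continuous path of self-adjoint regular operators. Applying Lemma \ref{difftopyofoperator} to that path produces a bordism $(\tilde{\mathpzc{E}},\tilde{D})+(\mathpzc{E}_\infty,D_\infty)\sim_{\bor}\alpha\beta_0(\tilde{\mathpzc{E}},\tilde{D})+(\mathpzc{E}_\infty,D_\infty)$, and Theorem \ref{weakdegnullbor} allows the weakly degenerate summand to be cancelled in $\Omega_*^e(\mathcal{I},B)$, completing the proof that $\alpha\circ\beta_0=\id$ and hence that $\alpha$ is an isomorphism with inverse $\beta_0|_{\Omega_*^e(\mathcal{I},B)}$.
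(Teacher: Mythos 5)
Your treatment of the well-definedness of $\alpha$ and $\beta_0$, and of $\beta_0\circ\alpha=\id$ at the cycle level, is sound and in line with what is needed (the paper does not even spell out the self-adjointness of the compression $e\tilde{D}e$, which your Kato--Rellich argument handles cleanly). However, the proposed surjectivity argument has a genuine gap, and the diagnosis of the obstruction is wrong in a way that matters.

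You assert that the block-wise bounds on $\tilde{D}_{ij}$ and on $e_{0k}\tilde{D}_{kk}e_{k0}-e\tilde{D}e$ ``are not uniform in the block indices.'' In fact they \emph{are} uniform: the hypothesis that the representation $\pi\colon\mathcal{I}\to\Lip(\tilde{D})$ is \emph{continuous} (this is built into Definition \ref{cycleschaindeef}) gives a constant $C$ with $\|[\tilde{D},a]\|\le C\|a\|_{\mathcal{I}}$ for all $a\in\mathcal{I}$, and since $\mathcal{I}$ is symmetrically normed, all matrix units $e_{jk}$ have the same $\mathcal{I}$-norm. This uniform bound is precisely what the paper's Equation \eqref{comandlip} records, and it is the engine of the whole argument: it gives, after conjugating by $U=\bigoplus_j e_{0j}$, that $UD_0U^*-D\otimes\id_{\ell^2(\N)}=\bigoplus_j e_{0j}[\tilde{D},e_{j0}]e_{00}$ is a \emph{bounded} adjointable operator, so that the diagonal part $D_0=\sum_j e_{jj}\tilde{D}e_{jj}$ is bordant to $D\otimes\id$ by a direct application of Corollary \ref{bddpert}. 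Without invoking this continuity you have no control over the blocks, which is why you are forced into a speculative swindle.

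The real difficulty (and the point you have not identified) is getting from $\tilde{D}$ to its block-diagonal compression $D_0$: the off-diagonal part $T:=\sum_k(1-e_{kk})\tilde{D}e_{kk}$ is \emph{not} bounded, nor relatively bounded with bound $<1$; it has relative bound exactly $1$, so Corollary \ref{bddpert} does not apply and the standard Kato--Rellich theorem is insufficient. The paper handles this with W\"ust's extension of Kato--Rellich (\cite[Theorem 4.6]{leschkaad2}, applicable at relative bound $1$) to establish self-adjointness of $D_0=\tilde{D}-T$, and then an operator homotopy $t\mapsto\tilde{D}-T+\chi(t)T$ processed through the machinery of Theorem \ref{sumregul} and Lemma \ref{difftopyofoperator} to produce the bordism $(\tilde{\mathpzc{E}},\tilde{D})\sim_{\bor}(\tilde{\mathpzc{E}},D_0)$. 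Your proposed fix---absorbing ``non-uniform block corrections'' into a swindle against a weakly degenerate companion---attacks a problem that does not exist (the blocks are uniform) while leaving the actual problem (relative bound $1$ for the off-diagonal part) unaddressed; it is not clear how the swindle would manufacture a regular self-adjoint path $D(t)$ in $C^1([0,1],\Hom_B^*(\mathpzc{W},\mathpzc{E}))$ with a \emph{fixed} domain module $\mathpzc{W}$ as required by Lemma \ref{difftopyofoperator}, since the perturbations involved are unbounded relative to the reference operator. You should replace that step with the two-stage argument: first use W\"ust's theorem and an operator homotopy to kill the off-diagonal part; then use the uniform commutator estimate \eqref{comandlip} together with Corollary \ref{bddpert} to replace the diagonal blocks by copies of $e\tilde{D}e$.
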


\begin{proof}
It holds already at the level of cycles that 
$$\beta_0\circ\alpha(\mathpzc{E},D)=\left(e(\mathpzc{E}\otimes\ell^2(\N)),e(D\otimes\id_{\ell^2(\N)})e\right)\cong (\mathpzc{E},D),$$
so $\beta_0\circ\alpha=\id_{\Omega_*(\C,B)}$. The proof of the proposition is completed upon proving surjectivity of $\alpha$, that is, for any essential $(\mathcal{I},B)$-cycle $(\tilde{\mathpzc{E}},\tilde{D})$, there is a $(\C,B)$-cycle $(\mathpzc{E},D)$ and a bordism of $(\mathcal{I},B)$-cycles
\begin{equation}
\label{surbor}
(\mathpzc{E}\otimes\ell^2(\N),D\otimes\id_{\ell^2(\N)})\sim_{\bor}(\tilde{\mathpzc{E}},\tilde{D}).
\end{equation}

We note that since any element of $\mathcal{I}$ has bounded adjointable commutators with $\tilde{D}$, the continuity of $\mathcal{I}\to \Lip(\tilde{D})$ ensures the existence of a $C>0$ such that
\begin{equation}
\label{comandlip}
\|[\tilde{D},a]\|_{\End^*_B(\tilde{\mathpzc{E}})}\leq C\|a\|_\mathcal{I}.
\end{equation}
We take matrix units $(e_{jk})_{j,k=0}^\infty$. Its linear span is dense in $\K(\mathpzc{H})$ in the operator norm and by assumption dense in $\mathcal{I}$ in its norm. We also note that since $\tilde{\mathpzc{E}}$ is essential,
$$\tilde{\mathpzc{E}}=\bigoplus_{j=0}^\infty \mathpzc{E}_j, \quad\mbox{for}\quad \mathpzc{E}_j:=e_{jj}\tilde{\mathpzc{E}}.$$
Moreover, each $e_{jk}$ is a partial isometry with source $\mathpzc{E}_k$ and range $\mathpzc{E}_j$, i.e. it restricts to a unitary isomorphism $e_{jk}:\mathpzc{E}_k\to \mathpzc{E}_j$. 

We define the symmetric operator
$$T:=\sum_k (1-e_{kk})\tilde{D}e_{kk}.$$
It is clear that $T$ is relatively bounded to $\tilde{D}$ with relative bound $1$. It holds for any $e_{jk}$ that $e_{jk}T$ and $Te_{jk}$ are bounded adjointable and the density of finite rank operators combined with the estimate \eqref{comandlip} implies that $T$ is locally bounded. Write $D_0$ for the operator $\tilde{D}-T=\sum_{j\in \N} e_{jj} \tilde{D}e_{jj}$ extended to its graph closure of $\Dom(\tilde{D})$. Since $D_0$ is a relatively bounded perturbation of $\tilde{D}$ with relative bound $1$, W\"ust's extension of the Kato-Rellich theorem (see \cite[Theorem 4.6]{leschkaad2}) implies that $D_0$ is self-adjoint and regular. For any $a\in \mathcal{I}$, the local boundedness of $T$ shows that
\begin{equation}
\label{domainsarelikecrazynice}
a\Dom(D_0)\subseteq a\Dom(\tilde{D}),
\end{equation}
It follows that $\mathcal{I}$ preserves $\Dom(D_0)$. The algebra $\mathcal{I}$ has bounded adjointable commutators with $D_0$ because $[D_0,a]:\Dom(D_0)\to \tilde{\mathpzc{E}}$ is the closure of the operator $[\tilde{D}-T,a]:\Dom(\tilde{D})\to \tilde{\mathpzc{E}}$ in the graph topology of $\Dom(D_0)$ and the latter admits a bounded adjointable extension. Moreover, \eqref{domainsarelikecrazynice} and the compactness of the inclusion $a\Dom(\tilde{D})\subseteq \tilde{\mathpzc{E}}$ for any $a\in \mathcal{I}$ implies locally compact resolvent of $D_0$. It follows that $(\tilde{\mathpzc{E}},D_0)$ is an $(\mathcal{I},B)$-cycle. 

Take a function $\chi\in C^\infty[0,1]$ with non-negative derivative and $\chi(t)=0$ near $t=0$ and $\chi(t)=1$ near $t=1$. Again applying W\"ust's extension of the Kato-Rellich theorem, the operator $G=\tilde{D}-T+\chi\cdot T$ is a $B\otimes C[0,1]$-linear selfadjoint and regular on $\tilde{\mathpzc{E}}\otimes C[0,1]$ with domain being the graph closure of $\Dom(\tilde{D})\otimes C[0,1]$. Moreover, the local boundedness of $T$ implies that for any $a\in \mathcal{I}$ it holds that 
$$a\Dom(G)\subseteq a\Dom(\tilde{D})\otimes C[0,1]$$ 
and that $G$ has locally compact resolvent in $\tilde{\mathpzc{E}}\otimes C[0,1]$. 

We can formally construct the interior product $Q$ of $G$ with the half-closed chain $(L^2[0,1],\dmin_x)$ over $C[0,1]$ as in Lemma \ref{difftopyofoperator}. Using Theorem \ref{sumregul}, one shows that $Q$ is symmetric and regular and we obtain a bordism from $(\tilde{\mathpzc{E}}\hat{\boxtimes} L^2[0,1],Q)$ whose boundary is $-(\tilde{\mathpzc{E}},\tilde{D})+(\tilde{\mathpzc{E}},D_0)$. We have therefore constructed a bordism $(\tilde{\mathpzc{E}},\tilde{D})\sim_{\bor}(\tilde{\mathpzc{E}},D_0)$.

We define $\mathpzc{E}:=\mathpzc{E}_0$ and consider the $B$-linear unitary operator 
$$U:=\bigoplus_{j\in \N} e_{0j}:\tilde{\mathpzc{E}}\to\mathpzc{E}\otimes \ell^2(\N)= \bigoplus_{j\in \N} \mathpzc{E}.$$
It is clear that $U$ intertwines the $\mathcal{I}$-actions. We define $D:=e_{00}\tilde{D}e_{00}$ which is a regular self-adjoint operator with compact resolvent in $\mathpzc{E}$. The operator $D$ satisfies 
\begin{align*}
UD_0U^*-D\otimes \id_{\ell^2(\N)}&=\bigoplus_{j\in \N} (e_{0j}\tilde{D}e_{j0}-e_{00}\tilde{D}e_{00})\\
&=\bigoplus_{j\in \N} e_{0j}[\tilde{D},e_{j0}]e_{00}\in \End_B^*(\mathpzc{E}\otimes \ell^2(\N)),
\end{align*}
because of the uniform norm bound $\|[D,e_{j0}]\|_{\End_B^*(\tilde{\mathpzc{E}})}\leq C$ of Equation \eqref{comandlip}. It follows from Corollary \ref{bddpert} that $(\tilde{\mathpzc{E}},D_0)\sim_{\bor} (\mathpzc{E}\otimes \ell^2(\N), D\otimes \id_{\ell^2(\N)})$. In summary, there is a bordism as in \eqref{surbor}. 
\end{proof}

\begin{cor}
\label{symnormediso}
For any regular symmetrically normed ideal $\mathcal{I}\subseteq \K$, the bounded transform $b:\Omega_*^e(\mathcal{I},B)\to KK_*(\K,B)\cong K_*(B)$ is an isomorphism.
\end{cor}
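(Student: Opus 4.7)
The plan is to deduce the corollary by combining the preceding proposition, which establishes that the stabilization map $\alpha : \Omega_*(\C,B) \to \Omega_*^e(\mathcal{I},B)$ is an isomorphism, with Theorem \ref{bddtrandk}, which says that $b:\Omega_*(\C,B)\to KK_*(\C,B)$ is an isomorphism, and with the standard Morita isomorphism at the level of $KK$-theory.

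First I would set up the diagram
\begin{equation*}
\xymatrix{
\Omega_*(\C,B) \ar[r]^{\alpha} \ar[d]_{b}& \Omega_*^e(\mathcal{I},B) \ar[d]^{b}\\
KK_*(\C,B) \ar[r]_{M}& KK_*(\K,B)
}
\end{equation*}
where $M$ is the Morita isomorphism sending $[\mathpzc{E},F]$ to $[\mathpzc{E}\otimes \ell^2(\N), F\otimes \mathrm{id}_{\ell^2(\N)}]$, with $\K$ acting on $\mathpzc{E}\otimes \ell^2(\N)$ through the second factor. The next step is to verify commutativity of this square. This is immediate on the level of cycles: the bounded transform of $\alpha(\mathpzc{E},D)=(\mathpzc{E}\otimes \ell^2(\N), D\otimes \mathrm{id})$ is $(\mathpzc{E}\otimes \ell^2(\N), b(D)\otimes \mathrm{id})$ because $b(D\otimes \mathrm{id})=b(D)\otimes \mathrm{id}$, which follows from the functional calculus representation $b(D)=D(1+D^*D)^{-1/2}$ and the fact that internal tensor product with the identity commutes with continuous functional calculus on regular operators. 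Hence $b\circ \alpha = M\circ b$ on cycles and therefore on classes.

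With commutativity established, the corollary follows from a diagram chase. The left vertical arrow $b$ is an isomorphism by Theorem \ref{bddtrandk}, the horizontal arrow $\alpha$ is an isomorphism by the preceding proposition, and $M$ is an isomorphism by the standard Morita equivalence $\K\sim_{\textnormal{Morita}}\C$ in Kasparov theory (combined with the canonical identification $KK_*(\C,B)\cong K_*(B)$). Therefore the right vertical arrow $b:\Omega_*^e(\mathcal{I},B)\to KK_*(\K,B)$ is also an isomorphism, which is what we wanted.

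There is no serious obstacle at this stage: the hard analytic work was already carried out in the preceding proposition, where the stabilization argument used W\"ust's Kato--Rellich theorem, Theorem \ref{sumregul}, and the homotopy lemma to show that any essential $(\mathcal{I},B)$-cycle is bordant to a stabilization of a $(\C,B)$-cycle. The one technical point to check carefully is that the functional calculus for the regular operator $D\otimes \mathrm{id}_{\ell^2(\N)}$ indeed yields $b(D)\otimes \mathrm{id}_{\ell^2(\N)}$; this is standard from \cite[Chapter 9, 10]{lancesbook} since the resolvents satisfy $(1+(D\otimes\mathrm{id})^*(D\otimes\mathrm{id}))^{-1}=(1+D^*D)^{-1}\otimes \mathrm{id}_{\ell^2(\N)}$.
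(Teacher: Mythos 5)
Your proposal is correct and takes essentially the intended approach: the paper gives no explicit proof for this corollary because it is meant to follow directly by composing the isomorphism $\alpha$ from the preceding proposition with the isomorphism $b:\Omega_*(\C,B)\to KK_*(\C,B)$ from Theorem \ref{bddtrandk} and the standard Morita isomorphism $KK_*(\C,B)\cong KK_*(\K,B)$. Your verification that the square commutes via $b(D\otimes\mathrm{id})=b(D)\otimes\mathrm{id}$ supplies exactly the routine detail that the paper leaves implicit.
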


\begin{appendix}
\section{A counterexample when the subalgebra is not fixed}
\label{counterdensesub}

There is a need for fixing the subalgebra in a spectral triple, or an unbounded Kasparov cycle, in order to obtain a semigroup under direct sum. For simplicity, we restrict our discussion to the unital case. To contrast the principle of ``fixing a subalgebra", we use the notion of an unbounded Fredholm module for a $C^*$-algebra $A$ as a triple $(\pi,\mathpzc{H},D)$ where $D$ is a self-adjoint operator with compact resolvent on the Hilbert space $\mathpzc{H}$ and the associated Lipschitz algebra is dense in $A$:
$$\Lip(\pi,D):=\{a\in A: a\Dom(D)\subseteq \Dom(D)\;\;\mbox{and}\;\; [D,a]\;\;\mbox{has a bounded extension}\}.$$

We will in the next proposition construct a counterexample to the statement ``the direct sum of unbounded Fredholm modules is an unbounded Fredholm module". It is likely that this result is known to experts in $KK$-theory. However, we could not find an explicit example in the literature.

We consider the following setup. Take a closed Riemannian manifold $X$ and a Dirac type operator $D$ acting on sections of a Clifford bundle $E\to X$, densely defined on $L^2(X,E)$ with domain $H^1(X,E)$. Define $\pi:C(X)\to \B(L^2(X,E))$ as point wise multiplication. If $h:X\to X$ is a homeomorphism, we set $\pi_h:=\pi\circ h^*$. The triples $(\pi_h,L^2(X,E),D)$ and $(\pi,L^2(X,E),D)$ are both unbounded Fredholm modules with $\Lip(\pi,D)=\Lip(X)$ and $\Lip(\pi_h,D)=(h^{-1})^*\Lip(X)$.

\begin{prop}
\label{hoeh}
Let $f\in C(S^1,\R)$ be a nowhere differentiable function and define
$$h:S^1\times S^1\to S^1\times S^1, \quad (z,w)\mapsto (z,\mathrm{e}^{if(z)}w).$$
The function $h$ is a homeomorphism and for $X=S^1\times S^1$ with its flat metric and $D$ the spin Dirac operator thereon, the inclusion
$$\Lip(\pi\oplus \pi_h,D\oplus D)=\Lip(\pi,D)\cap \Lip(\pi_h,D)\subseteq C(S^1\times S^1),$$
is not dense. In particular, the direct sum of unbounded Fredholm modules of the form $(\pi_h,L^2(X,E),D)$ and $(\pi,L^2(X,E),D)$ is in general not an unbounded Fredholm module for $C(X)$.
\end{prop}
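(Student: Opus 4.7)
The plan is first to identify the two Lipschitz algebras explicitly, and then to prove that their intersection is so small that it is confined to functions of the first coordinate alone — a subalgebra that certainly cannot be dense in $C(S^{1}\times S^{1})$. Since $D$ is the flat spin Dirac operator, $\pi(a)$ preserves $\Dom(D)=H^{1}$ with bounded commutator (Clifford multiplication by $da$) precisely when $a\in\Lip(X)$. Hence $\Lip(\pi,D)=\Lip(X)$, and replacing $a$ by $a\circ h$ gives $\Lip(\pi_{h},D)=\{a\in C(X):a\circ h\in\Lip(X)\}$. The problem therefore reduces to showing that the set $\mathcal{L}$ of $a\in C(X)$ for which both $a$ and $a\circ h$ are Lipschitz is not dense in $C(X)$.

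The main tool I would use is a Fourier expansion in the $y$-variable. Writing $a(x,y)=\sum_{n\in\Z}a_{n}(x)\,e^{iny}$, the explicit formula $h(x,y)=(x,y+f(x))$ yields $(a\circ h)(x,y)=\sum_{n}a_{n}(x)e^{inf(x)}e^{iny}$, so the $n$-th $y$-Fourier coefficient of $a\circ h$ is $a_{n}(x)e^{inf(x)}$. Bounding Fourier coefficients by slice integrals shows that if $a$ and $a\circ h$ have Lipschitz constants $L_{1}$ and $L_{2}$ respectively, then for every $n$ both $a_{n}$ and $a_{n}\cdot e^{inf}$ are Lipschitz functions on $S^{1}$ with Lipschitz constants at most $L_{1}$ and $L_{2}$.

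The decisive step is to prove $a_{n}\equiv 0$ for every $n\neq 0$. Assuming the opposite, there exist $n\neq 0$ and $x_{0}\in S^{1}$ with $a_{n}(x_{0})\neq 0$. By continuity, $|a_{n}|\geq c>0$ on some closed arc $I\ni x_{0}$; shrinking $I$ further, continuity of $f$ ensures $|n(f(x)-f(x'))|\leq\pi$ for all $x,x'\in I$. The factorisation
\begin{equation*}
a_{n}(x_{1})e^{inf(x_{1})}-a_{n}(x_{2})e^{inf(x_{2})}=(a_{n}(x_{1})-a_{n}(x_{2}))e^{inf(x_{1})}+a_{n}(x_{2})(e^{inf(x_{1})}-e^{inf(x_{2})})
\end{equation*}
together with the two Lipschitz bounds yields $|e^{inf(x_{1})}-e^{inf(x_{2})}|\leq\tfrac{L_{1}+L_{2}}{c}|x_{1}-x_{2}|$ on $I$. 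Combined with the elementary lower bound $|e^{it_{1}}-e^{it_{2}}|\geq\tfrac{2}{\pi}|t_{1}-t_{2}|$ valid for $|t_{1}-t_{2}|\leq\pi$, this forces $f|_{I}$ to be Lipschitz, and Rademacher's theorem then produces a point of differentiability of $f$ inside $I$, contradicting the hypothesis. The subtle step — and the main obstacle — is precisely this last transition from Lipschitz-ness of $e^{inf}$ to Lipschitz-ness of $f$: the exponential wraps around, so one must first localise enough so that $t\mapsto e^{int}$ is a bi-Lipschitz embedding of the relevant arc.

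Once $a_{n}\equiv 0$ for all $n\neq 0$, every $a\in\mathcal{L}$ has the form $a(x,y)=a_{0}(x)$ and thus depends only on the $S^{1}$-coordinate in the first factor. Such functions fail to separate any pair $(z,w_{1}),(z,w_{2})\in S^{1}\times S^{1}$ with $w_{1}\neq w_{2}$, and by Stone--Weierstrass they cannot be dense in $C(S^{1}\times S^{1})$. Concretely, the smooth function $(z,w)\mapsto w$ cannot be approximated uniformly within $\tfrac{1}{2}$ by any continuous function of $z$ alone, which completes the proposed argument.
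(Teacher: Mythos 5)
Your argument is correct, and it takes a genuinely different route than the paper. The paper proceeds by examining the difference quotient in the \emph{first} variable of $a\circ h$: it splits the difference quotient into two pieces, invokes Egorov's theorem and almost-everywhere differentiability of Lipschitz functions to show one piece converges off a set of small measure, and then observes that the residual piece contains a nowhere-convergent difference quotient of $f$; this forces $\partial_2 a\equiv 0$ almost everywhere. Your proof instead Fourier-expands in the \emph{second} variable, noting that the $n$-th slice coefficient $a_n$ of $a$ and the coefficient $a_n e^{inf}$ of $a\circ h$ inherit one-variable Lipschitz bounds from $a$ and $a\circ h$, and then runs a pointwise algebraic estimate to show that $a_n\not\equiv 0$ for some $n\neq 0$ would make $e^{inf}$ locally Lipschitz, hence $f$ locally Lipschitz (after shrinking to an arc where the exponential is bi-Lipschitz), contradicting nowhere differentiability via Rademacher. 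The payoff of your route is that it trades the measure-theoretic subtleties of the paper's argument (convergence of difference quotients with a moving base point, the Egorov step) for a cleaner one-dimensional calculation, at the mild cost of a Fourier decomposition and the standard lower bound $|e^{it_1}-e^{it_2}|\geq \tfrac{2}{\pi}|t_1-t_2|$ on $|t_1-t_2|\leq\pi$; both arguments terminate at the same structural conclusion that members of the intersection are functions of the first coordinate only, and hence not dense. One small remark: you do not need Invariance of Domain, nor do you address the homeomorphism claim at all, but this is harmless since $h^{-1}(z,w)=(z,e^{-if(z)}w)$ is visibly continuous.
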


\begin{proof}
The mapping $h$ is a continuous injection. It follows from the Theorem on Invariance of Domains that $h$ is a homeomorphism onto its image. As such, the image $h(S^1\times S^1)\subseteq S^1\times S^1$ is a topological submanifold of the same dimension as $S^1\times S^1$. However, since $S^1\times S^1$ is connected it holds that $h(S^1\times S^1)= S^1\times S^1$. We conclude that $h$ is a homeomorphism. 

To study $\Lip(\pi\oplus \pi_h,D\oplus D)\subseteq C(S^1\times S^1)$, we note that
$$\Lip(\pi,D)\cap \Lip(\pi_h,D)=\{a\in \Lip(S^1\times S^1): a\circ h\in \Lip(S^1\times S^1)\}.$$
We use $\partial_i$ to denote partial differentiation on $S^1\times S^1$, e.g. 
$$\partial_1a(z,w)=\frac{\mathrm{d}}{\mathrm{d} t}\bigg|_{t=0}a(z\mathrm{e}^{it},w).$$ 
For $a,a\circ h\in \Lip(S^1\times S^1)$, the following expression has a limit almost everywhere as $t\to 0$:
\begin{align*}
&\frac{a(z\mathrm{e}^{it},\mathrm{e}^{if(z\mathrm{e}^{it})}w)-a(z,\mathrm{e}^{if(z)}w)}{t}\\
&\qquad\qquad=\underbrace{\frac{a(z,\mathrm{e}^{if(z\mathrm{e}^{it})}w)-a(z,\mathrm{e}^{if(z)}w)}{t}}_{I_t(z,w)}+\underbrace{\frac{a(z\mathrm{e}^{it},\mathrm{e}^{if(z\mathrm{e}^{it})}w)-a(z,\mathrm{e}^{if(z\mathrm{e}^{it})}w)}{t}}_{II_t(z,w)}.
\end{align*}
Moreover, since $a$ is Lipschitz, we can by Egorov's theorem deduce that for any $\epsilon>0$, $II_t(z,w)$ converges to $\partial_1a(z,\mathrm{e}^{if(z)}w)$ as $t\to 0$ on the complement of a set of measure $\epsilon$. Therefore, for any $\epsilon>0$, $I_t(z,w)$ converges as $t\to 0$ on the complement of a set of measure $\epsilon$. However, we have 
$$I_t(z,w)=\underbrace{\frac{a(z,\mathrm{e}^{if(z\mathrm{e}^{it})}w)-a(z,\mathrm{e}^{if(z)}w)}{f(z\mathrm{e}^{it})-f(z)}}_{A_t(z,w)}\cdot \underbrace{\frac{{f(z\mathrm{e}^{it})-f(z)}}{t}}_{B_t(z,w)}.$$
Using that $a$ is Lipschitz, $A_t(z,w)$ converges almost everywhere to $\partial_2a(z,\mathrm{e}^{if(z)}w)$ as $t\to 0$. Since $f$ is nowhere differentiable, $B_t(z,w)$ converges nowhere and we deduce that for all $\epsilon$, $\partial_2a(z,\mathrm{e}^{if(z)}w)=0$ on the complement of a set of measure $\epsilon$. We conclude that any $a\in \Lip(\pi,D)\cap \Lip(\pi_h,D)$ is constant in the second variable, i.e. $a(z,w)=a(z,1)$ for any $(z,w)$. Therefore $\Lip(\pi\oplus \pi_h,D\oplus D)\subseteq C(S^1\times S^1)$ is not dense.
\end{proof}

\end{appendix}

\paragraph{\textbf{Acknowledgements}}
Foremost we express our gratitude to Michel Hilsum and Matthias Lesch for valuable and inspiring discussions. The first listed author was supported by the ANR Project SingStar. The second listed author thanks the Knut and Alice Wallenberg foundation for their support. The second listed author was supported by the Swedish Research Council Grant 2015-00137 and Marie Sklodowska Curie Actions, Cofund, Project INCA 600398. The third listed author was supported by the EPSRC grant EP/J006580/2. We also thank the Universit\'e Blaise Pascal Clermont-Ferrand, Leibniz Universit\"at Hannover and the Graduiertenkolleg 1463 (\emph{Analysis, Geometry and String Theory}) and the University of Warwick for facilitating this collaboration. We are grateful to the Hausdorff Institute for Mathematics in Bonn (trimester program on \emph{Noncommutative Geometry and its Applications}) and the University of Wollongong for their hospitality. The authors first met at ``The sixth annual Spring Institute on Noncommutative Geometry and Operator Algebras" at Vanderbilt University; this collaboration would not have been possible without this event. We thank Simon Brain, Jens Kaad and Adam Rennie for fruitful discussions. We also thank the referee for a number of useful comments.


\begin{thebibliography}{99}

\bibitem{baajjulg} S. Baaj, P. Julg, \emph{Theorie bivariante de Kasparov et operateurs non bornes dans les $C^*$-modules hilbertiens}, C. R. Acad. Sci. Paris Ser. I Math. 296 (1983), no. 21, p. 875--878. 

\bibitem{BD}P. Baum and R. Douglas. {\it $K$-homology and index theory}. Operator Algebras and Applications (R. Kadison editor), volume 38 of Proceedings of Symposia in Pure Math., 117-173, Providence RI, 1982. AMS.

\bibitem{BD2}P. Baum and R. Douglas. {\it Index theory, bordism, and $K$-homology}. Contemp. Math. 10, p. 1--31, 1982.

\bibitem{Bla} B. Blackadar,  {\em $K$-Theory for Operator Algebras}, Math. Sci. Res. Inst. Publ., vol 5, Springer, New York, 1986; 2nd ed., Cambridge Univ. Press, Cambridge, 1988.

\bibitem{BMS} S. Brain, B. Mesland and W.D. van Suijlekom, \emph{Gauge theory for spectral triples and the unbounded Kasparov product}, J. Noncomm. Geom. 10 (2016), p. 135-206.

\bibitem{connesgravity} A. Connes, \emph{Gravity coupled with matter and the foundation of non-commutative geometry}, Comm. Math. Phys. 182 (1996), no. 1, p. 155--176.

\bibitem{chakramathai} P.S. Chakraborty and V. Mathai, \emph{The geometry of determinant line bundles in noncommutative geometry}, J. Noncommut. Geom. 3 (2009), p. 559--578.

\bibitem{cuntzquasi} J. Cuntz, \emph{A new look at $KK$-theory}, $K$-Theory 1 (1987), no. 1, p. 31--51. 

\bibitem{cuntzweyl} J. Cuntz, \emph{Bivariant $K$-theory and the Weyl algebra}, $K$-theory 35 (2005), p. 93--137

\bibitem{cumero} J. Cuntz, R. Meyer and J.M. Rosenberg, \emph{Topological and Bivariant $K$-theory}, Birkh\"auser 2007.

\bibitem{CS} J. Cuntz and G. Skandalis, \emph{Mapping cones and exact sequences in $KK$-theory}, J. Operator Theory 15:1, p. 163--180, 1986.

\bibitem{deelgoffIII} R. Deeley and M. Goffeng, {\it Realizing the analytic surgery group of Higson and Roe geometrically, Part III: Higher invariants}, arXiv: 1412.1768, to appear in Math. Ann.

\bibitem{Exel} R. Exel, \emph{A Fredholm operator approach to Morita equivalence}, $K$-theory 7, p. 285--308, 1993.


\bibitem{Far}C. Farsi, \emph{Orbifold index cobordism invariance}, Topology Appl. 156 (2007), p. 1770--1775.

\bibitem{FMR} I. Forsyth, B. Mesland and A. Rennie, \emph{Dense domains, symmetric operators and spectral triples}, New York J. Math. 20 (2014), p. 1001--1020.

\bibitem{hankpapsch} B. Hanke, D Pape and T. Schick, \emph{Codimension two index obstructions to positive scalar curvature}, Annales de l'Institut Fourier 65 (6) (2015), p. 2681--2710.

\bibitem{HReta} N. Higson and J. Roe. {\it $K$-homology, assembly and rigidity theorems for relative eta invariants}, Pure Appl. Math. Quar. Vol 6, 2, p. 555--601, 2010.


\bibitem{hilsumcmodbun} M. Hilsum, \emph{Index classes of Hilbert modules with boundary}, Jussieu Preprint 281, https://www.imj-prg.fr/preprints/281.pdf

\bibitem{hilsumfol} M. Hilsum, \emph{Hilbert modules of foliated manifolds with boundary}, Foliations: geometry and dynamics (Warsaw, 2000), p. 315--332, World Sci. Publ., River Edge, NJ, 2002. 

\bibitem{hilsumbordism} M. Hilsum, \emph{Bordism invariance in $KK$-theory}, Math. Scand. 107 (2010), no. 1, p. 73--89.


\bibitem{leschkaad1} J. Kaad and M. Lesch, \emph{Spectral flow and the unbounded Kasparov product}, Adv. Math. 248 (2013), p. 495--530.

\bibitem{leschkaad2} J. Kaad and M. Lesch, \emph{A local global principle for regular operators in Hilbert $C^*$-modules}, J. Funct. Anal. 262 (2012), no. 10, p. 4540--4569.

\bibitem{kaaddiffabs} J. Kaad, \emph{Differentiable absorption of Hilbert $C^*$-modules, connections, and lifts of unbounded operators}, arXiv:1407.1389, to appear in J. Noncomm. Geom.

\bibitem{Kas} G. G. Kasparov, {\em Equivariant $KK$-theory and the Novikov Conjecture}, Invent. Math. {\bf 91} (1988), p. 147--201.

\bibitem{Kescont} N. Keswani. {\it Geometric K-homology and controlled paths}, New York J. Math. 5 (1999), p. 53--81.

\bibitem{jeto} K. Knudsen-Jensen and K. Thomsen, \emph{Elements of $KK$-theory}, Birkh\"auser 1991.

\bibitem{Kucerovsky1} D. Kucerovsky, \emph{The $KK$-product of unbounded modules}, $K$-Theory, \textbf{11} (1997), p. 17--34.

\bibitem{lancesbook} E. C. Lance, \emph{Hilbert $C^*$-modules. A toolkit for operator algebraists.} London Mathematical Society Lecture Note Series, 210. Cambridge University Press, Cambridge, 1995. x+130 pp.

\bibitem{LRV} S. Lord, A. Rennie and J. C. Varilly, \emph{Riemannian manifolds in noncommutative geometry},  J. Geom. Phys. 62 (2012), no. 7, p. 1611--1638.

\bibitem{thebeastofmesland}  B. Mesland, \emph{Unbounded bivariant $K$-theory and correspondences in noncommutative geometry}, J. Reine Angew. Math. 691 (2014), p. 101--172.

\bibitem{mesren} B. Mesland, A. Rennie, \emph{Nonunital spectral triples and metric completeness in unbounded $KK$-theory},  arXiv:1502.04520.


\bibitem{posepumpen} A. Pal, \emph{Regular operators on Hilbert $C^*$-modules}, J. Operator Theory 42 (2) (1999) p. 331--350.

\bibitem{Pierrot} F. Pierrot, \emph{Op\'{e}rateurs r\'{e}guliers dans les $C^{*}$-modules et structure des $C^{*}$-alg\`{e}bres de groupes de Lie semisimples complexes simplement connexes}, J. Lie Theory 16 (2006), p. 651--689.

\bibitem{Rav} J. Raven. {\it An equivariant bivariant Chern character}, PhD Thesis, Pennsylvania State University, 2004. (available online at the Pennsylvania State Digital Library).

\bibitem{Schickconn} T. Schick. {\it $L^2$-index theorems, $KK$-theory, and connections}, New York J. Math., 11, p. 387--443 2005.


\bibitem{valettebook} A. Valette, \emph{Introduction to the Baum-Connes conjecture}. From notes taken by Indira Chatterji. With an appendix by Guido Mislin. Lectures in Mathematics ETH Z\"urich. Birkh\"auser Verlag, Basel, 2002.

\bibitem{Wal}M. Walter. {\it Equivariant geometric $K$-homology with coefficients}. Diplomarbeit University of Goettingen. 2010.

\end{thebibliography}
\end{document}